\newtheorem{theorem}{Theorem}[section]
\newtheorem{lemma}[theorem]{Lemma}
\newtheorem{proposition}[theorem]{Proposition}
\newtheorem{corollary}[theorem]{Corollary}
\theoremstyle{definition}
\newtheorem{definition}[theorem]{Definition}
\newtheorem{conjecture}[theorem]{Conjecture}
\newtheorem{remark}[theorem]{Remark}
\newtheorem{example}[theorem]{Example}
\numberwithin{equation}{section}
\numberwithin{equation}{section}
\renewcommand{\S}{\mathcal{S}}
\newcommand{\mx}{0}
\newcommand{\gr}{\mathfrak g}
\newcommand{\e}{\varepsilon}
\renewcommand{\L}{\mathcal{L}}
\newcommand{\xdownarrow}[1]{{\left\downarrow\vbox to #1{}\right.\kern-\nulldelimiterspace}}
\newcommand{\xuparrow}[1]{{\left\uparrow\vbox to #1{}\right.\kern-\nulldelimiterspace}}
\newcommand{\Z}{\mathbb{Z}}
\newcommand{\R}{\mathbb{R}}
\newcommand{\N}{\mathbb{N}}
\renewcommand{\Pr}{{P}}	
\newcommand{\Ex}{{E}}
\newcommand{\m}{\mathsf}
\renewcommand{\hat}{\widehat}
\renewcommand{\bar}{\overline}
\newcommand{\pj}{Q}
\newcommand{\ga}{\gamma}
\newcommand{\de}{\delta}
\newcommand{\ep}{\varepsilon}
\newcommand{\al}{\alpha}
\newcommand{\Ga}{\Gamma}
\newcommand{\cL}{\mathcal{L}}
\newcommand{\cK}{\mathcal{K}}
\newcommand{\cD}{\mathcal{D}}
\newcommand{\cE}{\mathcal{E}}
\newcommand{\fd}{\bm{d}}
\newcommand{\Rd}{{\mathbb{R}}^4_\uparrow}
\newcommand{\eqd}{\overset{d}{=}}
\newcommand{\pn}{\mathcal P}
\title
{Upper tail large deviations of the directed landscape}
\author{Sayan Das \and Duncan Dauvergne \and B\'alint Vir\'ag}
\date{}
\begin{document}
\maketitle
\begin{abstract} Starting from one-point tail bounds, we establish an upper tail large deviation principle for the directed landscape at the metric level. Metrics of finite rate are in one-to-one correspondence with measures supported on a set of countably many paths, and the rate function is given by a certain \textit{Kruzhkov entropy} of these measures. As an application of our main result, we prove a large deviation principle for the directed geodesic.
\end{abstract}

% \subjclass[2020]{60K35, 82C22}
% \keywords{KPZ fixed point, directed landscape, random growth, Large deviations, directed metric}

\maketitle
{
	\hypersetup{linkcolor=black}
	\setcounter{tocdepth}{1}
	\tableofcontents
}

%Things to consider adding:
%%\begin{itemize}
%  \item remark/pf of Strassen's functional LIL
%\item LDP for V vs. Y
% \item LDP for $h_t(0), t > 0$ where $h_t$ is the KPZFP from an IC with a global maximum at $0$.
%\item energy formulation: either prove this for finitely many paths (proof is worked out already) or try to get the countable path case to work
% \item axiomatic formulation? Note that $I(e) + I(e') = I(e \vee e') + I(e \wedge e')$ requires notions of pathwise min and max.
%\end{itemize}
\section{ Introduction}

The directed landscape describes  random planar geometry:  it is expected to be the scaling limit of planar first passage percolation, and has been proven to be the scaling limit of a rich class of models in the KPZ (Kardar-Parisi-Zhang) universality class, including integrable models of last passage percolation (\cite{DOV18, dv22}), coloured tasep (\cite{aggarwal2024scaling}), and the continuum directed random polymer and the KPZ equation (\cite{wu2023kpz}). 

The directed landscape is best thought of as a random directed metric on the space-time plane. More precisely, define $\mathcal E$ be the set of all continuous functions $e$ from $\Rd=\{(x,s;y,t)\in \mathbb R^4, s<t\}$ to $\R$ satisfying the (reverse) triangle inequality 
$$
e(p;q)+e(q;r) \le e(p;r)
$$
for $(p;q),(q;r)\in \Rd$. The directed landscape is a random element $\cL \in \mathcal E$. Because of the triangle inequality, we can think of an element $e \in \mathcal E$ as a \textbf{(directed) metric}, and view $e$ as assigning distances between pairs of points $(x, s), (y, t)$ in the space-time plane. The condition that $e$ only assigns a distance when $s < t$ makes the second-coordinate time-like, and because of this restriction, there is no contradiction in allowing our directed metrics to be real-valued. The fact that the triangle inequality is reversed for the directed landscape is simply a convention.

For a fixed point pair $u=(x,s;y,t)\in \Rd$ the law of $\cL(u)$ is given by
\begin{equation}\label{e:landscape-Dirichlet}
	\mathcal L(u)\eqd (t-s)^{1/3}\mathrm{TW} + d(u), \qquad d(u)=-\frac{(y-x)^2}{t-s},
\end{equation}
where TW is a GUE Tracy-Widom random variable. We call $d$ the {\bf Dirichlet metric} (with negative sign). In particular, it can be shown that after scaling  
\begin{align}
	\label{E:L-epsilon}
	\L_\e(x,s;y,t):=\e\L(x\e^{-1/2},s;y\e^{-1/2},t)
\end{align}
as $\e\to 0$ we have $\L_\e \to d$  uniformly on bounded subsets (see Corollary 10.7 in  \cite{DOV18} for a quantitative version). But how likely is it that $\cL_\e$ is close to a  different metric? 

Our main theorem answers this question. For this theorem and throughout the paper, we equip $\mathcal E$ with the  topology of uniform convergence on bounded sets. This is stronger than uniform convergence on compact sets, since it requires uniform convergence near the boundary $s=t$ in $\Rd$.

\begin{theorem}\label{t:main} There exists a lower semicontinuous function $I : \mathcal E \to [0,\infty]$ such that for every Borel measurable  $A\subset \mathcal E$, as $\e\to 0$ we have
	\begin{equation*}   
		\exp((o(1)-\inf_{A^\circ} I)\e^{-3/2}) \le P( \L_\e\in A) \le \exp((o(1)-\inf_{\bar{A}}I)\e^{-3/2}).
	\end{equation*}
	Moreover, $I^{-1}[0,a]$ is compact for every  $a<\infty$,  $I^{-1}(0)=d$, and $I$ is strictly increasing on $I^{-1}[0,\infty)$.
\end{theorem}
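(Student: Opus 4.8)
The plan is to obtain the theorem from the standard large-deviation machinery --- exponential tightness together with matching local upper and lower bounds --- fed by the one-point tail asymptotics of $\mathcal L$, and then to read off the listed properties of the rate function from the structure that emerges. First I would establish \emph{exponential tightness at speed $\e^{-3/2}$}: starting from the one-point upper tail $P(\mathcal L(u)-d(u)>(t-s)^{1/3}m)=\exp(-\tfrac43 m^{3/2}(1+o(1)))$, uniform over $u$ in compact subsets of $\Rd$, and from the H\"older regularity of $\mathcal L$ (which has Gaussian-type tails, hence is super-exponentially strong at this speed), one shows that for every $M$ there is a set $K_M\subset\mathcal E$, compact in the topology of uniform convergence on bounded sets, with $P(\mathcal L_\e\notin K_M)\le\exp(-M\e^{-3/2})$. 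The set $K_M$ is carved out by a uniform bound $\mathcal L_\e(u)\le d(u)+\phi_M(|u|)$ together with a uniform modulus of continuity; Arzel\`a--Ascoli gives precompactness, and the reverse triangle inequality combined with the upper bound controls the behaviour near the diagonal $s=t$, which is exactly what the strong topology requires. Downward deviations $\mathcal L_\e(u)<d(u)$ cost $\e^{-3}$ and are irrelevant here.

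Next I would prove a \emph{local (weak) LDP}. Fix $e\in\mathcal E$ and $\delta>0$. Using the uniform modulus of continuity from the tightness step, the ball $\{\mathcal L_\e\in B(e,\delta)\}$ agrees, up to events of probability $\le\exp(-M\e^{-3/2})$ for any prescribed $M$, with a finite-dimensional event $\{|\mathcal L_\e(u_i)-e(u_i)|<2\delta,\ i\le k\}$ over a sufficiently fine grid $\{u_i\}$ of point pairs. I would then establish an LDP at speed $\e^{-3/2}$ for the vector $(\mathcal L_\e(u_1),\dots,\mathcal L_\e(u_k))$ by combining the one-point tails with the independence of $\mathcal L$ over disjoint time strips and the metric composition law $\mathcal L(p;r)\ge\mathcal L(p;q)+\mathcal L(q;r)$: the upper bound from a covering/union-bound over near-optimal families of point pairs realizing the constraints, the lower bound from prescribing $\mathcal L_\e$ to be large over a grid of thin, disjoint cells along chosen paths and multiplying the resulting independent probabilities. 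The finite-dimensional rate is a convex variational problem assembled from the one-point costs $\tfrac43(t_i-s_i)^{-1/2}(v_i-d(u_i))_+^{3/2}$ subject to composition constraints; as the grid refines, the strict concavity of $m\mapsto m^{3/2}$ forces the optimizer to spread the prescribed excess along increasingly thin paths, and the limiting value --- obtained via the Dawson--G\"artner projective-limit theorem --- is the announced Kruzhkov entropy of a measure supported on countably many paths. The local upper and lower limits as $\e\to0$ then agree, defining $-I(e)$, and $I$ is automatically lower semicontinuous.

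With the local bounds and exponential tightness in hand, the standard argument upgrading a weak LDP to a full LDP with good rate function yields $\exp((o(1)-\inf_{A^\circ}I)\e^{-3/2})\le P(\mathcal L_\e\in A)\le\exp((o(1)-\inf_{\bar A}I)\e^{-3/2})$ for all Borel $A$ and simultaneously shows $I^{-1}[0,a]$ is compact. For $I^{-1}(0)=\{d\}$: the one-point lower tail is sharp enough that any $e$ with $e(u_0)<d(u_0)$ has local rate $+\infty$, hence $I(e)=+\infty$; the one-point upper tail gives $I(e)\ge\tfrac43(t_0-s_0)^{-1/2}(e(u_0)-d(u_0))_+^{3/2}>0$ whenever $e(u_0)>d(u_0)$; so $I^{-1}(0)\subseteq\{e\le d\}\setminus\{e\le d,\ e\ne d\}=\{d\}$, while $I(d)=0$ because $\mathcal L_\e\to d$ uniformly on bounded sets. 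Finally, strict monotonicity on $I^{-1}[0,\infty)$: since finite-rate metrics correspond bijectively to measures $\mu$ on countably many paths with $I$ equal to the Kruzhkov entropy of $\mu$, from $d\le e_1\le e_2$ with $e_1\ne e_2$ one extracts a domination $\mu_{e_1}\preceq\mu_{e_2}$ at the level of the path representation, and the entropy is strictly increasing under adding mass, so $I(e_1)<I(e_2)$.

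The main obstacle is the identification of the rate function: extracting, from the metric composition law and the finite-dimensional variational problems, that a finite-rate metric is \emph{exactly} one generated by a finite measure on a countable family of paths, and that the optimal cost is precisely the Kruzhkov entropy of that measure. Closely tied to this are the need, in the local LDP, to control ball probabilities uniformly up to the diagonal $s=t$ (forced by the strong topology) and the verification that the projective-limit rate genuinely has the claimed explicit form; the strict-monotonicity statement rides on the same structural analysis and is among the more delicate points. The inputs --- one-point tails, H\"older regularity, independence over time strips, and metric composition --- are all available; the real work is in fusing them into the path representation.
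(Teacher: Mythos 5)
Your plan follows the same overall architecture as the paper — exponential tightness from one-point tails and regularity, finite-dimensional local bounds fed by time-strip independence, and a rate function identified as the Kruzhkov entropy of a measure supported on countably many paths — so at the level of intent the two coincide. However, there is a genuine gap at the local lower bound, and a secondary one in the independence argument.

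The step that would fail as written is the claim that ``the local upper and lower limits as $\e\to 0$ then agree, defining $-I(e)$.'' The construction you describe for the local lower bound — prescribing $\L_\e$ to exceed $e-\de$ over a grid of cells along a network of paths and multiplying independent one-point probabilities — lower-bounds $\Pr(\L_\e(u_i)\ge e(u_i)-\de,\ i\le k)$. This is the probability of a neighbourhood of the \emph{cone} $\{e'\in\mathcal E:e'\ge e\}$, not of the small ball $B(e,\de)$: nothing in the argument prevents $\L_\e$ from being much larger than $e$ somewhere, and that possibility is not \emph{a priori} rare at the $\e^{-3/2}$ scale. Matching the local upper bound therefore requires showing that the part of the cone away from $B(e,\de)$ carries strictly larger rate. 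The paper handles exactly this: the lower bound is stated and proved only for neighbourhoods of cones (Theorem \ref{t:lbd_apart0}), and then in the proof of the lower bound of Theorem \ref{t:main} one subtracts off $\Pr(\L_\e\in B(\m{Cone}_e,\de)\cap O^c)$ using the upper bound, lower semicontinuity, compactness of sub-level sets, and — crucially — \emph{strict monotonicity} of $I$ (Corollary \ref{c:monotone}): any $e^*\ge e$ with $e^*\ne e$ satisfies $I(e^*)>I(e)$, so the cone minus the ball costs strictly more. In your plan, strict monotonicity appears only as a ``moreover'' conclusion; in the paper it is an essential ingredient of the LDP lower bound, and without it (or some substitute argument controlling exceedances) the local two-sided claim does not close.

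A secondary issue: you invoke ``independence over disjoint time strips'' to multiply cell probabilities along paths. That suffices when the paths in the network occupy disjoint time intervals, but finite-rate metrics generically require several paths overlapping in time (compare the Y-shaped optimizer of Example \ref{e:two-point}). To multiply probabilities across spatially separated cells at the same time, one needs quantitative approximate independence of the Airy sheet over well-separated parallelograms — this is the paper's Proposition \ref{l:4.4}, proved via a coupling of last-passage models, and it is more than the axiomatic temporal independence of Definition \ref{D:L-unique}. Finally, the Dawson--G\"artner step you invoke to pass from the finite-dimensional rates to the Kruzhkov entropy of a path measure is precisely where the paper's Sections \ref{sec:top}--\ref{sec:plant} do the work (the compact sets $\mathcal D_m$, the geodesic and quadrangle structure, and the structure theorem \ref{t:structure}); you correctly flag this as the main obstacle, but the identification of the projective-limit rate with $\frac43\mathcal K(\mu_e)$ is not automatic and constitutes the bulk of the argument.
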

In other words, the family $\{\L_\e\}$ satisfies a large deviation principle with speed $\e^{-3/2}$ and good rate function $I$. 

% Theorem \ref{t:main} has some perhaps unexpected features and raises some questions.  

Large deviations at speed $\ep^{-3/2}$ are typically called upper-tail large deviations in the KPZ literature. There is no notion of ``upper tail'' involved in the statement of the theorem.  Lower tail events simply have infinite rate at this speed. There is no contradiction to having different large deviation principles at different speeds, and we expect to see another principle at speed $\e^{-3}$ corresponding to the lower tail. Indeed, in the closely related setting of first passage percolation, a lower tail  large deviation principle at the metric level was recently established by \cite{verges2024large}. What is special for this particular speed is that $I$ is a good rate function: the  sub-level sets are compact. At higher speeds sub-level sets must contain our entire $I^{-1}[0,\infty)$. 

Monotonicity of the rate function is a feature of large deviations at the metric level. It is not expected to hold for important contractions, such as the upper-tail large deviations for the Airy process. Another feature of the metric-level large deviations is a simple characterization of the rate function, which we describe next. 

\subsection{The rate function}
\label{SS:rate-function-intro}

For $e \in \mathcal E$ and a continuous function $\ga:[s, t] \to \R$, henceforth referred to as a path, define the \textbf{length} $|\ga|_e$ of $\ga$ with respect to $e$ by
\begin{equation}
	\label{E:ga-length}
	|\ga|_e = \inf_{k \in \N} \inf_{s = r_0 < r_1 < \dots < r_k} \sum_{i=1}^k e(\ga(r_{i-1}), r_{i-1}; \ga(r_i), r_i)).
\end{equation}
The triangle inequality guarantees that $|\ga|_e \le e(\ga(s), s; \ga(t), t)$, and we call a path $\ga$ an \textbf{$e$-geodesic} if this is an equality.

For the Dirichlet metric, the length is a negative Dirichlet energy: when $\ga$ is absolutely continuous, we have $|\ga|_d = -\int_s^t \ga'^2$, and otherwise $|\ga|_d = -\infty$. Let $H^1$ be the set of all paths $\ga$ with $|\ga|_d > -\infty$. The metrics in Theorem \ref{t:main} with finite rate can be described planting good regions in $\R^2$ on top of the Dirichlet metric.

Given a singular measure $\mu$ on $\R^2$, define a directed metric $e_\mu$ as follows. For a path $\ga:[s, t] \to \R$, let $\gr \gamma=\{(\gamma(r),r):r\in[s,t]\}$ denote the \textbf{graph} of $\gamma$, and define
\begin{equation}
	\label{e:emu}
	e_\mu(x,s;y,t)=\sup_\gamma \mu(\gr \gamma)+|\gamma|_d, 
\end{equation}
where the supremum is over all $\gamma\in H^1$ with domain $[s,t]$ satisfying $\gamma(s)=x$, $\gamma(t)=y$. In words, paths are rewarded by covering sets of large $\mu$-measure with their graphs, but are penalized by the Dirichlet metric for excessive undulation. 

Such metrics were studied by  \cite{bakhtin2013burgers} in the case when $\mu$ is a Poisson point process. For us, $\mu$ will be deterministic and without atoms.  We note in passing that under some technical conditions,  as a function of $y,t$ the metric $e_{\mu}(x,s;y,t)$ satisfies a Hamilton-Jacobi equation (an Eikonal equation) with ``forcing term'' given by $\mu$. The $y$-derivative then satisfies a forced inviscid Burgers' equation.

More precisely, our measures $\mu$ will be supported on a countable union of graphs $\gr \gamma, \ga \in \Ga \subset H^1$ and have the property that for every $\ga \in \Ga$, the time-marginal of the restricted measure $\mu|_{\gr \ga}$ has a Lebesgue density.
Finally, all our measures will have finite {\bf Kruzhkov entropy} $\mathcal K(\mu)$. For this, define the {\bf temporal density} $\rho_\mu(x,t)$ of $\mu$, by setting $\rho_\mu(x,t) = 0$ for $(x, t) \notin \text{supp}(\mu)$, and whenever $(x,t)\in \gr\gamma$ for some $\gamma\in \Gamma$, letting $\rho_\mu(x,t)$ be the Lebesgue density of the $t$-marginal of $\mu|_{\gr \ga}$.
As long as $\rho_\mu$ exists, it is well-defined $\mu$-a.e. Then 
$$
\mathcal K(\mu)=\int \sqrt{\rho_\mu}d\mu.
$$
The name is motivated by Kruzhkov's analysis of Burgers' equation and its generalizations, \cite{kruzhkov1970first}. The definition is related to the concept of R\'enyi entropy, where a power of the density is integrated against the measure. 
We call a measure satisfying the above conditions a {\bf planted network measure}. Then for all $e\in \mathcal E$,
$$
I(e)=\begin{cases}
	\frac43 \mathcal K(\mu)\qquad &\text{if }e=e_\mu \text{ for some planted network measure $\mu$}
	\\ \infty &otherwise. 
\end{cases}
$$
We will also show that in any finite rate metric, the length of a path $\gamma\in H^1$ is given by $\mu(\gr \gamma)+|\gamma|_d$. 
This representation quickly implies the last two claims of Theorem \ref{t:main}.

\subsection*{Applications, an alternate characterization, and the proof}
In Section \ref{sec:app}, a continuation of this introduction, we give several applications of Theorem \ref{t:main} to describe large deviations for marginals of the directed landscape. Among other results, we describe the joint large deviations of $\mathcal L(u_1), \dots , \cL(u_k)$ for any finite collection of points $u_1, \dots, u_k$. 

We also present a full large deviation principle for the directed geodesic $\Pi$. This is the a.s.\ unique $\cL$-geodesic between the points $(0,0)$ and $(0, 1)$. By the symmetries of the directed landscape, any $\cL$-geodesic between deterministic points is equal in law to an affine function of $\Pi$. The large deviations of the directed geodesic are described by a good rate function $J:C([0, 1]) \to [0, \infty]$. It turns out that $J(f)$ solves a variational problem, and its value is between $4/3$ times the cubes of the $L^{3/2}$ and $L^3$ norms of $f$, and that both of these norms are attained for different choices of $f$. We use our large deviation principle to show that as $a\to\infty$ we have $P(\sup_{[0,1]}|\Pi|\ge a)=e^{-\frac{32}3a^3+o(a^3)}$, to partially solve a conjecture of \cite{liu2022one}, and to present a new conjecture. 

A natural question that arises from our description of the rate function is how to systematically determine $\mu$ from the measure $e$. This can be done by differentiation. For $e \in \mathcal E$ and $p \in \R^2$, define 
\begin{equation}
	\label{E:sup-theta-R}
	\rho_e(p) = \sup_{\theta \in \R} \lim_{t \to 0^+} \frac{e(p; p + (\theta t, t)) - d(p; p + (\theta t, t))}{t}.
\end{equation}
The limit above can be thought of as quantifying how much excess density the metric $e$ has at $p$ when compared with the Dirichlet metric. When $e = e_\mu$ for a planted network measure $\mu$, then $\rho_e$ is the temporal density of $\mu$. Another upshot of this differentiation approach to understanding finite rate metrics is a formula for the rate function in terms of an integrated Dirichlet energy. This interpretation is closely related to the predicted large deviations of the Airy process, which should be the same as those of a Brownian motion conditioned to remain above a parabola. See Section \ref{sec:dirichlet} for details.

Our proof relies only on single-point bounds for the Tracy-Widom random variables, and builds the geometric picture on top of this by relying on deterministic topological arguments and the metric structure of the directed landscape. Indeed, most of the proof is devoted to understanding the deterministic structure of finite-rate metrics in $\mathcal E$. Because we use so little about the structure of the directed landscape, we believe that the methods should extend quite generally to other first and last passage percolation models, even non-integrable ones under some mild conditions. 

\subsection*{A partial review of  previous work}

We give a partial literature review here, focusing on work in the KPZ universality class related to the study of large deviations. 
See \cite*{quastel2011introduction}; 
\cite*{corwin2012kardar};  \cite*{zygouras2018some};
\cite*{ganguly2021random} and references therein for a broader review of the KPZ universality class.

Large deviations in the KPZ universality class have been studied by many authors. The upper tail bound for the  Tracy-Widom law  can be deduced from results in the original paper of \cite{tracy1994level}. A probabilistic derivation using the stochastic Airy operator was given by \cite{ramirez2011beta}.

One-point large deviations in exactly solvable last passage percolation models have been explored by \cite{ls77,sep98a,sep98b,dz99,joh00} using a range of methods. At the process level, large deviations for tasep (equivalently exponential last passage percolation) in the upper-tail regime were studied by \cite{jensen2000asymmetric} and \cite{varadhan2004large}, but the full proof in this regime had to wait until the recent paper of \cite{quastel2021hydrodynamic}. A version of Kruzhkov entropy appears in that paper. To express the rate function there, one has to solve a Burgers' equation. In our case,  the formulas simplify because we consider the higher, metric level of large deviations. Also, at this level, we only need to rely on single-point tail bounds as opposed to the KPZ fixed point-like determinantal formulas used in \cite{quastel2021hydrodynamic}. 

%As a result, our methods are expected to extend to pre-limiting models, such as the exponential last passage percolation driving tasep. 

Process level large deviations at different speed (those corresponding to lower tail) have been studied by \cite{ot19} for tasep. The recent exciting preprint of \cite{verges2024large} proves a lower tail large deviation principle in first passage percolation on $\Z^d$ at the metric level. A one-point lower tail large deviation principle for this model with $d=2$ was previously shown by \cite{bgs17}. 

%We believe the method of \cite{verges2024large} could be adapted to give the lower tail large deviations in the directed landscape setting. 

A study of what happens to the directed landscape and its geodesics at finer fluctuation scales under large deviation events was initiated in \cite{liu2022geodesic,liu2024conditional}. These papers obtained one-point and multi-point Gaussian convergence for the geodesic and the KPZ fixed point, respectively, using exact formulas from \cite{liu2022one}. Relying on probabilistic and geometric arguments, a Brownian bridge limit for the geodesic under an upper-tail large deviation event was proven in \cite{ganguly2023brownian} using precise bounds from \cite{ganguly2022sharp}.

Recently, there has been significant interest within both the mathematics and physics communities in understanding large deviations of the  KPZ equation (\cite{kpz86}). The works of \cite{le2016large,kamenev2016short,sasorov2017large,corwin2018coulomb,krajenbrink2018systematic,Le_Doussal_KP_large_deviations,kraj19,cg20b,lwtail,dt21,tsai_lower_tail,cafasso_claeys_KPZ,gl20} obtain results on one-point tails of KPZ equation. Process level limits and large deviations for the KPZ equation were studied by \cite{lin2023spacetime} and \cite{gaudreau2023kpz}. 
%A study of what happens to geodesics at the finer fluctuation scale under a certain large deviation event was undertaken in \cite{ganguly2023brownian} using precise bounds of \cite{ganguly2022sharp}. 
For some other integrable models in the KPZ universality class, one-point large deviation principles have been established by \cite{gs13,jan15,ej17,jan19,asep,qpng}.

\subsection*{Organization} The rest of the paper is organized as follows. In Section \ref{sec:app}, we describe various applications of our main theorem. In Section \ref{sec:basic}, we collect preliminary properties and estimates for the directed landscape and Airy sheet, and establish simple extensions of the one-point bounds of Theorem \ref{T:tracy-widom-tails}. Woven through Section \ref{sec:basic} is a heuristic derivation of the large deviation principle, which is naturally suggested by the preliminaries.
In Section \ref{sec:top}, we introduce a subspace of $\mathcal E$, where the rate function is finite, and prove properties of it. The rate function and various approximations for metrics are defined and analyzed in Sections \ref{sec:rate} and \ref{sec:plant}. In Sections \ref{sec:upbd} and \ref{sec:lwbd}, we prove the upper and lower bounds for the large deviations. Finally, in Section \ref{sec:dirichlet}, we express the rate function as an integrated Dirichlet energy.

\section{Applications} \label{sec:app}
In this section, we study the minimizing metric and its rate function for various optimization problems. We first prove a proposition which describing a generic structure of the minimizer for a particular class of optimization problems.

A finite or countable collection of paths is {\bf internally disjoint} if for all distinct $\ga, \ga'$ in the collection with domains $[a, b], [c, d]$ and $r \in (a, b) \cap (c, d)$ we have $\ga(r) \ne \ga'(r)$. We will use the intuitive notion of rightmost geodesics, see the discussion before Lemma \ref{L:geodesic-space0} for the precise definition, but otherwise the proposition and its proof only appeal to Theorem \ref{t:main} and the structure of the rate function introduced in Section \ref{SS:rate-function-intro}.

\begin{proposition}[Multi-point upper tail decay for the directed landscape]\label{P:stline} 
	\ \\
	For $i=1,\ldots, k$, let $(p_i;q_i)\in  \mathbb{R}_{\uparrow}^4$ and $\al_i\in \mathbb R$. Then  
	\begin{equation}
		\label{E:PLE}
	P(\L_\e(p_i;q_i)\ge \alpha_i, i=1,\ldots, k)=\exp(-\beta \e^{-3/2}+o(\e^{-3/2})), \qquad \text{as} \quad \e\to 0.
	\end{equation}
 Here 
	% Consider the set $\mathcal M(U, \al) = \{e \in \mathcal E: e(u_i) \ge \alpha_i \ \mbox{ for }i\in \{1,2,\ldots,k\}\}$ and
	$\beta<\infty$ solves the variational problem
	\begin{align*}
		\beta := \inf_{e \in \mathcal M} I(e), \qquad \mathcal M=\big\{e\in \mathcal E:  e(u_i) \ge \alpha_i \ \mbox{ for }i=1,\ldots,k\big\}.
	\end{align*}
	This infimum is achieved by at least one $e \in \mathcal M$, and any such $e$ equals $e_\mu$ for some measure $\mu$ with the following structure.
	\begin{enumerate}[label=(\alph*),leftmargin=18pt]
		\item \label{propa} The support of $\mu$ is contained in a set of the form $\bigcup_{i=1}^k \gr \pi_i$ where each $\pi_i$ is the rightmost $e_\mu$-geodesic from $p_i$ to $q_i$.
		\item \label{propb} There is a finite collection of line segments $\ga_1, \dots, \ga_m$ such that $\bigcup_{i=1}^k \gr \pi_i = \bigcup_{i=1}^m \gr \gamma_i$, and $\rho_\mu$ is $\mu$-a.e. constant on each of the line segments $\gr \gamma_i$.
		\item \label{propc} If $\gamma_i(t)=\gamma_j(t)$  for $i\neq j$, then either $(\gamma_i(t),t) \in \bigcup_{\ell = 1}^k \{p_\ell, q_\ell\}$, or at least three segments meet at the point $(\ga_i(t), t)$. That is, there exists an index $\ell$ distinct form $i$ and $j$ so that $\gamma_i(t)=\gamma_j(t)=\gamma_\ell(t)$.
		
		% Let $\Omega_{\Ga}:=\{(x,t) : \ga_i(t)=\ga_j(t)=x \mbox{ for some }i\neq j\}$ be the set of meeting points.  
		% $\ga_i$ can be so chosen that for each $(x,t)\in \Omega_\Ga \setminus \{p_i,q_i : 1\le i\le k\}$, there exists $1\le i_1<i_2<i_3\le m$ such that
		% $$\ga_{i_1}(t)=\ga_{i_2}(t)=\ga_{i_3}(t)=x.$$
		% That is, other than the specified points $p_i, q_i$, every meeting point of two paths $\ga_i$ is a meeting point for at least three paths.
	\end{enumerate}
\end{proposition}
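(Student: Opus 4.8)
The plan is to read off the probability asymptotics \eqref{E:PLE} and the identity $\beta=\inf_{\mathcal M}I$ from Theorem \ref{t:main}, and then to extract the structure \ref{propa}--\ref{propc} of the minimizers by local exchange arguments that use only the form $I(e_\mu)=\frac43\mathcal K(\mu)$ of the rate function and the description of $e_\mu$-geodesics as maximizers of $\gamma\mapsto\mu(\gr\gamma)+|\gamma|_d$. Since $e\mapsto e(u)$ is continuous for uniform convergence on bounded sets, $\mathcal M=\bigcap_i\{e:e(u_i)\ge\alpha_i\}$ is closed, so the upper bound of Theorem \ref{t:main} gives the upper bound in \eqref{E:PLE} with constant $\inf_{\mathcal M}I$. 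For the matching lower bound it suffices to verify $\inf_{\mathcal M^\circ}I=\inf_{\mathcal M}I=:\beta$, because $\{e:e(u_i)>\alpha_i\ \text{for all }i\}\subseteq\mathcal M^\circ$; given a finite-rate $e_\mu\in\mathcal M$, I would perturb $\mu$ to $\mu+\delta\nu$, where $\nu$ puts a uniform-in-time density on each rightmost geodesic from $p_i$ to $q_i$. Then $e_{\mu+\delta\nu}(u_i)>\alpha_i$ strictly, while $\mathcal K(\mu+\delta\nu)\to\mathcal K(\mu)$ as $\delta\to0$ by subadditivity of $\sqrt{\cdot\,}$ together with the a priori bounds $\mu(\gr\gamma)\le\mathcal K(\mu)^{2/3}(t-s)^{1/3}$ and $\int_{\gr\gamma}\sqrt{\rho_\mu}\,dt\le\mathcal K(\mu)^{1/3}(t-s)^{2/3}$, which make the relevant portion of $\mu$ finite. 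That $\beta<\infty$ follows by planting enough uniform mass on the straight segment joining each $p_i$ to $q_i$ and invoking monotonicity of $\mu\mapsto e_\mu$. Existence of a minimizer is then immediate: $I$ is good, so $\mathcal M\cap I^{-1}[0,\beta+1]$ is nonempty and compact and $I$ attains its infimum there; the rate function formula identifies any minimizer as some $e_\mu$.

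Fix such a minimizer $e=e_\mu$, let $\pi_i$ be the rightmost $e_\mu$-geodesic from $p_i$ to $q_i$, and set $\mathcal N:=\bigcup_i\gr\pi_i$, so that $\mu(\gr\pi_i)+|\pi_i|_d=e_\mu(u_i)\ge\alpha_i$. For \ref{propa}, suppose $\mu$ charges a Borel set $D$ disjoint from $\mathcal N$. Deleting $\mu|_D$ produces $\mu'$ with $\mu'(\gr\pi_i)=\mu(\gr\pi_i)$, hence $e_{\mu'}\in\mathcal M$; moreover $\rho_{\mu'}\le\rho_\mu$ and $\mathcal K(\mu')=\mathcal K(\mu)-\int_D\sqrt{\rho_\mu}\,d\mu<\mathcal K(\mu)$, since $\rho_\mu>0$ holds $\mu$-a.e.\ (the density vanishing would mean $\mu$ assigns no mass there). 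This contradicts minimality, so $\operatorname{supp}\mu\subseteq\mathcal N$.

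For \ref{propb} and \ref{propc} the heart of the matter is that $\mathcal N$ is a \emph{finite} union of line segments, carrying constant density on each. I would argue that each $\pi_i$ is piecewise linear: on a time window in which $\mathcal N$ does not branch, if $\pi_i$ were not straight, replacing that arc and the mass it carries — with its temporal density profile intact — by the corresponding straight segment leaves $\mathcal K$ unchanged but strictly increases $|\pi_i|_d$ (and $|\pi_j|_d$ for any $\pi_j$ sharing the arc); the slack so created is absorbed by deleting a bit of mass, strictly lowering $\mathcal K$, a contradiction. Next, using that rightmost geodesics which meet must run together along a common arc and that coalescence is permanent until the geodesics are forced to diverge (the properties of rightmost geodesics recorded around Lemma \ref{L:geodesic-space0}): two distinct geodesics through an interior vertex contribute a shared third segment there, and if a single geodesic has a corner at an interior point not among the $p_\ell,q_\ell$, the straightening argument applies at that corner for a strict gain — this is \ref{propc}, and it forces the branch structure to be a planar graph with at most $2k$ leaves and interior vertices of degree $\ge3$, hence $O(k)$ edges, so $\mathcal N=\bigcup_{i=1}^m\gr\gamma_i$ for finitely many segments. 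Finally, on an edge $\gr\gamma_a$ of time-length $\tau_a$ and $\mu$-mass $M_a$, replacing a non-constant density by the constant $M_a/\tau_a$ changes no $\mu(\gr\pi_i)$ and hence no constraint, and strictly decreases $\int_{\gr\gamma_a}\rho_\mu^{3/2}$ unless $\rho_\mu$ is already $\mu$-a.e.\ constant there (strict Jensen for $\rho\mapsto\rho^{3/2}$); this gives \ref{propb}.

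The main obstacle is precisely this finite-segment reduction and the two exchange arguments behind \ref{propb} and \ref{propc}: one must choose straightening windows that respect the (a priori only qualitatively understood) branch structure of $\mathcal N$, handle sub-arcs shared by several geodesics and the bookkeeping forced by non-uniqueness of geodesics (the ``rightmost'' convention), establish and use the permanence of coalescence to get the $O(k)$ bound on branch times, and in each local modification confirm that the slack gained is genuine and does not silently violate another constraint. By contrast, the derivation of \eqref{E:PLE} itself is a routine application of Theorem \ref{t:main} once the closedness of $\mathcal M$, the perturbation giving the interior-infimum identity, and the finiteness of $\beta$ are in hand.
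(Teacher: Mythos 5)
Your high-level strategy---read the asymptotics off Theorem~\ref{t:main} via closedness of $\mathcal M$ and an interior-infimum perturbation, identify a minimizer by goodness of $I$, then extract the structure by local exchange arguments on $\mu$---matches the paper's proof exactly, and parts (a), existence, and the interior-infimum step are handled in essentially the same way. Where you diverge is in the proofs of \ref{propb} and \ref{propc}: the paper first builds an internally disjoint decomposition $\gamma_1,\dots,\gamma_m$ of $\bigcup \gr\pi_i$ and obtains \ref{propc} purely combinatorially by choosing $m$ minimal (if two segments meet endpoint-to-endpoint without being a geodesic endpoint or a triple point, concatenate and reduce $m$); only then does it run a \emph{single} exchange $\mu\mapsto\mu^*_\ep$ (straight segment with constant density $(\al^{3/2}-\ep)^{2/3}$, $\al$ the average of $\rho_\mu$ along $\gamma_1$) that establishes straightness and density-flattening simultaneously via one Jensen computation. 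You instead do straighten-then-flatten, deriving \ref{propc} as a by-product of corner-straightening. That route works, but it carries two burdens the paper's organization avoids. First, there is a mild circularity: your flattening step asserts that replacing $\rho_\mu$ by the constant $M_a/\tau_a$ on $\gr\gamma_a$ ``changes no $\mu(\gr\pi_i)$,'' which is only true if every $\pi_i$ contains all or none of $\gamma_a$; this is a consequence of \ref{propc} (there are no branch points in the interior of an edge) and must be in place before you flatten, so the argument has to be sequenced as straighten $\to$ \ref{propc} $\to$ identify the finite edge decomposition $\to$ flatten, and this ordering should be stated explicitly. Second, your straightening step moves mass to a new graph, so you need to say what the new density is where the relocated segment might overlap the rest of the support---the paper sidesteps this by defining $\rho_{\mu^*_\ep}=\rho_{\mu^*}\vee\rho_{\nu_\ep}$ (a pointwise maximum, not a sum). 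Neither issue is fatal, but the paper's single-exchange formulation is cleaner and avoids both; you correctly identified these as the delicate points.
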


\begin{proof} 
	To see that $\beta < \infty$,  observe that we can find a finite rate metric in $\mathcal M$ by considering a measure $\mu$ supported on straight line segments between $p_i$
	to $q_i$ and setting $\rho_\mu$ to be a high enough constant on the graphs of these segments. 
	
	Now suppose $e_n \in \mathcal M$ is a sequence of metrics with $I(e_n) \to \beta$ as $n \to \infty$. Since $\beta < \infty$ and sub-level sets of $I$ are compact, $e_n$ has a subsequential limit $e^*$, which lies in $\mathcal M$ since this set is closed. Moreover, $I(e^*) \le \beta$ by lower semicontinuity of $I$. Hence $e^*=e_\mu$ for some $\mu$ and $I(e_\mu)=\beta$.
	
	Next, let $\pi_i$ be the rightmost geodesic from $p_i$ to $q_i$ under $e^*$ (these exist by Lemma \ref{L:geodesic-space}). The metric $\mu$ must be supported inside $\bigcup_{i=1}^k\gr \pi_i$, since the metric induced by $\mu|_{\bigcup_{i=1}^k\gr \pi_i}$ is still in $\mathcal M$, and has strictly smaller Kruzhkov entropy if $\mu$ has mass outside this set. This gives \ref{propa}.
	
		Let us consider $\nu$ defined via $\rho_{\nu}:=\rho_{\mu}+c$ with $c>0$. Note that $e_{\nu} \in \mathcal M^{\circ}$ and $I(e_{\nu})-I(e_{\mu})$ can be made arbitrarily small by choosing $c$ small enough as $\mu$ is supported on finitely many paths. Thus $\beta=\inf_{\mathcal M^\circ} I$, and consequently \eqref{E:PLE} is now a direct application of the Theorem \ref{t:main}. 
	
	Now, any two distinct paths $\pi_i, \pi_j$ must overlap on a closed interval, a simple fact about rightmost geodesics. Therefore there is a collection of internally disjoint paths $\gamma_i, i = 1, \dots, m$ such that $\bigcup_{i=1}^k\gr \pi_i = \bigcup_{i=1}^m \gr \ga_i$. If we choose $m$ minimally, then \ref{propc} must hold for this collection. Indeed, if there were a pair $\ga_i, \ga_j$ with $\ga_i(t) = \ga_j(t)$, then internal disjointness guarantees that $t$ is an endpoint of the domains $[a_i, b_i], [a_j, b_j]$ of $\ga_i, \ga_j$. If $t = a_i = b_j$ or $t = a_j = b_i$, then we can concatenate $\ga_i, \ga_j$ to give an internally disjoint collection of $m-1$ paths whose graphs cover the same region $\bigcup_{i=1}^k\gr \pi_i$. If $t = a_i = a_j$ or $t = b_i = b_j$, then either $(\ga_i(t), t)$ is an endpoint of one of the paths $\pi_i$, or else there is a third path $\ga_\ell$ with $\ga_i(t) = \ga_j(t) = \ga_\ell(t)$.

	Finally, we claim that each $\ga_i$ must be a straight line, and that $\rho_\mu$ must be constant along $\gr \ga_i$. It is enough to check this for $\ga_1:[a, b]\to \R$. Define $\mu^* = \mu|_{\bigcup_{i=2}^m \gr \ga_i}$, and for $\e \ge 0$ let $\nu_\e$ be the measure supported on the line segment $\ga_1^*$ from $(\ga_1(a), a)$ to $(\ga_1(b), b)$ and with $\rho_\nu$ constant along $\ga_1^*$, and equal to $(\al^{3/2} - \ep)^{2/3}$, where $\al$ is the average of $\rho_\mu(\ga_1(t), t), t \in [a, b]$. Let $\mu^*_\ep$ be the measure with $\rho_{\mu^*_\ep} = \rho_{\mu^*} \vee \rho_{\nu_\ep}$. By a short computation,
	\begin{align*}
		\mathcal K(\mu) - \mathcal K(\mu^*_\ep) &\ge \mathcal K(\mu|_{\gr \ga_1}) - \mathcal K(\nu_\ep) \\
		&= \int_a^b \rho_\mu(\ga_1(t), t)^{3/2} dt - \frac1{(b-a)^{1/2}} \int_a^b \rho_\mu(\ga_1(t), t) dt + (b-a) \ep \\
		&\ge (b-a) \ep,
	\end{align*}
	where the final bound uses Jensen's inequality. Moreover, the $e_\mu$-length of any concatenation of paths $\ga_{i_1}, \dots, \ga_{i_\ell}$ is bounded above by the $e_{\mu^*_\ep}$-length of any concatenation of paths $\ga_{i_1}^*, \dots, \ga_{i_\ell}^*$ where $\ga_i^* = \ga_i$ for $i \ge 2$, as long as $|\ga_1^*|_{e_{\mu^*_\e}} \ge |\ga_1|_{e_\mu}$. Using the characterization of length in terms of the measure $\mu$ discussed at the end of Section \ref{SS:rate-function-intro}, this inequality is equivalent to $\mu^*_\e(\ga_1^*) + |\ga_1^*|_d \ge \mu(\ga_1) + |\ga_1|_d$, which is implied by the claim that 
	$$
	(b-a)(\al^{3/2} - \ep)^{2/3} - (b-a)\al + |\ga_1^*|_d \ge |\ga_1|_d.
	$$
	This inequality implies that if $\ga_1$ is not a straight line, then $e_{\mu^*_\ep} \in \mathcal M$ for some $\e > 0$, which is a contradiction since $\mathcal K (\mu_\e^*) < \mathcal K(\mu_\e)$ for $\e > 0$. If $\ga_1$ is a straight line, then we still have that $\mathcal K (\mu_0^*) \in \mathcal M$ and so the inequalities in the previous display must all be equalities, forcing $\rho_\mu$ to be constant along $\ga_1$. 
\end{proof}

We use Proposition \ref{P:stline} to resolve two optimization problems.

\begin{example}[Back to a single point]\label{e:one-point} For $\al \ge 0$, consider the one point variational problem:
	\begin{align*}
		\inf \big\{I(e) :  e(0,0;0,1) \ge \alpha\big\}= \tfrac43\alpha^{3/2}.
	\end{align*}
	By Proposition \ref{P:stline}, a measure $\mu$ for the optimizing metric is supported on the  straight line from $(0,0)$ to $(0,1)$ with $\rho_\mu = \al_1$, a constant, along this line. The bound  $e_\mu(0,0;0,1)\ge \alpha$ implies $\alpha_1\ge \alpha$. Thus $I(e_\mu)=\frac{4}{3}\alpha_1^{3/2}$, which is minimized when $\al_1 = \al$.
\end{example}

\begin{example}[V or Y?]\label{e:two-point}Let us consider the two-point variational problem:
	\begin{align*}
		\inf \big\{I(e) :  e(0,0;-1,1) \ge \alpha, e(0,0;1,1) \ge \alpha\big\}.
	\end{align*}
	By Proposition \ref{P:stline}, the optimizing measure $\mu$ with temporal density $\rho$ is supported on three line segments that meet at some point $(r,t)$:
	\begin{align*}
		\begin{array}{lll}
			\ga:[0,t]\to\mathbb{R}, \qquad \qquad
			& \ga(s)=rs/t, \qquad \qquad \qquad &\rho|_{\gr \gamma}-(\gamma')^2\equiv p, 
			\\
			\ga_1: [t,1]\to\mathbb{R}, &\ga_1(s)=r-(s-t)\frac{r+1}{1-t}, &\rho|_{\gr \gamma_1}-(\gamma'_1)^2 \equiv q_1, 
			\\
			\ga_2: [t,1]\to\mathbb{R}, &\ga_2(s)=r-(s-t)\frac{r-1}{1-t}, &\rho|_{\gr \gamma_2}-(\gamma'_2)^2\equiv q_2.
		\end{array}
	\end{align*}
	Note that $e_\mu(0,0;-1,1)=pt+q_1(1-t) \ge \alpha$ which implies $q_1 \ge (\alpha-pt)/(1-t)$. Similarly, $q_2 \ge (\alpha-pt)/(1-t)$. We have
	\begin{align*}  I(e_\mu) & =\tfrac43\int_0^t(p+\ga'(s)^2)^{3/2}ds+\tfrac43\int_t^1(q_1+\ga_1'(s)^2)^{3/2}ds+\tfrac43\int_t^1(q_2+\ga_2'(s)^2)^{3/2}ds \\ & 
		\ge \tfrac43\left[
		\frac{(tp+\frac{r^2}{t})^{3/2}}{\sqrt{t}}
		+
		\frac{(\alpha-pt+\frac{(r+1)^2}{1-t})^{3/2}}{\sqrt{1-t}}
		+
		\frac{(\alpha-pt+\frac{(r-1)^2}{1-t})^{3/2}}{\sqrt{1-t}}\right].
	\end{align*}
	with equality if and only if $e_\mu(0,0;-1,1) = e_\mu(0,0;1,1) = \al$. Therefore the optimizer $e_\mu$ must have $q_1 = q_2 = (\alpha-pt)/(1-t)$. To find the optimizer from this point forward, we shall minimize the right-hand side above over $p,r,t$. As a function of $r$, the above expression is minimized when $r=0$. Thus,
	\begin{align*}
		I(p,t):=I(e_\mu)=\tfrac43\left[tp^{3/2}+2\frac{(\alpha-pt+(1-t)^{-1})^{3/2}}{\sqrt{1-t}}\right].
	\end{align*}
	As a function of $p$, one can check that the above expression is minimized when $$p=p^*(t):=\frac{4(\alpha+(1-t)^{-1})}{1+3t}.$$
	Finally, $I(p^*(t),t)$ is minimized at $t^*$ where
	\begin{align*}
		t^*=\begin{cases}
			0, & \alpha\in [-1,0], \\
			(\sqrt{1+1/\alpha}-\sqrt{1/\alpha})^2,  & \alpha>0.
		\end{cases}
	\end{align*}
	\begin{figure}[t]
		\centering
		\captionsetup{width=0.9\linewidth}
		\begin{tikzpicture}[line cap=round,line join=round,>=triangle 45,x=1.5cm,y=3cm]
			\draw (-2,0)--(2,0);
			\draw (-2,1)--(2,1);
			\draw (0,0)--(-1,1);
			\draw (0,0)--(1,1);
			\draw (3,0)--(7,0);
			\draw (3,1)--(7,1);
			\draw (5,0)--(5,0.4);
			\draw (5,0.4)--(4,1);
			\draw (5,0.4)--(6,1);
			\begin{scriptsize}
				\node at (1,1.1) {$(1,1)$};
				\node at (-1,1.1) {$(-1,1)$};
				\node at (4,1.1) {$(1,1)$};
				\node at (6,1.1) {$(-1,1)$};
				\node at (0,-0.1) {$(0,0)$};
				\node at (6.2,0.4) {$\left(0,\left(\sqrt{1+\frac1\alpha}-\sqrt{\frac1\alpha}\right)^2\right)$};
				\node at (5,-0.1) {$(0,0)$};
			\end{scriptsize}
		\end{tikzpicture}
		\caption{Optimizers in Example \ref{e:two-point} for $\alpha\in [-1,0]$ (left) and $\alpha>0$ (right).}
		\label{fig:2path}
	\end{figure}
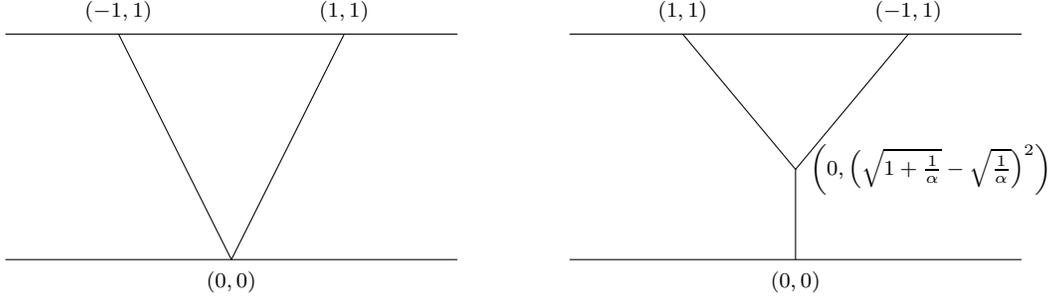
	We have
	\begin{align*}
		I(p^*(t^*),t^*)= \begin{cases}
			\frac83(1+\alpha)^{3/2}, & \alpha\in [-1,0], \\
			\frac43+2\alpha+\frac43(1+\alpha)^{3/2},  & \alpha>0.
		\end{cases}
	\end{align*}
	The optimizing metric thus takes a `V' structure for $\alpha\in [-1,0]$ and a 'Y' structure for $\alpha>0$, see Figure \ref{fig:2path}.
\end{example}

The remaining examples study variational problems with different forms than Proposition \ref{P:stline}. 

Given a finite or countable collection of internally disjoint paths $\{\ga_i : [a_{\ga_i},b_{\ga_i}]\to \R\}_{i\in I} \subset H^1$ with corresponding \textit{weight functions} $\{w_i : [a_{\ga_i},b_{\ga_i}]\to \R\}_{i\in I}$ satisfying $w_i'\ge -(\ga_i')^2$, we define a measure $\mu$ on $\R^2$  by setting its temporal density to be $\rho_{\mu}=w_i'+\ga_i'^2$ on each $\gr\ga_i$ and zero everywhere else. We shall call the corresponding metric $e_{\mu}$ (defined via \eqref{e:emu}) to be the metric obtained by planting paths $\{\ga_i\}_{i\in I}$ with corresponding weight functions $\{w_i\}_{i\in I}$. For such a metric $e_{\mu}$ we have
\begin{align}\label{e:plantrate}
	I(e_{\mu})=\frac43\sum_{i\in I} \int_{a_{\ga_i}}^{b_{\ga_i}}\big(w_i'(t)+(\ga_i'(t))^2\big)^{3/2}dt.
\end{align}

\begin{example}[Weight function large deviations]
	Consider the optimization problem:
	\begin{align*}
		\inf \big\{I(e) :  e(0,0;0,t)=w(t), \mbox{ for }t\in [0,1]\big\},
	\end{align*}
	where $w:[0, 1] \to \R$ is an absolutely continuous function with $w(0)=0$, $w' \ge 0$, and $\int_0^1|w'(t)|^{3/2} dt < \infty$. Then the above infimum is achieved by the metric $e_{\mu_*}$, where $\rho_{\mu_*}(0, t) = w'(t)$ for $t \in [0, 1]$, and $\rho_{\mu_*} = 0$ elsewhere. From the fact that $|\ga|_{e_{\mu_*}} = \mu_*(\gr \ga) + |\ga|_d$, it is easy to check that $e_{\mu_*}(0,0;0,t)=w(t)$ for $t \in [0, 1]$. 
	
	On the other hand, let $e$ be any finite rate metric satisfying $e(0,0;0,t)=w(t)$ for $t\in [0,1]$. %By Theorem \ref{T:D-is-planted}, $e=d_{\Ga, W}$ for some disjoint network $(\Ga, W)$.
	%Let $\mathcal C$ be the dense collection of points in $[0,1]$ where $w$ is differentiable.
	For each $n\ge 1$ and $k\in \{1,2,\ldots,2^n\}$, let $\pi_{k,n}$ be the rightmost geodesic from $(0,0)$ to  $(0, k2^{-n})$ under $e$. Again these exist by Lemma \ref{L:geodesic-space}.  Let $e_n$ be the metric obtained from $\mu$ restricted to $\bigcup_{k=1}^{2^n} \gr\pi_{k,n}$.  Then $\{e_n\}_{n\ge 1}$ is an increasing sequence of metrics bounded above by $e$. In particular, the sequence $e_n$ is precompact and monotone, so $e_n \uparrow e^*$ for some metric $e^*$.
	We have
	$$
	\lim_{n\to\infty} I(e_n)=I(e^*)\le  I(e)
	$$ and $e^*(0,0;0,t)=w(t)$ for $t\in [0,1]$.
	Let $d_n$ be the metric obtained by planting straight lines from $(0,(k-1)2^{-n})$ to $(0,k2^{-n})$ with weight $w_k$ satisfying $w_k' := 2^n[w(k2^{-n})-w((k-1)2^{-n})]$. Then $d_n(0,0;0,k2^{-n})=w(k2^{-n})$ for all $k\in \{1,2\ldots,2^{n}\}$. Using a similar argument as in the proof of Proposition \ref{P:stline}, one can check that $I(d_n) \le I(e_n)$. We have 
	$$
	I(d_n)=\frac{4}{3} 2^{n/2}\sum_{k=1}^{2^n} [w(k2^{-n})-w((k-1)2^{-n})]^{3/2},
	$$
	and so $I(d_n) \to I(e_{\mu_*})$ as $n \to \infty$, and hence $I(e_{\mu_*}) \le I(e)$.
\end{example}

\subsection{Large deviations of the directed geodesic}

In this section, we present a large deviation principle for geodesics in the directed landscape. 
Let $u=(x,s;y,t)\in \Rd$, let $u_\ep=(x/\ep,s;y/\ep,t)$
and let $\gamma_{u_\ep}:[s, t] \to \R$ denote the (a.s.\ unique) $\cL$-geodesic with endpoints $u_\ep$ as an element of the space of continuous functions $C([s, t])$ from $[s, t] \to \R$ with respect to uniform convergence. 
\begin{theorem}\label{T:dg-ldp}
	For every Borel measurable set $A \subset C([s, t])$ we have
	\begin{equation*}   
		\exp((o(1)-\inf_{A^\circ} J_u)\e^{-3}) \le  P( \ga_{u_\e}\in A) \le \exp((o(1)-\inf_{\bar{A}}J_u)\e^{-3}).
	\end{equation*}
	Here $J_u:C([s, t]) \to [0, \infty]$ is a good rate function satisfying $J_u(f) = \infty$ when $f \notin H^1$ and
	\begin{align}\label{e:J-def}
		{J}_{u}(f) := \inf \big\{I(e) :  e \in \mathcal D_u(f)\big\}
	\end{align}
	for $f \in H^1$, where $\mathcal D_u(f) \subset \mathcal E$ is the set of metrics where $f$ is a geodesic with endpoints $u$. 
\end{theorem}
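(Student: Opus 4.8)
The plan is to derive Theorem~\ref{T:dg-ldp} from the metric-level principle of Theorem~\ref{t:main} by combining an exact scaling identity with a contraction-type argument. Applying Theorem~\ref{t:main} with $\e$ replaced by $\e^2$ gives an LDP for $\L_{\e^2}$ at speed $\e^{-3}$ with good rate function $I$; and by the scaling relation $\L_\delta(x,s;y,t)=\delta\,\L(x\delta^{-1/2},s;y\delta^{-1/2},t)$ used to define $\L_\delta$, the rescaled directed geodesic is precisely the (a.s.\ unique) $\L_{\e^2}$-geodesic $T(\L_{\e^2})$ between the endpoints of $u$, where $T$ sends a metric to its unique $u$-geodesic (defined $P$-a.s., since directed landscape geodesics between deterministic endpoints are a.s.\ unique). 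So it remains to push the LDP forward through $T$. Since $T$ is not continuous --- geodesics can jump under metric perturbations --- the contraction principle must be reproved by hand, with a stability property of geodesics in place of continuity.

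The key lemma asserts: \emph{if $e_n\to e$ in $\mathcal E$ with $I(e)<\infty$ and $g_n$ is an $e_n$-geodesic between the endpoints of $u$, then $\{g_n\}$ is precompact in $C([s,t])$ and every subsequential limit is an $e$-geodesic between the endpoints of $u$.} The second half is soft: along a fixed partition $P$ of $[s,t]$, $\sum_P e_n(g_n;g_n)\to\sum_P e(g;g)$ by uniform convergence on bounded sets, so $|g|_e\ge\limsup_n|g_n|_{e_n}=\limsup_n e_n(u)=e(u)$, while $|g|_e\le e(u)$ by the triangle inequality. Applying the lemma with $g_n\equiv f$ shows $\mathcal D_u(f)$ is closed; applying it in general shows that the relaxed sets $\mathcal G_F:=\{e\in\mathcal E:\text{some }e\text{-geodesic between the endpoints of }u\text{ lies in }F\}$ are closed on every sub-level set $\{I\le M\}$ --- which, given the exponential tightness built into Theorem~\ref{t:main}, is all the upper bound needs.

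For the upper bound on a closed $F$, $\{T(\L_{\e^2})\in F\}\subseteq\{\L_{\e^2}\in\mathcal G_F\}$ a.s.; localising to a neighbourhood of $\{I\le M\}$, applying the upper bound of Theorem~\ref{t:main} and the lemma, and letting $M\to\infty$ gives $\limsup_\e\e^3\log P(T(\L_{\e^2})\in F)\le-\inf_{\mathcal G_F}I\le-\inf_F J_u$, the last step because any $e\in\mathcal G_F$ with $I(e)<\infty$ carries an $e$-geodesic $f\in F$, whence $e\in\mathcal D_u(f)$ and $J_u(f)\le I(e)$. For the lower bound on an open $O$, fix $f\in O\cap H^1$ and $e=e_\mu\in\mathcal D_u(f)$ with $I(e_\mu)\le J_u(f)+\delta$; as in the proof of Proposition~\ref{P:stline}, replacing $\rho_\mu$ by $\rho_\mu+c$ along $\gr f$ yields a finite-rate $e'$ with $I(e')\le I(e_\mu)+O(c)$ for which $f$ is the \emph{unique} $u$-geodesic. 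By the key lemma (applied by contradiction), there is a neighbourhood $U$ of $e'$ so that every $e''\in U$ has all its $u$-geodesics inside $O$; hence $\{T(\L_{\e^2})\in O\}\supseteq\{\L_{\e^2}\in U\}$ a.s., and the lower bound of Theorem~\ref{t:main} gives $\liminf_\e\e^3\log P(T(\L_{\e^2})\in O)\ge-I(e')\ge-J_u(f)-O(\delta+c)$; optimising over $f\in O$ and sending $\delta,c\to0$ finishes it.

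Finally, the stated properties of $J_u$: since $\mathcal D_u(f)$ is closed and sub-level sets of $I$ are compact, the infimum defining $J_u(f)$ is attained when finite, and lower semicontinuity and compactness of $\{J_u\le a\}$ follow from those of $I$ together with the precompactness in the lemma; and $J_u(f)=\infty$ for $f\notin H^1$ because every geodesic of a finite-rate metric lies in $H^1$ (structure theory of Section~\ref{sec:top}), so $\mathcal D_u(f)$ then contains no finite-rate metric. The main obstacle is the precompactness claim in the key lemma: geodesics of arbitrary elements of $\mathcal E$ can behave wildly, so one must quantify how closeness of $e_n$ to a metric of rate $\le M$ forces a uniform modulus of continuity and a uniform bound on $\sup|g_n|$. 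This reduces to showing that geodesics of finite-rate metrics are confined to a bounded region and have Dirichlet energy controlled by the rate (hence are equicontinuous), and that this tameness is inherited robustly by geodesics of nearby metrics --- making the implication ``nearly finite rate $\Rightarrow$ nearly tame geodesics'' precise is the technical heart of the argument.
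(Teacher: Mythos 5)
Your outline follows essentially the same route as the paper: reduce via the exact scaling identity to pushing the metric-level LDP through the map sending a metric to its geodesic, using closedness of $S_A:=\{e:\text{some }e\text{-geodesic between the endpoints of }u\text{ lies in }A\}$ for the upper bound, and for the lower bound planting extra mass along $f$ to make it the unique geodesic and then using geodesic stability. Two places where the paper proceeds more directly than your key-lemma formulation. First, goodness of $J_u$: rather than inheriting compactness of $\{J_u\le a\}$ from precompactness of geodesic families, the paper just cites Lemma \ref{l:J-norms} — the bound $\tfrac43(\int|f'|^2)^{3/2}\le J_u(f)$ confines $\{J_u\le a\}$ inside a fixed ball of the homogeneous $H^1$-norm, compact in $C$ with endpoint conditions, with no geodesic stability needed. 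Second, the lower bound: instead of invoking the key lemma by contradiction, the paper observes that the strict inequality $\sup_r\max_{a=\pm\de}\big(e'(p;f(r)+a,r)+e'(f(r)+a,r;q)\big)<e'(u)$ (which holds because $f$ is the \emph{unique} $e'$-geodesic) is an open condition under uniform-on-bounded-sets convergence, so any geodesic of a nearby $e''\in\mathcal G$ is trapped in the $\de$-tube of $f$; this bypasses the precompactness question entirely. The precompactness you flag as ``the technical heart'' is genuinely needed for closedness of $S_A$ — the paper also relies on it there (Lemma \ref{L:length-lemma} only delivers the ``limits are geodesics'' half, not the existence of convergent subsequences) — and it is bridgeable exactly as you sketch: for $e_n\in\mathcal G$ with $e_n\to e\in\mathcal D_m$, the two-sided Dirichlet bound $d\le e\le d+m^{2/3}(t-s)^{1/3}$ on bounded sets together with the geodesic decomposition gives $-4N^2+O(m^{2/3})\ge e_n(u)\to e(u)\ge 0$ whenever the geodesic visits spatial distance $N$, forcing a uniform $\sup$-bound, and a similar computation gives a uniform H\"older$(1/2)$ modulus. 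So your plan is sound; the one shortcut you're missing is the $L^{3/2}$-energy bound of Lemma \ref{l:J-norms}, which gives the rate function's goodness without touching geodesic stability at all.
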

We will use the abbreviations ${J}={J}_{(0,0;0,1)}, \mathcal D(f) = D_{(0,0;0,1)}(f)$. Theorem \ref{T:dg-ldp} is essentially an application of the contraction principle from large deviation theory, but we need to be a bit careful since there is not a natural continuous map taking metrics to a geodesic from $(0,0) \to (0, 1)$. We postpone the proof to Section \ref{s:gamma-LDP}.  Here we first establish a few natural properties of the rate function ${J}$, and then use these to understand $J(f)$ for some specific examples.

\begin{lemma}\label{l:j-facts}
	\begin{enumerate}[label=\arabic*.]
		
		\item The infimum in \eqref{e:J-def} is always achieved by a metric $e \in \mathcal E$ whenever it is finite. Moreover, any metric achieving this infimum is given by planting the single path $f$ with some weight function $w$.
		\item ${J}_{(x, s;y, t)}(f)={J}(Af)$ where $Af:[0, 1] \to \R$ is given by
		$$
		Af(r) = (t-s)^{-1/3}(f(s + (t-s) r) - [x + r (y-x)]).
		$$
		In other words, $Af$ is the function given by transforming the plane under the landscape symmetry mapping $(x, s) \to (0,0), (y, t) \to (0,1)$.
		
		\item If $f(0)=f(1)=0, f \in H^1$ then ${J}(f)$ is the value of the optimization problem: \begin{align*}
			&\text{minimize } \qquad \tfrac{4}{3}\int_0^1 (w'(s)+f'(s)^2)^{3/2}\,ds, \\
			&\text{subject to }\;\; \qquad  w(t)-w(s)\ge-\frac{ (f(t)-f(s))^2}{t-s}, \quad 0\le s<t\le 1.      
		\end{align*}
		
		\item  Assume $f(0)=f(1)=0, f \in H^1$. Think of $f'$ as a random variable defined on $[0,1]$ with Lebesgue measure. Then ${J}(f)$ is the value of the optimization problem
		$$
		\text{minimize } \tfrac{4}{3}\Ex \rho^{3/2}\quad \text{subject to }\;\; \Ex \big[\rho\,\big|\,[s,t]\big] \ge \operatorname{Var}\!\big[f'\,\big|\,[s,t]\big] \quad 0\le s<t\le 1.
		$$
		over all random variables $\rho:[0,1] \to [0, \infty)$. The expectation and variance are conditioned on the event $[s,t]$.
		\item For $a\in \mathbb R$ we have ${J}(af)=|a|^3 {J}(f)$.
		
		\item Consider a finite or countable disjoint collection of subintervals $\{[a_i, b_i), i \in F\}$ of $[0, 1]$. 
		For any path $f$ from $(0,0)$ to $(0, 1)$ we have ${J}(f)\ge \sum_{i \in F} {J}_{p_i;q_i}(f|_{[a_i,b_i]})$ where $p_i=(a_i,f(a_i)),q_i=(b_i,f(b_i))$.
	\end{enumerate}
\end{lemma}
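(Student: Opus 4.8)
I would establish the six items essentially in the stated order: item~1 carries the structural content, and items~2--6 follow from it by soft arguments. The bulk of the work is in items~1 and~3.

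\textbf{Item 1.} First I would check that $\mathcal D_u(f)$ is closed in $\mathcal E$. If $e_n\to e$ uniformly on bounded sets and each $e_n$ has $f$ as a geodesic with endpoints $u$, then for every finite partition the inequality $|f|_{e_n}\le\sum_i e_n(f(r_{i-1}),r_{i-1};f(r_i),r_i)$ passes to the limit, so $\limsup_n|f|_{e_n}\le|f|_e$; since also $|f|_{e_n}=e_n(u)\to e(u)$ while $|f|_e\le e(u)$ always holds, we get $|f|_e=e(u)$, i.e.\ $e\in\mathcal D_u(f)$. When $J_u(f)<\infty$, a minimizing sequence eventually lies in a sublevel set of $I$, which is compact by Theorem~\ref{t:main}; a subsequential limit lies in $\mathcal D_u(f)$ and has $I$-value at most $J_u(f)$ by lower semicontinuity, so the infimum in \eqref{e:J-def} is attained. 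A minimizer is finite rate, hence equals $e_\mu$ for a planted network measure $\mu$; arguing as in Proposition~\ref{P:stline}, replacing $\mu$ by $\mu|_{\gr f}$ leaves the length $\mu(\gr f)+|f|_d$ of $f$ unchanged and can only decrease $e_\mu$, so $f$ remains a geodesic with endpoints $u$, while $\mathcal K$ strictly decreases unless $\mu$ was already supported on $\gr f$. A planted network measure supported on $\gr f$ is exactly $f$ planted with the weight function $w(\tau):=\int_s^\tau(\rho_\mu(f(r),r)-f'(r)^2)\,dr$ (legitimate since $\rho_\mu\ge0$ gives $w'\ge-f'^2$ and $f\in H^1$ gives $w'\in L^1$), which is the second assertion of item~1.

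\textbf{Item 2.} The time-shift, spatial-shear and $1{:}2{:}3$ scaling symmetries of the directed landscape each induce a rate-preserving homeomorphism of $\mathcal E$ that maps geodesics between a prescribed pair of points to geodesics between the image pair; for the shear and scaling this can be read off from $I(e_\mu)=\tfrac43\mathcal K(\mu)$ by a change of variables, or probabilistically from the fact that the law of $\L_\e$ is invariant under the corresponding transformation, whence so is its rate. Composing the three to send $(x,s)\mapsto(0,0)$ and $(y,t)\mapsto(0,1)$ transforms $f$ into $Af$ and carries $\mathcal D_u(f)$ bijectively onto $\mathcal D(Af)$; taking infima of $I$ over these sets gives $J_u(f)=J(Af)$.

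\textbf{Items 3--5.} For item~3, when $J(f)<\infty$ the minimizer plants $f$ with a weight $w$ (item~1), and by \eqref{e:plantrate} its rate is $\tfrac43\int_0^1(w'+f'^2)^{3/2}$. It remains to determine when such an $e_\mu$ has $f$ as a geodesic from $(0,0)$ to $(0,1)$. Using the length representation $|\gamma|_{e_\mu}=\mu(\gr\gamma)+|\gamma|_d$, and the fact (a routine consequence of the triangle inequality) that the restriction of a geodesic is a geodesic, this is equivalent to $e_\mu(f(s),s;f(t),t)=|f|_{[s,t]}|_{e_\mu}=w(t)-w(s)$ for all $0\le s<t\le1$. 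Decomposing any competing path $\eta$ from $(f(s),s)$ to $(f(t),t)$ into its excursions off $\gr f$, each excursion $[\sigma,\tau]$ contributes at most $-(f(\tau)-f(\sigma))^2/(\tau-\sigma)$ to $\mu(\gr\eta)+|\eta|_d$ (Cauchy--Schwarz, together with $\mu$ being concentrated on $\gr f$), so the displayed equality holds for every $s<t$ exactly when $w(t)-w(s)\ge-(f(t)-f(s))^2/(t-s)$ for all $s<t$, with the straight-line path the extremal competitor --- this is item~3. Item~4 is the change of variable $\rho:=w'+f'^2$, which turns the objective into $\tfrac43\Ex\rho^{3/2}$ and, via $\int_s^t f'^2-(f(t)-f(s))^2/(t-s)=(t-s)\operatorname{Var}[f'\mid[s,t]]$, turns the constraint into $\Ex[\rho\mid[s,t]]\ge\operatorname{Var}[f'\mid[s,t]]$; the correspondence $w\leftrightarrow\rho$ is a bijection of the feasible sets. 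Item~5 follows from item~4 by the rescaling $\rho\mapsto a^2\rho$ (equivalently $w\mapsto a^2w$), which multiplies the objective by $|a|^3$ and leaves the constraint for $g=af$ unchanged; the case $a=0$ is $J(0)=I(d)=0$.

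\textbf{Item 6 and the main obstacle.} If $J(f)=\infty$ there is nothing to prove, so assume it is finite, pick $e=e_\mu\in\mathcal D(f)$ attaining $J(f)$ (item~1, with $\mu$ supported on $\gr f$), and set $\mu_i:=\mu|_{\gr(f|_{[a_i,b_i]})}$. Since $f|_{[a_i,b_i]}$ is an $e_\mu$-geodesic between $p_i$ and $q_i$ and $\mu_i$ agrees with $\mu$ on $\gr(f|_{[a_i,b_i]})$, the same length-representation argument as in item~1 shows $f|_{[a_i,b_i]}$ is an $e_{\mu_i}$-geodesic between $p_i$ and $q_i$, so $J_{p_i;q_i}(f|_{[a_i,b_i]})\le I(e_{\mu_i})=\tfrac43\mathcal K(\mu_i)$. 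As $\rho_{\mu_i}$ coincides with $\rho_\mu$ along $\gr(f|_{(a_i,b_i)})$ and the intervals $[a_i,b_i)$ are disjoint, $\sum_{i\in F}\mathcal K(\mu_i)=\int_{\bigcup_i\gr(f|_{[a_i,b_i]})}\sqrt{\rho_\mu}\,d\mu\le\mathcal K(\mu)$, and summing the previous inequality gives item~6. I expect the main obstacle to be item~1 --- not so much the existence of a minimizer as the rigidity statement that it plants a single path --- together with the verification in item~3 that the admissible weight functions are exactly those with $w(t)-w(s)\ge-(f(t)-f(s))^2/(t-s)$; both rely on the structure of finite-rate metrics and the length representation from Section~\ref{SS:rate-function-intro} and Proposition~\ref{P:stline}. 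Given these, items~2, 4, 5 and~6 should be routine.
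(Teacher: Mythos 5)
Your proof is correct and follows essentially the same route as the paper's: compactness of sublevel sets and lower semicontinuity of $I$ for existence in item 1, strict monotonicity (equivalently, strict decrease of $\mathcal K$ under restriction to $\gr f$) for the rigidity claim, symmetries for item 2, the planting formula and the geodesic condition for items 3--5, and the disjointness of subintervals for item 6. You provide more detail than the paper in a few places — in particular the excursion decomposition verifying that the displacement constraint is exactly the geodesic condition in item 3 (which the paper dispatches with ``Claim 3 spells out this condition''), and you derive item 6 directly from the measure restriction rather than from the relaxed-constraint reformulation of item 3, but these are minor variations on the same ideas.
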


\begin{proof}
	For part $1$, let $e_n \in \mathcal D(f)$ be a sequence of metrics with $I(e_n) \downarrow {J}(f) < \infty$. Then since the sub-level sets of $I$ are compact, all the metrics $e_n$ are contained in a common compact set and so there exists a subsequential limit $e$. Since $I$ is lower semicontinuous, $I(e) \le {J}(f)$. From the definition of path length, it is easy to see that $f$ must also be a geodesic in $e$ and so $e \in \mathcal D(f)$ and hence $I(e)$ achieves the infimum \eqref{e:J-def}. Since $e$ is a finite rate metric, $e=e_{\mu}$ for some planted network measure $\mu$. Consider $\nu=\mu|_{\gr f}$, and observe that $e_{\nu} \in D(f)$. Therefore $I(e_{\nu}) = I(e)$ and so $e_{\nu} = e$ since $I$ is strictly monotone. Part $2$ follows from the symmetries of $\cL$ (see Lemma \ref{L:invariance}).
	
	Next, by part $1$ and the formula from \eqref{e:plantrate} it is enough to minimize
	$$
	\frac43\int_0^1(w'(t)+f'(t)^2)^{3/2}dt$$
	over all absolutely continuous weight functions with $w'\ge -f'^2$ that make $f$ a geodesic in $e$. Claim $3$ spells out this condition.  Note that $w'\ge -f'^2$ is implied by the constraint there. Part $4$ is just a reformulation of $3$ with $\rho=w'+f'^2$, and part $5$ follows from part $4$, since if $\rho$ solves the optimization problem in $4$ for $f$, then for any $a \in \R$, $a^2 \rho$ solves the optimization problem for $a f$.

	The inequality in $6$ follows by $3$. Indeed, the right hand side of the inequality can be written as the optimization problem
	$$
	\text{minimize } \sum_{i\in F} \tfrac{4}{3}\int_{a_i}^{b_i} (w'(s)+f'(s)^2)^{3/2}\,ds, \quad \text{subject to }\;\; w(t)-w(s)\ge-\frac{ (f(t)-f(s))^2}{t-s},
	$$
	where the constraint is only imposed for pairs $s, t$ with $a_i \le s < t \le b_i$ for some $i \in F$.
\end{proof}
\begin{lemma}[$L^{3/2}$ and $L^3$-norm bounds]\label{l:J-norms}
	We have
	\begin{equation}
		\label{E:lower-upper}
		\tfrac43 \left(\int_0^1 |f'(t)|^{2}dt\right)^{3/2}\le {J}(f)\le \tfrac43\int_0^1|f'|^3 dt.
	\end{equation}
	In particular ${J}(f)=\tfrac43\int_0^1|f'|^3 dt$ whenever $|f'|$ is constant almost everywhere.
	
	If we can find a finite or countable disjoint collection of sets $[a_i, b_i), i \in F$, whose union $\bigcup_{i \in F} [a_i, b_i)$, is a subset of $[0, 1]$ of Lebesgue measure $1$, such that $f(a_i) = f(b_i) = 0$ for all $i$, and such that
	$|f'|$ is constant Lebesgue-a.e.\ on each of the intervals $[a_i, b_i]$, then the upper bound in \eqref{E:lower-upper} is an equality.
	
	If the function $f'$, viewed as a random variable defined on $[0, 1]$ with Lebesgue measure satisfies the conditional variance bound
	$$
	\Ex  |f'|^2 = \operatorname{Var} (f') \ge \operatorname{Var}\!\big[f'\,\big|\,[s,t]\big], \quad 0\le s<t\le 1,
	$$
	then the lower bound in \eqref{E:lower-upper} is an equality.
\end{lemma}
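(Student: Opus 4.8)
The plan is to derive all three parts of Lemma \ref{l:J-norms} from the variational characterization of $J$ in parts 3--4 of Lemma \ref{l:j-facts}, using Jensen's inequality (in the appropriate directions) for the two bounds and carefully tracking equality cases.

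\textbf{Lower bound.} By part 4 of Lemma \ref{l:j-facts}, $J(f)$ is the infimum of $\tfrac43 \Ex \rho^{3/2}$ over nonnegative $\rho$ on $[0,1]$ satisfying $\Ex[\rho \mid [s,t]] \ge \operatorname{Var}[f' \mid [s,t]]$ for all $0 \le s < t \le 1$. Taking $s = 0, t = 1$ in this constraint gives $\Ex \rho \ge \operatorname{Var}(f') = \Ex|f'|^2$ (using $f(0) = f(1) = 0$, so $\Ex f' = 0$). Then by Jensen's inequality applied to the convex function $x \mapsto x^{3/2}$,
\begin{align*}
	\tfrac43 \Ex \rho^{3/2} \ge \tfrac43 (\Ex \rho)^{3/2} \ge \tfrac43 \left(\Ex |f'|^2\right)^{3/2} = \tfrac43 \left(\int_0^1 |f'(t)|^2\,dt\right)^{3/2},
\end{align*}
which gives the lower bound in \eqref{E:lower-upper} after taking the infimum over $\rho$.

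\textbf{Upper bound.} By part 3 of Lemma \ref{l:j-facts}, it suffices to exhibit an admissible weight function $w$ with $\tfrac43 \int_0^1 (w'(t) + f'(t)^2)^{3/2}\,dt = \tfrac43\int_0^1 |f'|^3\,dt$, i.e.\ the choice $w'(t) = |f'(t)|^2 - f'(t)^2 = 0$, so $w \equiv 0$. The only thing to verify is admissibility: $w(t) - w(s) = 0 \ge -\tfrac{(f(t)-f(s))^2}{t-s}$, which is immediate since the right side is nonpositive. Hence $w \equiv 0 \in \mathcal D(f)$ in the sense of part 3, giving $J(f) \le \tfrac43\int_0^1 |f'|^3\,dt$. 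When $|f'|$ is constant a.e., say $|f'| \equiv c$, the lower bound reads $\tfrac43 c^3 \le J(f)$ and the upper bound reads $J(f) \le \tfrac43 c^3$, so equality holds throughout.

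\textbf{Equality cases.} For the upper-bound equality case, apply part 6 of Lemma \ref{l:j-facts} to the collection $\{[a_i, b_i)\}_{i \in F}$: since the union has full measure, $J(f) \ge \sum_{i \in F} J_{p_i; q_i}(f|_{[a_i, b_i]})$ with $p_i = (a_i, f(a_i)), q_i = (b_i, f(b_i))$; by part 2 (the affine-reduction identity), since $f(a_i) = f(b_i) = 0$ the shift in $Af$ is zero and $J_{p_i;q_i}(f|_{[a_i,b_i]}) = (b_i - a_i) J(g_i)$ for the rescaled path $g_i$ on $[0,1]$, which has $|g_i'|$ constant a.e.\ (since $|f'|$ is) — more directly, one applies the already-established fact that $J$ equals $\tfrac43 \int |(\cdot)'|^3$ when the derivative's absolute value is constant, on each subinterval. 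Summing gives $J(f) \ge \sum_i \tfrac43\int_{a_i}^{b_i}|f'|^3\,dt = \tfrac43 \int_0^1 |f'|^3\,dt$, which combined with the upper bound forces equality. For the lower-bound equality case: the hypothesis $\Ex|f'|^2 \ge \operatorname{Var}[f' \mid [s,t]]$ for all $s < t$ says exactly that the constant function $\rho \equiv \Ex|f'|^2$ satisfies all the constraints in the part-4 optimization problem (for the pair $0,1$ the constraint $\Ex \rho \ge \operatorname{Var}(f')$ is an equality). Therefore $\rho \equiv \Ex|f'|^2$ is admissible, so $J(f) \le \tfrac43 (\Ex|f'|^2)^{3/2} = \tfrac43(\int_0^1 |f'|^2)^{3/2}$, which matches the lower bound.

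\textbf{Main obstacle.} The calculations here are all routine Jensen-type arguments; the only delicate point is the upper-bound equality case, where one must correctly invoke the subadditivity from part 6 together with the affine-invariance from part 2 to reduce to a union of constant-speed pieces — in particular making sure the endpoint values $f(a_i) = f(b_i) = 0$ are what allows the reduction to go through cleanly, and handling the (at most countable) indexing set $F$ without measure-theoretic slippage.
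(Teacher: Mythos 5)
Your proof is correct and follows essentially the same route as the paper: $w \equiv 0$ for the upper bound, Jensen together with the $[s,t]=[0,1]$ constraint (equivalently, $w(1)\ge 0$) for the lower bound, Lemma \ref{l:j-facts}.6 plus the affine reduction for the second claim, and the constant $\rho \equiv \Ex|f'|^2$ for the third. The only place to be careful is your intermediate formula $J_{p_i;q_i}(f|_{[a_i,b_i]}) = (b_i-a_i)J(g_i)$: this is correct if $g_i(r) = (b_i-a_i)^{-1} f(a_i+(b_i-a_i)r)$, but it does not come from part 2 of Lemma \ref{l:j-facts} alone — it requires combining the affine reduction with the homogeneity $J(af)=|a|^3 J(f)$ from part 5 — and, as you rightly observe at the end, the cleaner route is simply to verify directly that $J_{(0,a_i;0,b_i)}(f|_{[a_i,b_i]}) = \tfrac43\int_{a_i}^{b_i}|f'|^3$ via the constant-$|f'|$ case after the affine change of variables.
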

\begin{proof}
	Taking $w=0$ in Lemma \ref{l:j-facts}.3 clearly satisfies the constraint and gives the present upper bound.
	
	The lower bound follows from Jensen's inequality and the bound $w(1) \ge d(0,0; 0, 1) = 0$:
	\begin{equation}
		\label{E:J-lower}
		\int_0^1(w'(t)+f'(t)^2)^{3/2}dt \ge \left(\int_0^1 w'(t)+f'(t)^2\,dt\right)^{3/2} \ge \left(\int_0^1 f'(t)^2dt\right)^{3/2}.
	\end{equation}
	In the case when $|f'|$ is constant, these two bounds agree, and we have identified ${J}(f)$.
	For the second claim, we have
	$$
	{J}(f) \ge \sum_{i\in F} {J}_{(0, a_i), (0, b_i)}(f|_{[a_i, b_i]}) = \sum_{i\in F} \frac{4}{3} \int_{a_i}^{b_i} |f'(t)|^3 dt = \frac43\int_0^1|f'|^3 dt,
	$$
	where the first bound uses Lemma \ref{l:j-facts}.6, and the first equality uses Lemma \ref{l:j-facts}.2 and the constant $|f'|$ case of the present lemma.
	For the final claim, if the conditional variance bound holds, then turning to Lemma \ref{l:j-facts}.4, we can set $\rho \equiv \operatorname{Var}(f') = \Ex |f'|^2$ to solve the optimization problem. The quantity $\tfrac{4}{3} \Ex  \rho^{3/2}$ equals the lower bound in \eqref{E:lower-upper}.
\end{proof}

The next lemma computes the rate function for a piecewise linear $f$ with two pieces. The result is somewhat surprising. 
\begin{lemma}\label{l:twopiece}
	Let $f_a(0)=f_a(1)=0$,  $f_a(a)=1$, and let $f_a$ be linear on $[0,a]$ and $[a,1]$.
	Then $J(f_a)=J(f_{1-a})$, and for $a\in [0,1/2]$ we have
	$J(f_a)=\frac{3-4 a^2}{6 (1-a)^3 a^2}$.
	\\In particular, $J(f_a)$ is not thrice differentiable  at  $a=1/2$.
\end{lemma}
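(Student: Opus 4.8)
The plan is to work from the variational description of $J$ in Lemma~\ref{l:j-facts}.4: writing $\rho$ for the nonnegative function $w'+f'^2$ on $[0,1]$, the value $J(f_a)$ equals $\tfrac43$ times the infimum of $\int_0^1\rho^{3/2}$ over measurable $\rho\ge 0$ subject to $\int_s^t\rho\ge (t-s)\operatorname{Var}[f_a'\mid[s,t]]$ for all $0\le s<t\le 1$. Since $f_a'$ is $1/a$ on $[0,a]$ and $-1/(1-a)$ on $[a,1]$, this conditional variance vanishes unless $s<a<t$, and in that case a short computation gives $(t-s)\operatorname{Var}[f_a'\mid[s,t]]=V(s,t):=\frac{(a-s)(t-a)}{(t-s)\,a^2(1-a)^2}$; so the only nontrivial constraints are $\int_s^t\rho\ge V(s,t)$ for $0\le s\le a\le t\le 1$. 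The identity $J(f_a)=J(f_{1-a})$ is immediate here: in the problem of Lemma~\ref{l:j-facts}.3 the substitution $s\mapsto 1-s$, $w\mapsto w(1)-w(1-\cdot)$ carries the optimization for $f_a$ bijectively onto the one for $f_{1-a}$ with the same objective value (it also follows from the time-reversal invariance of $\cL$ together with Lemma~\ref{l:j-facts}.5). So it is enough to compute $J(f_a)$ for $a\in(0,1/2]$, where $0<2a\le 1$; the endpoint $a=0$ is degenerate ($f_0$ has a jump, and $\phi(0)=\infty$, where $\phi(a):=\frac{3-4a^2}{6a^2(1-a)^3}$).

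For the upper bound I will exhibit the candidate minimizer
\[
\rho^*(x)=\frac{1}{4a^2(1-a)^2}\ \text{ for }0\le x\le 2a,\qquad \rho^*(x)=\frac{1}{x^2(1-a)^2}\ \text{ for }2a\le x\le 1,
\]
which is continuous (the two formulas agree at $x=2a$) and bounded by $\tfrac{1}{4a^2(1-a)^2}$. Feasibility is verified in two cases. If $a\le t\le 2a$ then $[s,t]\subset[0,2a]$, and with $p=a-s$, $q=t-a$ the inequality $\int_s^t\rho^*\ge V(s,t)$ becomes $(p+q)^2\ge 4pq$, i.e.\ $(p-q)^2\ge 0$. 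If $t>2a$, one computes $\int_0^t\rho^*=\frac{t-a}{at(1-a)^2}$, whence $\int_s^t\rho^*-V(s,t)=\frac{1}{a^2(1-a)^2}\bigl(\frac{s(t-a)^2}{t(t-s)}-\frac s4\bigr)$, which is $\ge 0$ because $4(t-a)^2>t^2\ge t(t-s)$ (the first inequality uses $t>2a$, the second $s\ge 0$). Finally $\int_0^1(\rho^*)^{3/2}=2a\bigl(\tfrac{1}{4a^2(1-a)^2}\bigr)^{3/2}+\tfrac{1}{(1-a)^3}\int_{2a}^1 x^{-3}\,dx=\frac{3-4a^2}{8a^2(1-a)^3}$, so $J(f_a)\le\frac{3-4a^2}{6a^2(1-a)^3}$.

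For the matching lower bound I will use convexity of $r\mapsto r^{3/2}$ together with a dual certificate. Let $\lambda^*$ be the positive measure $\tfrac{3}{2(1-a)}\,\delta_{(0,1)}+\tfrac{3}{2t^2(1-a)}\,\ind_{[2a,1]}(t)\,dt$, supported on constraints of the form $(0,t)$, and put $\Lambda^*(x)=\int\ind\{s\le x\le t\}\,d\lambda^*(s,t)$. One checks that $\Lambda^*(x)=\tfrac32\sqrt{\rho^*(x)}$ for every $x$, and that $\rho^*$ saturates each constraint in $\operatorname{supp}\lambda^*$, i.e.\ $\int_0^t\rho^*=V(0,t)$ for $t\in[2a,1]$. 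Then for any feasible $\rho$, the tangent-line bound $\rho^{3/2}\ge(\rho^*)^{3/2}+\tfrac32\sqrt{\rho^*}\,(\rho-\rho^*)$, integrated over $[0,1]$, gives
\[
\int_0^1\rho^{3/2}\ \ge\ \int_0^1(\rho^*)^{3/2}+\int_0^1\Lambda^*(\rho-\rho^*)\ =\ \int_0^1(\rho^*)^{3/2}+\int\Bigl(\int_s^t(\rho-\rho^*)\Bigr)d\lambda^*(s,t)\ \ge\ \int_0^1(\rho^*)^{3/2},
\]
where the middle equality is Fubini and the last step uses $\int_s^t\rho\ge V(s,t)=\int_s^t\rho^*$ on $\operatorname{supp}\lambda^*$. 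This proves $J(f_a)=\frac{3-4a^2}{6a^2(1-a)^3}$ for $a\in(0,1/2]$.

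For the differentiability claim, $J(f_a)=\phi(a)$ for $a\le 1/2$ and $J(f_a)=\phi(1-a)$ for $a\ge 1/2$. A Taylor expansion of $\phi$ at $1/2$ (substitute $a=1/2+h$ and expand numerator and denominator) gives $\phi(1/2+h)=\tfrac{32}{3}+64h^2-\tfrac{128}{3}h^3+O(h^4)$; so $\phi'(1/2)=0$ and $\phi''(1/2)=128$, and hence $J(f_\cdot)$ is $C^2$ at $a=1/2$. However, the third derivative of $J(f_\cdot)$ at $a=1/2$ equals $\phi'''(1/2)=-256$ from the left and $-\phi'''(1/2)=256$ from the right, so $J(f_\cdot)$ is not thrice differentiable at $a=1/2$. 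The main obstacle throughout is locating $\rho^*$: the natural piecewise-constant guesses (a constant $\rho$; $\rho$ constant on $[0,a]$ and on $[a,1]$; $\rho$ supported on a window symmetric about the kink) all yield strictly larger values, and the correct profile — flat on $[0,2a]$ and proportional to $x^{-2}$ on the longer arm $[2a,1]$ — emerges only from the stationarity condition once the active constraints are identified as the left-anchored family $\{(0,t):2a\le t\le 1\}$; with $\rho^*$ and $\lambda^*$ in hand, the rest is routine.
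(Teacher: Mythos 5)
Your proof is correct, and it reaches the same optimizer as the paper (writing $b=(1/a+1/(1-a))^2$, the paper's $\lambda(t)=b\bigl(\tfrac14\wedge(a^2/t^2)\bigr)$ is exactly your $\rho^*$), but your argument for optimality is genuinely different. The paper considers a relaxed problem whose constraints are only the left-anchored ones $\int_0^t(\rho-\lambda)\ge 0$, proves existence of a minimizer via weak $L^1$ compactness and Fatou, and then rules out any $\rho\neq\lambda$ by an explicit local perturbation (transferring mass between two sets $A,B$ along the excursion of $f_\rho$ and computing a one-sided derivative of the objective). You instead exhibit a nonnegative dual measure $\lambda^*$ supported on the active constraints, check the complementary-slackness conditions $\Lambda^*=\tfrac32\sqrt{\rho^*}$ and $\int_0^t\rho^*=V(0,t)$ on $[2a,1]$, and conclude the lower bound from the convexity tangent-line inequality plus Fubini. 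Your route avoids both the compactness/existence step and the delicate perturbation calculus, at the modest cost of having to guess the dual certificate; the two feasibility verifications (which the paper calls ``a simple computation'') you spell out in full and they check out. Both approaches are sound, and the values $\int_0^1(\rho^*)^{3/2}=\tfrac{3-4a^2}{8a^2(1-a)^3}$ and hence $J(f_a)$ agree, as does the final $C^2$-but-not-$C^3$ analysis at $a=1/2$ (one needs $\phi'(1/2)=0$ and $\phi'''(1/2)\neq 0$, both of which hold).
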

\begin{proof}
	Let $a \le 1/2$ and set $b=(1/a+1/(1-a))^2$, the squared difference of slopes of the two pieces of $f$. 
	Applying the optimality condition of Lemma \ref{l:j-facts}.4  to intervals of the form $[0,t], t \in [2a, 1]$, we get
	\begin{equation}\label{e:looser-cond}
		\rho\ge 0, \qquad 
		\int_0^t (\rho-\lambda)(s)ds\ge 0, \qquad \lambda(t)= b(\tfrac{1}4 \wedge (a^2/t^2)).
	\end{equation}
	A simple computation shows that   $\rho=\lambda$ satisfies all the constraints of the optimization problem of Lemma \ref{l:j-facts}.4.
	Next we will solve the problem of minimizing $\tfrac43\int_0^1 \rho^{3/2}$ subject to the less restrictive conditions \eqref{e:looser-cond}.
	
	To show that a minimizer exists, let $\rho_n$
	be a sequence satisfying \eqref{e:looser-cond} so that $\int_0^1 \rho_n^{3/2}$ converges to the infimum $q$. Since $\rho_n$ has bounded $L^{3/2}$-norm, and hence bounded $L^{1}$-norm, it has an $L^1$-weakly convergent subsequence $\rho_n\to\rho$ with $\int_0^1 \rho^{3/2}\le q$ by Fatou's lemma. Weak convergence implies that \eqref{e:looser-cond} holds for $\rho$.
	
	We claim that $\rho\neq \lambda$ cannot be a minimizer. Consider $\rho$ satisfying \eqref{e:looser-cond} and let $f_\rho(t) = \int_0^t (\rho - \lambda)(s) ds$. Suppose that $f_\rho(s) = \de > 0$ for some $s \in (0, 1]$. Let 
	$$
	s_0 = \inf \{r \le s : f_\rho(r) \ge \de/2\}, \qquad s_1 = \sup \{r \ge s : f_\rho(r) \ge \de/2\}.
	$$
	Then there is a set $A \subset [s_0, s]$ of positive Lebesgue measure on which $\rho \ge \lambda + \de/4$. If $s_1 = 1$, then $\rho^* = \rho - (\de/4) \mathbf{1}_A$ still satisfies the conditions \eqref{e:looser-cond} since 
	$$
	f_{\rho^*}(u) \ge f_\rho(u) - \tfrac{\de}{4} \mathbf{1}(u \ge s_0) \ge 0,
	$$ and moreover $\int_0^1 (\rho^*)^{3/2} < \int_0^1 \rho^{3/2}$. Hence $\rho$ cannot be a minimizer. If $s_1 < 1$, then we can find another set $B \subset [s, s_1]$ of positive Lebesgue measure on which $\rho \le \lambda - \de/4$. By possibly reducing the size of either $A$ or $B$, we may assume that they have the same positive Lebesgue measure $|A| = |B|$. Let $\eta=\mathbf{1}_B-\mathbf{1}_A$. Then for all $r\in [0,\delta/4]$, $\rho+r\eta$ satisfies \eqref{e:looser-cond} since $|f_\rho(u) - f_{\rho+r\eta}(u)| \le r \mathbf{1}(u \in [s_0, s_1])$.
	On the other hand,
	$$
	\partial_r \int_0^1(\rho+r \eta)^{3/2}\Big\vert_{r=0}=\tfrac 32\int \eta\rho^{1/2}\le \tfrac32|A|\left(-(\lambda(s)+\delta)^{1/2}+(\lambda(s)-\delta)^{1/2}\right)<0,
	$$
	so for some small $r>0$, $\rho+r\eta$ is a better candidate than $\rho$. Thus $\rho=\lambda$ is the unique minimizer of the less restrictive problem \eqref{e:looser-cond}, and $\lambda$ satisfies all conditions of the stricter Lemma \ref{l:j-facts}.4. We compute the $L^{3/2}$-norm of $\lambda$ to get the claim.
\end{proof}
\begin{corollary}\label{c:geodesic-sup}
	Let $\pi$ be the $\cL$-geodesic from $(0,0)$ to $(0,1)$. Then as $a\to\infty$ $$P(\pi(1/2)\ge a)=e^{-\frac{32}3a^3+o(a^3)}, \qquad P(\sup_{[0,1]}|\pi|\ge a)=e^{-\frac{32}3a^3+o(a^3)}.$$
\end{corollary}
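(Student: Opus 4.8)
The plan is to read off both tail asymptotics from the geodesic large deviation principle of Theorem~\ref{T:dg-ldp}, specialized to $u = (0,0;0,1)$. For this choice of endpoints the rescaled geodesic $\gamma_{u_\e}$ has the law of $\e\pi$ (a consequence of the scaling symmetry of $\cL$, Lemma~\ref{L:invariance}), so, substituting $\e = 1/a$, Theorem~\ref{T:dg-ldp} states that for every Borel $A \subseteq C([0,1])$ the probability $P(a^{-1}\pi \in A)$ lies between $\exp((o(1) - \inf_{A^\circ}J)a^3)$ and $\exp((o(1) - \inf_{\bar A}J)a^3)$ as $a \to \infty$. Since $\{\pi(1/2) \ge a\} = \{a^{-1}\pi \in A\}$ for $A = \{g : g(1/2) \ge 1\}$, and $\{\sup_{[0,1]}|\pi| \ge a\} = \{a^{-1}\pi \in A\}$ for $A = \{g : \sup_{[0,1]}|g| \ge 1\}$, everything reduces to computing $\inf_A J$ for these two (closed) sets, together with the infimum of $J$ over the interior of each; I would show that each of these equals $\tfrac{32}{3}$.

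For the lower bound $\inf_A J \ge \tfrac{32}{3}$, which I would establish for both sets at once, I would feed Cauchy--Schwarz into the $L^2$-bound $J(f) \ge \tfrac43\big(\int_0^1 |f'|^2\big)^{3/2}$ of Lemma~\ref{l:J-norms}. Every $f$ in either set satisfies $f(0) = f(1) = 0$ and $|f(t^*)| \ge 1$ for some $t^* \in (0,1)$; after replacing $f$ by $-f$, which does not change $J$ by Lemma~\ref{l:j-facts}.5, we may assume $f(t^*) \ge 1$, and then $\int_0^{t^*}|f'|^2 \ge 1/t^*$ and $\int_{t^*}^1 |f'|^2 \ge 1/(1-t^*)$, so that $\int_0^1 |f'|^2 \ge \tfrac1{t^*} + \tfrac1{1-t^*} \ge 4$ and $J(f) \ge \tfrac43 \cdot 4^{3/2} = \tfrac{32}{3}$. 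For the matching upper bound I would use the tent function $f_{1/2}$, linear on $[0,1/2]$ and on $[1/2,1]$ with $f_{1/2}(0)=f_{1/2}(1)=0$ and $f_{1/2}(1/2)=1$: it lies in both sets and has $|f_{1/2}'| \equiv 2$, so $J(f_{1/2}) = \tfrac43 \int_0^1 |f_{1/2}'|^3 = \tfrac{32}{3}$ by the constant-slope case of Lemma~\ref{l:J-norms} (equivalently, by setting $a = 1/2$ in Lemma~\ref{l:twopiece}). Finally $(1+\de)f_{1/2}$ lies in the interior $\{g(1/2) > 1\}$, resp.\ $\{\sup_{[0,1]}|g| > 1\}$, with $J((1+\de)f_{1/2}) = (1+\de)^3 \tfrac{32}{3} \downarrow \tfrac{32}{3}$ as $\de \downarrow 0$ by the homogeneity $J(af) = |a|^3 J(f)$ of Lemma~\ref{l:j-facts}.5, so the infimum over each interior is $\tfrac{32}{3}$ as well.

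With these infima in hand, the large deviation estimate applied to $A = \{g : g(1/2) \ge 1\}$ yields $P(\pi(1/2) \ge a) = e^{-\frac{32}{3}a^3 + o(a^3)}$, the first assertion. For the second, the lower bound $P(\sup_{[0,1]}|\pi| \ge a) \ge P(\pi(1/2) \ge a) = e^{-\frac{32}{3}a^3 + o(a^3)}$ follows immediately from the inclusion of events, while the matching upper bound is the estimate applied to the closed set $A = \{g : \sup_{[0,1]}|g| \ge 1\}$, whose $J$-infimum has just been computed.

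I expect the only genuine subtlety to be the reduction in the first paragraph: one must check that Theorem~\ref{T:dg-ldp} at $u=(0,0;0,1)$ really encodes the upper tail of the single fixed geodesic $\pi$ --- i.e.\ that $\gamma_{u_\e} \eqd \e\pi$ and that substituting $\e = 1/a$ is legitimate --- and that the two sets above are closed with the interiors used here, so that the open and closed $J$-infima coincide. What remains is the short variational computation isolating $\tfrac{32}{3} = \tfrac43 \cdot 4^{3/2}$.
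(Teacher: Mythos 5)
Your proof is correct and follows essentially the same approach as the paper: the $L^{3/2}$-norm lower bound from Lemma~\ref{l:J-norms} together with Jensen/Cauchy--Schwarz gives $J\ge \tfrac{32}{3}$ on both sets, and the tent function $f_{1/2}$ (with $J(f_{1/2})=\tfrac{32}{3}$ via Lemma~\ref{l:twopiece} or the constant-slope case of Lemma~\ref{l:J-norms}) plus homogeneity settles the matching infimum over the interiors. The subtlety you flag at the end is a real one --- for $u=(0,0;0,1)$ the paper's definition literally gives $\ga_{u_\e}=\pi$ identically, so the statement of Theorem~\ref{T:dg-ldp} should be read as an LDP for $\e\ga_{u_\e}$, consistent with its proof where \eqref{E:Pr-translation} works with $\Pr(\e\ga\in A)$; your reading is the intended one.
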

\begin{proof}
	Any function $f$ with $f(0)=f(1)=0$ and $\sup_{[0,1]}|f|\ge a$ must have 
	$$\int_0^1 |f'|^2 \ge (\int_0^1 |f'|)^2 = 4a^2
	$$ by Jensen's inequality. 
	The upper bounds on the above probabilities then follow from the left bound in \eqref{E:lower-upper}. The matching lower bound follows from the construction in Lemma \ref{l:twopiece}.
\end{proof}
\begin{remark}[Liu's conjecture]
	The first claim partially proves Conjecture 1.5  by \cite{liu2022one} for 
	the special case $t=1/2$. In forthcoming work, R. Basu and co-authors (\cite{basu2024personal}), show that the rest of the conjecture is not correct. They also give an independent proof of the $t=1/2$ case starting from (and extending to) exponential last passage percolation. Conjecture 1.5 of \cite{liu2022one} gives the correct value for having a zero-length path that takes the value $a$ at time $t$. When $t\neq1/2$, the zero-length path with the conjectured rate will not be a geodesic:  some shortcuts will have positive length.
	
	Heuristics using large deviations for the Airy process suggest  that geodesic under the  large deviation event $\pi(t)>a$ for $t\neq 1/2$ will not follow a piecewise linear path. In particular, it will not be $f_t$ from Lemma \ref{l:twopiece}! Instead, it will have two linear pieces and a parabola. Solving the optimization problem (or even describing the heuristic behind it) is beyond the scope of this paper, but we can state the resulting formula  as a new conjecture.
\end{remark}

\begin{conjecture}For $t\in (0,1/2]$,
	as $a\to\infty$ we have $P(\pi(t)\ge a)=e^{-\iota(t) a^3+o(a^3)}$ where
	$$\iota(t)=\frac{-(2t)^{5/2} (9 b+4)+6 t^2 (25 b+13)-2(2 t)^{3/2} (26 b+19)-48 \sqrt{2t}+24}{3 \left(3-\sqrt{8t}\right)^3 (1-t)^2 t^2},
	$$
	with 
	$$
	b=\frac{\sqrt{72 t^2+6 (2t)^{3/2}-143 t-12 \sqrt{2t}+72}}{(9-8 t) \sqrt{t}}.
	$$
	In particular, as $t\to 0$ we have $\iota(t)=\frac8{27}/t^{2}+o(1/t^2)$.
\end{conjecture}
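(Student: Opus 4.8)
The plan is to reduce the conjecture, via the large deviation principle for the directed geodesic (Theorem~\ref{T:dg-ldp}) and the scaling identity $J(af)=|a|^3J(f)$ of Lemma~\ref{l:j-facts}.5, to an explicit calculus-of-variations problem for $J$, and then to solve that problem under a ``two lines and a parabola'' ansatz. First I would argue exactly as in the proof of Corollary~\ref{c:geodesic-sup}: the rescaled geodesic $a^{-1}\pi$ obeys an LDP at speed $a^3$ with good rate function $J$ (this follows from Theorem~\ref{T:dg-ldp}, equivalently from Theorem~\ref{t:main} contracted to the $(0,0)$--$(0,1)$ geodesic after the $\e^{-1/2}$-rescaling defining $\L_\e$, with $\e=a^{-2}$). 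Applying this to the closed set $\{f:f(t)\ge 1\}$, whose interior $\{f:f(t)>1\}$ has the same $J$-infimum (push a minimizer $f^*$ with $f^*(t)=1$ up to $(1+\de)f^*$ and use $J((1+\de)f^*)=(1+\de)^3J(f^*)$), and then using the scaling once more to replace $f(t)\ge 1$ by $f(t)=1$, yields
\[
P(\pi(t)\ge a)=\exp\big(-\iota(t)\,a^3+o(a^3)\big),\qquad \iota(t)=\inf\{J(f):f\in H^1,\ f(0)=f(1)=0,\ f(t)=1\},
\]
where $0<\iota(t)<\infty$ since $f_t$ from Lemma~\ref{l:twopiece} is admissible and the left bound of \eqref{E:lower-upper} is positive. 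So the entire content of the conjecture is the evaluation of $\iota(t)$ and the identification of its minimizer.

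To evaluate $\iota(t)$ I would use Lemma~\ref{l:j-facts}.3--4, which rewrites it as $\tfrac43\inf\int_0^1\rho^{3/2}$ over pairs $(f,\rho)$ with $f(0)=f(1)=0$, $f(t)=1$, $\rho\ge 0$ and $\int_s^{s'}\rho\ge \operatorname{Var}[f'\,|\,[s,s']]$ for all $0\le s<s'\le 1$, and attack it in two stages. For fixed $f$, the optimal $\rho$ is the pointwise-smallest nonnegative majorant of the envelope of the right-hand sides over all intervals, obtained exactly as $\lambda$ is in the proof of Lemma~\ref{l:twopiece}, with complementary slackness. Then I would optimize over $f$ using the heuristic-predicted ansatz that the minimizer is linear on $[0,s_1]$, a downward parabola on $[s_1,s_2]$ with apex at $t$ (so $f(t)=1$ and $f'(t)=0$), and linear on $[s_2,1]$, matched in value and first derivative at $s_1$ and $s_2$. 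The conditions $f(0)=f(1)=0$, $f(t)=1$ and the smooth fits leave $s_1,s_2$ and the parabola's curvature $c$ as the only unknowns; eliminating $c$ reduces this to a one-parameter minimization whose stationarity condition is a quadratic in an auxiliary quantity $b$ (the analogue of the $b$ in Lemma~\ref{l:twopiece}), and solving and simplifying should produce the stated closed form for $\iota(t)$, with the parabolic piece shrinking to a point at $t=1/2$ so that $\iota(1/2)=J(f_{1/2})=\tfrac{32}{3}$ as in Corollary~\ref{c:geodesic-sup}. The asymptotic $\iota(t)=\tfrac{8}{27}\,t^{-2}+o(t^{-2})$ as $t\to 0$ would then be a Taylor expansion of that closed form.

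The hard part is rigorously justifying the ansatz, i.e.\ proving that the true minimizer of $\iota(t)$ is two lines and a parabola and nothing more exotic. I would need: (i) existence of a minimizing $(f,\rho)$, via the weak-$L^{3/2}$ compactness plus lower semicontinuity argument already used in Lemma~\ref{l:twopiece}, now applied jointly to $(f,\rho)$; (ii) first-order (Euler--Lagrange) optimality conditions for the joint problem, which should say that on the set where the density constraint is slack $f$ is affine (a non-affine portion could be straightened, loosening the constraint and letting $\rho$ drop), and on the set where it is tight $f''$ is constant --- the appearance of a parabola here is the geodesic analogue of the ``Brownian motion conditioned above a parabola'' picture behind the Airy-process large deviations; (iii) a combinatorial/topological step, in the spirit of parts~\ref{propb}--\ref{propc} of Proposition~\ref{P:stline}, bounding the number of pieces to two linear and one parabolic and placing them (the parabola straddling $t$, the linear pieces outermost and carrying the endpoint conditions). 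Carrying out (ii)--(iii) rigorously, with $s_1$ and $s_2$ appearing as free boundaries pinned down by smooth-fit conditions, is the real work and the reason this is stated as a conjecture; everything downstream is the finite-dimensional calculus of the previous paragraph and an elementary asymptotic expansion.
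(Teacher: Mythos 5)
This statement is a \emph{conjecture} in the paper, not a theorem --- the authors explicitly write that solving the optimization problem ``is beyond the scope of this paper,'' so there is no paper proof to compare against. Your reduction of the probability statement to the variational problem $\iota(t)=\inf\{J(f):f(0)=f(1)=0,\ f(t)=1\}$, via Theorem~\ref{T:dg-ldp} and the scaling $J(af)=|a|^3J(f)$ from Lemma~\ref{l:j-facts}.5 (which also resolves the closed-set/open-set discrepancy), is correct and rigorous, and it matches how Corollary~\ref{c:geodesic-sup} handles $t=1/2$. You also correctly identify the paper's hinted ansatz --- two linear pieces flanking a parabola --- and, to your credit, you flag that the steps you label (ii) and (iii) are not carried out.

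Those steps are exactly the unresolved content of the conjecture, so what you have is a roadmap, not a proof. The difficulty is real: Proposition~\ref{P:stline} and Lemma~\ref{l:twopiece} rely on the optimizer being supported on line segments, which allows straightening/Jensen perturbations and makes the Euler--Lagrange analysis finite-dimensional. Here the optimal $f$ is genuinely curved precisely on the set where the density constraint $\int_s^{s'}\rho\ge \operatorname{Var}[f'\mid[s,s']]$ is tight, so those perturbations do not apply there; one would need a new variational argument showing the tight set is a single interval containing $t$ on which $f''$ is constant, while $f$ is affine on the two slack intervals, together with free-boundary smooth-fit conditions pinning down $s_1,s_2$. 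Even granting the ansatz, the conjecture asserts a specific closed form: you would still owe the explicit parameter elimination, the identification of the auxiliary quantity $b$ as the root of the stated quadratic, the algebraic simplification to the displayed $\iota(t)$, and the Taylor expansion giving $\iota(t)\sim\tfrac{8}{27}t^{-2}$ as $t\to 0$. None of that is done here, and it is not done in the paper either, which is precisely why the result is stated as a conjecture.
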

The last part should recover the rate function for the semi-infinite geodesic, see \cite{rahman2021infinite} for the definition.

Lemma \ref{l:J-norms} allows us to construct classes of functions $f$ where either the upper or lower bound in \eqref{E:lower-upper} is satisfied. We finish this section by giving a few specific examples of such functions.

\begin{lemma}
	\label{L:jump-functions}
	Fix $\beta \in (0, 1/2)$ and $\al > 0$, and consider the continuous function $f$ which is linear on each of the intervals $[0, \beta], [\beta, 1 - \beta], [1 - \beta, 1]$ and satisfies $f(0) = f(1) = 0, f(\beta) = f(1-\beta) = \al$. Then if $\beta \ge 1/8$, ${J}(f)$ is given by the lower bound in \eqref{E:lower-upper}.
\end{lemma}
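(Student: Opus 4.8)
The plan is to reduce everything to the last claim of Lemma~\ref{l:J-norms}: if the derivative $f'$, viewed as a random variable on $[0,1]$ equipped with Lebesgue measure, satisfies $\Ex f'=0$ together with the conditional variance bound $\operatorname{Var}(f')\ge\operatorname{Var}[f'\mid[s,t]]$ for all $0\le s<t\le1$, then ${J}(f)$ equals the lower bound $\tfrac43\big(\int_0^1|f'|^2\big)^{3/2}$ in \eqref{E:lower-upper}. So the whole problem becomes verifying this inequality for our piecewise linear $f$. First I record that $f'$ is the step function equal to $c:=\al/\beta$ on $[0,\beta]$, to $0$ on $[\beta,1-\beta]$, and to $-c$ on $[1-\beta,1]$; hence $\Ex f'=f(1)-f(0)=0$ and $\operatorname{Var}(f')=\Ex|f'|^2=2\beta c^2$.

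Given an arbitrary subinterval $[s,t]\subseteq[0,1]$, write $p=|[s,t]\cap[0,\beta]|$, $q=|[s,t]\cap[\beta,1-\beta]|$, $r=|[s,t]\cap[1-\beta,1]|$, and $L=t-s=p+q+r$. A short computation gives $\Ex[f'\mid[s,t]]=c(p-r)/L$ and $\Ex[f'^2\mid[s,t]]=c^2(p+r)/L$, so $\operatorname{Var}[f'\mid[s,t]]=c^2\big(L(p+r)-(p-r)^2\big)/L^2$. I then split into cases. If $[s,t]$ meets at most one of the two slope-nonzero pieces, say $r=0$, this equals $c^2pq/(p+q)^2\le c^2/4$ by AM--GM. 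If $[s,t]$ meets both slope-nonzero pieces, it must contain the whole middle piece, so $q=1-2\beta$; writing $u=p+r\in(0,2\beta]$ and discarding the $-(p-r)^2$ term, it is at most $c^2u/(u+1-2\beta)\le 2\beta c^2$, with equality at $[s,t]=[0,1]$. If $[s,t]$ meets neither, then $f'\equiv0$ there and the variance is $0$. Combining, $\sup_{[s,t]}\operatorname{Var}[f'\mid[s,t]]=\max(c^2/4,\,2\beta c^2)$, which is $\le 2\beta c^2=\operatorname{Var}(f')$ exactly when $\beta\ge1/8$. Under that hypothesis Lemma~\ref{l:J-norms} applies and gives the claim.

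The computation is elementary; the only place to be careful --- the ``hard part,'' such as it is --- is the case bookkeeping: checking that an interval straddling both outer pieces necessarily covers the middle (forcing $q=1-2\beta$), and that in the one-outer-piece case the geometric constraints $p\le\beta$, $q\le1-2\beta$ do not prevent the AM--GM value $c^2/4$ from being the relevant bound there. I expect no further difficulty, and the appearance of the threshold $\beta=1/8$ is precisely the point at which the ``two outer pieces'' maximum $2\beta c^2$ overtakes the ``one outer piece'' maximum $c^2/4$.
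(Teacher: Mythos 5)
Your proof is correct and follows essentially the same route as the paper's: both arguments reduce to verifying the conditional variance bound from the last claim of Lemma~\ref{l:J-norms}, and both split into the same cases (interval meets one outer piece versus both outer pieces), arriving at the same two competing bounds $c^2/4$ and $2\beta c^2$. The only cosmetic difference is that you parametrize by $(p,q,r)$ and keep $\alpha$ general throughout, whereas the paper first normalizes $\alpha = 1$ via Lemma~\ref{l:j-facts}.5 and phrases the one-piece case as a Bernoulli variance bound; the content is identical.
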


\begin{proof}
	By Lemma \ref{l:j-facts}.5, it suffices to prove this when $\al = 1$. Observe that $|f'(t)| = 1/\beta$ on $[0, \beta] \cup [1 - \beta, 1]$, whereas $f' = 0$ on $[\beta, 1-\beta].$ Therefore we can compute that
	$
	\operatorname{Var}(f') = 2/\beta.
	$
	On the other hand, for $[s, t]$ with either $s > \beta$ or $t < 1- \beta$, $f'$ is a Bernoulli-$p$ random variable multiplied by $\pm 1/\beta$ for some $p \in [0, 1]$. Then
	$$
	\operatorname{Var}\!\big[f'\,\big|\,[s,t]\big] \le \frac{1}{4 \beta^2},
	$$
	with equality if and only if $p = 1/2$. The condition $\beta \ge 1/8$ implies that this is less than or equal to $2/\beta$. Finally, if $s \le \beta < 1 - \beta \le t$, then
	\[
	\operatorname{Var}\!\big[f'\,\big|\,[s,t]\big] \le \Ex  \big[|f'|^2\,\big|\,[s,t]\big] \le \Ex  |f'|^2 = 2/\beta. 
	\]
	The lemma follows from the last claim of Lemma \ref{l:J-norms}.
\end{proof}

It is not difficult to construct other explicit examples where the lower bound is attained. One simple example is $f(x) = x(1-x)$. The next lemma gives an example where the upper bound is attained.
\begin{lemma}
	\label{L:l3l2}
	There exist functions $f$ with $f(0)=f(1)=1$, rate ${J}(f)=\infty$ and $\int_0^1f'(t)^2dt<\infty$.
\end{lemma}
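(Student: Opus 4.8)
We construct an explicit $f\in H^1$ with $f(0)=f(1)=0$ (recall that $J(f)=\infty$ automatically unless $f$ has these endpoints, since otherwise no metric has $f$ as a geodesic between $(0,0)$ and $(0,1)$) for which $J(f)=\infty$. By the upper bound $J(f)\le\tfrac43\int_0^1|f'|^3$ of Lemma~\ref{l:J-norms}, any such $f$ must have $f'\in L^2\setminus L^3$, so we look for $f$ with $f'$ just barely failing to lie in $L^3$; the borderline power is $f'(t)\asymp t^{-1/3}$. Concretely, take $f(t)=\tfrac32 t^{2/3}$ on $[0,\tfrac12]$ and extend $f$ linearly on $[\tfrac12,1]$ with $f(1)=0$. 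Then $\int_0^1 f'^2<\infty$ (the singular piece contributes $\int_0^{1/2}t^{-2/3}\,dt=3\cdot 2^{-1/3}$), whereas $\int_0^1|f'|^3=\infty$ since $\int_0^{1/2}t^{-1}\,dt=\infty$; so for this $f$ the upper bound in \eqref{E:lower-upper} is infinite, and the content of the lemma is that $J(f)$ is infinite as well.

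The plan is to use the variational description of Lemma~\ref{l:j-facts}.3: $J(f)$ equals the infimum of $\tfrac43\int_0^1(w'+f'^2)^{3/2}$ over absolutely continuous weight functions $w$ with $w(t)-w(s)\ge-\tfrac{(f(t)-f(s))^2}{t-s}$ for all $0\le s<t\le 1$. Writing $\rho:=w'+f'^2$ (which is $\ge 0$, as implied by the constraint), it suffices to show that every feasible $\rho$ has $\int_0^1\rho^{3/2}=\infty$. Specializing the constraint to $s=0$ and using $f(0)=0$ gives $\int_0^t\rho\ge\int_0^t f'^2-\tfrac{f(t)^2}{t}$, and for $t\le\tfrac12$ a direct computation shows the right-hand side equals $3t^{1/3}-\tfrac94 t^{1/3}=\tfrac34 t^{1/3}$.

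Finally, apply Hölder's inequality on $[0,t]$ with exponents $\tfrac32$ and $3$: $\int_0^t\rho\le\bigl(\int_0^t\rho^{3/2}\bigr)^{2/3}t^{1/3}$. Combining this with $\int_0^t\rho\ge\tfrac34 t^{1/3}$ and cancelling $t^{1/3}$ yields $\int_0^t\rho^{3/2}\ge(3/4)^{3/2}$ for every $t\in(0,\tfrac12]$. If $\int_0^1\rho^{3/2}$ were finite the left-hand side would tend to $0$ as $t\to 0^+$, a contradiction. Hence no finite-rate weight function exists and $J(f)=\infty$.

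The one genuinely delicate point is that the exponent $1/3$ sits exactly at the critical threshold: the lower bound $\int_0^t f'^2-f(t)^2/t$ and the Hölder loss $t^{1/3}$ must carry the same power of $t$, so that the resulting estimate on the tail mass $\int_0^t\rho^{3/2}$ is scale-invariant and therefore bounded below uniformly as $t\to 0$. A steeper power $f'(t)\asymp t^{-\al}$ with $\al\in(1/3,\tfrac12)$ works even more robustly (one then gets $\int_0^t\rho^{3/2}\gtrsim t^{1-3\al}\to\infty$), while $\al<1/3$ would place $f'\in L^3$ and make $J(f)$ finite by Lemma~\ref{l:J-norms}; in this sense the construction is essentially forced.
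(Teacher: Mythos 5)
Your proof is correct and takes a genuinely different route from the paper's. The paper constructs an oscillating $f$ whose derivative has constant absolute value on each of the countably many intervals $[1/(j+1),1/j]$, arranged so that $f$ vanishes at every endpoint $1/j$; this puts $f$ precisely into the equality case of the upper bound in Lemma~\ref{l:J-norms} (constant $|f'|$ between consecutive zeros implies $J(f)=\frac43\int_0^1|f'|^3$), and the argument finishes because that integral diverges. You instead take a single monotone cusp $f(t)=\tfrac32 t^{2/3}$ on $[0,1/2]$ extended linearly, for which $|f'|$ is not piecewise constant so the structural criterion of Lemma~\ref{l:J-norms} does not apply; you then argue directly from the variational problem of Lemma~\ref{l:j-facts}.3, using the constraint at $s=0$ to bound $\int_0^t\rho\ge\tfrac34 t^{1/3}$ and H\"older's inequality to convert this into the scale-invariant estimate $\int_0^t\rho^{3/2}\ge(3/4)^{3/2}$ for all $t\in(0,1/2]$, which is incompatible with $\rho^{3/2}\in L^1$ by absolute continuity of the integral. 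Both arguments are sound; the paper's is shorter once Lemma~\ref{l:J-norms} is in hand and explains conceptually when the $L^3$ upper bound is tight, while yours is more elementary, bypasses the additivity/equality machinery entirely, and gives a transparent ``criticality'' picture of why the exponent $-1/3$ is the borderline. (Minor note: the lemma statement's condition $f(0)=f(1)=1$ is a typo for $f(0)=f(1)=0$, as is clear from both the paper's proof and yours; your reading is the correct one.)
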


\begin{proof}
	The following function $f$ is defined through the condition $f(0) = 0$ and its derivative. For $j \in \N$, set
	$$
	f'(s) = \begin{cases}
		j^{1/3}, \qquad &s \in \left[\frac{1}{j+1}, \frac{1 + 1/(2j)}{j+1}\right) \\
		-j^{1/3}, \qquad &s \in \left[\frac{1 + 1/(2j)}{j+1}, \frac{1}{j}\right).
	\end{cases}
	$$
	Then $\int_0^1 |f'|^2<\infty$.
	However, $|f'|$ is constant almost everywhere between consecutive zeros of $f$, and so by Lemma \ref{l:J-norms} we have ${J}(f)=\frac{4}{3}\int_0^1 |f'|^3=\infty$.
\end{proof}

Lemma \ref{L:l3l2} constructs an example of a function which will have finite length in any directed metric with finite rate, but will not be a geodesic in any of these metrics!

\section{Preliminaries}
\label{sec:basic}

In this section we list the preliminaries needed to prove the large deviation principle for $\cL$. These preliminaries also naturally suggest what the large deviation rate function should be, and so woven through this section will be a heuristic argument for our large deviation principle. We start by defining directed metrics.

\begin{definition}
	\label{D:directed-metric}
	A \textbf{directed metric of positive sign} on a set $X$ is a function $e:X \times X \to \R \cup \{\infty\}$ such that $e(x, x) = 0$ for all $x \in X$ and $e$ satisfies the triangle inequality
	$
	e(x, z) \le e(x, y) + e(y, z).
	$
	We say that $e:X \times X \to \R \cup \{-\infty\}$ is a \textbf{directed metric of negative sign} if $-e$ is a directed metric of positive sign.
\end{definition}
Section 5 of \cite{dv22} builds up a general theory of directed metrics. They are a natural generalization of metrics, when the symmetry condition $e(x, y) = e(y, x)$ is removed. In our context, any function $e:\Rd \to \R$ in the set $\mathcal E$ can be extended to a directed metric (of negative sign) on all of $\R^2$ by setting $e(p; p) = 0$ for all $p \in \R^2$ and $e(p; q) = -\infty$ when $p \ne q$ and $(p; q) \notin \Rd$. Because of this, we can always think of $\mathcal E$ as the set of continuous directed metrics on the space-time plane, and we refer to elements of $\mathcal E$ simply as metrics. The path length formula \eqref{E:ga-length} for $e \in \mathcal E$ is a special case of the usual formula for the length of a path in a (directed) metric.

The directed landscape is a random directed metric. It is built from the Airy sheet $\S$, which is a random continuous function $\S:\R^2 \to \R$, defined precisely, for example, in \cite{dv22}, Definition 1.22.   In this paper we take the law of the Airy sheet as a black box as we require only a few properties of the object. We call 
$$
\S_\sigma(x, y) := \sigma \S(x/\sigma^2, y/\sigma^2)
$$
an \textbf{Airy sheet of scale $\sigma$}.
\begin{definition}
	\label{D:L-unique}
	The directed landscape $\cL:\Rd \to \R$ is the unique random continuous function satisfying
	\begin{enumerate}[label=\Roman*.]
		\item (Airy sheet marginals) For any $t\in \R$ and $s>0$ we have
		$$
		\mathcal{\cL}(x, t; y,t+s^3) \eqd \S_s(x, y)
		$$
		jointly in all $x, y$. That is, the increment over time interval $[t,t+s^3)$ is an Airy sheet of scale $s$.
		\item (Independent increments) For any disjoint time intervals $\{[t_i, s_i] : i \in \{1, \dots k\}\}$, the random functions
		$
		\{\cL(\cdot, t_i ; \cdot, s_i) : i \in \{1, \dots, k \}\}
		$
		are independent.
		\item (Metric composition law) Almost surely, for any $r<s<t$ and $x, y \in \R$ we have that
		\begin{equation}
			\label{E:MC-law}
			\cL(x,r;y,t)=\max_{z \in \mathbb R} [\cL(x,r;z,s)+\cL(z,s;y,t)].
		\end{equation}
	\end{enumerate}
\end{definition}
The triangle inequality for $\cL$ is equivalent to the weaker claim that LHS$\ge$RHS in the metric composition law \eqref{E:MC-law}. The fact that the metric composition law is an equality implies that $\cL$ defines a geodesic space: almost surely, for any point pair $(p; q) \in \Rd$, we have
$$
\cL(p; q) = \max_{\ga:p \to q} |\ga|_\cL.
$$
For fixed $(p; q)$, this maximum is almost surely uniquely attained (i.e. there is a unique geodesic). Existence and uniqueness of $\cL$-geodesics are shown in \cite{DOV18}, Theorem 12.1 and Lemma 13.2.

Like many scaling limits, the directed landscape satisfies many distributional symmetries, which we use throughout.

\begin{lemma} [Lemma 10.2, \cite{DOV18}]
	\label{L:invariance} We have the following equalities in distribution as random continuous functions from $\Rd \to \R$. Here $r, c \in \R$, and $q > 0$.
	\begin{enumerate}[label=\arabic*.]
		\item (Time stationarity)
		$$
		\displaystyle
		\cL(x, t ; y, t + s) \eqd \cL(x, t + r ; y, t + s + r).
		$$
		\item (Spatial stationarity)
		$$
		\cL(x, t ; y, t + s) \eqd \cL(x + c, t; y + c, t + s).
		$$
		\item (Shear stationarity)
		$$
		\cL(x, t ; y, t + s) \eqd \cL(x + ct, t; y + ct + sc, t + s) + s^{-1}[(x - y)^2 - (x - y - sc)^2].
		$$
		\item (KPZ rescaling)
		$$
		\cL(x, t ; y, t + s) \eqd  q \cL(q^{-2} x, q^{-3}t; q^{-2} y, q^{-3}(t + s)).
		$$
	\end{enumerate}
\end{lemma}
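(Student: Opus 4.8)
The statement is Lemma 10.2 of \cite{DOV18}, so one option is simply to cite it; here I describe the argument that fits the present setup, in which $\cL$ is known to us only through the uniqueness characterization of Definition \ref{D:L-unique}. The plan is, for each of the four identities, to introduce the candidate process $\cL'$ obtained by applying the asserted transformation to $\cL$, and to check that $\cL'$ is again a random continuous function on $\Rd$ satisfying properties I (Airy sheet marginals), II (independent increments), and III (metric composition law). By uniqueness in Definition \ref{D:L-unique} this forces $\cL' \eqd \cL$ as random continuous functions, which is the claim.

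Properties II and III come essentially for free in all four cases. The transformed increment of $\cL'$ over a time interval is a deterministic function of the corresponding $\cL$-increment over a translated or rescaled interval, plus a deterministic additive term, and disjoint time intervals are sent to disjoint time intervals by all four maps; hence independence of $\cL$-increments gives II. For III one checks that the metric composition law survives each transformation: for the translations and the KPZ rescaling it is immediate after a linear change of the maximizing variable, while for the shear, where $\cL'(x,r;y,t)=\cL(x+cr,r;y+ct,t)+2c(x-y)-(t-r)c^2$ (note $s^{-1}[(x-y)^2-(x-y-sc)^2]=2c(x-y)-sc^2$), the additive corrections telescope: expanding $\max_z[\cL'(x,r;z,s)+\cL'(z,s;y,t)]$, the $z$-dependent linear terms satisfy $2c(x-z)+2c(z-y)=2c(x-y)$ and drop out, leaving the metric composition law for $\cL$ after the substitution $w=z+cs$.

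The content is in property I. For time stationarity it is immediate, since by I for $\cL$ the increment over \emph{any} interval of length $s^3$ is an Airy sheet of scale $s$. For spatial stationarity it reduces to spatial stationarity of the Airy sheet, $\S_s(x+c,y+c)\eqd\S_s(x,y)$. For KPZ rescaling it reduces to the scaling built into the definition of the Airy sheet: $q\,\cL(q^{-2}x,q^{-3}t;q^{-2}y,q^{-3}t+(s/q)^3)\eqd q\,\S_{s/q}(q^{-2}x,q^{-2}y)=\S_s(x,y)$. For the shear, after using spatial stationarity of the Airy sheet one is reduced to a \emph{shift} symmetry of the Airy sheet, namely $\S_s(x,y+a)-s^{-3}\big[(y+a-x)^2-(y-x)^2\big]\eqd\S_s(x,y)$ jointly in $x,y$; that is, the Airy sheet with its parabolic drift subtracted is invariant under translating a single coordinate. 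This is the main obstacle: it does not follow from the other symmetries or from the metric structure, but rather from the construction of the Airy sheet out of the parabolic Airy line ensemble (equivalently, out of Brownian last passage percolation), where it is the image of the Galilean invariance of the prelimit model. Since we treat the Airy sheet as a black box, at this point we invoke \cite{DOV18}. Granted that input, property I for the shear follows and the four verifications are complete.
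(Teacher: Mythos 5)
The paper does not prove Lemma~\ref{L:invariance}: it simply cites Lemma~10.2 of \cite{DOV18}, as the lemma header indicates. Your proposal instead derives the symmetries from the uniqueness characterization of Definition~\ref{D:L-unique}, which is a genuinely different (and more self-contained) route. The structure is sound: defining $\cL'$ as the transformed field, checking properties~I--III, and invoking uniqueness is exactly the right strategy, and your verification of the metric composition law for the shear (the telescoping of the $z$-dependent linear terms $2c(x-z)+2c(z-y)=2c(x-y)$ and the change of variable $w=z+cs$) is correct, as is the computation reducing KPZ rescaling to $q\,\S_{s/q}(q^{-2}x,q^{-2}y)=\S_s(x,y)$.

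One point worth sharpening: you frame the argument as if $\cL$ is "known to us only through" Definition~\ref{D:L-unique}, but that definition only pins down the law of $\cL$ given the law of $\S$, and $\S$ itself is taken as a black box from \cite{dv22}, Definition~1.22. Consequently property~I forces you to import symmetries of the Airy sheet that are not consequences of Definition~\ref{D:L-unique}: the diagonal stationarity $\S_s(x+c,y+c)\eqd\S_s(x,y)$ for item~2, and the single-coordinate "shift" identity $\S_s(x,y+a)-s^{-3}[(y+a-x)^2-(y-x)^2]\eqd\S_s(x,y)$ for item~3. You flag the latter explicitly but use the former silently; both should be acknowledged as inputs from the Airy sheet construction rather than derived facts. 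With that caveat made explicit, your proof is correct, and what it buys over the paper's citation is a clean reduction of all four landscape symmetries to two symmetries of the one-time-slice marginal $\S$, plus purely algebraic verifications of II and III. The cost is that those two Airy sheet symmetries still have to be taken from \cite{DOV18}, so in the end the argument does not avoid the external reference, it just isolates exactly which part of it is needed.
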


By Lemma \ref{L:invariance}, we can observe that for any fixed $u = (x, s; y, t) \in \Rd$, the law of the random variable $\cL(u)$ is simply a shifted and rescaled version of the law of $\cL(0,0; 0, 1)$:
\begin{equation}
	\label{E:one-point}
	\cL(u) \eqd (t-s)^{1/3}\cL(0,0; 0, 1) - \frac{(x-y)^2}{t-s}.
\end{equation}
Equation \eqref{E:one-point} provides a good way of thinking about $\cL$: it consists of the `Dirichlet part'
$$
d(x, s; y, t) = - \frac{(x-y)^2}{t-s},
$$
and a noise part, which consists of a Tracy-Widom random variable. One way of thinking about the rescaling $\cL_\ep$ is that it provides one way of scaling down the strength of the noise part to have size $\ep$. Indeed, at the level of one-point distributions, by Lemma \ref{L:invariance} we have
\begin{equation}
	\label{E:one-point-law}
	\cL_\ep(u) \eqd \ep (t-s)^{1/3}\cL(0,0; 0, 1) + d(x, s; y, t)
\end{equation}
In order to understand the large deviation behaviour of the whole directed landscape, we should first try to understand the large deviation behaviour at the level of single points. By \eqref{E:one-point}, it suffices to understand the tails of the Tracy-Widom random variable $\cL(0,0; 0, 1)$.
\begin{theorem}[see Theorem 1.3 in \cite{ramirez2011beta}]
	\label{T:tracy-widom-tails}
	Let $X = \cL(0,0; 0, 1)$. Then we have the following asymptotics as $m \to \infty$:
	$$
	\Pr (X > m) = e^{-[\frac{4}{3} + o(1)] m^{3/2}}, \qquad \Pr (X < -m) = e^{-[\frac{1}{12} + o(1)] m^{3}}.
	$$
\end{theorem}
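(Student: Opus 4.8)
\emph{Proof proposal.} By the one-point marginal \eqref{e:landscape-Dirichlet} (take $t-s=1$, $d=0$), $X=\cL(0,0;0,1)$ is a GUE Tracy--Widom variable; write $F_2(m)=\Pr(X\le m)$. The plan is to read the tails off the Tracy--Widom formula
\begin{equation*}
F_2(m)=\exp\Big(-\int_m^\infty (x-m)\,q(x)^2\,dx\Big),
\end{equation*}
where $q$ is the Hastings--McLeod solution of Painlev\'e~II, $q''=xq+2q^3$, pinned by $q(x)\sim\mathrm{Ai}(x)$ as $x\to+\infty$. (The route of \cite{ramirez2011beta} instead uses the stochastic Airy operator; I sketch that below as an alternative.)

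For the upper tail, $E(m):=\int_m^\infty(x-m)q(x)^2\,dx\to 0$, so $\Pr(X>m)=1-F_2(m)=E(m)(1+o(1))$. Inserting $q(x)^2\sim\mathrm{Ai}(x)^2\sim\tfrac1{4\pi}x^{-1/2}e^{-\frac43 x^{3/2}}$ and Laplace-expanding near $x=m$ (set $x=m+u$, use $\tfrac43 x^{3/2}=\tfrac43 m^{3/2}+2m^{1/2}u+O(u^2m^{-1/2})$ and $\int_0^\infty u\,e^{-2m^{1/2}u}\,du=\tfrac1{4m}$) gives $E(m)=\tfrac1{16\pi}m^{-3/2}e^{-\frac43 m^{3/2}}(1+o(1))$, hence $\Pr(X>m)=e^{-(\frac43+o(1))m^{3/2}}$. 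As only the exponential rate is claimed, the one substantive step is the uniform replacement of $q^2$ by its leading Airy asymptotic on $[m,\infty)$.

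For the lower tail, $\Pr(X<-m)=F_2(-m)=\exp(-E(-m))$ with $E(-m)=\int_{-m}^\infty(x+m)q(x)^2\,dx$. Split at $0$: on $[0,\infty)$ the super-exponential decay of $q$ contributes $O(m)$, negligible beside $m^3$; on $[-m,0]$ I would use the Hastings--McLeod connection asymptotic $q(x)^2=-\tfrac x2(1+o(1))$ as $x\to-\infty$, uniformly enough that
\begin{equation*}
\int_{-m}^0(x+m)q(x)^2\,dx=(1+o(1))\int_{-m}^0(x+m)\big(-\tfrac x2\big)\,dx=(1+o(1))\tfrac{m^3}{12}.
\end{equation*}
Thus $E(-m)=\tfrac{m^3}{12}(1+o(1))$ and $\Pr(X<-m)=e^{-(\frac1{12}+o(1))m^3}$.

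The main obstacle is supplying the two Hastings--McLeod facts: the normalization at $+\infty$ is essentially the definition (linearize Painlev\'e~II), but the connection formula $q(x)\sim\sqrt{-x/2}$ as $x\to-\infty$, with the uniform error control on $[-m,0]$ that the above needs, is the classical connection problem for Painlev\'e~II and genuinely requires an isomonodromy/Riemann--Hilbert steepest-descent analysis. To avoid Painlev\'e one can instead use $-X\eqd\Lambda_0$, the bottom eigenvalue of $-\partial_x^2+x+\sqrt2\,b'$ on $L^2(\R_+)$ with Dirichlet boundary condition, and run a Schilder-type argument on the Rayleigh quotient: lower bounds come from a deterministic Brownian profile plus a trial function --- for $\{X<-m\}$, force $x+\sqrt2\,b'\ge m$, paying $\tfrac12\int(b')^2\approx\tfrac14\int_0^m(m-x)^2\,dx=\tfrac{m^3}{12}$; for $\{X>m\}$, localize a bump on scale $m^{-1/2}$ in a Brownian dip and optimize the profile to reach $\tfrac43 m^{3/2}$ --- while the matching upper bounds need uniform a priori control of $\Lambda_0$ under constraints on $b$ via discretization and union bounds, and pinning down the sharp constants there is the delicate part, exactly as carried out in \cite{ramirez2011beta}.
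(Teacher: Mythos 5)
The paper offers no proof of this theorem at all; the statement is simply a citation to Theorem 1.3 of \cite{ramirez2011beta} (and, for the upper tail, the introduction also points to \cite{tracy1994level}). Your proposal is therefore not comparable to "the paper's proof," but it is a sound sketch of how the result is actually established in the literature. The Painlev\'e~II route you lead with is the classical Tracy--Widom one: the formula $F_2(m)=\exp(-\int_m^\infty (x-m)q(x)^2\,dx)$ is correct, the Airy asymptotic $q(x)^2\sim\frac{1}{4\pi}x^{-1/2}e^{-\frac43 x^{3/2}}$ gives the upper-tail rate (your Laplace expansion and the resulting prefactor $\frac{1}{16\pi}m^{-3/2}$ are both right), and the Hastings--McLeod connection asymptotic $q(x)^2\sim -x/2$ gives $\int_{-m}^0(x+m)(-x/2)\,dx=m^3/12$ for the lower tail. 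You correctly flag that the one nontrivial input on this route is the uniform control of $q$ on the negative axis, which is a genuine Riemann--Hilbert/isomonodromy result, not a local computation. Your alternative sketch via the stochastic Airy operator $-\partial_x^2+x+\sqrt2\,b'$ and Schilder-type bounds on the Rayleigh quotient is precisely the route of \cite{ramirez2011beta}, i.e.\ the one the paper is actually invoking, and you correctly identify the delicate part there as the matching upper bounds with sharp constants. Since only the exponential rates $\frac43 m^{3/2}$ and $\frac1{12}m^3$ are asserted, either route suffices; the two differ in flavor (integrable-systems asymptotics versus a direct probabilistic variational argument on the operator) but deliver the same constants. No gaps to flag beyond the external facts you already name.
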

From Theorem \ref{T:tracy-widom-tails} we can observe that it is much easier to make the directed landscape large at a single point than to make it small. Heuristically, this is fairly easy to see if we think of $\cL$ as a path metric: to make $\cL(0,0; 0, 1)$ large we need to plant a \textit{single good path}, whereas to make $\cL(0,0; 0, 1)$ small we need to make \textit{all paths bad}. This phenomena extends all the way through to our large deviation principle for $\cL$. Indeed, at the $\ep^{3/2}$-scale our rate function will assign infinite rate to any metric $e$ with $e(u) < d(u)$ for some $u \in \Rd$, and will assign finite rates only to metrics than can be achieved by planting countably many paths.

Let us try to describe this more precisely, in a way that derives the rate function $I$. Let $e$ be any directed metric on $\Rd$ with $e \ge d$. We will explore the possibility that $\cL_\ep$ can be close to $e$ by looking at the behaviour of $\cL_\ep$ restricted to a single path $\ga:[s, t] \to \mathbb R$. For $r < r' \in [s, t]$ let $u_{r, r'} = (\ga(r), r; \ga(r'), r')$, and
consider the event where for fixed $\de > 0$, we have
\begin{align}
	\label{E:weight-de}
	|\cL_\ep(u_{r, r'}) - e(u_{r, r'})| < \de
\end{align}
for all $s \le r < r' \le t$. In other words, this is the event where $\cL_\ep$ is close to $e$ along $\ga$. Now, by Theorem \ref{T:tracy-widom-tails} and \eqref{E:one-point-law}, the probability of \eqref{E:weight-de} for fixed $r, r'$ is given by
$$
\exp \left(-\frac{4}{3}\ep^{3/2}\Theta(e, u_{r, r'}) + O(\de \ep^{3/2} (r' - r)^{-1/2}) \right),
$$
where for $u = (x, s; y, t)$,
\begin{equation}
	\label{e:dThetaeu}
	\Theta(e, u) := \frac{[e(u)-d(u)]_+^{3/2}}{(t-s)^{1/2}}=\left[\frac{e(u)}{t-s} + \frac{(x-y)^2}{(t-s)^2}\right]_+^{3/2}(t-s).
\end{equation}
Moreover, by the temporal independence in $\cL$ (Property II of Definition \ref{D:L-unique}), the events in \eqref{E:weight-de} are independent for disjoint intervals $[r, r']$, and so the probability that \eqref{E:weight-de} holds for all consecutive pairs $r, r'$ on a partition $r_0 < r_1 < \dots < r_k$ of $[s, t]$ is given by
$$
\exp \left(-\frac{4}{3}\ep^{3/2}\sum_{i=1}^k \Theta(e, u_{r_{i-1}, r_i}) + O(k \de \ep^{3/2} \max_{1 \le i \le k} (r_{i} - r_{i-1})^{-1/2}) \right),
$$
which for a fine enough partition, $\de$ sufficiently small, and $\ga, e$ sufficiently nice is well approximated by $e^{- \ep^{3/2}I(\ga, e)}$, where
$$
I(\ga, e) = \frac{4}{3} \int_s^t (w'(r) + [\ga'(r)]^2)^{3/2}, \qquad w(r) = \|\ga|_{[s, r]}\|_e.
$$
This computation only gives a lower bound on the chance that $\cL_\ep$ is close to $e$ since we have only compared the metrics along the single path $\ga$. This gives a lower bound on $I(e)$. The general form of the rate function in Theorem \ref{t:main} comes from comparing the metrics on arbitrary collections of disjoint paths:
$$
I(e) = \sup_{\ga_1, \dots, \ga_k} \sum_{i=1}^k I(\ga_i, e),
$$
where the supremum is over all finite collections of disjoint paths $\ga_1, \dots, \ga_k$. The fact that the rate function is additive over disjoint paths loosely follows from the fact that the landscape $\cL$ is derived as a limit of last passage models built on independent noise, so the behaviour of $\cL$ along two disjoint paths should be independent. To prove this, we will need an asymptotic independence proposition for the Airy sheet.

\subsection{Quantitative approximate independence in the Airy sheet}

\begin{proposition}
	\label{l:4.4} Fix $\Delta>0$. Fix any $a_1<b_1<a_2<b_2<\cdots<a_k<b_k$ and $c_1<d_1<c_2<d_2<\cdots<c_k<d_k$, and suppose that
	\begin{align}
		\label{e:cond}
		c_{i+1} - d_i > 2 \Delta, \qquad a_{i+1} - b_i > 2 \Delta
	\end{align}
	for all $i \in \{1, \dots, k-1\}$.
	For each $i = 1, \dots, k$, let $A_i$ be a Borel subset of $C([a_i,b_i]\times [c_i,d_i])$. Then
	\begin{equation}
		\label{e:l4.4}
		\begin{aligned}	\left|\Pr\left(\bigcap_{i=1}^k\{\S|_{[a_i,b_i]\times[c_i,d_i]}\in A_i\}\right)-\prod_{i=1}^k\Pr\left(\S|_{[a_i,b_i]\times[c_i,d_i]} \in A_i\right)\right|&\le 2k \Pr(\|\Pi\|_\infty \ge \Delta) \\&\le 4 k e^{- c' \Delta^3},
		\end{aligned}
	\end{equation}
	where
	$\Pi$ is the a.s.\ unique $\cL$-geodesic in $\cL$ from $(0,0)$ to $(0, 1)$, and $c' > 0$ is an absolute constant.
\end{proposition}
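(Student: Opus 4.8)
The plan is to identify the Airy sheet with the time-$1$ slice of the directed landscape, $\S \eqd \cL(\cdot,0\,;\,\cdot,1)$, and then to localize each restriction $\S|_{R_i}$, where $R_i := [a_i,b_i]\times[c_i,d_i]$, to a deterministic space-time tube $T_i$ chosen so that $T_1,\dots,T_k$ are pairwise disjoint. Since the directed landscape inside disjoint tubes is built from independent pieces of noise, this reduces the claim to controlling the probability that some geodesic leaves its tube, which is where the factor $2k\,\Pr(\|\Pi\|_\infty\ge\Delta)$ enters.

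For the geometric step, for each $i$ I would let $g_i^-$ (resp.\ $g_i^+$) be the $\cL$-geodesic from $(a_i,0)$ to $(c_i,1)$ (resp.\ from $(b_i,0)$ to $(d_i,1)$), with $\ell_i^\pm$ the corresponding line segments. Because both $\{[a_i,b_i]\}$ and $\{[c_i,d_i]\}$ are increasing in $i$, the standard non-crossing property of geodesics with ordered endpoints puts every geodesic from a point of $[a_i,b_i]\times\{0\}$ to a point of $[c_i,d_i]\times\{1\}$ weakly between $g_i^-$ and $g_i^+$. By the shear and spatial symmetries of $\cL$ (Lemma~\ref{L:invariance}), the one-sided transversal displacements $\sup_t(\ell_i^-(t)-g_i^-(t))$ and $\sup_t(g_i^+(t)-\ell_i^+(t))$ each have the law of $\sup_t\Pi(t)$, so each exceeds $\Delta$ with probability at most $\Pr(\|\Pi\|_\infty\ge\Delta)$. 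Let $G$ be the event that all $2k$ of these displacements are $<\Delta$; by a union bound $\Pr(G^c)\le 2k\,\Pr(\|\Pi\|_\infty\ge\Delta)$. On $G$, all geodesics emanating from $R_i$ lie in the deterministic tube $T_i:=\{(x,t):\ell_i^-(t)-\Delta\le x\le \ell_i^+(t)+\Delta,\ t\in[0,1]\}$, and since $\ell_{i+1}^-(t)-\ell_i^+(t)=(a_{i+1}-b_i)(1-t)+(c_{i+1}-d_i)t\ge\min(a_{i+1}-b_i,\ c_{i+1}-d_i)>2\Delta$ for every $t\in[0,1]$, the hypothesis \eqref{e:cond} makes $T_1,\dots,T_k$ disjoint in every time slice.

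For the probabilistic step: on $G$, for $(x,y)\in R_i$ the value $\cL(x,0;y,1)$ equals the length of its $\cL$-geodesic, which lies in $T_i$, hence equals $\sup\{|\gamma|_\cL:\gamma\subset T_i,\ \gamma:(x,0)\to(y,1)\}$; so $\S|_{R_i}$ agrees on $G$ with a fixed measurable functional $\Psi_i$ of the landscape restricted to $T_i$. I would then invoke the independence of the directed landscape over disjoint space-time tubes — a consequence of its construction as a scaling limit of last passage percolation on an independent noise field (\cite{DOV18,dv22}) — to conclude that the $\Psi_i$ are mutually independent. Writing $E_i=\{\S|_{R_i}\in A_i\}$ and $\tilde E_i=\{\Psi_i\in A_i\}$, the events $E_i,\tilde E_i$ coincide on $G$, so their symmetric differences lie in $G^c$; combined with $\Pr(\bigcap_i\tilde E_i)=\prod_i\Pr(\tilde E_i)$, a routine inclusion–exclusion bounds the left side of \eqref{e:l4.4} by $\Pr(G^c)\le 2k\,\Pr(\|\Pi\|_\infty\ge\Delta)$. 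The second inequality in \eqref{e:l4.4} is then the standard transversal-fluctuation bound $\Pr(\|\Pi\|_\infty\ge\Delta)\le 2e^{-c'\Delta^3}$ for the directed geodesic (see, e.g., \cite{DOV18}).

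I expect the probabilistic step to be the main obstacle: asserting rigorously that the directed landscape restricted to a deterministic tube is a measurable function of an independent, localized piece of the driving noise requires going beyond the black-box properties of the Airy sheet and using the last-passage construction, and the noise windows attached to the tubes must be checked to be disjoint. It is exactly the disjointness of \emph{both} families $\{[a_i,b_i]\}$ and $\{[c_i,d_i]\}$ that forces this: a tube around a line of nonzero slope is slanted, so only disjointness at the two time endpoints propagates to disjointness throughout $[0,1]$. The remaining ingredients — geodesic ordering, the identification of the fluctuation law with that of $\|\Pi\|_\infty$, and the final inclusion–exclusion — are routine.
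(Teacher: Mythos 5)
Your geometric setup (tubes from the corner geodesics, ordering, union bound) matches the paper's in spirit, but there is a genuine gap at the probabilistic step, and it is exactly where you flag discomfort. The statement you invoke — that "the directed landscape restricted to disjoint space-time tubes is independent" — is not a property of $\cL$ in the sense you need it, and your reduction does not make it one. Your functional $\Psi_i$ is defined through $\cL(p;q)$ for $p,q\in T_i$, and even for $p,q$ inside a tube the value $\cL(p;q)$ is a maximum over paths that are free to leave the tube; it is not a measurable function of any "noise localized to $T_i$." At the level of the continuum object there simply is no driving noise field over which to take disjoint $\sigma$-algebras, so the claimed independence of $\Psi_1,\dots,\Psi_k$ does not follow from anything in the cited references and is, in fact, false as stated.

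The paper fills precisely this hole by working in the prelimit with a \emph{coupling}, not an independence statement about a single landscape. It takes $k$ independent families of exponential weights $(\xi^{(1)},\dots,\xi^{(k)})$, builds a \emph{stitched} family $\xi^{(0)}$ that equals $\xi^{(i)}$ on the lattice image $A_n(Q_i)$ of each parallelogram (and equals $\xi^{(1)}$ elsewhere), and lets all $k+1$ LPP models converge jointly along a subsequence to a coupling $(\cL_0,\cL_1,\dots,\cL_k)$ in which $\cL_1,\dots,\cL_k$ are independent copies of $\cL$ and $\cL_0\eqd\cL$. The key point is then not that $\cL_0$ has independent restrictions, but that $\cL_0|_{R_i}=\cL_i|_{R_i}$ except on the event that some geodesic crosses the boundary of $Q_i$, which is controlled by the same geodesic-ordering and transversal-fluctuation argument you sketch. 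Substituting this coupling directly into
$\bigl|\Pr(\cap_i\{\S_0|_{R_i}\in A_i\})-\Pr(\cap_i\{\S_i|_{R_i}\in A_i\})\bigr|\le\sum_i\Pr(\S_0|_{R_i}\ne\S_i|_{R_i})$
also yields the constant $2k$ in one shot, whereas your inclusion–exclusion between $E_i$ and $\tilde E_i$ (which compares both $\Pr(\cap E_i)$ to $\Pr(\cap\tilde E_i)$ and $\prod\Pr(E_i)$ to $\prod\Pr(\tilde E_i)$) would at best give something like $4k\,\Pr(\|\Pi\|_\infty\ge\Delta)$. So the missing idea is not a routine technical point about measurability: it is the replacement of an independence claim about one landscape by a joint construction of several coupled landscapes at the LPP level.
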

We refer to Figure \ref{fig:my_label3} for a visualization of the above lemma.

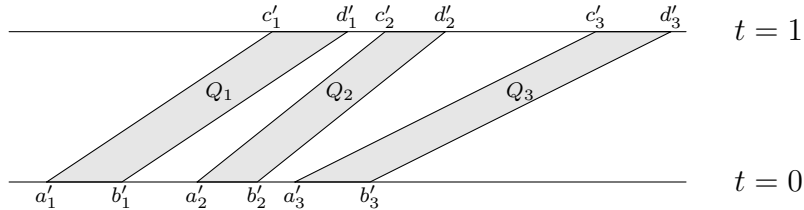
\begin{figure}[h!]
	\centering
	\captionsetup{width=0.9\linewidth}
	\begin{tikzpicture}[line cap=round,line join=round,>=triangle 45,x=1cm,y=2cm]
		\draw[fill=gray!20!white] (-0.5,0)--(0.5,0)--(1.5+2,1)--(0.5+2,1)--(-0.5,0);
		\draw[fill=gray!20!white] (1.5,0)--(2.3,0)--(2.8+2,1)--(2+2,1)--(1.5,0);
		\draw[fill=gray!20!white] (2.8,0)--(3.8,0)--(5.8+2,1)--(4.8+2,1)--(2.8,0);
		\node at (9.1,1.01) {$t=1$};
		\node at (9.1,0.01) {$t=0$};
		\draw (-1,0)--(8,0);
		\draw (-1,1)--(8,1);
		\begin{scriptsize}
			\node at (-0.5,-0.1) {$a_1'$};
			\node at (0.5,-0.1) {$b_1'$};
			\node at (1.5,-0.1) {$a_2'$};
			\node at (2.3,-0.1) {$b_2'$};
			\node at (2.8,-0.1) {$a_3'$};
			\node at (3.8,-0.1) {$b_3'$};
			\node at (2.5,1.1) {$c_1'$};
			\node at (3.5,1.1) {$d_1'$};
			\node at (4,1.1) {$c_2'$};
			\node at (4.8,1.1) {$d_2'$};
			\node at (6.8,1.1) {$c_3'$};
			\node at (7.8,1.1) {$d_3'$};
			\node at (1.8,0.6) {$Q_1$};
			\node at (5.8,0.6) {$Q_3$};
			\node at (3.4,0.6) {$Q_2$};
		\end{scriptsize}
	\end{tikzpicture}
	\caption{By Condition \eqref{e:cond} the open quadrilaterals $Q_1$, $Q_2$, $Q_3$ are disjoint.}
	\label{fig:my_label3}
\end{figure}

\begin{proof}[Proof of Proposition \ref{l:4.4}] The second bound in \eqref{e:l4.4} follows from Proposition 12.3 in \cite{DOV18}, so it suffices to show the first bound. The main strategy behind the proof of Proposition \ref{l:4.4} is to construct a coupling of $k+1$ landscapes $(\L_0,\L_1,\ldots,\L_k)$ such that $(\L_i)_{i=1}^k$ are all independent and with a high explicit probability $\L_{\mx}(x,0;y,1)=\L_i(x,0;y,1)$ for all $x\in [a_i,b_i]$ and $y\in [c_i,d_i]$. This is a more quantitative variant of the proof of Proposition 2.6 in \cite{dauvergne2022non}.
	
	To do this, we first construct a coupling of copies of exponential last passage percolation. Let $(\xi_{v}^{(i)})_{v\in \Z^2, i=1,\ldots,k}$ be collections of independent rate-one exponential random variables. For $i = 1, \dots, m$ let
	$a_i'=a_i-\Delta, b_i'=b_i+\Delta, c_i'=c_i-\Delta, d_i'=d_i+\Delta$, and let
	$Q_i$ be the open parallelogram with vertices
	$$
	(a_i',0), \qquad (b_i', 0), \qquad(c_i', 1), \qquad (d_i', 1),
	$$
	see Figure \ref{fig:my_label3}. Due to the condition \eqref{e:cond}, the parallelograms $Q_i$ are disjoint (see Figure \ref{fig:my_label3}). For $n \in \N$, define $A_n:\R^2 \to \Z^2$ by letting $A_n(x, s) = (\lfloor xn^{2/3}+sn \rfloor, \lfloor sn \rfloor)$, and let
	\begin{align*}
		\xi_{v}^{(0)}(n):=\begin{dcases}
			\xi_v^{(i)} & \mbox{ if }v\in A_n(Q_i)  \mbox{ for some }i=1,2,\ldots,k \\
			\xi_v^{(1)} & \mbox{ if }v\notin \bigcup_{i=1}^k A_n(Q_i) .
		\end{dcases}
	\end{align*}
	Loosely speaking, $\xi_v^{(0)}(n)$ is formed by `stitching' together different collections of exponential weights. The linear scaling operator $A_n$ is chosen to set up convergence to the directed landscape.

	%	\begin{figure}[h!]
		%		\centering
		%		\captionsetup{width=0.9\linewidth}
		%		\begin{tikzpicture}[line cap=round,line join=round,>=triangle 45,x=1cm,y=2cm]
			%		\draw (-0.5,0)--(0.5,0)--(1.5+2,1)--(0.5+2,1)--(-0.5,0);
			%		\draw (1.5,0)--(2.5,0)--(3+2,1)--(2+2,1)--(1.5,0);
			%		\draw (2.8,0)--(3.8,0)--(5.8+2,1)--(4.8+2,1)--(2.8,0);
			%		\foreach \x in {0,1,2,...,8}
			%		{\draw[gray,dashed]  (\x,-0.2) -- (\x,1.2); }
			%		\foreach \x in {-1,0,1,...,7}
			%		{\draw[gray,dashed]  (\x+0.5,-0.2) -- (\x+0.5,1.2); }
			%		\draw[gray,dashed]  (-1,0) -- (8.5,0);
			%		\draw[gray,dashed]  (-1,0.25) -- (8.5,0.25);
			%		\draw[gray,dashed]  (-1,0.5) -- (8.5,0.5);
			%		\draw[gray,dashed]  (-1,0.75) -- (8.5,0.75);
			%		\draw[gray,dashed]  (-1,1) -- (8.5,1);
			%		\node at (9.1,1.01) {$t=M$};
			%		\node at (9.1,0.01) {$t=0$};
			%		\end{tikzpicture}
		%		\caption{The quadrilaterals $R_i(M)$ has sides on lines $t=0$ and $t=M$.}
		%		\label{fig:my_label2}
		%	\end{figure}
	
	We now consider $k+1$ copies of exponential last passage percolation using $k+1$ collection of weights: $(\xi_v^{(\mx)}(n))_{v\in \Z^2}$ and $(\xi_v^{(i)})_{v\in \Z^2}$ where $i=1,2,\ldots,k$. For any points $v_1,v_2\in \Z^2$ with $v_{1, i} \le v_{2, i}$ for $i = 1, 2$, let
	$
	T_n^{(i)}(v_1,v_2)
	$
	denote the last passage value from $v_1$ to $v_2$ computed using the $i$-th collection of exponential weights for $i=0,1,\ldots,k$. That is,
	\begin{equation}
		\label{E:T-def}
		T_n^{(i)}(v_1,v_2)  = \max_{\pi:v_1 \to v_2} \sum_{j=0}^{\|v_1 - v_2\|_1} \xi_{\pi_j}^{(i)}(n)
	\end{equation}
	where $\xi_{\pi_j}^{(i)}(n) = \xi_{\pi_j}^{(i)}$ for $n \ge 1$, and the maximum is over all paths $\pi = (\pi_0, \dots, \pi_{\|v_1 - v_2\|_1})$ with $\pi_0 = v_1, \pi_{\|v_1 - v_2\|_1} = v_2$ and $\pi_i - \pi_{i-1} \in \{(1, 0), (0, 1)\}$ for all $i = 1, \dots, \|v_1 - v_2\|_1$. Almost surely, for all $v_1, v_2$ there is a unique geodesic $\pi^{(i)}(v_1, v_2)$ achieving the maximum in \eqref{E:T-def}. Set
	\begin{align*}
		L_n^{(i)}(x,s;y,t) & := 4n^{-1/3}[T_n^{(i)}(A_n(x, s) ; A_n(y, t)) -4n(t-s)-2n^{2/3}(y-x)].
	\end{align*}
	By Theorem 1.7 in \cite{dv22}, for each $i\in \{0,1,\ldots,k\}$, $L_n^{(i)} \stackrel{d}{\to} \L$, uniformly on compact subsets of $\Rd$. Since the prelimits are all defined on the same probability space, along a subsequence $(n_r)_{r=1}^\infty$ we have $$(L_{n_r}^{(0)}, L_{n_r}^{(1)},\ldots, L_{n_r}^{(k)}) \stackrel{d}{\to} (\mathcal{L}_0,\mathcal{L}_1,\ldots,\mathcal{L}_k).$$
	where each $\L_{i}$ is a directed landscape and the last $k$ copies, $\L_1,\L_2,\ldots,\L_k$ are independent. Letting $\S_i = \L_i(\cdot, 0; \cdot, 1)$, the left-hand side of \eqref{e:l4.4} is then equal to
	$$	
	\left|\Pr\left(\bigcap_{i=1}^k\{\S_0|_{[a_i,b_i]\times[c_i,d_i]}\in A_i\}\right)-\Pr\left(\bigcap_{i=1}^k\{\S_i|_{[a_i,b_i]\times[c_i,d_i]}\in A_i\}\right)\right|,
	$$	
	which by a union bound is bounded above by
	$
	\sum_{i=1}^k \Pr(\S_0|_{[a_i,b_i]\times[c_i,d_i]} \ne \S_i|_{[a_i,b_i]\times[c_i,d_i]}).
	$
	Therefore letting $\S^{(i)}_n$ denote the prelimiting version of $\S_i$, to finish the proof, it suffices to show that for all $i = 1, \dots, k$ we have
	\begin{equation}
		\label{E:n-infty}
		\limsup_{n \to \infty} \Pr(\S^{(0)}_n|_{[a_i,b_i]\times[c_i,d_i]} \ne \S^{(i)}_n|_{[a_i,b_i]\times[c_i,d_i]}) \le  2\Pr(\|\Pi\|_\infty \ge \Delta).
	\end{equation}
	We will have $\S^{(i)}_n|_{[a_i,b_i]\times[c_i,d_i]} \ne \S^{(0)}_n|_{[a_i,b_i]\times[c_i,d_i]}$ if and only if one of geodesics $\pi^{(i)}(A_n(x, 0); A_n(y, 1))$ or $\pi^{(0)}(A_n(x, 0); A_n(y, 1))$ for $(x, y) \in [a_i,b_i]\times[c_i,d_i]$ exits the set $A_n(Q_i)$. By planarity, these geodesics satisfy the following ordering property: all of the geodesics $\pi^{(i)}(A_n(x, 0); A_n(y, 1))$ for $(x, y) \in [a_i,b_i]\times[c_i,d_i]$ are contained in the subset of the strip $\R \times [0, n]$ bounded on the left and right by $\pi^{(i)}(A_n(a_i, 0); A_n(b_i, 1))$ and $\pi^{(i)}(A_n(c_i, 0); A_n(d_i, 1))$ respectively, and similarly for the geodesics $\pi^{(0)}$. Therefore \eqref{E:n-infty} is implied by claim that for $j = 0, i$ we have
	\begin{equation}
		\label{E:n-infty'}
		\limsup_{n \to \infty} \Pr(\pi^{(j)}((a_i, 0)_n; (b_i, 1)_n) \cup \pi^{(j)}((c_i, 0)_n; (d_i, 1)_n) \not \subset A_n(Q_i)) \le \Pr(\|\Pi\|_\infty \ge \Delta).
	\end{equation}
	Using Theorem 1.7/1.8 in \cite{dv22} for $j = 0, i$, the rescaled sets
	$$
	A_n^{-1}[\pi^{(j)}((a_i, 0)_n; (b_i, 1)_n)] \cup A_n^{-1}[\pi^{(j)}((c_i, 0)_n; (d_i, 1)_n)\}]
	$$
	converge in law as $n \to \infty$ in the Hausdorff topology on compact sets to the set
	$$
	\mathfrak{g} \Pi_1 \cup \mathfrak{g} \Pi_2 :=\{(\Pi_1(r), r) : r \in [0, 1]\} \cup \{(\Pi_2(r), r) : r \in [0, 1]\},
	$$
	where $\Pi_1$ is the a.s.~unique geodesic from $(a_i,0)$ to $(c_i,1)$ in $\L$, and $\Pi_2$ is the a.s. unique geodesic from $(b_i,0)$ to $(d_i,1)$. Therefore the limsup in \eqref{E:n-infty} is bounded above by
	$
	\Pr( \mathfrak{g} \Pi_1 \cup \mathfrak{g} \Pi_2 \not \subset Q_i).
	$
	Noting that $\Pi_1 \le \Pi_2$, and letting $L_1(t) = b_i' t + a_i'(1-t)$ and $L_2(t) = c_i' t + d_i'(1-t)$, we have that
	\begin{align*}
		\Pr( \mathfrak{g} \Pi_1 \cup \mathfrak{g} \Pi_2 \not \subset Q) \le &\;\Pr( \Pi_1(r)\le L_1(r) \text{ for some }r\in[0,1]) \\ &+  \Pr(L_2(r)\le \Pi_2(r) \text{ for some }r\in[0,1])\le 2 \Pr(\|\Pi\|_\infty \ge \Delta),
	\end{align*} where the final inequality uses the shear and translation invariance of $\cL$ (Lemma \ref{L:invariance}), to translate to statements about geodesics from $(0,0)$ to $(0, 1)$. This yields \eqref{E:n-infty'}, completing the proof. 
\end{proof}

\begin{remark} The second bound in \eqref{e:l4.4} where we have appealed to Proposition 12.3 in \cite{DOV18} uses more than one-point bounds.  However, we note in passing that the argument for showing $A_\e' \subset A_\e$ in Proposition \ref{P:landscape-in-E*} yields a proof of the geodesic tail bound needed here. This proof relies only on Propositions \ref{p:d-close}, \ref{p:d-dominant}, which we build from the one-point bounds in Theorem \ref{T:tracy-widom-tails}. Note that a fortiori, Corollary \ref{c:geodesic-sup} determines the optimal rate in the stretched exponential tail bound for $\|\Pi\|_\infty$. 
\end{remark}

\begin{proposition}[Airy sheet tails]\label{p:Airy-tails}
	There is a universal constant $c_0 > 0$ so that the following holds. 
	Let $s<t$, $\Delta>0$ and $U=\{(x_i,s;y_i,t):i=1,\ldots,k\}$ with $x_i+\Delta\le x_{i+1}$ and $y_i+\Delta\le y_{i+1}$ for all $i$. Let  $r:U \to \mathbb R$ with $r\ge d$.
	Let $\theta=\sum_{u\in U}\Theta(r,u)$. Then as
	as $\e\to 0$ we have
	\begin{align*}
		P(\mathcal L_\e|_U\ge r)=
		\exp((o(1)-\tfrac43\theta)
		\e^{-3/2}), \qquad \text{ if }\theta< c_0\frac{\Delta^3}{(t-s)^2}.
	\end{align*}
\end{proposition}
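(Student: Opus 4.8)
The plan is to reduce everything to the one-point estimate of Theorem~\ref{T:tracy-widom-tails} and then glue the $k$ points using the quantitative approximate independence of Proposition~\ref{l:4.4}. \textbf{Step 1 (one point).} For $u=(x,s;y,t)$, combining \eqref{E:one-point-law} with Theorem~\ref{T:tracy-widom-tails} and using $r\ge d$,
\[
P\big(\mathcal L_\e(u)\ge r(u)\big)=P\Big(X\ge \tfrac{r(u)-d(u)}{\e(t-s)^{1/3}}\Big)=\exp\big((o(1)-\tfrac43\Theta(r,u))\,\e^{-3/2}\big),
\]
where $X=\cL(0,0;0,1)$ and we used $\big(\tfrac{r(u)-d(u)}{\e(t-s)^{1/3}}\big)^{3/2}=\e^{-3/2}\Theta(r,u)$; when $r(u)=d(u)$ this reads $P(X\ge0)=\exp(O(1))$, consistent with $\Theta(r,u)=0$. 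Here $o(1)\to0$ as $\e\to0$ with $u,r$ fixed.

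\textbf{Step 2 (rescale to an Airy sheet and decouple).} By Property~I of Definition~\ref{D:L-unique} together with $\S_\sigma(x,y)=\sigma\S(x/\sigma^2,y/\sigma^2)$, one has $\cL(\cdot,s;\cdot,t)\eqd(t-s)^{1/3}\S\big(\,\cdot/(t-s)^{2/3},\cdot/(t-s)^{2/3}\big)$ as random functions, hence as random vectors
\[
\big(\mathcal L_\e(x_i,s;y_i,t)\big)_{i=1}^k\ \eqd\ \big(\e(t-s)^{1/3}\S(\tilde x_i,\tilde y_i)\big)_{i=1}^k,\qquad \tilde x_i:=x_i\,\e^{-1/2}(t-s)^{-2/3},\ \ \tilde y_i:=y_i\,\e^{-1/2}(t-s)^{-2/3}.
\]
The separation hypotheses become $\tilde x_{i+1}-\tilde x_i\ge3\Delta'$ and $\tilde y_{i+1}-\tilde y_i\ge3\Delta'$ with $\Delta':=\tfrac13\Delta\,\e^{-1/2}(t-s)^{-2/3}$. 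I would then apply Proposition~\ref{l:4.4} with gap parameter $\Delta'$, using thin nondegenerate rectangles $[\tilde x_i,\tilde x_i+\Delta'/2]\times[\tilde y_i,\tilde y_i+\Delta'/2]$ and events constraining only the corner value $\S(\tilde x_i,\tilde y_i)$, to get
\[
\Big|P\big(\mathcal L_\e|_U\ge r\big)-\prod_{i=1}^kP\big(\mathcal L_\e(u_i)\ge r(u_i)\big)\Big|\le 4k\,e^{-c'(\Delta')^3}=4k\exp\!\big(-\tfrac{c'}{27}\tfrac{\Delta^3}{(t-s)^2}\,\e^{-3/2}\big),
\]
with $c'$ the universal constant of Proposition~\ref{l:4.4}.

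\textbf{Step 3 (combine).} By Step~1, and since $k$ is fixed so that finitely many $o(1)$'s add to an $o(1)$, the product in Step~2 equals $\exp\big((o(1)-\tfrac43\theta)\,\e^{-3/2}\big)$. Taking the universal constant $c_0:=c'/36$, the hypothesis $\theta<c_0\,\Delta^3/(t-s)^2$ forces $\tfrac43\theta<\tfrac{c'}{27}\tfrac{\Delta^3}{(t-s)^2}$, so for all small $\e$ the error term in Step~2 is at most half of the product; hence $P(\mathcal L_\e|_U\ge r)$ lies between $\tfrac12$ and $\tfrac32$ times that product, and a bounded multiplicative factor is $\exp(o(\e^{-3/2}))$, which gives the claim (both the lower and upper bounds of the proposition fall out of this single sandwich---note the trivial one-point upper bound alone sees only one $\Theta(r,u_i)$, so the decoupling is genuinely needed). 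The main obstacle is precisely this bookkeeping: checking that the threshold $\theta<c_0\Delta^3/(t-s)^2$ is exactly what renders the Proposition~\ref{l:4.4} error negligible next to the true main term $\exp(-\tfrac43\theta\,\e^{-3/2})$, uniformly as $\e\to0$---this is where the $\Delta^3/(t-s)^2$ scaling enters. A minor point is replacing the degenerate point-rectangles by thin nondegenerate ones so that Proposition~\ref{l:4.4} applies verbatim, and noting that the identity in Step~2 is an identity of random functions, so it descends to the relevant finite-dimensional marginal.
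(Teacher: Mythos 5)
Your proof is correct and is essentially the paper's own argument: rescale to a single Airy sheet, decouple via Proposition~\ref{l:4.4} (with the gap $\Delta_0=\Delta/((t-s)^{2/3}\e^{1/2})$), bound the resulting product by the one-point Tracy--Widom tail, and observe that the hypothesis $\theta<c_0\Delta^3/(t-s)^2$ makes the decoupling error subdominant. The paper's proof is a three-line version of exactly this; your writeup just spells out the bookkeeping (the thin rectangles, the factor-of-$3$ slack, the $r(u)=d(u)$ edge case) that the paper leaves implicit.
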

\begin{proof}
	Let $\Delta_0=\Delta/((t-s)^{2/3}\ep^{1/2}).$ 
	By Proposition \ref{l:4.4} and scaling,
	$$
	\bigg|P(\mathcal L_\e|_U\ge r)-\prod_{u\in U}P(\mathcal L_\e(u)\ge r(u))\bigg|\le 4k\exp(-c\Delta_0^3)=
	\exp\bigg((o(1)-c\frac{\Delta^3}{(t-s)^2})\e^{-3/2}\bigg).$$
	The claim follows if we bound the product using the Tracy-Widom upper tail bound in Theorem \ref{T:tracy-widom-tails} and scaling. 
\end{proof}

\subsection{Neighborhood bounds}

The goal of this section is to use to go from pointwise bounds to uniform bounds over compact or bounded subsets. The methods here are standard: the lower bound follows by a quick chaining argument, while the upper bound follows from the lower bound and the triangle inequality.  Let $u_0=(0,0;0,1)$.

\begin{proposition}\label{p:neighborhood-l}
	With a universal $c_0>0$,
	for every $\eta\in (0,1/12)$ there is $c_\eta>0$ so that
	$$
	P\bigg(\inf_{u:\|u-u_0\|_\infty\le \ep}\mathcal L(u)<-a\bigg)\le c_\eta \exp(-a^3(\eta-c_0\e^{1/3})), \qquad \text{ for all } a>0, \ep\in(0,1/5].
	$$
\end{proposition}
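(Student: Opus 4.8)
The plan is to bound the probability that $\cL$ dips below $-a$ somewhere in the $\ep$-ball around $u_0=(0,0;0,1)$ by combining the one-point lower-tail estimate from Theorem~\ref{T:tracy-widom-tails} (in the scaled form \eqref{E:one-point}) with a chaining argument over a dyadic net of point pairs, using continuity of $\cL$ and the triangle inequality to control the oscillation. First I would fix a point $u^*$ in the ball — say $u_0$ itself, or a point whose spatial and temporal coordinates are bounded away from the diagonal — and write, for any $u$ in the ball, $\cL(u) \ge \cL(u^*) - \text{(two ``connector'' increments joining $u^*$ to $u$)}$ via the reverse triangle inequality. So if $\cL(u) < -a$ for some $u$ in the ball, then either $\cL(u^*) < -a/2$, or one of the connector increments is less than $-a/4$ in absolute size; the first event is controlled directly by the $e^{-[\frac1{12}+o(1)]m^3}$ bound with $m \asymp a/2$, which after absorbing the $o(1)$ into the $c_0\ep^{1/3}$ slack and the constant $c_\eta$ gives exactly a term like $c_\eta \exp(-a^3(\eta - c_0\ep^{1/3}))$ for any $\eta<1/12$.

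The remaining work is the chaining/oscillation bound: I need $P(\sup_{u \asymp u_0, \ \|u-u^*\| \le \ep} |\cL(u) - \cL(u^*)| \ge a/4)$ to be negligible compared to the main term. Here the natural approach is a standard multi-scale chaining: take dyadic scales $\ep 2^{-j}$, a net of $O((\ep 2^{-j})^{-C})$ points at scale $j$ (the polynomial count only contributes a harmless constant factor after taking logs at speed $a^3$), and use the modulus-of-continuity estimate for $\cL$ — of the form $P(|\cL(u)-\cL(u')| \ge \lambda |u-u'|^{\alpha}) \le e^{-c\lambda^{\beta}}$ for appropriate exponents — which itself follows from the one-point tail bound via \eqref{E:one-point} after noting that for nearby point pairs near $u_0$ the increment $\cL(u)-\cL(u')$ is, up to the deterministic Dirichlet part, a difference of GUE Tracy–Widom variables of the appropriate scale. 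Summing the tail bounds over scales gives $P(\text{oscillation} \ge a/4) \le C e^{-c a^{3}}$ with a constant $c$ that can be taken much larger than $1/12$ — in fact, since the oscillation over an $\ep$-ball is itself of order at most $\ep^{1/3}$ times a Tracy–Widom-type variable, the exponent naturally comes with a factor $\ep^{-1}$, i.e.\ the bad oscillation event has probability at most $e^{-c a^3 / \ep}$, which is swallowed into the $\exp(-a^3(\eta - c_0\ep^{1/3}))$ bound for $\ep$ small.

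In more detail, I would organize it as: (i) peel off the one-point contribution $P(\cL(u^*) < -a/2)$ and bound it by $c\exp(-(\frac1{12} - o(1)) (a/2)^3)$, noting $\frac1{12}\cdot\frac18 = \frac1{96}$... actually here one should take the split with a larger share going to the one-point term, e.g.\ $\cL(u^*) < -a(1-\kappa)$ and oscillation $\ge \kappa a$, and let $\kappa \to 0$ together with $\ep \to 0$, so that the one-point exponent approaches $\frac1{12}$ and can be written as $\eta - c_0\ep^{1/3}$ for the chosen $\eta < 1/12$; (ii) bound the oscillation term by chaining as above, getting $e^{-c a^3 \ep^{-1}}$ or at worst $e^{-c a^3}$ with $c$ arbitrarily large, hence negligible; (iii) combine. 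The main obstacle is step (ii): getting a clean, self-contained modulus-of-continuity estimate for $\cL$ near $u_0$ good enough to beat the $1/12$ rate, uniformly in $a$ and $\ep$ — one must be slightly careful that point pairs in the $\ep$-ball can have temporal gap as small as $1-O(\ep)$ (so the Dirichlet normalization is harmless) and that the polynomial net size contributes only to the prefactor $c_\eta$, not the exponent. Everything else — the triangle-inequality split, absorbing $o(1)$ terms into the $c_0\ep^{1/3}$ correction — is routine. Likely this is where one invokes an already-available continuity estimate for $\cL$ (e.g.\ from \cite{DOV18}) rather than reproving it from scratch.
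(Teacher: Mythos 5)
Your proposal is conceptually aligned with the paper's proof: both decompose $\cL(u)$ via the reverse triangle inequality against a reference point pair (you call it $u^*$; the paper uses $u_1=(0,\ep;0,1-\ep)$), both control the ``connector'' contribution by a dyadic chaining argument that appeals to the one-point tail bound at each scale, and both obtain the $c_0\ep^{1/3}$ correction in the exponent by balancing a split $\kappa\sim\ep^{1/3}$ between the reference term and the connector term (equivalently, because the connector increments involve time gaps of order $\ep$ and hence have fluctuations of order $\ep^{1/3}$). The overall shape of the argument is right.

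However, there is a genuine risk in how you phrase the oscillation step. You propose to bound $P\big(\sup_{u}|\cL(u)-\cL(u^*)| \ge a/4\big)$ via a modulus-of-continuity estimate of the form $P(|\cL(u)-\cL(u')|\ge\lambda|u-u'|^\alpha)\le e^{-c\lambda^\beta}$ and call this ``standard multi-scale chaining.'' If taken literally — as a two-sided bound on $|\cL(u)-\cL(u^*)|$ — this approach does \emph{not} work at speed $a^3$: the upper tail of $\cL$ (and of its increments) decays only like $e^{-c m^{3/2}}$ (Theorem~\ref{T:tracy-widom-tails}), far too slowly to be negligible against the $e^{-\eta a^3}$ main term. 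The chaining must be strictly one-sided. The paper avoids this pitfall by only ever lower-bounding $\cL(u)$: the reverse triangle inequality gives
$\cL(p;q)\ge\cL(p;p^*)+\cL(p^*;q^*)+\cL(q^*;q)$ when $p<p^*<q^*<q$ in time, so one only needs to prevent the connector terms $\cL(p;p^*)$, $\cL(q^*;q)$ from being too \emph{negative}, which is a lower-tail estimate for $\cL-d$ at small time gaps and hence comes at rate $e^{-c m^3/\ep}$ precisely as you want. Concretely, the paper's quantity $S=\sum_k \min_{U_k}(\L-d)$ is controlled entirely by lower tails; there is no need (and no way, at this speed) to control upper tails of increments. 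If you rewrite your step (ii) to control only $P\big(\text{connector sum} \le -\kappa a\big)$ rather than an absolute-value oscillation, your argument goes through; as written, it invites a wrong invocation of two-sided modulus estimates. One more minor imprecision: after the split $\kappa\sim\ep^{1/3}$, the connector-tail bound comes out at a \emph{fixed} rate $e^{-c a^3}$ (with $c$ set by the geometry of the chain, e.g.\ via the weights $\beta_k$ in the paper), not ``arbitrarily large''; one simply needs $c\ge\eta$, which is achievable but must be checked.
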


\begin{proof}
	This is a standard chaining argument. We specify a nice countable set $U\subset \Rd$ so that  a dense set of point pairs in the domain of the $\inf$ above can be connected by some concatenation of $d$-geodesics for $u\in U$.  Then we use the union bound and the one-point bound to show that elements of $U$ are unlikely to have a small $\mathcal L$-value. 
	
	To define the point pairs, let $\mathbb D=\{i/2^k\in [0,1) :i,k\in \Z\}$ denote the dyadic rationals in $[0,1)$. For $x\in \mathbb D$ let $0.x_1x_2\ldots$ denote its binary expansion with  $x_i=0$ eventually. Define $s(x)$ so that $x_{s(x)}$ is the last nonzero bit in the expansion. 
	Define the map
	$$\alpha:\mathbb D^2\setminus\{(0,0)\}\to \mathbb D^2, \quad 
	\alpha(x,y)=(0.x_1\dots x_\sigma, 0.y_1\dots y_\sigma), \quad \sigma=\max(s(x),s(y))-1.
	$$
	Iterating $\alpha$ will eventually take any point in $\mathbb D$ to $(0,0)$. This is how we will build our chains   up to simple similarity  transformations $\varphi_i$. 
	
	Next, we define two similarity transformations $\varphi_i:\mathbb R^2\to\mathbb R^2$, $i=0,1$ by the following properties. Let $R_\theta:\R^2 \to \R^2$ denote the counterclockwise rotation by $\theta$, and set $$\varphi_0(x) = \sqrt{2}\ep R_{-3\pi/4} x + (0, \ep), \qquad \varphi_1(x) = \sqrt{2}\ep R_{\pi/4} x + (0, 1- \ep).
	$$
	In other words, $\varphi_i$ takes $[0,1]^2$ to the closed $L^1$-ball of radius $\e$ about $(0,i)$,  $\varphi_0(0,0)=(0,\e)$, and $\varphi_1(0,0)=(0,1-\e)$.  Let $u_1=(0,\e;0,1-\e)$, and set
	$$U=
	\Big\{(\varphi_0(p) ; \varphi_0(\alpha(p))), 
	p\in \mathbb D^2\Big\} \cup \{u_1\} \cup 
	\Big\{(\varphi_1(\alpha(p)) ; \varphi_1(p)), 
	p\in \mathbb D^2\Big\}, 
	$$
	and let $U_k=\{(x,s;y,t)\in U:t-s=\e 2^{-k/2}\}$. 
	By construction, we have the following:
	$$
	\{u_1\} \cup \bigcup_{k\ge 0}U_k=U\subset \Rd, \quad |U_k|\le 2^{2k+1}, \quad \frac{|x-y|}{t-s}\in \{0,1\} \text{ for all }(x,s;y,t)\in U.
	$$
	Let $A_i=\varphi_i(\mathbb D^2)$. For every $p\in A_0$, iterating $\varphi_0\circ \alpha \circ \varphi_0^{-1}$ we get a sequence $p=p_0,p_1,\ldots, p_\ell=(0,\e)$ so that $(p_i;p_{i+1})\in U$. Similarly, for every $q\in A_1$, there is a sequence 
	$(0,1-\e)=q_0,\ldots, q_\ell=q$ so that $(q_i;q_{i+1})\in U$. Concatenating the two sequences to connect $p$ to $q$, and using the triangle inequality we get
	$$
	\inf_{A_0\times A_1}  \L\ge \L(u_1)+2b+ 2S, \quad S=\sum_{k=0}^\infty \min_{U_k}(\L-d),\quad 
	b=\sum_{k=0}^\infty \min_{U_k}d=\frac{-\e}{1-2^{-1/2}}> -4\e. $$
	where  $d$ is the Dirichlet metric \eqref{e:landscape-Dirichlet}. By the union bound and scaling properties  of the directed landscape (Lemma \ref{L:invariance}) for any  sequence $\beta_k\ge 0$ with total sum $\beta$ we have
	$$
	P(S\le-a\beta \ep^{1/3})\le \sum_{k=0}^\infty 2^{2k+2}P(\mathcal L(u_0)\le -a \beta_k2^{k/6}).
	$$
	Without loss of generality, let $a\ge 2$ and let $\beta_k=2^{2-k/12}$. Then 
	$$
	P(S\le-a\beta \ep^{1/3})\le c'_\eta e^{-\eta a^3}\sum_{k=0}^\infty 2^{2k+2}\exp(-7\eta \beta_k^3 2^{k/2})=c''_\eta e^{-\eta a^3}
	$$
	by the lower tail bound of Theorem \ref{T:tracy-widom-tails}. By the same bound again, 
	\begin{align*}
		P(\inf_{A_0\times A_1} \L \le -a) &\le P(\mathcal \L(u_1)\le -a(1-2\beta\e^{1/3})+8\e)+P(S\le -a\beta \e^{1/3})\\&\le c'_\eta e^{-\eta a^3(1-2\beta \e^{1/3}-8\ep)^3}+c''_\eta e^{-\eta a^3}
	\end{align*}
	Since the closure of $A_0\times A_1$ contains all $u$ with $\|u-u_0\|_\infty\le \e/2$, the claim follows. 
\end{proof}

We will now use the lower bound and the triangle inequality to get an upper tail bound. Let $u_0=(0,0;0,1)$.
\begin{proposition}\label{p:neighborhood-u}
	With a universal $c_0>0$,
	for every $\eta\in (0,4/3)$ there is $c_\eta>0$ so that
	$$
	P\Big(\sup_{u:\|u-u_0\|_\infty\le \e}\L(u)>a\Big)\le c_\eta \exp(-a^{3/2}(\eta-c_0\e^{1/3})), \qquad \text{ for all } a>0, \ep\in(0,1/5].
	$$
\end{proposition}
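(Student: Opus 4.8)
The plan is to use the reverse triangle inequality to dominate $\L$ on the neighborhood by a single near-unit-scale increment, which Theorem~\ref{T:tracy-widom-tails} controls, together with two short increments near the endpoints, which Proposition~\ref{p:neighborhood-l} controls. Write $u_0=(0,0;0,1)$. For any $u=(x,s;y,t)$ with $\|u-u_0\|_\infty\le\e$ and $\e<1/2$ we have $-2\e<s<t<1+2\e$, so we may apply $\L(p;r)\ge\L(p;q)+\L(q;r)$ twice — first with $q=(x,s)$, then with $q=(y,t)$ — starting from the point pair $(0,-2\e;0,1+2\e)$, to get
\begin{equation*}
	\L(x,s;y,t)\ \le\ \L(0,-2\e;0,1+2\e)\ -\ \L(0,-2\e;x,s)\ -\ \L(y,t;0,1+2\e).
\end{equation*}
Since the constraints $|x|\le\e,|s|\le\e$ and $|y|\le\e,|t-1|\le\e$ decouple, taking the supremum gives $\sup_{\|u-u_0\|_\infty\le\e}\L(u)\le A-B-C$, with $A=\L(0,-2\e;0,1+2\e)$, $B=\inf_{\|(x,s)\|_\infty\le\e}\L(0,-2\e;x,s)$, and $C=\inf_{\|(y,t)-(0,1)\|_\infty\le\e}\L(y,t;0,1+2\e)$.

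Fix $\eta\in(0,4/3)$; we may assume $\e$ is below a universal threshold, since otherwise $\eta-c_0\e^{1/3}\le0$ (once $c_0$ is chosen large and universal) and the claim is vacuous. Set $\delta=\e^{1/3}$ and apply the union bound
\begin{equation*}
	P\Big(\sup_{\|u-u_0\|_\infty\le\e}\L(u)>a\Big)\ \le\ P\big(A>(1-2\delta)a\big)+P\big(B<-\delta a\big)+P\big(C<-\delta a\big).
\end{equation*}
The first term carries the stated rate. By the KPZ rescaling of Lemma~\ref{L:invariance}, $A\eqd(1+4\e)^{1/3}\L(u_0)$, and $\big((1-2\e^{1/3})(1+4\e)^{-1/3}\big)^{3/2}\ge1-\kappa\e^{1/3}$ for a universal $\kappa$; so choosing $\eta'\in(\eta,4/3)$ to absorb the $o(1)$ of Theorem~\ref{T:tracy-widom-tails} into a constant $c_{\eta'}$, we obtain $P(A>(1-2\delta)a)\le c_{\eta'}\exp\big(-a^{3/2}(\eta'-\kappa\eta'\e^{1/3})\big)\le c_\eta\exp\big(-a^{3/2}(\eta-c_0\e^{1/3})\big)$ for $c_0$ a large universal constant.

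The terms $B$ and $C$ are negligible because lower-tail probabilities decay at speed $a^{3}$. For $B$: the point pairs $(0,-2\e;x,s)$ with $\|(x,s)\|_\infty\le\e$ have time-span in $[\e,3\e]$, so cover $\{\|(x,s)\|_\infty\le\e\}$ by $N^2$ sub-boxes of side $\e/N$; on each, time-stationarity, shear, and KPZ scaling (Lemma~\ref{L:invariance}) turn $\inf\L(0,-2\e;\cdot)$ into a factor of order $\e^{1/3}$ times the infimum of $\L$ over a neighborhood of radius $O(1/N)$ around a point pair $(0,0;y',1)$ with $|y'|=O(\e^{1/3})$, up to a shift of order $\e^{2/3}$. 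Taking $N$ a large enough universal constant makes this radius small enough that the analogue of Proposition~\ref{p:neighborhood-l} for a neighborhood of $(0,0;y',1)$ (which follows from the version at $u_0$ and the symmetries of $\L$, uniformly for $|y'|\le1$) yields $P(B<-\delta a)\le(\mathrm{const})\exp(-ca^3)$ for a universal $c>0$; the reflection symmetry $\L(x,s;y,t)\eqd\L(-y,-t;-x,-s)$ handles $C$ identically. Since $\exp(-ca^3)\le K_\eta\exp\big(-a^{3/2}(\eta-c_0\e^{1/3})\big)$ for a finite $K_\eta$, summing the three estimates finishes the proof. The one delicate point is the bookkeeping in this last step — matching the rescaled neighborhood radius to the threshold $1/5$ allowed in Proposition~\ref{p:neighborhood-l} while keeping $c_0$ universal — whereas the genuinely new input is only the triangle-inequality reduction of the first paragraph.
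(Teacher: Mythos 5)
Your proof is correct and follows essentially the same strategy as the paper's: use the reverse triangle inequality to bound the supremum by a single ``bulk'' increment minus two short ``boundary'' infima near the endpoints, then control the bulk term via the one-point upper tail of Theorem~\ref{T:tracy-widom-tails} and the boundary terms via Proposition~\ref{p:neighborhood-l}. The one place where the paper is slicker is precisely the ``delicate point'' you flag: rather than keeping the base points at $(0,\mp 2\e)$ and covering the boundary box by $N^2$ sub-boxes, the paper pushes the base points out to $p_1=(0,-b^3\e)$, $q_1=(0,1+b^3\e)$ for a large universal $b$, so that after KPZ rescaling the boundary infimum lands \emph{directly} inside a small neighborhood of $u_0$ (of radius $\sim b^{-2}$) to which Proposition~\ref{p:neighborhood-l} applies with no covering; the paper also takes a threshold $r\sim b\,a^{1/2}\e^{1/3}$ (rather than your $\delta a=\e^{1/3}a$) so that $P(S_0<-r)$ matches the bulk speed $e^{-(4/3)a^{3/2}}$ exactly. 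Both realizations yield the stated bound; yours just discharges the rescaling bookkeeping with an extra tiling step.
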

\begin{proof}
	Let $S$ denote the $\sup$ above, let $b>0$ large to be chosen later, and $p_1=(0,-b^3\e),q_1=(0,1+b^3\e)$. By the triangle inequality, we have
	$$
	S \le \L(p_1;q_1)-S_0-S_1, 
	\quad S_0=\inf_{p:\|p\|_{\infty}\le\e} \L(p_1;p), \quad S_1= \inf_{q:\|q-(0,1)\|_{\infty}\le\e} \L (q;q_1). 
	$$
	By the symmetries of the directed landscape (Lemma \ref{L:invariance}), after scaling time by $b^{-3}\e^{-1}$ and shifting, we have
	$$
	S_1\eqd S_0\eqd b\e^{1/3}\inf_{|x|\le \e^{1/3}/b^2 ,|s-1|\le 1/b^3} \L(0,0;x,s). 
	$$
	For $b$  large enough, by the lower tail bound, Proposition \ref{p:neighborhood-l}, we have $P(S_0<-r)\le c\exp(-r^3/(30b^3\e))$ all $r>0,\e\in(0,1/5]$. With $r=40^{1/3}b a^{1/2}\e^{1/3}$, a union bound gives
	$$P(S\ge a)\le P(\L(p_1;q_1)>a(1-2b^{3}40^{1/3}  \e^{1/3}a^{-1/2}))+2ce^{-(4/3)a^{3/2}}
	$$
	With large enough $b$, for $a\ge 1$ the claim now follows from the one-point upper tail bound of Theorem \ref{T:tracy-widom-tails}. The $a\le 1$ case will hold automatically if we  properly  adjust $c_\eta$.
\end{proof}

\subsection{Efficient covers}

The neighborhood bounds in the previous section will lead to tightness bounds for $\L_\e$ over bounded sets, as long as such sets can be covered efficiently with translates of a neighborhood under the symmetry  group of the landscape.
The last claim \eqref{e:cover-open} of the next lemma does show exactly this. The lemma uses the compact sets 
\begin{equation}\label{e:tildeBn}
	\tilde B_n=\{u = (x, s; y, t) \in \Rd : t -s \ge 1/n, \|u\|_\infty \le n\}, \qquad \bigcup_{n\in \mathbb N} \tilde B_n =\Rd,
\end{equation}
and the boxes $\Lambda_{a, b} = ([-a, a] \times [-b, b])^2$.
\begin{lemma}\label{l:cover}
	Let
	$
	\alpha_{i,j,k,\ell}$ denote the map $(x,s)\mapsto (2^{-2\ell}(x+i+k s),2^{-3\ell}(s+j))$, and for $(p; q) \in \Rd$ and $n \in \Z^4$ set $\alpha_n(p ;q)=(\alpha_n(p);\alpha_n(q))$ for the diagonal map on $\Rd$. Then there exist absolute constants $r_0,c_0>0$ so that 
	\begin{equation}\label{e:cover-full}
		\bigcup_{n\in \mathbb Z^4} \alpha_n(\tilde B_{r_0})=\Rd,
	\end{equation}
	and for all  $\ell\in \mathbb Z$, $a, b > 0$ we have 
	\begin{equation}\label{e:cover-compact}
		\#\{n\in \Z^3:\alpha_{n,\ell}(\tilde B_{r_0})\cap  \Lambda_{a, b}\not=\emptyset\} \le  c_0(a^22^{4\ell}+1)b2^{3\ell}.
	\end{equation}
	Moreover, the left-hand side above is $0$ if $c_0 b  \le 2^{-3 \ell}$. 
	
	Similarly, for any open neighborhood $U$ of $(0,0;0,1)$ there exists $c_U>0, m_U\in \mathbb N$ and $S\subset \mathbb (\tfrac{1}{m_U} \Z)^4$  so that for any $a, b > 0$ we have  $\Lambda_{a, b} \subset \bigcup_{n\in S}\alpha_n(U)$ and for all $\lambda \in \mathbb Z$ we have
	\begin{equation}\label{e:cover-open}
		\#\{(i,j,k,\lambda)\in S:\lfloor \lambda\rfloor =\ell \} \le c_U(a^22^{4\ell}+1)b2^{3\ell}
	\end{equation}
	and the left-hand side equals $0$ if $c_Ub \le 2^{-3\ell}$.
\end{lemma}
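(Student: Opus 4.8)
The maps $\alpha_{i,j,k,\ell}$ are precisely the compositions of a KPZ rescaling, a shear, and a space-time translation, so each is an affine bijection of the space-time plane whose diagonal extension carries $\Rd$ onto $\Rd$; all three assertions are purely deterministic. Writing $u=(x,s;y,t)$ and solving $\alpha_{i,j,k,\ell}(x',s';y',t')=u$ gives $s'=2^{3\ell}s-j$, $t'-s'=2^{3\ell}(t-s)$, with $x',y'$ affine in $i,j,k$. Set $u_0:=(0,0;0,1)$. For \eqref{e:cover-full} the inclusion $\subseteq$ is automatic, and for $\supseteq$ I would, given $u$ with $\tau:=t-s$, take $\ell$ to be the unique integer with $2^{3\ell}\tau\in[1,8)$, then $j:=\lfloor 2^{3\ell}s\rfloor$, then $k$ the nearest integer to $2^{2\ell}(y-x)/(2^{3\ell}\tau)$, then $i$ the nearest integer making $x'$ small. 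A one-line computation then bounds every coordinate of $\alpha_n^{-1}(u)$ by an absolute constant and gives $t'-s'\ge 1$, so $r_0=9$ (say) works.

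For \eqref{e:cover-compact} and the vanishing claim, fix $\ell,a,b$. If $\alpha_{i,j,k,\ell}(\tilde B_{r_0})$ meets $\Lambda_{a,b}$, pick a witness $u=(x,s;y,t)\in\tilde B_{r_0}$ with $\alpha_n(u)\in\Lambda_{a,b}$; the six membership inequalities read $|s+j|,|t+j|\le 2^{3\ell}b$ and $|x+i+ks|,|y+i+kt|\le 2^{2\ell}a$. With $\|u\|_\infty\le r_0$ these give $|j|\le 2^{3\ell}b+r_0$; subtracting the two spatial inequalities and dividing by $t-s\ge 1/r_0$ gives $|k|\le r_0(2r_0+2^{2\ell+1}a)$; then the first spatial inequality gives $|i|\le 2^{2\ell}a+r_0+r_0|k|$. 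Counting integer points in these three intervals and using that $(2^{2\ell}a+1)^2$ is at most a constant times $2^{4\ell}a^2+1$ yields \eqref{e:cover-compact} with an absolute $c_0$, once one restricts to the regime $2^{3\ell}b\ge 1/c_0$ where $2^{3\ell}b+r_0+1$ is comparable to $2^{3\ell}b$. Finally, a witness forces $2^{-3\ell}(t-s)\le 2b$ with $t-s\ge 1/r_0$, i.e.\ $b\ge 2^{-3\ell}/(2r_0)$; taking $c_0$ a suitable absolute multiple of $r_0$ then makes the count vanish whenever $c_0 b\le 2^{-3\ell}$.

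For the open-neighborhood statement I would first note that enlarging $U$ only enlarges each $\alpha_n(U)$, so it suffices to treat a small ball $U=\{u:\|u-u_0\|_\infty<\delta\}$, $\delta<1/4$, contained in the given neighborhood. The crucial observation is that integer scales are too coarse: $\alpha_{i,j,k,\ell}(u_0)$ always has time-gap exactly $2^{-3\ell}$, so a generic $\tau\in(0,2b]$ need not be matchable by any such map to within the fixed tolerance $\delta$. Letting $\lambda$ range over $\tfrac1{m_U}\Z$ repairs this: once $m_U$ is large enough (a function of $\delta$), the geometric grid $\{2^{-3\lambda}\}$ is fine enough that some $2^{-3\lambda}$ matches $\tau$ to within a factor $1\pm\delta/2$. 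Having fixed such a $\lambda$, the identity $\alpha_{i,j,k,\lambda}^{-1}(v)=u_0+\xi$ exhibits $\xi$ as an explicit affine function of $(i,j,k)$ whose temporal components are already small; I would choose $j$, then $k$, then $i$ in $\tfrac1{m_U}\Z$, each within $\tfrac1{2m_U}$ of the value forcing the next spatial component of $\xi$ to vanish, and verify $\|\xi\|_\infty<\delta$. Collecting all tuples so produced into $S\subset(\tfrac1{m_U}\Z)^4$ gives $\Lambda_{a,b}\cap\Rd\subset\bigcup_{n\in S}\alpha_n(U)$; the bound \eqref{e:cover-open} and the vanishing claim then follow exactly as for \eqref{e:cover-compact} — for each $\ell$ there are $m_U$ admissible $\lambda$, each carrying an interval of admissible $j$ of length $O(2^{3\ell}b+1)$ and then $O((2^{2\ell}a+1)^2)$ pairs $(i,k)$ — with $c_U$ taken large enough.

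The main obstacle is this last part. Everything collapses to counting lattice points in explicit intervals once the parameters are chosen, but forcing the index set into a single lattice $(\tfrac1{m_U}\Z)^4$ is what makes the fractional-scale device necessary, and one must check that the \emph{full} inverse image $\alpha_n^{-1}(v)$, not merely $\alpha_n(u_0)$, lies in $U$; this is the delicate estimate, since the shear parameter $k$ couples the spatial error at time $t$ with the temporal error. The remaining work is bookkeeping: confirming that the constants in \eqref{e:cover-full}--\eqref{e:cover-compact} are absolute and that those in \eqref{e:cover-open} depend only on $U$ (through $\delta$ and $m_U$).
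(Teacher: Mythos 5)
Your proposal is correct, but for the covering statement \eqref{e:cover-full} (and for the ``Similarly'' part) you take a genuinely different route from the paper. The paper observes that the maps $\alpha_{i,j,k,0}$ realize the real Heisenberg group $H_{\R}$ (via the upper-triangular matrix representation), invokes the fact that the integer Heisenberg group $H_{\Z}$ is a co-compact lattice in $H_{\R}$, uses transitivity of the diagonal $H_{\R}$-action on each slice $O_r=\{u: t-s=r\}$, and reduces to $r\in[1,4]$ by dyadic scaling; the existence of a compact fundamental domain plus continuity of the action immediately furnishes a compact set $C$ with $\Rd=\bigcup_{n\in\Z^4}\alpha_n(C)$, whence $r_0$ exists without ever naming it. The paper even remarks explicitly, ``It is possible, but cumbersome, to prove the lemma by hand. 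Instead we use a bit of group theory for a quick proof'' --- and your explicit parameter-selection (choose $\ell$ so $2^{3\ell}\tau\in[1,8)$, then $j$, $k$, $i$ by nearest-integer rounding, yielding $r_0=9$) is precisely that by-hand route, carried out correctly. For the ``Similarly'' claim the paper again leans on the lattice picture, rescaling to a finer sublattice $(m_0^{-1}\Z)^4$ so that a compact fundamental domain fits inside $U$, and composing with the already-established integer cover of $\tilde B_{r_0}$; you instead match the time-gap with a fine geometric grid $2^{-3\lambda}$, $\lambda\in m_U^{-1}\Z$, and then round $j,k,i$ in $m_U^{-1}\Z$. Both routes hinge on the same insight you flag --- that integer $\ell$ is too coarse to land the time-gap inside $U$, so one must pass to a fractional lattice --- and your explicit treatment of the shear coupling (that $k$ feeds both the spatial error at time $t$ and the endpoint mismatch) is the main bookkeeping the group-theoretic argument sidesteps. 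For the lattice-point count \eqref{e:cover-compact} and the vanishing condition the two proofs coincide up to the order in which $i,j,k$ are counted (the paper bounds $k$, then $i$, then $j$; you bound $j$, then $k$, then $i$), and both use that the presence of any witness forces $2^{3\ell}b\gtrsim 1/r_0$. In summary: your approach buys explicit constants and self-containedness at the cost of case-by-case parameter bookkeeping, while the paper's Heisenberg-lattice argument is shorter, more structural, and makes the existence of $r_0$ and $C'$ essentially automatic.
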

It is possible, but cumbersome, to prove the lemma by hand. Instead we use a bit of group theory for a quick proof.
\begin{proof}
	The maps $\{\alpha_{i,j,k,1} :(i,j,k)\in \mathbb R^3\}$ form the real Heisenberg group $H_\mathbb R$, as can be seen via the matrix representation
	$$
	\begin{bmatrix}
		x \\
		s  \\
		1 
	\end{bmatrix}
	\mapsto 
	\begin{bmatrix}
		1 & k & i \\
		0 & 1 & j \\
		0 & 0 & 1
	\end{bmatrix}
	\begin{bmatrix}
		x \\
		s  \\
		1 
	\end{bmatrix}=
	\begin{bmatrix}
		x+i+ks \\
		s+j  \\
		1 
	\end{bmatrix}.
	$$
	The discrete Heisenberg group $H_\mathbb Z=\{\alpha_{i,j,k,1} : (i,j,k)\in \mathbb Z^3\}$ forms a co-compact lattice in $H_\R$, so there is a compact subset $K\subset H_\R$ with $\bigcup_{\alpha\in H_\mathbb Z} \alpha (K)=H_\R$. Let $S=\{(0,0;0,r):r\in [1,4]\}$.
	Since the diagonal action of $H_\R$ is transitive on each set $O_r=\{(x,s;y,t)\in \Rd: t-s=r\}$, we have 
	\begin{align}
		\label{E:RdRd}
		\Rd&=\bigcup_{\ell \in \mathbb Z, h\in H_\R} \alpha_{0,0,0,\ell}\circ h(S)=\bigcup_{\ell \in \mathbb Z, \alpha \in H_\Z} \bigcup_{ h\in K} \alpha_{0,0,0,\ell}\circ \alpha \circ h(S)=\bigcup_{n\in \Z^4} \alpha_n(C), 
	\end{align}
	with $C=\big\{h(p): (h,p)\in K\times S\big\}.$
	The map $(h,p)\mapsto h(p)$ is continuous, so $C$ is compact. 
	Since every compact set is contained in some $\tilde B_r$, \eqref{e:cover-full} follows. 
	
	For a nonempty intersection in \eqref{e:cover-compact}, $i,j,k$ must satisfy
	$$
	2^{-2\ell}|x+i+sk|\le a,\; 2^{-3\ell}|s+j|\le b, \;2^{-2\ell}|x+sk-y-tk|\le 2a, \;2^{-3\ell}(t-s)\le 2b
	$$
	for some $(x,s;y,t)\in \tilde B_{r_0}$. Since $t-s$ is bounded below,  the third inequality limits the number of choices for $k$ to at most  $c(a2^{2\ell}+1)$. Given $k$, the first inequality allows at most $c(a2^{2\ell}+1)$ choices for $i$. The second inequality gives at most $c{b2^{3\ell}+1}$ choices for $j$. Since $t-s>1/r_0$, by the last inequality there are no solutions if $2^{3\ell}b<c$, and  \eqref{e:cover-compact} and the subsequent `Moreover' follows. 
	
	For the `Similarly' claim, by \eqref{E:RdRd} and rescaling, for large enough $m_0 \in \N$ there exists a compact $C' \subset U$ such that $\Rd = \bigcup_{n \in (m_0^{-1}\Z)^4} \al_n(C')$. Therefore there exists a finite set $F\subset (m_0^{-1}\mathbb Z)^4$ so that $\tilde B_{r_0}\subset \bigcup_{n\in F} \alpha_n(U)$. The remaining claims follow similarly to \eqref{e:cover-compact}.
\end{proof}

\subsection{Exponential tightness bounds for $\L_\e$}

In this section, we deduce two exponential tightness bounds for $\L_\e$. These bounds do not rely on the fine topological structure explored in the coming sections. 

\begin{proposition}\label{p:d-close}
	For every bounded $B\subset \Rd$,  $a,\delta>0$ we have 
	$$
	\limsup_{\e\to 0} \e^{3/2} \log P\Big(|\L_\e(u)-d(u)|\ge a(t-s)^{1/3}+\delta\text{ for some } u=(x,s;y,t)\in B\Big) \le - \tfrac43a^{3/2}.
	$$
\end{proposition}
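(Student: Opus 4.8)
The plan is to pass from $\cL_\e$ to the noise field $W:=\cL-d$, use the symmetries of $\cL$ to reduce to a one-point Tracy--Widom tail on each piece of an efficient cover of $B$, and sum.

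\emph{Reduction and symmetry.} Since $d(x\e^{-1/2},s;y\e^{-1/2},t)=\e^{-1}d(x,s;y,t)$, one has $\cL_\e(u)-d(u)=\e\,W(\tilde u)$ with $\tilde u=(x\e^{-1/2},s;y\e^{-1/2},t)$, and $\tilde u$ ranges over a subset of $\Lambda_{a_0\e^{-1/2},b_0}$ once $B\subset\Lambda_{a_0,b_0}$. Splitting according to the sign of $W$, the half where $W(\tilde u)\le-(a(t-s)^{1/3}+\delta)\e^{-1}$ somewhere will be of order $\exp(-\Omega(\e^{-3}))$ and hence negligible at speed $\e^{-3/2}$; the work is in the other half. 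The key structural input is that $W$ transforms cleanly under Lemma \ref{L:invariance}: it is invariant in law (as a random field on $\Rd$) under spatial and temporal shifts and under shears (the shear correction is precisely the corresponding difference of Dirichlet terms), and $W\circ(\text{KPZ rescaling by }q)\eqd q^{-1}W$ since $d$ scales the same way. Each map $\alpha_n=\alpha_{i,j,k,\ell}$ of Lemma \ref{l:cover} is a shift-shear followed by the scaling $(x,s)\mapsto(2^{-2\ell}x,2^{-3\ell}s)$, so $W\circ\alpha_n\eqd2^{-\ell}W$, whence $\sup_V W\eqd2^{\ell}\sup_{\alpha_n(V)}W$ for every set $V$, and similarly for infima.

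\emph{Cover and summation.} Fix a small $\e_0\in(0,1/5]$, let $U$ be the open $\e_0$-ball about $u_0=(0,0;0,1)$ (on which $|d|\le c\e_0^2$), and use Lemma \ref{l:cover} to cover $\Lambda_{a_0\e^{-1/2},b_0}$ by the sets $\alpha_n(U)$, $n\in S$, with at most $c_U(a_0^2\e^{-1}2^{4\ell}+1)b_02^{3\ell}$ pieces at scale $\ell$ and none for $\ell\le\ell_0$. On a scale-$\ell$ piece every point has $t-s$ of order $2^{-3\ell}$, so by the displayed identity and $|d|\le c\e_0^2$ the probability that $W(\tilde u)$ exceeds $(a(t-s)^{1/3}+\delta)\e^{-1}$ somewhere on it is, by Proposition \ref{p:neighborhood-u}, at most $c_\eta\exp\!\big(-(\eta-c_0\e_0^{1/3})\big((ac_1+\delta2^{\ell})\e^{-1}-c\e_0^2\big)^{3/2}\big)$ for any $\eta<4/3$, where $c_1=c_1(\e_0)\to1$ as $\e_0\to0$. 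Using superadditivity of $x\mapsto x^{3/2}$ to split off a factor $\exp\!\big(-(\eta-c_0\e_0^{1/3})(\delta2^{\ell}\e^{-1})^{3/2}\big)$ and summing over $n\in S$ grouped by $\ell$: this factor beats the $O(\e^{-1}2^{7\ell})$ bound on the number of level-$\ell$ pieces once $\e$ is small, so the series over $\ell$ converges and is dominated by the smallest admissible scale. One gets $P(\cdot)\le\exp\!\big(-(\eta-c_0\e_0^{1/3})(ac_1)^{3/2}\e^{-3/2}(1+o(1))\big)$, and Proposition \ref{p:neighborhood-l} similarly bounds the other sign by $\exp(-\Omega(\e^{-3}))$. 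Taking $\limsup_{\e\to0}$ and then letting $\e_0\to0$, $\eta\to4/3$ gives the claimed $-\tfrac43a^{3/2}$.

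\emph{Main obstacle.} The delicate point is the accumulation of infinitely many cover pieces near the time-boundary $\{t-s=0\}$. It is exactly the $+\delta$ in the statement that forces the tails on the small-$(t-s)$ pieces to decay like $\exp(-c(\delta2^{\ell}\e^{-1})^{3/2})$, which is what makes the sum over scales converge; without it one would have to truncate near $t-s=0$ and treat that region by a separate (easy) estimate.
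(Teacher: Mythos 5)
Your proposal is correct and follows essentially the same approach as the paper's proof: cover the rescaled domain via Lemma \ref{l:cover}, reduce each piece to a one-point neighborhood bound (Propositions \ref{p:neighborhood-l} and \ref{p:neighborhood-u}) using the landscape symmetries, and sum over the cover. The only organizational difference is that you keep the combined threshold $a(t-s)^{1/3}+\delta$ on each piece and decouple the scale-covariant $a$-part from the scale-growing $\delta$-part via superadditivity of $x\mapsto x^{3/2}$, whereas the paper defines two separate events (one per threshold), bounds each cover piece by the \emph{minimum} of the two resulting probability bounds, and introduces an auxiliary scale cutoff $\ell_0\to\infty$ to decide which bound to use at which scale — both devices serve the same purpose, namely controlling the accumulation of pieces near $t-s=0$, which you correctly identify as the main obstacle.
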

\begin{proof}
	It suffices to prove the claim for $B=[-n,n]^4\cap \Rd$.
	For any open or closed  $D$, let
	\begin{align*}
		A_{\ep,1}(D)&=\Big\{\e|\L(u)-d(u)|\ge a(t-s)^{1/3}\text{ for some } u=(x,s;y,t)\in D\Big\}.
		\\
		A_{\ep,2}(D)&=\Big\{\e|\L(u)-d(u)|\ge \delta \text{ for some } u=(x,s;y,t)\in D\Big\}.
	\end{align*}
	% For $u=(x,s;y,t)$ let $\mathcal R_1(u)=|\L(u)-d(u)|/(t-s)^{1/3}$, and let $\mathcal R_2(u)=|\L(u)-d(u)|.$ 
	
	Let $\eta<4/3$.  By Propositions \ref{p:neighborhood-l} and \ref{p:neighborhood-u} there exists $c>0$ and an open neighborhood $U$ of $(0,0;0,1)$ so that for $i=1,2$ and all  $\e>0$ we have $P\big(A_{\ep,1}(U)\big) < ce^{-\eta (a/\e)^{3/2}}$ and $ P\big(A_{\ep,2}(U)\big) < ce^{- (\delta/\e)^{3/2}} $. 
	
	Let $B^\e=([-n/\e^{1/2}, n/\ep^{1/2}]\times[- n, n])^2\cap \Rd$. Lemma \ref{l:cover} provides a set $S \subset (\tfrac{1}{m_U} \Z)^4$ so that $B^\e \subset \bigcup_{n\in S}\alpha_n(U)$. Let $A$ be the event  in the proposition. Then 
	$$
	P(A)\le P\big(\bigcap_{i=1,2} A_{\e,i}(B^\e)\big)\le \sum_{n\in S}\min_{i=1,2}P\big(A_{\e,i}(\alpha_n(U))\big)=\sum_{n\in S}\min\big(P\big(A_{\e, 1}(U)\big),P\big(A_{\e/2^{n_4}, 2}(U)\big)\big)
	$$
	where $n_4$ denotes the last coordinate of $n$, and $c'$ is a constant depending on $a, \de$. Here the final inequality uses the symmetries of the directed landscape (Lemma \ref{L:invariance}). Thus, by the cardinality bounds \eqref{e:cover-open}, for any $\ell_0\in \mathbb N$,
	$$
	P(A)\le \frac{c2^{7\ell_0}}{ \ep}\,e^{-\eta (a/\e)^{3/2}}+\frac{c}{\e}\sum_{\ell=\ell_0}^\infty2^{7\ell} e^{- (\delta 2^{\ell}/\ep)^{3/2}}\le \frac{c'2^{\ell_0}}{\e}(e^{-\eta (a/\e)^{3/2}}+e^{-(\delta2^{ \ell_0}/\e)^{3/2}}),
	$$
	where $c,c'$ depends on $a, n, \eta,\delta$ but not on $\e$ or $\ell_0$. Set $\ell_0$ large, then let $\ep\downarrow 0$ and finally let $\eta\uparrow\frac43$ to get the result. 
\end{proof}

We will only need a very weak version of the following proposition in the proof of Theorem \ref{t:main}.

\begin{proposition}\label{p:d-dominant}
	For every compact $K\subset \Rd$, let $r=\min\{t - s: (x,s;y,t)\in K \}$, and let $a>0$. We have
	$$
	\limsup_{\e\to 0}\e^{3}\log P\Big(\L_\e(u)\le d(u)-a\text{ for some } u\in K\Big) \le-\frac{a^3}{12r}. $$
\end{proposition}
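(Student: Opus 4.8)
\emph{Proof idea.} The plan is to transfer the event to the unscaled landscape $\cL$, cover the relevant region efficiently, and feed each cover piece into the neighbourhood lower‑tail bound of Proposition~\ref{p:neighborhood-l}. Introduce the map $\phi_\e(x,s;y,t)=(x\e^{-1/2},s;y\e^{-1/2},t)$; it fixes the time coordinates and satisfies $d(\phi_\e(u))=\e^{-1}d(u)$, so $\cL_\e(u)-d(u)=\e\,[\cL(\phi_\e(u))-d(\phi_\e(u))]$ and hence
\[
P\big(\cL_\e(u)\le d(u)-a\ \text{for some }u\in K\big)=P\big(\cL(v)\le d(v)-a\e^{-1}\ \text{for some }v\in\phi_\e(K)\big).
\]
It is enough to take $K=\tilde B_n$ with $n$ large, so that every time span of a pair in $K$ lies in $[1/n,2n]$ and $\phi_\e(K)\subset\Lambda_{n\e^{-1/2},\,n}$.

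Next I would record the local estimate. For a pair $v$ of time span $\tau$ and a small $\delta>0$, the symmetries of Lemma~\ref{L:invariance} --- time stationarity, shear stationarity (which removes the Dirichlet offset $d(v)$), and KPZ rescaling with $q=\tau^{1/3}$ --- show that the restriction of $\cL$ to the radius‑$\delta$ ball about $v$ agrees in law with $d(v)+\tau^{1/3}$ times the restriction of $\cL$ to the radius‑$(\delta/\tau)$ ball about a time‑translate of $u_0=(0,0;0,1)$. Thus, once $\delta/\tau\le 1/5$, Proposition~\ref{p:neighborhood-l} and time stationarity give, for every $\eta<1/12$ and with $m:=a\e^{-1}$,
\[
P\big(\cL(w)\le d(w)-m\ \text{for some $w$ in the radius-}\delta\text{ ball about }v\big)\ \le\ c_\eta\exp\!\big(-\tfrac{m^{3}}{\tau}(\eta-c_0(\delta/\tau)^{1/3})\big).
\]
This is the only probabilistic input; everything else is a deterministic covering argument.

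Finally I would run the union bound. By the ``Similarly'' clause of Lemma~\ref{l:cover}, applied with $U$ a small neighbourhood of $u_0$, the box $\Lambda_{n\e^{-1/2},n}$ --- and therefore $\phi_\e(K)$ --- is covered by $\alpha_n$‑images of $U$, and the cardinality estimate \eqref{e:cover-open} shows that only $O_n(\e^{-1})$ pieces occur, none of time span below a fixed positive constant. Each piece is carried by its $\alpha_n$ to the situation of the previous display, so summing that bound over all pieces, taking $\e^{3}\log$, and letting $\e\to 0$, then $\delta\to 0$, then $\eta\uparrow 1/12$, absorbs the polynomial‑in‑$\e^{-1}$ prefactor and leaves $\tfrac1{12}$ divided by the governing time span, giving
\[
\limsup_{\e\to 0}\e^{3}\log P\big(\cL_\e(u)\le d(u)-a\ \text{for some }u\in K\big)\ \le\ -\frac{a^3}{12r}.
\]
I expect the only real work to be the covering bookkeeping: checking that only polynomially many pieces in $\e^{-1}$ are used and that the symmetries normalising each piece keep its time span in a fixed compact subset of $(0,\infty)$, so that the hypothesis $\e\le 1/5$ and the error term $c_0\e^{1/3}$ in Proposition~\ref{p:neighborhood-l} are controlled uniformly over the cover. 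Nothing analytically deep is involved past this, which fits with only a weak version of the statement being needed in the sequel.
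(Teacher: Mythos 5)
Your approach is the paper's, step for step: reduce to $K=\tilde B_n$, transfer the event to the unscaled landscape via $\phi_\e$, cover $\phi_\e(K)$ by $\alpha_n$-images of a small neighbourhood $U$ of $u_0$ using Lemma \ref{l:cover}, normalise each piece through the landscape symmetries to land in Proposition \ref{p:neighborhood-l}, and then union bound over the $O(\e^{-1})$ pieces. The bookkeeping you defer to (tracking the finite set of $n_4$-scales, controlling the $\delta$-error uniformly, absorbing the polynomial prefactor after taking $\e^{3}\log$) is exactly what the paper carries out, so as far as the method goes you have reproduced the proof.

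One point deserves scrutiny, and the paper shares it. You write that the union bound and $\e^{3}\log$ "leaves $\tfrac1{12}$ divided by the governing time span" and then identify that time span with $r=\min\{t-s\}$. But your own local bound $c_\eta\exp(-\tfrac{m^3}{\tau}(\eta-c_0(\delta/\tau)^{1/3}))$ decays \emph{slowest} at \emph{large} $\tau$, so the dominant contribution in the union bound comes from the pieces with the \emph{largest} time span in $K$, not the smallest. Carried through correctly, the argument yields $-a^3/(12\max\{t-s:u\in K\})$, which is strictly weaker than $-a^3/(12r)$ whenever $K$ spans several time scales. In fact the stated inequality fails as written: taking $K=\{(0,0;0,1),(0,0;0,2)\}$, the one-point Tracy--Widom lower tail already gives $\liminf_{\e\to0}\e^{3}\log P \ge -a^3/24 > -a^3/12 = -a^3/(12r)$. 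The paper's own proof has the same slip --- the displayed $\min_{k\in N_{4,U}}(a2^k)^3$ is attained at the longest time scale in $K$, not at $r$ --- so almost certainly $r$ should read $\max\{t-s\}$ in the statement; the paper notes that only a weak version of this proposition is used downstream, which is why the slip is inconsequential there. Your proof, like the paper's, establishes the $\max$-version; it does not, and cannot, establish the $\min$-version as literally stated.
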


\begin{proof}
	It suffices to prove the claim for $\tilde B_n$ defined in \eqref{e:tildeBn}.
	For any open or closed  $D$, let
	\begin{align*}
		A_{\ep}(D)&=\Big\{\ep\L(u)\le d(u)-a\text{ for some } u\in D\Big\}.
	\end{align*}
	Let $\eta<1/12$.  By Proposition \ref{p:neighborhood-l}, for every $\delta>0$ there exists $c>0$ and an open neighborhood $U$ of $(0,0;0,1)$ of diameter at most $\delta$ so that for and all  $\e>0$ we have $P(A_{\ep}(U))  < ce^{- \eta(a/\ep)^{3}} $. 
	
	Let $B^\e=\{(x,s;y,t): (\e^{1/2}x,s;\e^{1/2}y,t)\in \tilde B_n\}$. By 
	Lemma \ref{l:cover} we can find $m_0\in \mathbb N$ and sets  $S_\e'\subset (\tfrac{1}{m_0}\mathbb Z)^4$ so that for every $\e>0$ we have $B^\e \subset \bigcup_{n\in S_\e'}\alpha_n(U)$.  Let $S_\e$ denote the set of  $n\in S_\e'$ for which $\alpha_n(U)\cap B^\e \neq \emptyset$. By the compactness of $\tilde B_n$, the set $N_{4,U}=\{n_4:n\in S_\ep,\ep>0\}$ is finite (here $n_4$ denotes the last coordinate). Let $A_\e$ be the event in the proposition. Then 
	$$
	P(A_\e)=P \big(A_{\e}(B^\e)\big)\le \sum_{n\in S}P\big(A_{\e}(\alpha_n(U))\big)=\sum_{n\in S}P\big(A_{\e/2^{n_4}}(U)\big)\le c\sum_{n\in S}e^{-\eta (a2^{n_4}/\e)^3},
	$$
	where the final equality is by the symmetries of the directed landscape (Lemma \ref{L:invariance}). By the cardinality bounds \eqref{e:cover-open}, $\# S_\ep\le c/\ep$, and so letting $\e\to 0$  we get
	$$
	\limsup_{\e\to 0}\e^{3}\log P(A_\e)\le -\eta \min_{k\in N_{4,U}}(a2^{k})^3.$$
	As $\delta\to 0$, the minimum above converges to $a^3/r$.
	Letting  $\eta\uparrow\frac1{12}$ yields  the result. 
\end{proof}

\section{Topology}
\label{sec:top}

In this section, we introduce the space of functions on which we define the rate function and prove some basic properties of the metrics in this space. First, recall from the introduction that $\mathcal E$ is the space of continuous functions $e:\R^4_\uparrow \to \R$ with the topology of uniform convergence on bounded sets, satisfying the reverse triangle inequality $e(p; q) + e(q; r) \le e(p; r)$ for all triples $(p; q), (q ;r), (p; r) \in \Rd$. The space $\mathcal E$ is completely metrizable with the following metric $\fd$. Let $B_n = [-n, n]^4 \cap \R^4_\uparrow$, and for $e, e' \in \mathcal E$, let
\begin{align}\label{e:metric}
	\fd (e, e') = \sum_{n=1}^\infty 2^{-n} \frac{\fd _n(e, e')}{1 + \fd _n(e, e')}, \qquad \text{ where } \quad \fd _n(e, e') = \|e|_{B_n} - e'|_{B_n}\|_{\infty}.
\end{align}
A general directed metric $e \in \mathcal E$ does not have enough structure for us to define the rate function. To work around this, we will define our rate function on a well-behaved subset $\mathcal D \subset \mathcal E$ and simply set it to be $\infty$ elsewhere. We call $e\in \mathcal E$ {\bf Dirichlet-dominant} if $e \ge d$ everywhere. For such $e$ and a point $u = (x, s; y, t)\in \mathbb R^4_\uparrow$, recall the definition of $\Theta$ from \eqref{e:dThetaeu}, 
and define
$$
\Theta(e) = \sup_{u_1, \dots, u_k} \sum_{i=1}^k \Theta(e, u_i),
$$
where the supremum is over all finite sets of points $u_i = (x_i, s_i; y_i, t_i), i= 1, \dots, k$ such that the intervals $(s_i, t_i)$ are disjoint. The quantity $\Theta(e)$ can be thought of as a measure of how close $e$ is to the Dirichlet metric, and is closely related to the rate function.
\begin{definition}
	\label{D:Dm-def}
	For $m > 0$, let $\mathcal D_m \subset \mathcal E$ be the set of functions $e$ satisfying the following three conditions:
	\begin{enumerate}[label=(\roman*),leftmargin=18pt]	
		\item\label{eddom} $e$ is Dirichlet-dominant: $e \ge d$.
		\item\label{edclose} $e$ is Dirichlet-close: $\Theta(e) \le m$.
		\item\label{emet} $e$ satisfies the metric composition law:
		for any points $x, y \in \R$ and $s < r < t$ we have
		\begin{align}
			\label{e:metmax}
			e(x, s; y, t) = \max_{z \in \R} e(x, s; z, r) + e(z, r; y, t).
		\end{align}
	\end{enumerate}
	We set $\mathcal D:= \bigcup_{m > 0} \mathcal D_m.$
\end{definition}
Property \ref{emet} can be thought of as ensuring that $e$ defines a geodesic space, and later on we will show that this is indeed the case.

The rate function is only finite on a subset of $\mathcal D$. In the remainder of this section we collect some basic structural properties of $\mathcal D$. These properties can loosely be summarized as saying that each $\mathcal D_m$ is a compact, and that all metrics $e \in \mathcal D$ define geodesic spaces with quantitatively controlled path lengths.

The first three facts we record are immediate from considering the above definitions of $\Theta(e), \mathcal D_m$. We leave the proofs to the reader.

\begin{lemma}% \\
	\label{L:basic-facts}
	\begin{enumerate}[label=(\roman*)]
		\item\label{L:basic-lsc} The function $e\mapsto \Theta(e)$ is lower semi-continuous on the space of continuous functions $e:\mathbb R^4_\uparrow \to \R$ satisfying $e \ge d$.
		\item\label{L:basic-mbound} If $e \in \mathcal D_m$, then for all $u = (x, s; y, t) \in \mathbb R^4_\uparrow$, since $\Theta(e) \le m$ we have
		$$
		e(u) - d(u) \le m^{2/3} (t-s)^{1/3}.
		$$
		\item\label{L:basic-invariant} For all $m > 0$, the set $\mathcal D_m$ is invariant under the four landscape symmetries from Lemma \ref{L:invariance}.
	\end{enumerate}
\end{lemma}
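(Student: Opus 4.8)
The plan is to verify the three items separately; each reduces to a short computation once the definitions of Section~\ref{sec:top} are unwound, so the writeup would be organised that way.

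For item (i), fix a finite collection $u_1,\dots,u_k$, say $u_i=(x_i,s_i;y_i,t_i)$, with the intervals $(s_i,t_i)$ pairwise disjoint. The map $e\mapsto\sum_{i=1}^k\Theta(e,u_i)$ is continuous on $\{e:e\ge d\}$: if $e_n\to e$ uniformly on bounded sets then $e_n(u_i)\to e(u_i)$ for each $i$ (only finitely many fixed points are involved), and $\Theta(\cdot,u_i)$ depends continuously on the single value $e(u_i)$ because $x\mapsto[x-d(u_i)]_+^{3/2}$ is continuous. Since $\Theta$ is the pointwise supremum of these continuous functions over all admissible collections, it is lower semicontinuous, proving (i). Item (ii) is the single-point case of the definition of $\Theta(e)$: applying it to the collection $\{u\}$ with $u=(x,s;y,t)$ gives $[e(u)-d(u)]_+^{3/2}/(t-s)^{1/2}=\Theta(e,u)\le\Theta(e)\le m$, and since $e\ge d$ the positive part is redundant, so $(e(u)-d(u))^{3/2}\le m(t-s)^{1/2}$; raising to the power $2/3$ gives the bound.

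For item (iii), the plan is to put each of the four symmetries $\phi$ of Lemma~\ref{L:invariance}, viewed as a bijection of $\mathcal E$, into the normal form
$$(\phi e)(u)=\lambda\,e(\psi u)+g(u),$$
where $\lambda>0$ (equal to $1$ except for KPZ rescaling, where $\lambda=q$), $\psi$ is an affine bijection of $\Rd$ whose action on the time coordinates is an orientation-preserving affine map (a shift for time stationarity, the identity for spatial and shear stationarity, multiplication by $q^{-3}$ for KPZ rescaling), and $g$ is either identically $0$ (time, spatial, KPZ) or, for the shear, a potential difference $g(x,s;y,t)=h(y,t)-h(x,s)$ with $h$ an explicit affine function obtained by rewriting the quadratic correction term in Lemma~\ref{L:invariance}(3). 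With this normal form one checks the three defining properties of $\mathcal D_m$ in order. First, $\phi d=d$ in each case by direct substitution. Second, Dirichlet-dominance is preserved: for any $e$ one has $\phi e\ge d$ if and only if $e\ge\lambda^{-1}(d-g)\circ\psi^{-1}$, and this right-hand side equals $d$ precisely because $\phi d=d$. Third, $\Theta$ is preserved: using $\phi d=d$ one computes $\Theta(\phi e,u)=\lambda^{3/2}[e(\psi u)-d(\psi u)]_+^{3/2}/(t-s)^{1/2}$, which equals $\Theta(e,\psi u)$ once the time-scaling factor of $\psi$ in the denominator is accounted for (for KPZ the factors $\lambda^{3/2}=q^{3/2}$ and $(t'-s')^{-1/2}=q^{3/2}(t-s)^{-1/2}$ conspire correctly; for the others $\lambda=1$ and $t'-s'=t-s$); since $\psi$ carries disjoint time intervals to disjoint time intervals and is onto, the supremum defining $\Theta(\phi e)$ is literally the supremum defining $\Theta(e)$, so $\Theta(\phi e)=\Theta(e)\le m$. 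Fourth, the metric composition law \eqref{e:metmax} is preserved: the map $e\mapsto\lambda\,e(\psi\cdot)$ preserves it because the inner maximum is merely reparametrised by the bijection $\psi$ induces on the middle space coordinate, and adding a potential difference $g$ also preserves it since the two terms $h(z,r)$ produced by splitting at time $r$ cancel inside the maximum. This shows $\phi(\mathcal D_m)\subseteq\mathcal D_m$, and applying the same to $\phi^{-1}$ gives equality.

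The only part of the argument requiring genuine bookkeeping is the shear case of (iii): one must compute the quadratic correction in Lemma~\ref{L:invariance}(3) explicitly, confirm it telescopes into the potential form $h(y,t)-h(x,s)$, and then check that adding such a term commutes with the $\max$ in \eqref{e:metmax}. Everything else consists of one-line substitutions, which is presumably why the lemma is left to the reader.
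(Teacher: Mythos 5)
The proposal is correct. The paper explicitly leaves the proofs of this lemma to the reader, and your argument is the natural one: for (i) express $\Theta$ as a pointwise supremum over fixed finite collections of functions that are continuous in $e$; for (ii) specialize the supremum in the definition of $\Theta(e)$ to a singleton; for (iii) write each symmetry in the normal form $\phi e = \lambda\, e\circ\psi + g$ with $\phi d = d$ and check the three defining conditions of $\mathcal D_m$ directly. The one genuinely non-obvious observation — that the quadratic shear correction telescopes into a potential difference $h(y,t)-h(x,s)$, so that it drops out of both the metric composition law and the excess $e-d$ — is exactly the right thing to notice, and your bookkeeping of the $\lambda^{3/2}$ and $(t'-s')^{-1/2}$ factors in the KPZ case is accurate.

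One remark not about your proof: if you carry out the promised bookkeeping for the shear, you will find that the correction term as printed in Lemma~\ref{L:invariance}(3), namely $s^{-1}[(x-y)^2-(x-y-sc)^2]$, does \emph{not} satisfy $\phi d = d$; the sign is reversed (the bracket should be $[(x-y-sc)^2-(x-y)^2]$, equivalently $2c(y-x)+sc^2$). This is a typo in the statement of the symmetry, not a gap in your argument — you correctly identify what the correction \emph{must} be, namely the unique potential difference making $\phi d = d$, and the rest of your argument only uses that structural fact.
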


For some purposes, it will be more convenient to work with metrics that satisfy the condition in Lemma \ref{L:basic-facts}\ref{L:basic-mbound} rather than the bound on $\Theta(e)$ in Definition \ref{D:Dm-def}\ref{edclose}. For this reason, we let $\mathcal D_m^*$ be the set of all $e \in \mathcal E$ satisfying Definition \ref{D:Dm-def}\ref{eddom}\ref{emet} and Lemma \ref{L:basic-facts}\ref{L:basic-mbound}. Note that Lemma \ref{L:basic-facts}\ref{L:basic-invariant} also holds for $\mathcal D_m^*$. We next prove that $\mathcal D_m$ is compact, which will be necessary for eventually establishing our rate function is good. 
We start with an equicontinuity lemma.  We state this lemma in more generality than we need it here as it will later be used to prove exponential tightness for the upper bound in Theorem \ref{t:main}.

\begin{lemma}\label{L:equicontinuity} 
	Let $n, m, \ep > 0$, and suppose that $e \in \mathcal E$ satisfies the estimates
	\begin{equation}
		\label{E:d-e}
		d(u) - \ep \le e(u) \le d(u) + m^{2/3}(t-s)^{1/3} + \ep
	\end{equation}
	for all $u = (x, s; y, t) \in B_n$. Suppose also that for some $1 \le \ell$ and $n \ge m \vee \ell$, for any points $x, y \in [-\ell, \ell]$ and $s < r_1< r_2 < t \in [-\ell, \ell]$ we have
	\begin{align}
		\label{E:met-compact}
		e(x, s; y, t) = \max_{z_1,z_2 \in [-n, n]} e(x, s; z_1, r_1) +e(z_1,r_1;z_2,r_2)+ e(z_2, r_2; y, t).
	\end{align}
	Consider $u_1 = (p_1; q_1) = (x_1, s_1; y_1, t_1), u_2 = (p_2; q_2) = (x_2, s_2; y_2, t_2)\in B_{\ell}$ with $\|u_1 - u_2\|_\infty < [(t_1 - s_1)^3 \wedge (t_2 - s_2)^3]/64$. Then
	$$
	|e(u_1) - e(u_2)| \le 4 \ep + 12 n^2 \|u_1 - u_2\|_\infty^{1/9}.
	$$
\end{lemma}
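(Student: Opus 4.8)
The plan is to compare $e(u_1)$ and $e(u_2)$ by routing both through a common ``core'' segment: one splices in a short detour near each of the two endpoints, arranged so that the middle portion is literally the same point pair for $u_1$ and for $u_2$ and hence cancels. By the symmetry $u_1\leftrightarrow u_2$ it suffices to prove the one‑sided estimate $e(u_1)\le e(u_2)+4\ep+12n^2 h^{1/9}$, writing $h:=\|u_1-u_2\|_\infty$. The hypothesis forces $t_i-s_i>(64h)^{1/3}=4h^{1/3}$ for $i=1,2$, so \eqref{E:d-e} on $B_\ell\subseteq B_n$ already confines both $e(u_1)$ and $e(u_2)$ to an interval of length $O(n^2+\ep)$; the asserted inequality therefore holds outright once $h$ is bounded below by a fixed absolute constant, and I may assume $h$ as small as convenient.

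Next I would fix the detour width $\delta:=h^{1/3}$ (note $t_i-s_i>4\delta$) and set $r_1:=(s_1\vee s_2)+\delta$ and $r_2:=(t_1\wedge t_2)-\delta$. Using $|s_1-s_2|,|t_1-t_2|\le h\le\delta$ and $t_i-s_i>4\delta$, elementary arithmetic gives $s_i<r_1<r_2<t_i$ for $i=1,2$ and $r_1,r_2\in[-\ell,\ell]$, so the hypothesized local composition law \eqref{E:met-compact} applies to $u_1=(x_1,s_1;y_1,t_1)$ at the intermediate times $r_1<r_2$, and \eqref{E:d-e} applies to each of the end‑segments below (all of whose coordinates lie in $[-n,n]$). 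Applying \eqref{E:met-compact} produces maximizers $z_1,z_2\in[-n,n]$ with
$$e(u_1)=e(x_1,s_1;z_1,r_1)+e(z_1,r_1;z_2,r_2)+e(z_2,r_2;y_1,t_1),$$
while the reverse triangle inequality for $e$ (valid since $s_2<r_1<r_2<t_2$) gives
$$e(u_2)\ge e(x_2,s_2;z_1,r_1)+e(z_1,r_1;z_2,r_2)+e(z_2,r_2;y_2,t_2).$$
Subtracting cancels $e(z_1,r_1;z_2,r_2)$ and leaves
$$e(u_1)-e(u_2)\le\big[e(x_1,s_1;z_1,r_1)-e(x_2,s_2;z_1,r_1)\big]+\big[e(z_2,r_2;y_1,t_1)-e(z_2,r_2;y_2,t_2)\big].$$

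Then I would estimate the two brackets with \eqref{E:d-e}: bound each $u_1$‑end‑segment above by $d+m^{2/3}(\text{gap})^{1/3}+\ep$ and each $u_2$‑end‑segment below by $d-\ep$. Since $r_1-s_1\le h+\delta\le 2\delta$ and $t_1-r_2\le 2\delta$, the ``geodesic wandering'' terms $m^{2/3}(\text{gap})^{1/3}$ total at most $2m^{2/3}(2h^{1/3})^{1/3}=2^{4/3}m^{2/3}h^{1/9}$ and the $\pm\ep$'s total $4\ep$, so all that is left to bound are two Dirichlet differences, typified by $D:=-\tfrac{(z_1-x_1)^2}{r_1-s_1}+\tfrac{(z_1-x_2)^2}{r_1-s_2}$. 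Setting $a:=r_1-s_1$, $b:=r_1-s_2$ (both in $[\delta,\delta+h]$, with $|a-b|\le h$) and $p:=z_1-x_1$, $q:=z_1-x_2$ (with $|p-q|\le h$), the identity $D=q^2\tfrac{a-b}{ab}+\tfrac{q^2-p^2}{a}$ yields $|D|\le q^2\tfrac{h}{\delta^2}+\tfrac{(|p|+|q|)h}{\delta}$; with $\delta=h^{1/3}$ and $|p|,|q|\le 2n$ this is $O(n^2h^{1/3}+nh^{2/3})$, and the mirror‑image Dirichlet difference at the other endpoint obeys the same bound. Summing and using $h\le 1$ (so $h^{1/3},h^{2/3}\le h^{1/9}$) and $1\le m\le n$ gives $e(u_1)-e(u_2)\le 4\ep+O(n^2h^{1/9})$; to pin down the stated constant $12$ one tightens the trivial $|z_1-x_1|\le 2n$ to $|z_1-x_1|=O(\ell)$, which follows by combining the lower bound $e(u_1)\ge d(u_1)-\ep$ with the upper bounds from \eqref{E:d-e} on the three pieces of the decomposition of $e(u_1)$.

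The main obstacle is the control of these Dirichlet differences: each divides by a time gap of size $\delta$, making it tempting to send $\delta\to 0$ to kill the wandering terms $m^{2/3}(\text{gap})^{1/3}$, but then $|D|$ explodes like $\delta^{-2}$. The decisive step is combining the two fractions inside $D$ \emph{before} estimating, which converts a spurious $\delta^{-1}$ into the benign factor $|a-b|/\delta^2\le h/\delta^2$; balancing this against the wandering terms, subject to the unavoidable constraint $\delta<t_i-s_i$ — which can be forced to be as small as $\sim h^{1/3}$ — dictates the choice $\delta\sim h^{1/3}$ and hence the (non‑optimal) Hölder exponent $1/9$. The rest is bookkeeping: checking that the auxiliary times and points land where \eqref{E:met-compact} and \eqref{E:d-e} apply, and disposing of the trivial large‑$h$ range as in the first paragraph.
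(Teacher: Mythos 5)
Your proof is correct and follows essentially the same route as the paper: both insert common intermediate times $r_1, r_2$ at distance $\sim h^{1/3}$ from the endpoints, decompose $e(u_1)$ via \eqref{E:met-compact}, cancel the shared middle piece against $e(u_2)$, control the short end-segments with \eqref{E:d-e}, and estimate the resulting Dirichlet differences (the only cosmetic difference being your use of the reverse triangle inequality for $u_2$ rather than a second invocation of \eqref{E:met-compact}). Your caution about the constant is well placed: plugging in the naive $|z_i-x_j|\le 2n$ gives roughly $18n^2h^{1/9}$ rather than $12n^2h^{1/9}$, and the paper's intermediate bound of $8n^2\alpha^{1/3}$ for the Dirichlet supremum also appears to need a factor of two, so either a tightening of $|z_i|$ of the kind you sketch or a slightly larger constant is genuinely required.
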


\begin{proof}
	Set $\al = \|u_1 - u_2\|_\infty$. Choose $s_3 \in [s_1 + \al^{1/3}, s_1 + 2 \al^{1/3}) \cap [s_2 + \al^{1/3}, s_2 + 2 \al^{1/3})$ and $t_3 \in (t_1 - 2 \al^{1/3}, t_1 -  \al^{1/3}] \cap (t_2 - 2 \al^{1/3}, t_2 -  \al^{1/3}]$. Our restriction on $\al$ guarantees that $s_3 < t_3$. By \eqref{E:met-compact} we have that 
	\begin{align*}
		e(u_i) = \max_{z_1, z_2 \in [-n, n]} e(p_i; z_1, s_3) + e(z_1, s_3; z_2, t_3) + e(z_2, t_3; q_i).
	\end{align*}
	Therefore by \eqref{E:d-e} we have the estimate
	\begin{align*}
		|e(u_1) - e(u_2)| &\le 4 \ep + 2^{4/3} m^{2/3} \al^{1/9} \\
		&+ \sup_{z, z' \in [-n, n]} |d(p_1; z, s_3) - d(p_2; z, s_3)| + |d(z', t_3; q_1) - d(z', t_3; q_2)|.
	\end{align*}	
	By a straightforward computation, the above supremum is bounded by $8 n^2 \al^{1/3}$.
\end{proof}

We are now ready to prove our main compactness tool.

\begin{proposition}
	\label{P:compactness} Each of the sets $\mathcal D_m, \mathcal D_m^*$ is compact in $\mathcal E$.
\end{proposition}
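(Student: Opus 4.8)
The plan is to prove that $\mathcal D_m$ (and similarly $\mathcal D_m^*$) is sequentially compact in $\mathcal E$; since $\mathcal E$ is metrizable via $\fd$, this is equivalent to compactness. So let $(e_j)$ be a sequence in $\mathcal D_m$. First I would establish a uniform \emph{pointwise} two-sided bound on each bounded box $B_n$: the lower bound $e_j(u) \ge d(u)$ is Dirichlet-dominance \ref{eddom}, and the upper bound $e_j(u) \le d(u) + m^{2/3}(t-s)^{1/3}$ is Lemma \ref{L:basic-facts}\ref{L:basic-mbound}. In particular, on any $B_n$ the functions $e_j$ are uniformly bounded (note $d$ is bounded on $B_n$ since $t - s \ge 1/n$ there).

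Next I would invoke Lemma \ref{L:equicontinuity} to get equicontinuity of $(e_j)$ on each compact set $B_\ell$. To apply that lemma I need the localized metric composition law \eqref{E:met-compact}; this follows from the global metric composition law \ref{emet} for $e_j$ together with the fact that, by the pointwise bounds just established, the maximizers $z_1, z_2$ in \eqref{e:metmax} cannot escape to infinity — on a fixed box, a quadratic penalty from $d$ forces the maximizing intermediate points into a bounded set depending only on $n$ and $m$. This is the one place where a small argument is genuinely needed rather than a direct appeal: one shows that for $(x,s;y,t) \in B_\ell$ and intermediate times $r_1 < r_2$, any $z$ with $|z|$ large makes $e_j(x,s;z,r_1) + \cdots$ strictly smaller than the value at, say, the straight-line interpolation point, using $e_j \le d + m^{2/3}(\cdot)^{1/3}$ above and $e_j \ge d$ below. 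Choosing $n = n(\ell, m)$ large enough absorbs all relevant maximizers, so \eqref{E:met-compact} holds with $\ep = 0$, and Lemma \ref{L:equicontinuity} gives a modulus of continuity on $B_\ell$ uniform in $j$.

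With uniform boundedness and equicontinuity on every $B_\ell$, the Arzelà–Ascoli theorem plus a diagonal argument over $\ell \in \N$ yields a subsequence $e_{j_r}$ converging uniformly on each $B_\ell$, hence in the metric $\fd$, to some continuous limit $e: \Rd \to \R$. It remains to check $e \in \mathcal D_m$. Dirichlet-dominance passes to the limit since it is a closed pointwise condition. The Dirichlet-close condition $\Theta(e) \le m$ follows because $\Theta$ is lower semicontinuous (Lemma \ref{L:basic-facts}\ref{L:basic-lsc}) and $\Theta(e_{j_r}) \le m$ for all $r$. The metric composition law \ref{emet} passes to the limit: the inequality $e(x,s;y,t) \ge e(x,s;z,r) + e(z,r;y,t)$ is the (reverse) triangle inequality, which holds for all $e \in \mathcal E$; for the reverse, the maximizing $z$'s for $e_{j_r}$ lie in a fixed compact set (the same localization argument as above), so one can extract a convergent sequence of maximizers and use uniform convergence to conclude $e(x,s;y,t) = \max_z e(x,s;z,r) + e(z,r;y,t)$. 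Thus $e \in \mathcal D_m$, proving compactness; the argument for $\mathcal D_m^*$ is identical, using the defining bound of $\mathcal D_m^*$ in place of Lemma \ref{L:basic-facts}\ref{L:basic-mbound} and noting that $\mathcal D_m^*$ is also closed under the passage to the limit (it is defined by exactly the pointwise/maximization conditions that we have just shown survive limits).

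The main obstacle is the localization step: making precise that the maximizers in the metric composition law stay in a controlled compact set, uniformly in $j$ and uniformly over the relevant box of endpoints and intermediate times. Everything else is a standard Arzelà–Ascoli/diagonal argument combined with the lower semicontinuity of $\Theta$ and the closedness of the pointwise constraints.
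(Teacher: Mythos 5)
Your overall structure — uniform pointwise bounds, Lemma \ref{L:equicontinuity} for equicontinuity, Arzel\`a--Ascoli and a diagonal argument, then checking the limit stays in the class — is the right skeleton and matches the paper. The argument for passing the metric composition law to the limit (tracking a convergent subsequence of maximizers) is a legitimate alternative to the paper's formulation via two closed conditions. But there is a genuine gap at the single most delicate step.

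You write that $d$ is bounded on $B_n$ ``since $t-s\ge 1/n$ there,'' and later call $B_\ell$ a compact set. Both assertions are false: $B_n=[-n,n]^4\cap\Rd$ is bounded but \emph{not} compact, because it is open toward the diagonal $s=t$; and $d(u)=-(y-x)^2/(t-s)\to-\infty$ as $t-s\downarrow 0$ with $x\neq y$, so $d$ (hence $e_j$) is unbounded on $B_n$. You appear to be conflating $B_n$ with the compact sets $\tilde B_n=\{u:t-s\ge 1/n,\ \|u\|_\infty\le n\}$ from \eqref{e:tildeBn}. Arzel\`a--Ascoli can therefore only give you a subsequence converging uniformly on each $\tilde B_\ell$, i.e.\ convergence in the topology of uniform convergence on \emph{compact} sets ($\mathcal E_*$ in the paper), not in $\fd$, which is the strictly stronger topology of uniform convergence on \emph{bounded} sets. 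Closing this gap is exactly where the paper spends its effort: it shows that for $e_i,e\in\mathcal D_m^*$ the two-sided sandwich $d\le e_i\le d+m^{2/3}(t-s)^{1/3}$ forces $\|e_i|_{B_n}-e|_{B_n}\|_\infty\le \|e_i|_{\tilde B_{n'}}-e|_{\tilde B_{n'}}\|_\infty+m^{2/3}(n')^{-1/3}$, so $\mathcal E_*$-convergence of a sequence in $\mathcal D_m^*$ automatically upgrades to $\mathcal E$-convergence. You have the ingredient (the $m^{2/3}(t-s)^{1/3}$ bound) but never deploy it to control the behaviour near the diagonal; as written, your proof establishes precompactness only in $\mathcal E_*$, not in $\mathcal E$, and the remark in the introduction that the $\mathcal E$ topology is ``stronger than uniform convergence on compact sets, since it requires uniform convergence near the boundary $s=t$'' is a warning that this distinction cannot be elided.
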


\begin{proof}
	First let $\mathcal E_*$ denote the space $\mathcal E$ but with the topology of uniform convergence on compact, rather than bounded, sets. We first check that $\mathcal D_m^*$ is precompact in $\mathcal E_*$. For this, by the Arzel\`a-Ascoli theorem it is enough to check boundedness and equicontinuity on every compact set $\tilde B_\ell, \ell \in \N$. Boundedness follows from Dirichlet dominance and Lemma \ref{L:basic-facts}\ref{L:basic-mbound}. Equicontinuity follows from Lemma \ref{L:equicontinuity}, since \eqref{E:d-e} holds with $\ep = 0$ on $\Rd$, and in the metric composition law in Definition \ref{D:Dm-def}\ref{emet}, by \eqref{E:d-e}, on $B_\ell$ we have 
 \begin{align*}
   e(x, s; y, t) & = \max_{z_1,z_2 \in \R} e(x, s; z_1, r_1) +e(z_1,r_1;z_2,r_2)+ e(z_2, r_2; y, t) \\ & = \max_{z_1,z_2 \in [-n, n]} e(x, s; z_1, r_1) +e(z_1,r_1;z_2,r_2)+ e(z_2, r_2; y, t),   
 \end{align*}
for $n=(m+1)(\ell+1)$.

	Next we upgrade this to compactness in $\mathcal E_*$. That is, for a sequence $e_n \in \mathcal D_m^*$ we need to check that any  subsequential limit $e$ is in $\mathcal D_m^*$ as well. The conditions in Definition \ref{D:Dm-def}\ref{eddom} and Lemma \ref{L:basic-facts}\ref{L:basic-mbound} are closed conditions in the $\mathcal E_*$-topology, so $e$ must satisfy these conditions.
	
	To check that Definition \ref{D:Dm-def}\ref{emet} holds for $e$, observe that Lemma \ref{L:basic-facts}\ref{L:basic-mbound} and Definition \ref{D:Dm-def}\ref{emet} imply the following bound on $\mathcal D_m^*$. For any points $x, y \in \R$ and $s < r < t$ there exists $\al = \al(x, y, s, r, t)$ such that for all $e' \in \mathcal D_m^*$ we have
	\begin{align*}
		e'(x, s; y, t)&= \max_{z \in [-\al, \al]} e'(x, s; z, r) + e'(z, r; y, t).\\
		e'(x, s; y, t) &\ge 1 + \sup_{z \notin [-\al, \al]} e'(x, s; z, r) + e'(z, r; y, t).
	\end{align*}
	These two conditions are also closed conditions in the $\mathcal E_*$-topology, so both must be satisfied by $e$. Moreover, these conditions imply Definition \ref{D:Dm-def}\ref{emet} as desired.
	
	To complete the proof of the proposition, it suffices to show that any $\mathcal E_*$-convergent sequence $e_i$ in $\mathcal D_m^*$  is also $\mathcal E$-convergent to the same limit $e$. For this, consider the compact sets $\tilde B_n$ defined in \eqref{e:tildeBn}, and
	observe that for any  $n' \ge n$  in $\mathbb N$, Definition \ref{D:Dm-def}\ref{eddom} and Lemma \ref{L:basic-facts}\ref{L:basic-mbound} imply that
	\begin{align*}
		\|e_{i}|_{B_n} - e|_{B_n}\|_\infty \le \|e_{i}|_{\tilde B_{n'}} - e|_{\tilde B_{n'}}\|_\infty + m^{2/3} (n')^{-1/3}
	\end{align*}
	The right-hand side above converges to $m^{2/3} (n')^{-1/3}$ as $i \to \infty$ since $e_{i} \to e$ in $\mathcal E_*$. Taking $n' \to \infty$ then gives that $e_{i} \to e$ in $\mathcal E$.
	
	For $\mathcal D_m$, let  $e_n$ be a sequence in $\mathcal D_m$ with a  limit point $e \in \mathcal D_m^*$. Then $e$ satisfies Definition \ref{D:Dm-def}\ref{edclose} by the lower semicontinuity of $\Theta$, Lemma \ref{L:basic-facts}\ref{L:basic-lsc}. Thus $e\in \mathcal D_m$.
\end{proof}
Now, let $e \in \mathcal E$, and let $\gamma:[a, b] \to \R$ be any continuous path. Let $\bar \ga(r) := (\ga(r),r).$ Recall from the introduction that the $e$-length of $\ga$ can be defined as follows. For a partition $\mathcal P = \{a = r_0 < \dots < r_k = b\}$ of $[a, b]$, let
\begin{equation*}
	|\ga|_{e, \mathcal P} = \sum_{i=1}^k e(\bar \ga(r_{i-1}); \bar \ga(r_i)), \qquad
	|\ga|_e = \inf_{\mathcal P \text{ a partition of } [a, b]} |\ga|_{e, \mathcal P},
\end{equation*}
A path from $p$ to $q$ is a geodesic if $|\ga|_e = e(p; q)$, and a metric $e$ defines a \textbf{geodesic space} if for every $(p, q) \in \R^4_\uparrow$ there is a geodesic from $p$ to $q$. The next lemma establishes that all metrics in $\mathcal D$ are geodesic spaces.
A geodesic $\gamma$ from $p$ to $q$ is called a {\bf rightmost geodesic} if every geodesic $\gamma'$ from $p$ to $q$ satisfies $\gamma'\le \gamma$. 
First, we construct a candidate for the rightmost geodesic.

\begin{lemma}
	\label{L:geodesic-space0}
	For every $e \in \mathcal D$, $u = (p, q) = (x, s; y, t) \in \R^4_\uparrow$, and $r\in [s,t]$ the function 
	\begin{equation}
		\label{E:z-max}
		z \mapsto e(p; z, r) + e(z, r;q).
	\end{equation} 
	has a rightmost maximizer  $z_u(r)$. Moreover, $z_u(\cdot)$ is continuous.
\end{lemma}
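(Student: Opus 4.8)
The plan is to dispatch existence of the rightmost maximizer quickly and then spend the effort on continuity, which is the real content. Fix $m$ with $e\in\mathcal D_m$, write $p=(x,s),\ q=(y,t)$, and for $r\in(s,t)$ set $f_r(z):=e(p;z,r)+e(z,r;q)$; I will take $z_u(r):=\max f_r^{-1}(e(p;q))$. For $r\in(s,t)$, Dirichlet dominance together with Lemma \ref{L:basic-facts}\ref{L:basic-mbound} gives
$$
f_r(z)\ \le\ -\frac{(z-x)^2}{r-s}-\frac{(y-z)^2}{t-r}+m^{2/3}\big[(r-s)^{1/3}+(t-r)^{1/3}\big],
$$
so $f_r$ is continuous and $f_r(z)\to-\infty$ as $|z|\to\infty$; hence its maximizer set $M(r)$ is compact and nonempty, and by the metric composition law (Definition \ref{D:Dm-def}\ref{emet}) it equals $f_r^{-1}(e(p;q))$, so $z_u(r):=\max M(r)$ is well defined. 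For $r\in\{s,t\}$ the extension convention ($e(p';p')=0$ and $e=-\infty$ off $\Rd$) forces $M(s)=\{x\}$ and $M(t)=\{y\}$. The same coercivity estimate, used uniformly over $r'$ in a fixed compact subinterval of $(s,t)$, shows the union of the sets $M(r')$ there is bounded; letting $r'\to s^+$ (resp.\ $t^-$) it further forces $z_u(r')\to x$ (resp.\ $y$), which gives continuity of $z_u$ at the two endpoints.

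For continuity at an interior $r\in(s,t)$ I would prove upper and lower semicontinuity separately. Upper semicontinuity is routine: given $r_n\to r$, the values $z_u(r_n)$ are bounded by the previous step, so a subsequence converges, and joint continuity of $e$ on $\Rd$ (the relevant limit points $(x,s;z',r)$ and $(z',r;y,t)$ lie in $\Rd$ since $s<r<t$) shows the limit lies in $M(r)$ and is therefore $\le z_u(r)$; thus $\limsup_{r'\to r}z_u(r')\le z_u(r)$. The main obstacle is lower semicontinuity — a priori the \emph{rightmost} maximizer could jump downward — and here is the argument I would use. Suppose $r_n\to r$ with $z_u(r_n)\to z''<z^*:=z_u(r)$; since $z_u(r)=z^*\neq z''$ only finitely many $r_n$ equal $r$, and passing to a subsequence we may assume all $r_n>r$ (the case $r_n<r$ is entirely symmetric, using the composition law on the pair $(p;(z^*,r))$ with intermediate time $r_n$ instead). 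Apply the composition law to the pair $((z^*,r);q)$ with intermediate time $r_n\in(r,t)$ to obtain a maximizer $w_n$ with $e(z^*,r;q)=e(z^*,r;w_n,r_n)+e(w_n,r_n;q)$. Feeding this into $e(p;q)=e(p;z^*,r)+e(z^*,r;q)$ and applying the triangle inequality twice (to the triples $p,(z^*,r),(w_n,r_n)$ and $p,(w_n,r_n),q$) forces $e(p;w_n,r_n)+e(w_n,r_n;q)=e(p;q)$, i.e.\ $w_n\in M(r_n)$, so $w_n\le z_u(r_n)$ and hence $\limsup_n w_n\le z''<z^*$.

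Since the $w_n$ are bounded, a further subsequence has $w_n\to w_*\le z''<z^*$. Then $w_n-z^*\to w_*-z^*\neq 0$ while $r_n-r\to 0^+$, so by Lemma \ref{L:basic-facts}\ref{L:basic-mbound},
$$
e(z^*,r;w_n,r_n)\ \le\ -\frac{(w_n-z^*)^2}{r_n-r}+m^{2/3}(r_n-r)^{1/3}\ \longrightarrow\ -\infty,
$$
whereas $e(w_n,r_n;q)\to e(w_*,r;q)$ is finite by continuity of $e$ on $\Rd$ (the limit point $(w_*,r;y,t)$ lies in $\Rd$ since $r<t$). Hence $e(z^*,r;q)=e(z^*,r;w_n,r_n)+e(w_n,r_n;q)\to-\infty$, contradicting $e(z^*,r;q)\ge d(z^*,r;q)>-\infty$. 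This rules out a downward jump, so $\liminf_{r'\to r}z_u(r')\ge z_u(r)$, and together with the upper bound $z_u$ is continuous on $[s,t]$. Beyond bookkeeping, I expect the only delicate points to be (i) making the uniform coercivity bound near the endpoints precise, and (ii) the two triangle-inequality manipulations that certify $w_n$ as a genuine element of $M(r_n)$.
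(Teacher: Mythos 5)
Your proof is correct and uses the same core ingredients as the paper's: the metric composition law to manufacture a maximizer $w_n$ of the original problem $M(r_n)$ from one of the auxiliary problem started at $(z^*, r)$, and the Dirichlet shape bound (Lemma \ref{L:basic-facts}\ref{L:basic-mbound}) for coercivity. The only difference is presentational: for lower semicontinuity the paper observes that the auxiliary function $z_{(\bar z_u(r');q)}$ is continuous at its own left endpoint $r'$ (citing the endpoint-continuity step it just proved) and bootstraps from there, whereas you re-derive that same endpoint coercivity inline as a contradiction (if $w_n\to w_*\ne z^*$ then $e(z^*,r;w_n,r_n)\to-\infty$). Both routes verify $w_n\in M(r_n)$ by exactly the same double triangle-inequality manipulation, so this is substantively the paper's proof.
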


In \eqref{E:z-max} for $r \in \{s, t\}$, we have extended $e$ outside of $\Rd$ as discussed after Definition \ref{D:directed-metric}.

\begin{proof}
	A maximizer necessarily exists by Definition \ref{D:Dm-def}\ref{emet}, and the continuity of $e$ and Lemma \ref{L:basic-facts}\ref{L:basic-mbound} guarantee that a rightmost maximizer exists.
	
	Next, we claim that $r\mapsto z_u(r)$ is continuous. Indeed, the shape conditions in Definition \ref{D:Dm-def}\ref{eddom} and Lemma \ref{L:basic-facts}\ref{L:basic-mbound} guarantee that $z_u$ is continuous at $s, t$. The continuity of $e$ implies that $z_u$ is upper semicontinuous everywhere. Moreover, the metric composition law (Definition \ref{D:Dm-def}\ref{emet}) implies that for any $r' < r \in (s, t)$, the value $z_{(\bar z_u(r'); q)}(r)$ maximizes the function \eqref{E:z-max}, and so
	$
	z_{(\bar z_u(r'); q)}(r) \le z_{u}(r).
	$
	Therefore
	$$
	z_u(r') = \liminf_{r \downarrow r'} z_{(\bar z_u(r'); q)}(r) \le \liminf_{r \downarrow r'} z_{u}(r),
	$$
	where the equality uses that $z_{(\bar z_u(r'); q)}$ is continuous at $r'$, which we have just established. Combining this with the upper semicontinuity of $z_u$ yields that $z_u$ is right-continuous at $r'$. A symmetric argument gives left-continuity.
\end{proof}	

Next, we show the quadrangle inequality on $\mathcal D$.
\begin{lemma}
	\label{L:quadrangle}
	For $e \in \mathcal D$, and points $(p, q) = (x, s; y, t), (p', q') = (x', s'; y', t)$ with $x < x'$, and $y < y'$ we have
	$$
	e(p; q') + e(p'; q) \le e(p; q) + e(p'; q').
	$$
\end{lemma}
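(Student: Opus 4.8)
\emph{Plan of proof.} I read the hypotheses with a common initial time --- $p=(x,s),\ q=(y,t),\ p'=(x',s),\ q'=(y',t)$ with $x<x'$ and $y<y'$ --- since for $e=d$ (which lies in $\mathcal D$) the stated inequality can fail once the two initial times differ. My plan is to run the standard planar crossing argument, but carried out with the continuous functions $z_u(\cdot)$ of Lemma~\ref{L:geodesic-space0} in place of genuine geodesics, so that the only inputs needed are Lemma~\ref{L:geodesic-space0} and the metric composition law, Definition~\ref{D:Dm-def}\ref{emet}.

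\emph{Steps I would carry out.} First I would apply Lemma~\ref{L:geodesic-space0} to the two ``crossing'' point pairs $u=(p;q')=(x,s;y',t)$ and $u'=(p';q)=(x',s;y,t)$, obtaining continuous maps $z_u,z_{u'}:[s,t]\to\R$. For each $r\in[s,t]$, maximality of $z_u(r)$ together with Definition~\ref{D:Dm-def}\ref{emet} yields the exact splittings
\begin{equation*}
	e(p;q')=e(p;z_u(r),r)+e(z_u(r),r;q'),\qquad e(p';q)=e(p';z_{u'}(r),r)+e(z_{u'}(r),r;q),
\end{equation*}
while evaluating at $r=s,t$ and using the $\pm\infty$ convention extending $e$ off $\Rd$ (from the discussion after Definition~\ref{D:directed-metric}) pins the boundary values $z_u(s)=x$, $z_u(t)=y'$, $z_{u'}(s)=x'$, $z_{u'}(t)=y$. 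Next, since $r\mapsto z_u(r)-z_{u'}(r)$ is continuous, equal to $x-x'<0$ at $r=s$ and to $y'-y>0$ at $r=t$, the intermediate value theorem gives an $r^*\in(s,t)$ with $z:=z_u(r^*)=z_{u'}(r^*)$. Finally, regrouping the four terms of the two splittings at $r^*$ and applying the metric composition law one more time,
\begin{align*}
	e(p;q')+e(p';q)&=\big[e(p;z,r^*)+e(z,r^*;q)\big]+\big[e(p';z,r^*)+e(z,r^*;q')\big]\\
	&\le e(p;q)+e(p';q'),
\end{align*}
where the inequality holds because $e(p;q)=\max_{w\in\R}\big[e(p;w,r^*)+e(w,r^*;q)\big]\ge e(p;z,r^*)+e(z,r^*;q)$, and similarly $e(p';q')\ge e(p';z,r^*)+e(z,r^*;q')$. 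This is the asserted quadrangle inequality.

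\emph{Where the difficulty (such as it is) lies.} Once Lemma~\ref{L:geodesic-space0} is in hand there is no hard step; the proof is the three lines above. The points that need a little care are: (a) noticing that the two initial times must coincide for the statement to be correct; (b) reading off the boundary values $z_u(s)=x$, $z_u(t)=y'$ (and the analogues for $z_{u'}$) from the convention extending $e$ to all of $\R^2$; and (c) observing that the \emph{strict} inequalities $x<x'$, $y<y'$ make the intermediate value theorem apply directly, with no need to treat degenerate boundary configurations. As an alternative, the last step could be phrased with genuine geodesics --- concatenate the $p\to q'$ geodesic restricted to $[s,r^*]$ with the $p'\to q$ geodesic restricted to $[r^*,t]$ to produce a $p\to q$ path of length at most $e(p;q)$, do the symmetric thing for $p'\to q'$, add, and use additivity of length over concatenations --- but that version invokes the full geodesic-space property of $\mathcal D$, whereas the $z_u$-argument uses only what Lemma~\ref{L:geodesic-space0} literally provides.
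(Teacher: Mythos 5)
Your proof is correct and follows essentially the same route as the paper: apply Lemma~\ref{L:geodesic-space0} to the two crossing pairs $(p;q')$ and $(p';q)$, use continuity of $z_{(p;q')}-z_{(p';q)}$ and the intermediate value theorem to find a common point $o$ at time $r^*$, and conclude by the (reverse) triangle inequality after regrouping the four pieces. The paper phrases the final step simply as ``the triangle inequality'' while you invoke the metric composition law, but these amount to the same thing here.

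Your remark (a), that the two initial times must coincide, is a genuine and worthwhile observation: the lemma as printed has $p=(x,s)$, $p'=(x',s')$, which must be a typo for a common initial time $s$. Indeed, with $e=d$, taking $s=0$, $s'=1/2$, $t=1$, $x=0$, $x'=0.01$, $y=0$, $y'=1$ gives $d(p;q')+d(p';q)\approx -1.0002$ while $d(p;q)+d(p';q')\approx -1.9602$, violating the stated inequality. Your reading (replacing $s'$ by $s$) is the one under which the proof works and the one used later in the paper (the quadrangle inequality is applied in Lemma~\ref{L:geodesic-space} to pairs sharing both initial and terminal times).
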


\begin{proof}	In the setting of  Lemma \ref{L:geodesic-space0}, since $x < x', y < y'$ and $z_{(p, q')}-z_{(p', q)}$ is continuous, by the intermediate value theorem there exists $r \in (s, t)$ with $z_{(p, q')}(r) =  z_{(p', q)}(r)$. Let $o=(z_{(p',q)}(r),r)$. By the triangle inequality, 
	\[
	e(p; q') + e(p'; q)
	= e(p; o) + e(o; q') + e(p';o) + e(o; q) \le e(p; q) + e(p'; q'). \qedhere
	\]
\end{proof}

\begin{lemma}  \label{L:geodesic-space} The function $z_u$ defined in Lemma \ref{L:geodesic-space0} is a rightmost geodesic from $p$ to $q$.
\end{lemma}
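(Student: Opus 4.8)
The plan is to verify the two defining requirements separately: that $z_u$ (which is a continuous path $[s,t]\to\R$ with $z_u(s)=x$, $z_u(t)=y$ by Lemma \ref{L:geodesic-space0} and the extension of $e$ past $\Rd$) is a geodesic from $p$ to $q$, and that every geodesic from $p$ to $q$ lies weakly to its left. Throughout write $\bar z_u(r)=(z_u(r),r)$, as in the paper's path notation, so that $\bar z_u(s)=p$ and $\bar z_u(t)=q$.

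For the first part, the key is the pair of collinearity identities
$$e\big(p;\bar z_u(r')\big)+e\big(\bar z_u(r');\bar z_u(r)\big)=e\big(p;\bar z_u(r)\big),\qquad e\big(\bar z_u(r');q\big)=e\big(\bar z_u(r');\bar z_u(r)\big)+e\big(\bar z_u(r);q\big),$$
valid for all $s\le r'\le r\le t$. I would obtain both at once by a sandwich argument: applying the metric composition law (Definition \ref{D:Dm-def}\ref{emet}) at times $r'$ and $r$, and using that $z_u$ is by construction a maximizer there, gives $e(p;\bar z_u(r'))+e(\bar z_u(r');q)=e(p;q)=e(p;\bar z_u(r))+e(\bar z_u(r);q)$; meanwhile two applications of the (reverse) triangle inequality give $e(p;\bar z_u(r'))+e(\bar z_u(r');q)\ge e(p;\bar z_u(r'))+e(\bar z_u(r');\bar z_u(r))+e(\bar z_u(r);q)\ge e(p;\bar z_u(r))+e(\bar z_u(r);q)$. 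Since the two ends of this chain coincide, every inequality is an equality, which is exactly the two displayed identities. Applying the first identity to a triple $r_1\le r_2\le r_3$ in $[s,t]$ (with $\bar z_u(s)=p$, $\bar z_u(t)=q$ permitted as endpoints) yields the additivity $e(\bar z_u(r_1);\bar z_u(r_3))=e(\bar z_u(r_1);\bar z_u(r_2))+e(\bar z_u(r_2);\bar z_u(r_3))$; iterating over any partition $\mathcal P=\{s=r_0<\dots<r_k=t\}$ gives $|z_u|_{e,\mathcal P}=\sum_{i=1}^k e(\bar z_u(r_{i-1});\bar z_u(r_i))=e(p;q)$. Taking the infimum over partitions shows $|z_u|_e=e(p;q)=e(u)$, so $z_u$ is a geodesic from $p$ to $q$.

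For the rightmost property, let $\gamma'$ be any geodesic from $p$ to $q$ and fix $r\in(s,t)$. Inserting the time $r$ into the trivial partition $\{s,t\}$ only decreases the partition sum (reverse triangle inequality), so $e(p;q)=|\gamma'|_e\le e(p;\bar\gamma'(r))+e(\bar\gamma'(r);q)\le e(p;q)$, forcing $e(p;\bar\gamma'(r))+e(\bar\gamma'(r);q)=e(p;q)$. Since $e(p;q)$ is the maximum of $z\mapsto e(p;z,r)+e(z,r;q)$ by the metric composition law, $\gamma'(r)$ is a maximizer of this function, and $z_u(r)$ is its rightmost maximizer, hence $\gamma'(r)\le z_u(r)$. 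At $r\in\{s,t\}$ the two paths agree, so $\gamma'\le z_u$, and $z_u$ is the rightmost geodesic.

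There is no serious obstacle here: the whole argument is bookkeeping with the triangle inequality and the metric composition law, and the only genuinely nontrivial ingredient — continuity (and existence of a rightmost maximizer) of the candidate path $z_u$ — is already supplied by Lemma \ref{L:geodesic-space0}. The points that need a little care are the sign conventions (because $e$ is a directed metric of negative sign, refining a partition lowers the sum and path length is an infimum rather than a supremum), and the boundary cases where $r'$ or $r$ equals $s$ or $t$, in which the corresponding $\bar z_u$ value is the prescribed endpoint and the relevant $e$-terms degenerate to $e(p;p)=e(q;q)=0$. The quadrangle inequality (Lemma \ref{L:quadrangle}) is not needed for this particular argument.
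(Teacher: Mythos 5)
The ``rightmost'' half of your argument is fine and matches the paper's (one-line) justification: for any geodesic $\gamma'$ and $r\in(s,t)$, refining the trivial partition shows $e(p;\bar\gamma'(r))+e(\bar\gamma'(r);q)=e(p;q)$, so $\gamma'(r)$ is a maximizer and hence $\gamma'(r)\le z_u(r)$.

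The ``geodesic'' half, however, has a genuine gap, and it is precisely the gap the quadrangle inequality (which you dismiss at the end) is there to close. In your sandwich chain the second step asserts
\[
e(p;\bar z_u(r'))+e(\bar z_u(r');\bar z_u(r))+e(\bar z_u(r);q)\ \ge\ e(p;\bar z_u(r))+e(\bar z_u(r);q),
\]
i.e.\ $e(p;\bar z_u(r'))+e(\bar z_u(r');\bar z_u(r))\ge e(p;\bar z_u(r))$, attributing it to the reverse triangle inequality. But the reverse triangle inequality $e(a;b)+e(b;c)\le e(a;c)$ gives the \emph{opposite} direction here. Once the sign is corrected you are left with two \emph{upper} bounds on the three-term quantity $e(p;\bar z_u(r'))+e(\bar z_u(r');\bar z_u(r))+e(\bar z_u(r);q)$ (one from each composition identity, at $r'$ and at $r$), and nothing squeezes it from below; the chain is inconclusive, and neither collinearity identity follows.

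The obstruction is geometric, not bookkeeping. Knowing that $z_u(r')$ is the rightmost maximizer at time $r'$ and $z_u(r)$ the rightmost maximizer at time $r$ does \emph{not} by itself tell you that these two optimal cuts lie on a common max-path: in an abstract directed geodesic space with several geodesics from $p$ to $q$, the rightmost cuts at different time slices can sit on different geodesics, in which case $e(p;\bar z_u(r'))+e(\bar z_u(r');\bar z_u(r))+e(\bar z_u(r);q)<e(p;q)$. This is exactly what the paper rules out with a planarity input. It introduces the auxiliary rightmost maximizers $a=z_{(\bar z_u(r_1);q)}(r_2)$ and $b=z_{(p;\bar z_u(r_2))}(r_1)$ for the two restricted problems, uses rightmostness to see $a\le z_u(r_2)$ and $b\le z_u(r_1)$, and then applies the quadrangle inequality (Lemma~\ref{L:quadrangle}) to the quadruple $(b,r_1),\bar z_u(r_1),(a,r_2),\bar z_u(r_2)$. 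Summing the two exact composition identities and invoking the quadrangle inequality produces the missing lower bound. Note also that Lemma~\ref{L:quadrangle} is itself proved via the intermediate value theorem applied to the continuous functions $z_{(p;q')}$ and $z_{(p';q)}$ from Lemma~\ref{L:geodesic-space0}: so the continuity you flagged as the only nontrivial ingredient is indeed used, but through the quadrangle inequality rather than directly.
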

\begin{proof}
	We follow the argument from \cite{DOV18}, Lemma 13.2.
	Any $e$-geodesic $\ga$ from $p$ to $q$ satisfies $\ga \le z_u$ by definition so it is enough to show that $z_u$ is itself a geodesic. For this, is enough to show that for every pair of points $r_1 < r_2 \in [s, t]$ we have
	\begin{equation}
		\label{E:epq}
		e(p; q) = e(p; \bar z_u(r_1)) + e(\bar z_u(r_1); \bar z_u(r_2)) + e(\bar z_u(r_2); q).
	\end{equation}
	Letting $a = z_{(\bar z_u(r_1); q)}(r_2)$ and $b = z_{(p; \bar z_u(r_2)}(r_1)$, we have that
	\begin{align*}
		e(p; q) &= e(p; \bar z_u(r_1)) + e(\bar z_u(r_1); a, r_2) + e(a, r_2; q), \\
		e(p; q) &= e(p; b, r_1) + e(b, r_1; \bar z_u(r_2)) + e(\bar z_u(r_2); q).
	\end{align*}
	As in the proof of Lemma \ref{L:geodesic-space0}, $a \le z_u(r_2)$ and $b \le z_u(r_1)$. Therefore adding the above two equalities and applying the quadrangle inequality in Lemma \ref{L:quadrangle} gives that
	\begin{align}
		\nonumber
		2 e(p; q) & \le [e(p; \bar z_u(r_1)) + e(\bar z_u(r_1); \bar z_u(r_2)) + e(\bar z_u(r_2); q)] \\
		\nonumber
		& \hspace{1cm}+[e(p; b, r_1) + e(b, r_1; a, r_2) + e(a, r_2; q)].
	\end{align}
	The triangle inequality for $e$ implies that both of the bracketed expressions on the right-hand side above are less than or equal to $e(p, q)$. This forces \eqref{E:epq}.
\end{proof}

The remaining lemmas in this section aim to understand the structure of finite length paths in metrics $e \in \mathcal D$. We start with the special case when $e = d$, where path length is given the negative of the Dirichlet energy. 

\begin{lemma}(Lemma 5.1.6, \cite{dz})
	\label{L:dirichlet-chara}
	Let $\ga:[a, b] \to \R$. If $\ga$ is absolutely continuous, then
	\begin{equation}
		\label{E:ga-d}
		|\ga|_d = - \int_a^b |\ga'(t)|^2 dt,
	\end{equation}
	and if $\ga$ is not absolutely continuous then $|\ga|_d = -\infty$.
\end{lemma}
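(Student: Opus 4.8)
I would begin by rephrasing the statement in terms of a single functional. Writing $\mathcal P=\{a=r_0<\dots<r_k=b\}$ for a partition of $[a,b]$ and recalling $d(x,s;y,t)=-(y-x)^2/(t-s)$, the definition of $e$-length specialized to $e=d$ gives
\[
|\ga|_d=\inf_{\mathcal P}\sum_{i=1}^k\Big(-\frac{(\ga(r_i)-\ga(r_{i-1}))^2}{r_i-r_{i-1}}\Big)=-V(\ga),\qquad V(\ga):=\sup_{\mathcal P}\sum_{i=1}^k\frac{(\ga(r_i)-\ga(r_{i-1}))^2}{r_i-r_{i-1}}.
\]
Thus it is equivalent to prove that $V(\ga)=\int_a^b|\ga'(t)|^2\,dt$ whenever $\ga$ is absolutely continuous (interpreting the right side as $+\infty$ when $\ga'\notin L^2$), and that $V(\ga)=+\infty$ whenever $\ga$ is not absolutely continuous.

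For the upper bound in the absolutely continuous case I would apply the Cauchy--Schwarz inequality on each subinterval of a partition: $(\ga(r_i)-\ga(r_{i-1}))^2=\big(\int_{r_{i-1}}^{r_i}\ga'\big)^2\le(r_i-r_{i-1})\int_{r_{i-1}}^{r_i}|\ga'|^2$, so every partition sum is at most $\int_a^b|\ga'|^2$, and taking the supremum gives $V(\ga)\le\int_a^b|\ga'|^2$. For the reverse inequality I would restrict to the nested dyadic partitions $\mathcal P_n$ of $[a,b]$ and let $g_n$ be the step function equal to $(\ga(r_i)-\ga(r_{i-1}))/(r_i-r_{i-1})$ on each dyadic interval $[r_{i-1},r_i]$ of $\mathcal P_n$; equivalently $g_n=\Ex[\ga'\mid\mathcal F_n]$, where $\mathcal F_n=\sigma(\mathcal P_n)$ on $[a,b]$ equipped with Lebesgue measure. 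A direct computation gives $\int_a^b g_n^2=\sum_i(\ga(r_i)-\ga(r_{i-1}))^2/(r_i-r_{i-1})\le V(\ga)$, while $g_n\to\ga'$ almost everywhere as $n\to\infty$, either by the Lebesgue differentiation theorem or by $L^1$ martingale convergence (using that the dyadic intervals generate the Borel $\sigma$-algebra). Fatou's lemma then yields $\int_a^b|\ga'|^2\le\liminf_n\int_a^b g_n^2\le V(\ga)$, so the two bounds combine to give $V(\ga)=\int_a^b|\ga'|^2$, finite exactly when $\ga'\in L^2$.

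It remains to handle the case where $\ga$ is not absolutely continuous, which I would do by the contrapositive: if $V(\ga)<\infty$ then $\ga$ is absolutely continuous. Given finitely many pairwise disjoint open intervals $(s_j,t_j)\subset[a,b]$, inserting all of their endpoints into a partition of $[a,b]$ shows $\sum_j(\ga(t_j)-\ga(s_j))^2/(t_j-s_j)\le V(\ga)$, and one more application of Cauchy--Schwarz gives
\[
\sum_j|\ga(t_j)-\ga(s_j)|\le\Big(V(\ga)\sum_j(t_j-s_j)\Big)^{1/2},
\]
which is exactly the absolute-continuity modulus estimate (take $\de=\ep^2/V(\ga)$ for a prescribed $\ep$). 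Hence $\ga$ is absolutely continuous, which completes the dichotomy and gives $|\ga|_d=-\int_a^b|\ga'|^2$ in the absolutely continuous case and $|\ga|_d=-\infty$ otherwise. (The single-pair bound $(\ga(t)-\ga(s))^2/(t-s)\le V(\ga)$ also shows that a finite-length path is automatically $1/2$-H\"older, so continuity is not an extra assumption here; in our applications $\ga$ is continuous anyway.)

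The only step that goes beyond bookkeeping with Cauchy--Schwarz is the lower bound $V(\ga)\ge\int_a^b|\ga'|^2$, and there the one point requiring care is the almost-everywhere convergence $g_n\to\ga'$; this is the crux of the argument, though it is a standard consequence of the Lebesgue differentiation theorem applied along the regularly shrinking dyadic intervals (equivalently, of $L^1$ martingale convergence).
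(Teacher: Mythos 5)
Your proof is correct, but note that the paper does not actually give a proof of this lemma: it is cited verbatim to Dembo--Zeitouni (the reference \texttt{dz}, Lemma 5.1.6), where it appears in the identification of Schilder's rate function as the Cameron--Martin norm. Your argument is the standard one used there and in similar texts: the Cauchy--Schwarz upper bound on each subinterval, the lower bound via the piecewise-constant conditional expectations $g_n=\Ex[\ga'\mid\mathcal F_n]$ together with $L^1$ martingale convergence (or Lebesgue differentiation) and Fatou, and the contrapositive for the non-absolutely-continuous case via a second Cauchy--Schwarz giving the absolute-continuity modulus. All the steps check out, including the bookkeeping point that the nonnegativity of each partition term lets you drop the gap intervals when bounding $\sum_j(\ga(t_j)-\ga(s_j))^2/(t_j-s_j)$ by $V(\ga)$, and the observation that Fatou's inequality $\int|\ga'|^2\le V(\ga)$ also covers the case $\ga'\notin L^2$ so both sides of \eqref{E:ga-d} equal $-\infty$. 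There is no paper proof to compare against, so no further comment is needed.
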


Going forward, we write $H^1$ for the space of finite Dirichlet energy paths (i.e. paths for which $|\ga|_d \ne - \infty$).
Lemma \ref{L:dirichlet-chara} implies that for all $p = (x, s), q = (y, t)$ the unique $d$-geodesic from $p$ to $q$ is the linear function. 
For general metrics in $\mathcal D$, geodesics cannot wander too far from the Dirichlet geodesics.
\begin{lemma}
	\label{L:geo-mod-cont}
	Let $(p;q)\in \mathbb R^d$ and let $\pi_d$ be the (linear) $d$-geodesic from $p$ to $q$. 
	Let $e \in \mathcal D_m$. 
	Let $\pi$ be a path between points $p = (x, s), q = (y, t)$, with 
	$|\pi|_e\ge |\pi|_d$. Then for any $r \in [s, t]$ 
	$$
	|\pi(r) - \pi_d(r)| \le 2^{1/3} (t-s)^{1/6} \sqrt{m [r \wedge (t-r)]}.
	$$
\end{lemma}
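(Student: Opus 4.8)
The plan is to control the displacement of the single point $(\pi(r),r)$ from the straight line $\pi_d$ directly, using only the triangle inequality for $e$ and the shape estimate of Lemma \ref{L:basic-facts}\ref{L:basic-mbound}; no topological input is needed. Write $z=\pi(r)$. Taking the partition $\{s<r<t\}$ in the definition \eqref{E:ga-length} of $e$-length gives the upper bound
\[
|\pi|_e\ \le\ e(p;z,r)+e(z,r;q),
\]
while the hypothesis gives the lower bound $|\pi|_e\ge d(p;q)=|\pi_d|_d$ (which in particular holds whenever $\pi$ is an $e$-geodesic, as then $|\pi|_e=e(p;q)\ge d(p;q)$ by $e\ge d$). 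Since the point pairs $(p;z,r)$ and $(z,r;q)$ have time-spans $r-s$ and $t-r$, Lemma \ref{L:basic-facts}\ref{L:basic-mbound} applied to each yields
\[
d(p;q)\ \le\ d(p;z,r)+d(z,r;q)+m^{2/3}\big[(r-s)^{1/3}+(t-r)^{1/3}\big].
\]

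The remaining step is an exact identity for the (negative) Dirichlet metric: completing the square in $z$,
\[
\frac{(z-x)^2}{r-s}+\frac{(z-y)^2}{t-r}\ =\ \frac{(y-x)^2}{t-s}+\frac{t-s}{(r-s)(t-r)}\big(z-\pi_d(r)\big)^2,
\]
since the left-hand side is minimized at $z=\pi_d(r)=x+\tfrac{r-s}{t-s}(y-x)$. Equivalently, $d(p;z,r)+d(z,r;q)=d(p;q)-\tfrac{t-s}{(r-s)(t-r)}(z-\pi_d(r))^2$. Substituting this into the previous display, cancelling $d(p;q)$, and using the concavity bound $(r-s)^{1/3}+(t-r)^{1/3}\le 2^{2/3}(t-s)^{1/3}$ together with $\tfrac{(r-s)(t-r)}{t-s}\le (r-s)\wedge(t-r)$, I obtain
\[
\big(\pi(r)-\pi_d(r)\big)^2\ \le\ 2^{2/3}\,m^{2/3}\,(t-s)^{1/3}\,\big[(r-s)\wedge(t-r)\big],
\]
which is the asserted estimate after taking square roots.

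I do not expect a genuine obstacle: the argument is short, and the only point demanding attention is the lower bound $|\pi|_e\ge d(p;q)$ — i.e.\ using the hypothesis to compare $\pi$ against the straight line $\pi_d$ (whose $d$-length is exactly $d(p;q)$) rather than against $\pi$ itself in $d$. If one prefers a constant-free version of the estimate, the invocation of Lemma \ref{L:basic-facts}\ref{L:basic-mbound} can be replaced by applying $\Theta(e)\le m$ directly to the disjoint pair of point pairs $(p;z,r),(z,r;q)$ followed by H\"older's inequality, which bounds $[e(p;z,r)-d(p;z,r)]+[e(z,r;q)-d(z,r;q)]$ by $m^{2/3}(t-s)^{1/3}$.
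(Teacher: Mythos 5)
Your argument is the paper's argument, up to a change of variables. The paper first normalizes to $x=s=y=0$ by the symmetry invariance of $\mathcal D_m$ (Lemma \ref{L:basic-facts}\ref{L:basic-invariant}) and then applies $\tfrac1r+\tfrac1{t-r}\ge\tfrac1{r\wedge(t-r)}$; you keep general coordinates and use the completing-the-square identity $d(p;z,r)+d(z,r;q)=d(p;q)-\tfrac{t-s}{(r-s)(t-r)}(z-\pi_d(r))^2$ together with $\tfrac{(r-s)(t-r)}{t-s}\le (r-s)\wedge(t-r)$. Those are the same inequality, so the two proofs coincide in substance: the upper bound $e(p;\pi(r),r)+e(\pi(r),r;q)\ge|\pi|_e$ (partition), the $\mathcal D_m$ shape estimate, the lower bound by $d(p;q)$, and solving for the displacement.

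Two remarks. First, you are right that the step that really carries the argument is $|\pi|_e\ge d(p;q)=|\pi_d|_d$, and you are also right to be uneasy about it: the literal hypothesis $|\pi|_e\ge|\pi|_d$ is automatic for every $\pi\in H^1$ by Lemma \ref{L:dirichlet-close-paths} (since $e\ge d$) and is strictly weaker than $|\pi|_e\ge d(p;q)$ because $|\pi|_d\le d(p;q)$, so it cannot yield what you (or the paper) need. The intended hypothesis is evidently $|\pi|_e\ge|\pi_d|_d$, which is what holds for the $e$-geodesics to which the lemma is actually applied; the paper's own proof also uses this silently. So flag the subscript rather than claim the stated hypothesis "gives" the lower bound — as written it does not. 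Second, your computation produces $m^{1/3}$ where the lemma asserts $\sqrt{m}$. Applying Lemma \ref{L:basic-facts}\ref{L:basic-mbound} with its stated constant $m^{2/3}(t-s)^{1/3}$ does give $m^{1/3}$; the paper's proof writes $mr^{1/3}+m(t-r)^{1/3}$ at that step, dropping the $2/3$, which is how it reaches $\sqrt{m}$. Your exponent appears to be the correct one given Lemma \ref{L:basic-facts}\ref{L:basic-mbound}, but note that $m^{1/3}$ is not a sharper form of $\sqrt{m}$ — for $m<1$ it is weaker — so you should not present it as proving the stated inequality verbatim.
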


\begin{proof}
	By symmetries of ${\mathcal D}_m$ (Lemma \ref{L:basic-facts}\ref{L:basic-invariant}), it suffices to prove the bound when $x= s = y = 0$. In this case, $e(0,0; 0, t) \ge 0$, whereas by Lemma \ref{L:basic-facts}\ref{L:basic-mbound}
	\begin{align*}
		e(0,0; z, r) + e(z, r; 0, t) &\le - \frac{z^2}{r} - \frac{z^2}{t-r} + m r^{1/3} + m (t- r)^{1/3} \\
		&\le  - \frac{z^2}{r \wedge (t-r)} + 2^{2/3} m t^{1/3}.
	\end{align*}
	For $z$ to lie along $\pi$, the right-hand side above must greater than or equal to $0$, implying that $|z| \le 2^{1/3} \sqrt{m[r \wedge (t-r)]} t^{1/6}$.
\end{proof}

Lemma \ref{L:dirichlet-chara} also implies that a path has finite $d$-length if and only if it has finite Dirichlet energy. This also holds for general $e \in \mathcal D$ by the following lemma.

\begin{lemma}
	\label{L:dirichlet-close-paths}
	Let $e \in \mathcal D_m$, and let $\gamma:[s, t] \to \mathbb R$ be any path. Then
	$$
	|\ga|_d \le |\ga|_e \le |\ga|_d + m^{2/3}(t-s)^{1/3}.
	$$
\end{lemma}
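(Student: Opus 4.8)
The plan is to prove the two inequalities separately, exploiting Dirichlet dominance for the lower bound and Lemma \ref{L:basic-facts}\ref{L:basic-mbound} for the upper bound, and then to pass these pointwise-in-$u$ bounds through the infimum over partitions defining $|\ga|_e$. For the lower bound $|\ga|_d \le |\ga|_e$: fix any partition $\mathcal P = \{s = r_0 < \dots < r_k = t\}$. Since $e \ge d$ on $\Rd$ (Definition \ref{D:Dm-def}\ref{eddom}), for each consecutive pair we have $e(\bar\ga(r_{i-1}); \bar\ga(r_i)) \ge d(\bar\ga(r_{i-1}); \bar\ga(r_i))$, so $|\ga|_{e, \mathcal P} \ge |\ga|_{d, \mathcal P}$. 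But $|\ga|_{d, \mathcal P} \ge |\ga|_d$ trivially, since $|\ga|_d$ is the infimum over \emph{all} partitions (equivalently, refining a partition only decreases the $d$-sum because $d$ satisfies the triangle inequality). Taking the infimum over $\mathcal P$ on the left gives $|\ga|_e \ge |\ga|_d$.

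For the upper bound: by Lemma \ref{L:basic-facts}\ref{L:basic-mbound}, for every point pair $u = (x,s';y,t') \in \Rd$ with $e \in \mathcal D_m$ we have $e(u) \le d(u) + m^{2/3}(t'-s')^{1/3}$. Apply this along a partition $\mathcal P = \{s = r_0 < \dots < r_k = t\}$:
\begin{align*}
	|\ga|_{e, \mathcal P} = \sum_{i=1}^k e(\bar\ga(r_{i-1}); \bar\ga(r_i)) \le \sum_{i=1}^k d(\bar\ga(r_{i-1}); \bar\ga(r_i)) + m^{2/3} \sum_{i=1}^k (r_i - r_{i-1})^{1/3}.
\end{align*}
The first sum is $|\ga|_{d, \mathcal P}$, which can be made arbitrarily close to $|\ga|_d$ by choosing $\mathcal P$ finely (and if $|\ga|_d = -\infty$ the conclusion is vacuous). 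The second sum is controlled by concavity of $x \mapsto x^{1/3}$: by Jensen's inequality, $\sum_{i=1}^k (r_i - r_{i-1})^{1/3} \le k^{2/3}\big(\sum_i (r_i - r_{i-1})\big)^{1/3} = k^{2/3}(t-s)^{1/3}$, which is the wrong direction, so instead I would simply refine the partition to be uniform with mesh $\to 0$: for the uniform partition into $k$ pieces, $\sum_{i=1}^k (r_i - r_{i-1})^{1/3} = k \cdot ((t-s)/k)^{1/3} = k^{2/3}(t-s)^{1/3} \to \infty$. This is still the wrong direction.

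The honest fix — and this is the one delicate point — is to \emph{not} bound $|\ga|_e$ by a single $|\ga|_{e,\mathcal P}$, but to use the triangle inequality for $e$ to collapse: since $e$ satisfies the reverse triangle inequality, $|\ga|_{e,\mathcal P}$ is bounded above by $e(\bar\ga(s); \bar\ga(t))$ for \emph{every} partition $\mathcal P$ (this is exactly the inequality $|\ga|_e \le e(p;q)$ noted after \eqref{E:ga-length}). Hence $|\ga|_e \le e(\bar\ga(s);\bar\ga(t)) \le d(\bar\ga(s);\bar\ga(t)) + m^{2/3}(t-s)^{1/3}$ by Lemma \ref{L:basic-facts}\ref{L:basic-mbound}. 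It remains only to observe $d(\bar\ga(s);\bar\ga(t)) \le |\ga|_d$: indeed $d(\bar\ga(s);\bar\ga(t)) = -(\ga(t)-\ga(s))^2/(t-s)$, and by Cauchy--Schwarz (or Jensen) $(\ga(t)-\ga(s))^2 = \big(\int_s^t \ga'\big)^2 \le (t-s)\int_s^t (\ga')^2$ when $\ga$ is absolutely continuous, so $d(\bar\ga(s);\bar\ga(t)) \le -\int_s^t(\ga')^2 = |\ga|_d$ by Lemma \ref{L:dirichlet-chara}; when $\ga \notin H^1$ the bound is trivial since $|\ga|_d = -\infty$ while $|\ga|_e$ could still be finite, but then the claimed inequality $|\ga|_e \le |\ga|_d + m^{2/3}(t-s)^{1/3} = -\infty$ \emph{fails} — so in fact the statement implicitly restricts attention to, or should be read together with, the fact that $|\ga|_e = -\infty$ for $\ga \notin H^1$, which itself follows from $|\ga|_e \le e(\bar\ga(s);\bar\ga(t)) < \infty$ being false... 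Let me reconsider: actually $|\ga|_e \le e(\bar\ga(s);\bar\ga(t))$ is a finite number, so the upper bound $|\ga|_d + m^{2/3}(t-s)^{1/3}$ can only hold when $|\ga|_d > -\infty$. So I expect the intended reading is that the lemma is used (and true) for $\ga \in H^1$; the main obstacle is thus purely bookkeeping — making sure the upper inequality is routed through $e(\bar\ga(s);\bar\ga(t))$ rather than through a partition sum, and noting the lower bound $d(\bar\ga(s);\bar\ga(t)) \le |\ga|_d$ via Cauchy--Schwarz.
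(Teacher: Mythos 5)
Your lower bound argument is fine, but the upper bound has a sign error that sinks the whole approach. By Cauchy--Schwarz, $(\ga(t)-\ga(s))^2 \le (t-s)\int_s^t(\ga')^2$, so dividing by $-(t-s)$ reverses the inequality and gives $d(\bar\ga(s);\bar\ga(t)) = -\frac{(\ga(t)-\ga(s))^2}{t-s} \ge -\int_s^t(\ga')^2 = |\ga|_d$, not $\le$. So your chain reads $|\ga|_e \le d(\bar\ga(s);\bar\ga(t)) + m^{2/3}(t-s)^{1/3} \ge |\ga|_d + m^{2/3}(t-s)^{1/3}$, which is useless: the endpoint comparison $|\ga|_e \le e(\bar\ga(s);\bar\ga(t))$ is simply too weak, since for a wiggly $\ga$ both $|\ga|_e$ and $|\ga|_d$ are much more negative than $d(\bar\ga(s);\bar\ga(t))$.

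You were actually on the right track when you noticed that the naive per-interval bound $e(u_i) \le d(u_i) + m^{2/3}(r_i-r_{i-1})^{1/3}$ gives $\sum_i m^{2/3}(r_i - r_{i-1})^{1/3}$, which blows up as the mesh shrinks. The fix is \emph{not} to retreat to the endpoints, but to use the constraint on $\Theta(e)$ (Definition~\ref{D:Dm-def}\ref{edclose}) rather than its per-interval consequence Lemma~\ref{L:basic-facts}\ref{L:basic-mbound}. For a fixed partition $\pn$ with $u_i = (\bar\ga(r_{i-1}); \bar\ga(r_i))$, the intervals $(r_{i-1}, r_i)$ are disjoint, so $\sum_i \Theta(e, u_i) \le \Theta(e) \le m$; this is a constraint on the sum $\sum_i \frac{(e(u_i)-d(u_i))^{3/2}}{(r_i-r_{i-1})^{1/2}}$, and by Jensen's inequality for the convex function $x \mapsto x^{3/2}$ (this is exactly Lemma~\ref{L:path-pushed} with $e_*=e$, $e=d$) one gets
$$
\bigl(|\ga|_{e,\pn} - |\ga|_{d,\pn}\bigr)^{3/2}(t-s)^{-1/2} \le \sum_i \Theta(e, u_i) \le m,
$$
i.e.\ $|\ga|_{e,\pn} \le |\ga|_{d,\pn} + m^{2/3}(t-s)^{1/3}$. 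Now $|\ga|_e \le |\ga|_{e,\pn}$ for every $\pn$, so taking the infimum over $\pn$ on the right gives the claim. Note this also dispatches your worry about $\ga \notin H^1$: in that case $\inf_\pn |\ga|_{d,\pn} = -\infty$, forcing $|\ga|_e = -\infty$ as well, so the lemma holds with both sides $-\infty$.
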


To prove Lemma \ref{L:dirichlet-close-paths}, we start with a useful lemma based on Jensen's inequality.
\begin{lemma}
	\label{L:path-pushed}
	Suppose that $e, e_* \in \mathcal D$ with $e_* \ge e$, and consider a collection of points $U = \{u_i = (x_i, s_i; y_i, t_i) : i =1, \dots, k\} \subset \R^4_\uparrow$.
	Then
	$$
	\sum_{i=1}^k \Theta(e_*, u_i) - \Theta(e, u_i) \ge \left(\sum_{i=1}^k (e_*(u_i) - e(u_i)) \right)^{3/2} \left(\sum_{i=1}^k (t_i - s_i)\right)^{-1/2}
	$$
\end{lemma}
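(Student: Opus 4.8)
## Proof plan for Lemma \ref{L:path-pushed}

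The plan is to reduce the inequality to a single application of Jensen's inequality after recording the right normalization. Recall from \eqref{e:dThetaeu} that $\Theta(e,u) = [e(u) - d(u)]_+^{3/2}(t-s)^{-1/2}$. Set $\delta_i := e_*(u_i) - e(u_i) \ge 0$ (nonnegativity uses $e_* \ge e$), $a_i := e(u_i) - d(u_i)$, and $\tau_i := t_i - s_i$. Since $e, e_* \in \mathcal D$, both are Dirichlet-dominant (Definition \ref{D:Dm-def}\ref{eddom}), so $a_i \ge 0$ and $a_i + \delta_i = e_*(u_i) - d(u_i) \ge 0$; hence all the positive parts can be dropped, and the claim becomes
$$
\sum_{i=1}^k \frac{(a_i + \delta_i)^{3/2} - a_i^{3/2}}{\tau_i^{1/2}} \ge \frac{\big(\sum_i \delta_i\big)^{3/2}}{\big(\sum_i \tau_i\big)^{1/2}}.
$$

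The first step is a pointwise (one-index) bound: for $a, \delta \ge 0$ and $\tau > 0$,
$$
\frac{(a+\delta)^{3/2} - a^{3/2}}{\tau^{1/2}} \ge \frac{\delta^{3/2}}{\tau^{1/2}},
$$
which is immediate from $(a+\delta)^{3/2} \ge a^{3/2} + \delta^{3/2}$ (superadditivity of $x \mapsto x^{3/2}$ on $[0,\infty)$, since the function is convex and vanishes at $0$). This kills the role of $e$ entirely and reduces the problem to showing
$$
\sum_{i=1}^k \frac{\delta_i^{3/2}}{\tau_i^{1/2}} \ge \frac{\big(\sum_i \delta_i\big)^{3/2}}{\big(\sum_i \tau_i\big)^{1/2}}.
$$

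The second step is exactly Jensen's inequality: the function $\phi(x) = x^{3/2}$ is convex on $[0,\infty)$, so with weights $\tau_i / \sum_j \tau_j$ summing to $1$ and values $\delta_i / \tau_i$,
$$
\sum_i \frac{\tau_i}{\sum_j \tau_j}\,\phi\!\Big(\frac{\delta_i}{\tau_i}\Big) \ge \phi\!\Big(\sum_i \frac{\tau_i}{\sum_j \tau_j}\cdot\frac{\delta_i}{\tau_i}\Big) = \phi\!\Big(\frac{\sum_i \delta_i}{\sum_j \tau_j}\Big),
$$
and multiplying through by $\sum_j \tau_j$ and simplifying $\phi(x) = x^{3/2}$ gives precisely the displayed inequality, hence the lemma. (If $\sum_i \tau_i$ were $0$ the statement is vacuous, but $\tau_i > 0$ always since $(x_i,s_i;y_i,t_i) \in \R^4_\uparrow$.) There is no real obstacle here — the only thing to be careful about is making sure the positive parts in the definition of $\Theta$ genuinely disappear, which is where Dirichlet-dominance of both $e$ and $e_*$ is used, and that the two elementary inequalities (superadditivity of $x^{3/2}$ and Jensen for $x^{3/2}$) are invoked with the correct normalizing weights $\tau_i$.
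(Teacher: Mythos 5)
Your proof is correct and matches the paper's argument: the paper eliminates the base value $a_i = e(u_i)-d(u_i)$ by noting that $b\mapsto (b+\ep)^{3/2}-b^{3/2}$ is nondecreasing (equivalently, your superadditivity of $x^{3/2}$, which is the same fact since it means the minimum over $b\ge 0$ is at $b=0$), and then applies Jensen to $x^{3/2}$ with weights $\tau_i$ exactly as you do. Same two steps, same reduction, just phrased as monotonicity rather than superadditivity.
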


\begin{proof}
	We can write
	\begin{equation}
		\label{E:Ie_*}
		\begin{split}
			\sum_{i=1}^k \Theta(e_*, u_i) - \Theta(e, u_i) = \sum_{i=1}^k f(\ep_i, b_i, t_i - s_i) - f(0, b_i, t_i - s_i),
		\end{split}
	\end{equation}
	where
	$$
	\ep_i = e_*(u_i) - e(u_i), \qquad b_i = e(u_i) + \frac{(x_i - y_i)^2}{t_i - s_i}, \qquad f(\ep, b, \Delta) =  \left[\frac{b + \ep}{\Delta} \right]^{3/2} \Delta.
	$$
	Since $e_* \ge e$, we have $\ep_i \ge 0$ for all $i$. Moreover, Dirichlet dominance of $e$ ensures that $b_i \ge 0$ for all $i$. For fixed $\ep, \Delta \ge 0$, the difference $f(\ep, b, \Delta) - f(0, b, \Delta)$ is monotone increasing in $b$, so \eqref{E:Ie_*} is bounded below by
	$$
	\sum_{i=1}^k f(\ep_i, 0, t_i - s_i) = \sum_{i=1}^k (\ep_i/[t_i - s_i])^{3/2} [t_i - s_i] \ge \left(\sum_{i=1}^k \ep_i\right)^{3/2} \left(\sum_{i=1}^k (t_i-s_i)\right)^{-1/2}.
	$$
	The final inequality is Jensen's inequality.
\end{proof}
\begin{proof}[Proof of Lemma \ref{L:dirichlet-close-paths}]
	The bound $|\ga|_d \le |\ga|_e$ follows since $d \le e$. For the second bound, it suffices to show that
	$$
	|\ga|_{e, \pn} - |\ga|_{d, \pn} \le m^{2/3}(t-s)^{1/3}.
	$$
	for any partition $\pn$. Using Lemma \ref{L:path-pushed} with $e_* = e, e = d$ and the definition of $\Theta$ we have
	$$
	(|\ga|_{e, \pn} - |\ga|_{d, \pn})^{3/2} \le \Theta(e) (t-s)^{1/2},
	$$
	which gives the desired bound since $\Theta(e) \le m$.
\end{proof}

Next, for a path $\ga:[s, t] \to \R$ and a metric $e$ define the {\bf weight function} $w_{\ga, e}:[s, t] \to \R$ by $w_{\ga, e}(r) = |\ga|_{[s, r]}|_e$. Lemma \ref{L:dirichlet-chara} implies that for any path $\ga \in H^1$, that the weight function $w_{\ga, d}$ is absolutely continuous with derivative $w_{\ga, d}' = -|\ga'|^2$. This absolute continuity also holds for general metrics $e \in \mathcal D$.

\begin{lemma}
	\label{L:finite-weight}
	Let $\ga \in H^1$, $e \in \mathcal D$. Then $w_{\ga, e}$ is absolutely continuous and $w_{\ga, e}' \ge -|\ga'|^2$ Lebesgue-a.e. 
\end{lemma}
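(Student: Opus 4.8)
The plan is to split $w_{\ga,e} = g + w_{\ga,d}$ with $g := w_{\ga,e} - w_{\ga,d}$, and to prove that $g$ is a bounded nondecreasing function obeying a modulus-of-continuity bound over \emph{families} of disjoint intervals strong enough to force absolute continuity. Fix $m$ with $e\in\mathcal D_m$. Since $w_{\ga,d}(r) = -\int_s^r|\ga'|^2$ is absolutely continuous with $w_{\ga,d}' = -|\ga'|^2$ by Lemma \ref{L:dirichlet-chara}, it is enough to show that $g$ is absolutely continuous with $g'\ge 0$ a.e.

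First I would record that $w_{\ga,e}$ is additive over adjacent intervals: $w_{\ga,e}(b) - w_{\ga,e}(a) = |\ga|_{[a,b]}|_e$ for all $s\le a<b\le t$. Here ``$\le$'' follows by concatenating partitions of $[s,a]$ and $[a,b]$, and ``$\ge$'' follows because inserting a point into a partition can only decrease $|\ga|_{e,\mathcal P}$, by the reverse triangle inequality. Combined with Lemma \ref{L:dirichlet-close-paths} applied on each subinterval, this gives $0\le g(b)-g(a)\le m^{2/3}(b-a)^{1/3}$ for all $s\le a<b\le t$ (the hypothesis $\ga\in H^1$ makes $g$ real-valued); in particular $g$ is continuous, bounded, and nondecreasing. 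The key step is to upgrade this termwise bound to a \emph{pooled} one: for every finite collection of pairwise disjoint intervals $(a_j,b_j)\subseteq(s,t)$, $j=1,\dots,N$,
\begin{equation*}
\sum_{j=1}^N\bigl(g(b_j)-g(a_j)\bigr)\ \le\ m^{2/3}\Bigl(\sum_{j=1}^N(b_j-a_j)\Bigr)^{1/3}.
\end{equation*}
To prove this, fix $\ep>0$ and pick for each $j$ a partition $\mathcal P_j$ of $[a_j,b_j]$ fine enough that $|\ga|_{[a_j,b_j]}|_{d,\mathcal P_j}\le|\ga|_{[a_j,b_j]}|_d+\ep/N$; let $\{u_i\}$ collect all point pairs along $\ga$ determined by the $\mathcal P_j$. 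Because the $(a_j,b_j)$ are disjoint, the $u_i$ have pairwise disjoint time-spans, so $\sum_i\Theta(e,u_i)\le\Theta(e)\le m$. The Dirichlet metric $d$ lies in $\mathcal D_m$, satisfies $\Theta(d,\cdot)\equiv0$ and $d\le e$, so Lemma \ref{L:path-pushed} applied to the pair $d\le e$ gives $\sum_i\bigl(e(u_i)-d(u_i)\bigr)\le m^{2/3}\bigl(\sum_j(b_j-a_j)\bigr)^{1/3}$. Finally $\sum_i e(u_i)=\sum_j|\ga|_{[a_j,b_j]}|_{e,\mathcal P_j}\ge\sum_j|\ga|_{[a_j,b_j]}|_e$ and $\sum_i d(u_i)\le\sum_j|\ga|_{[a_j,b_j]}|_d+\ep$, so by the additivity of $w_{\ga,e}$ and $w_{\ga,d}$ the left-hand side above is at least $\sum_j\bigl(g(b_j)-g(a_j)\bigr)-\ep$; letting $\ep\to0$ proves the displayed inequality.

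With the displayed inequality in hand, absolute continuity of $g$ is immediate: given $\ep>0$, set $\de=\ep^3/m^2$; then any disjoint family with total length $<\de$ satisfies $\sum_j|g(b_j)-g(a_j)|=\sum_j\bigl(g(b_j)-g(a_j)\bigr)<\ep$ since $g$ is nondecreasing. Hence $w_{\ga,e}=g+w_{\ga,d}$ is absolutely continuous, and a.e.\ $w_{\ga,e}'=g'-|\ga'|^2\ge-|\ga'|^2$ because $g'\ge0$ a.e. The main subtlety is the comparison between Riemann-sum lengths and true lengths in the key step: one uses that $|\ga|_{e,\mathcal P_j}\ge|\ga|_e$ always, while $|\ga|_{d,\mathcal P_j}$ can be pushed within $\ep/N$ of $|\ga|_d$ by refining $\mathcal P_j$, which is precisely what lets the Jensen-type pooling in Lemma \ref{L:path-pushed} be brought to bear. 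That pooling is essential: a naive termwise use of Lemma \ref{L:dirichlet-close-paths} would only produce $\sum_j(b_j-a_j)^{1/3}$ on the right, which can be large even when $\sum_j(b_j-a_j)$ is tiny and hence cannot yield absolute continuity.
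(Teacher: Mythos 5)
Your proof is correct and uses the same key mechanism as the paper: pooling the increments across a disjoint family of intervals via Lemma \ref{L:path-pushed}, then invoking $\Theta(e)\le m$. The only bookkeeping difference is that you bound the increments of $g = w_{\ga,e}-w_{\ga,d}$ by introducing fine partitions $\mathcal P_j$ of each $[a_j,b_j]$, whereas the paper avoids partitions entirely by applying Lemma \ref{L:path-pushed} directly to the endpoint pairs $(\bar\ga(s_i);\bar\ga(t_i))$ and using $e(u_i)-d(u_i)\ge e(u_i)\ge (w_{\ga,e}(t_i)-w_{\ga,e}(s_i))_+$ (since $d\le 0$) to control only the positive part of the $w_{\ga,e}$-increment, with the negative part absorbed by the absolute continuity of $w_{\ga,d}$.
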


We define the {\bf excess density} of $e$ along  $\gamma$ as the function $\rho_{\gamma,e}:[a,b]\to \mathbb R$ given by
\begin{equation}
	\label{e:rhogamma}
	\rho_{\gamma,e}(r)=w'_{\ga,e} (r)+ \ga'(r)^2\ge 0.
\end{equation}

\begin{proof}
	Write $x_-=-\min(x,0)$ and $x_+=\max(x,0)$. We have $[w_{\ga, e}(t) - w_{\ga, e}(s)]_- \le |w_{\ga, d}(t) - w_{\ga, d}(s)|$ for any $s < t$ since $e \ge d$ and Dirichlet weight functions are non-increasing. Since the weight function $w_{\ga, d}$ is absolutely continuous, for $w_{\ga, e}$ to be absolutely continuous it is therefore enough to show that for any $\ep > 0$, there exists $\de > 0$ such that for a disjoint collection of intervals $(s_i, t_i), i = 1, \dots, \ell$ we have
	\begin{equation}
		\label{E:ws-+}
		\sum_{i=1}^\ell (t_i - s_i) < \de \quad \implies \quad \sum_{i=1}^\ell (w_{\ga, e}(t_i) - w_{\ga, e}(s_i))_+ < \ep.
	\end{equation}
	Using Lemma \ref{L:path-pushed}, we have
	\begin{align*}
		\Theta(e) \ge \sum_{i=1}^\ell \Theta(e, (\bar \ga(s_i); \bar \ga(t_i)) &\ge \left( \sum_{i=1}^\ell e(\bar \ga(s_i); \bar \ga(t_i)) - d((\bar \ga(s_i); \bar \ga(t_i)) \right)^{3/2} \de^{-1/2} \\
		&\ge \left( \sum_{i=1}^\ell (w_{\ga, e}(t_i) - w_{\ga, e}(s_i))_+ \right)^{3/2} \de^{-1/2}
	\end{align*}
	yielding \eqref{E:ws-+} as long as $\de < \ep^3 \Theta(e)^{-2}$. As $e\ge d$, $w'_{\ga,e} \ge -|\ga'|^2$ Lebesgue-a.e.
\end{proof}

We finish this section by recording a property of path length under approximation.
\begin{lemma}
	\label{L:length-lemma}
	If $\ga_n \to \ga$ pointwise, and $e_n \to e$ in $\mathcal D$, then $\limsup_{n \to \infty} |\ga_n|_{e_n} \le |\ga|_e$.
\end{lemma}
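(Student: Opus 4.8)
This is the ``easy'' direction of convergence of lengths, and the plan is to argue directly from the definition of $|\cdot|_{e}$ as an infimum over partitions, using nothing beyond the continuity of $e$ and the topology on $\mathcal E$. Write $[a,b]$ for the common domain of $\ga_n$ and $\ga$. Fix an arbitrary partition $\mathcal P = \{a = r_0 < r_1 < \dots < r_k = b\}$. For every $n$ we have $|\ga_n|_{e_n} \le |\ga_n|_{e_n, \mathcal P} = \sum_{i=1}^k e_n(\bar\ga_n(r_{i-1}); \bar\ga_n(r_i))$, so it suffices to show this finite sum converges, as $n \to \infty$, to $|\ga|_{e, \mathcal P} = \sum_{i=1}^k e(\bar\ga(r_{i-1}); \bar\ga(r_i))$; taking the infimum over $\mathcal P$ at the end then gives $\limsup_n |\ga_n|_{e_n} \le |\ga|_e$.

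For the term-by-term limit, fix $i$. Pointwise convergence $\ga_n \to \ga$ gives $\bar\ga_n(r_{i-1}) \to \bar\ga(r_{i-1})$ and $\bar\ga_n(r_i) \to \bar\ga(r_i)$ in $\R^2$, hence $(\bar\ga_n(r_{i-1}); \bar\ga_n(r_i)) \to (\bar\ga(r_{i-1}); \bar\ga(r_i))$ in $\R^4$. Since the times $r_{i-1} < r_i$ are fixed, all these point pairs lie, for $n$ large, in a single compact set $B \subset \R^4_\uparrow$ bounded away from the diagonal $\{s = t\}$. Convergence $e_n \to e$ in $\mathcal E$ means $\|e_n|_{B_N} - e|_{B_N}\|_\infty \to 0$ for each $N$, and $B \subset B_N$ for $N$ large, so $\|e_n - e\|_{\infty, B} \to 0$; meanwhile $e$ is uniformly continuous on $B$. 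Therefore
$$
\bigl| e_n(\bar\ga_n(r_{i-1}); \bar\ga_n(r_i)) - e(\bar\ga(r_{i-1}); \bar\ga(r_i)) \bigr| \le \|e_n - e\|_{\infty, B} + \bigl| e(\bar\ga_n(r_{i-1}); \bar\ga_n(r_i)) - e(\bar\ga(r_{i-1}); \bar\ga(r_i)) \bigr| \longrightarrow 0,
$$
and summing over the finitely many indices $i$ yields $|\ga_n|_{e_n, \mathcal P} \to |\ga|_{e, \mathcal P}$. Hence $\limsup_n |\ga_n|_{e_n} \le |\ga|_{e, \mathcal P}$, and taking the infimum over partitions completes the proof. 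The degenerate case $|\ga|_e = -\infty$ is covered by the same reasoning: for each $M \in \R$ choose $\mathcal P$ with $|\ga|_{e, \mathcal P} < M$, giving $\limsup_n |\ga_n|_{e_n} \le M$.

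\textbf{Main obstacle.} There is essentially no serious difficulty; the one point that deserves care is the reduction to a single compact set $B$ bounded away from the diagonal. The topology of $\mathcal E$ controls $e_n - e$ only on the sets $B_N = [-N,N]^4 \cap \R^4_\uparrow$, which do come arbitrarily close to the diagonal, so one must check that for a fixed partition the relevant point pairs never approach the diagonal, and only then is $\|e_n - e\|_{\infty, B} \to 0$ together with uniform continuity of $e$ on $B$ enough to conclude. After that observation the argument is a one-line estimate.
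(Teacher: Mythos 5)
Your proof is correct and is exactly the argument the paper has in mind: the authors state that ``the proof of Lemma~\ref{L:length-lemma} is immediate from the definition and we leave the details to the reader,'' and your fix-a-partition, pass-to-the-limit, then-infimize argument is that intended one-line proof. You correctly identify (and handle) the only subtlety, namely that the $\mathcal E$-topology only controls $e_n - e$ on bounded sets that approach the diagonal, but the point pairs associated to a fixed partition stay uniformly away from the diagonal so uniform convergence on a compact set plus uniform continuity of $e$ there suffices.
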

The proof of Lemma \ref{L:length-lemma} is immediate from the definition and we leave the details to the reader.

\section{The rate function}\label{sec:rate}
In this section, we give a path definition of the rate function $I$, and use this to prove that it is a good rate function. For $e \in \mathcal D$ and a path $\ga \in H^1$, recall the excess density $\rho_{\gamma,e}=w_{\gamma,e}'+|\gamma'|^2$ defined in \eqref{e:rhogamma} and define
\begin{equation}
	\label{E:ga-rate}
	I(\ga, e) = \tfrac{4}{3} \int_{a_\ga}^{b_\ga} \rho_{\ga, e}(t)^{3/2} dt.
\end{equation}
Here and throughout the paper we let $[a_\ga, b_\ga]$ be the domain of a path $\ga$.

We say that a finite or countable collection of Dirichlet paths $\Ga$ is a {\bf network} if it is {\bf internally disjoint}, i.e. for all $\ga, \ga' \in \Ga$ with domains $[a, b], [a', b']$ and $r \in (a, b) \cap (a', b')$ we have $\ga(r) \ne \ga'(r)$. We call the network {\bf disjoint} if this also holds on $[a, b] \cap [a', b']$.

\begin{definition}[Rate function]
	\label{D:path-rate}
	Define the rate function $I:\mathcal E \to [0, \infty]$ by setting $I(e) = \infty$ for $e \notin \mathcal D$ and for $e \in \mathcal D$, letting
	$$
	I(e) := \sup_{\Gamma} I(\Gamma, e), \qquad \text{ where } \qquad I(\Gamma, e) := \sum_{\ga \in \Gamma} I(\ga, e).
	$$
	Here the supremum is over all networks $\Gamma$.
\end{definition}

In Definition \ref{D:path-rate}, we can equivalently take the supremum over all finite networks. Also, by slightly truncating paths we can take the supremum over all finite disjoint networks.
Next, we record a few basic properties of the rate function.

\begin{proposition}
	\label{P:rate-function-properties}
	The function $I:\mathcal E \to [0, \infty]$ satisfies the following properties:
	\begin{enumerate}[label=(\roman*)]
		\item $I(d) = 0$.
		\item \label{sublevel} For any $\al < \infty$, the sub-level set $I^{-1}[0,\al]$ is contained in $\mathcal D_{3 \al/4}$.
		\item $I$ is lower semi-continuous: if $e_n \to e$ in $\mathcal E$, then $\liminf_{n \to \infty} I(e_n) \ge I(e)$.
		\item For all $\al < \infty$, $I^{-1}[0,\al]$ is compact in $\mathcal E$.
	\end{enumerate}
\end{proposition}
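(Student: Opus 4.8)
Parts (i) and (iv) are short. For (i), one first checks $d\in\mathcal D$: it is Dirichlet-dominant with $\Theta(d)=0$, and it satisfies the metric composition law \eqref{e:metmax} because the maximizing $z$ for $d$ is the point on the straight segment from $(x,s)$ to $(y,t)$. For any $\gamma\in H^1$, Lemma \ref{L:dirichlet-chara} gives $w_{\gamma,d}(r)=-\int_{a_\gamma}^r|\gamma'|^2$, so $\rho_{\gamma,d}\equiv 0$, hence $I(\gamma,d)=0$ and $I(d)=0$. For (iv), granting (ii) and (iii): $I^{-1}[0,\al]\subseteq\mathcal D_{3\al/4}$ by (ii), this set is compact (Proposition \ref{P:compactness}) hence closed, and $I^{-1}[0,\al]$ is itself closed by lower semicontinuity, so it is a closed subset of a compact set. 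Thus the content lies in (ii) and (iii).

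For (ii), let $I(e)\le\al<\infty$, so $e\in\mathcal D$, say $e\in\mathcal D_m$; it suffices to show $\Theta(e)\le\tfrac34\al$. Fix points $u_i=(x_i,s_i;y_i,t_i)$, $i=1,\dots,k$, with pairwise disjoint open time-intervals $(s_i,t_i)$. Since $e$ is a geodesic space (Lemma \ref{L:geodesic-space}), choose an $e$-geodesic $\gamma_i$ from $(x_i,s_i)$ to $(y_i,t_i)$; by Lemma \ref{L:dirichlet-close-paths}, $|\gamma_i|_d\ge|\gamma_i|_e-m^{2/3}(t_i-s_i)^{1/3}>-\infty$, so $\gamma_i\in H^1$, and since the time-intervals are disjoint $\{\gamma_i\}$ is a network, giving $I(e)\ge\sum_i I(\gamma_i,e)$. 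Because $\gamma_i$ is a geodesic, $\int_{s_i}^{t_i}\rho_{\gamma_i,e}=w_{\gamma_i,e}(t_i)+\int_{s_i}^{t_i}|\gamma_i'|^2=e(u_i)+\int_{s_i}^{t_i}|\gamma_i'|^2\ge e(u_i)-d(u_i)\ge0$, so Jensen's inequality yields $\int_{s_i}^{t_i}\rho_{\gamma_i,e}^{3/2}\ge(t_i-s_i)^{-1/2}(e(u_i)-d(u_i))^{3/2}=\Theta(e,u_i)$. Summing and taking the supremum over point configurations gives $\Theta(e)\le\tfrac34 I(e)\le\tfrac34\al$.

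The heart of the matter is (iii). If $e\notin\mathcal D$ it is immediate: for $e_n\to e$ with $\liminf_n I(e_n)=L<\infty$, along a subsequence $I(e_n)\to L$, so eventually $e_n\in\mathcal D_{3(L+1)/4}$ by (ii), whence $e\in\mathcal D_{3(L+1)/4}\subseteq\mathcal D$ by compactness (Proposition \ref{P:compactness}), a contradiction. So assume $e\in\mathcal D$; we must show $I(\Gamma,e)\le L$ for every finite network $\Gamma=\{\gamma_1,\dots,\gamma_k\}$. The plan is to transfer the rate of $\Gamma$ onto the $e_n$. By Lemma \ref{L:finite-weight} the measure $\rho_{\gamma_i,e}\,dt$ is absolutely continuous, so a Jensen/martingale argument shows $I(\gamma_i,e)$ is the increasing limit over refinements of the partition sums $\tfrac43\sum_c(|\gamma_i|_c|_e-|\gamma_i|_c|_d)^{3/2}|c|^{-1/2}$; fix a partition into cells $c$ realizing this to within $\delta$ and fine enough that the total Dirichlet excess of each $\gamma_i$ over its cells is controlled (possible since $\gamma_i\in H^1$). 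On each cell $c$ let $\pi_c^n$ be the $e_n$-geodesic between the endpoints of $\gamma_i|_c$ (Lemma \ref{L:geodesic-space}); by Lemma \ref{L:geo-mod-cont} these stay close to $\gamma_i$, so after a small perturbation restoring internal disjointness they concatenate into a network $\Gamma^n$, and Jensen on each cell together with $|\pi_c^n|_d\le d(\text{endpoints of }c)$ gives
$$
I(e_n)\ge I(\Gamma^n,e_n)\ge\tfrac43\sum_i\sum_c\frac{\big[e_n(\text{endpoints of }c)-d(\text{endpoints of }c)\big]^{3/2}}{|c|^{1/2}}.
$$
Letting $n\to\infty$, using uniform convergence $e_n\to e$ on the bounded set containing these finitely many points, gives the same inequality with $L$ on the left and $e$ in place of $e_n$; comparing $e(\text{endpoints of }c)-d(\text{endpoints of }c)$ with $|\gamma_i|_c|_e-|\gamma_i|_c|_d$ (they differ by at most the Dirichlet excess of $\gamma_i$ on $c$) and letting $\delta\to0$ gives $I(\Gamma,e)\le L$. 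Equivalently, this realizes $I(e)$ as a supremum of the continuous functionals $\tfrac43\sum_j\Theta(e,u_j)$ over point families $\{u_j\}$ that arise as cell-endpoint pairs of networks, which is manifestly lower semicontinuous.

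The delicate step is the final error bound. Writing $\nu_c=|\gamma_i|_c|_e-|\gamma_i|_c|_d$ and $\e_c$ for the Dirichlet excess of $\gamma_i$ on $c$, one has $\nu_c^{3/2}-[\nu_c-\e_c]_+^{3/2}\le\tfrac32\nu_c^{1/2}\e_c$, but combined with $\nu_c\le m^{2/3}|c|^{1/3}$ (Lemma \ref{L:dirichlet-close-paths}) this carries a factor $|c|^{-1/3}$, so a uniformly fine partition does not suffice: the refinement must be chosen adaptively so that $\sum_c\e_c^{3/2}|c|^{-1/2}=\sum_c|c|\operatorname{Var}(\gamma_i'|_c)^{3/2}$ is small, i.e.\ cell boundaries must be placed near the ``jumps'' of $\gamma_i'$ rather than merely shrinking the mesh (one also treats separately the easy case $I(\gamma_i,e)=\infty$). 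An alternative that sidesteps this is to first show the supremum defining $I(e)$ is unchanged when restricted to networks whose paths are concatenations of $e$-geodesics (for which the excess term vanishes, such a path being a $d$-geodesic on each cell); establishing that reduction is then where the work lies.
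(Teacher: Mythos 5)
Parts (i), (ii), and (iv) are correct and match the paper's argument. For (ii) you give exactly the paper's Jensen/geodesic computation, and (iv) is the standard deduction from (ii)+(iii)+Proposition \ref{P:compactness}.

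For (iii) your overall strategy is the right one — the paper does the same thing: take partitions, replace $\gamma_j$ by the path $\gamma_j^n$ that agrees with $\gamma_j$ at partition points and is the rightmost $e_n$-geodesic in between (Lemma \ref{L:geo-mod-cont} guarantees disjointness for fine enough mesh, so no perturbation is needed), and pass to the limit using uniform convergence on the finitely many point pairs. But you've chosen the wrong partition-based lower bound, and that is what creates the ``delicate step'' you then cannot close. You work with $\tfrac{4}{3}\sum_c\big(|\gamma_i|_c|_e-|\gamma_i|_c|_d\big)^{3/2}|c|^{-1/2}$, whose cell terms involve $|\gamma_i|_c|_d$; comparing this with the quantity you can control from $e_n$, namely $e(\text{endpoints of }c)-d(\text{endpoints of }c)$, forces you to absorb the Dirichlet excess $\varepsilon_c=d(\text{endpoints of }c)-|\gamma_i|_c|_d$, and as you correctly observe the resulting error carries a $|c|^{-1/3}$ factor that a uniformly fine mesh does not kill. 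The fix is not an adaptive partition: it is to use the partition rate $I(\gamma,e,\mathcal P)$ of display \eqref{E:6.3}, i.e.\ $\tfrac{4}{3}\sum_c\big(|\gamma_i|_c|_e-d(\text{endpoints of }c)\big)_+^{3/2}|c|^{-1/2}$. Lemma \ref{L:partition-approximation} shows this is a lower bound converging to $I(\gamma_i,e)$ as the mesh shrinks, and — this is the point — since $e(\text{endpoints of }c)\ge|\gamma_i|_c|_e$ by the (reverse) triangle inequality, the lower bound you already derived,
$$
\liminf_{n\to\infty} I(e_n)\ \ge\ \tfrac{4}{3}\sum_i\sum_c \frac{\big(e(\text{endpoints of }c)-d(\text{endpoints of }c)\big)^{3/2}}{|c|^{1/2}},
$$
is directly $\ge \sum_i I(\gamma_i,e,\mathcal P_i)$ with no Dirichlet-excess correction at all. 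Your proposed ``alternative'' is also off: an $e$-geodesic is generally \emph{not} a $d$-geodesic on each cell, so restricting to concatenations of $e$-geodesics does not make the excess term vanish. In short: the gap you flag is real in your formulation, but it is an artifact of using $|\gamma_i|_c|_d$ instead of $d(\text{endpoints of }c)$ in the partition sum; replacing it with the paper's $I(\gamma,e,\mathcal P)$ dissolves the problem and completes the proof along the lines you set up.
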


Properties (iii, iv) above together imply that $I$ defines a good rate function.

To prepare for the proof of the proposition, we need a lemma relating the integral expression of the rate $I(\ga, e)$ to a partition-based expression of the rate. This partition-based expression will be used to facilitate proofs throughout the paper. 

For $e \in \mathcal D$, a path $\ga:[a, b] \to \R$ with $\ga \in H^1$, and a partition $\pn = \{r_0 < r_1 < \dots < r_k\}$ of $[a, b]$ and with the notation $x_+ = \max(x, 0)$,
define
\begin{equation}
	\label{E:6.3}
	I(\ga, e, \pn) := \frac{4}{3} \sum_{i=1}^k \left(\frac{w_{\ga, e}(r_i) - w_{\ga, e}(r_{i-1})}{r_i - r_{i-1}} + \frac{(\ga(r_i) - \ga(r_{i-1}))^2}{(r_i - r_{i-1})^2} \right)^{3/2}_+ (r_i - r_{i-1}).
\end{equation}

\begin{lemma}
	\label{L:partition-approximation}
	Let $e \in \mathcal D$, $\ga \in H^1, \ga:[a, b] \to \R$ and let $\pn$ be a partition of $[a, b]$. Then
	$I(\ga, e, \pn) \le I(\ga, e)$. Moreover, for any sequence of partitions $\pn_n = \{r_{n, 0} = a <  \dots < r_{n, \ell(n)} = b\}$ such that the mesh size $m(\pn_n) := \min \{ r_{n, i} - r_{n, i-1} : i = 1, \dots, \ell(n)\}$ approaches $0$ with $n$, we have
	$$
	\lim_{n\to\infty} I(\ga, e, \pn_n) = I(\ga, e).
	$$
\end{lemma}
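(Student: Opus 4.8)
I would phrase everything through the weight function $w := w_{\ga,e}$, which by Lemma~\ref{L:finite-weight} is absolutely continuous with $\rho := \rho_{\ga,e} = w' + (\ga')^2 \ge 0$ Lebesgue-a.e.; since $\ga \in H^1$, $\ga$ is itself absolutely continuous (Lemma~\ref{L:dirichlet-chara}), so $w', \ga' \in L^1([a,b])$ and $I(\ga,e) = \tfrac43\int_a^b \rho^{3/2}$ by \eqref{E:ga-rate}. For a partition $\pn = \{a = r_0 < \cdots < r_k = b\}$ write $\Delta_i := r_i - r_{i-1}$, $I_i := [r_{i-1}, r_i]$ and $B_i := \tfrac1{\Delta_i}\big(w(r_i) - w(r_{i-1})\big) + \big(\tfrac1{\Delta_i}(\ga(r_i) - \ga(r_{i-1}))\big)^{2}$, so that $I(\ga,e,\pn) = \tfrac43\sum_{i=1}^{k}(B_i)_+^{3/2}\Delta_i$. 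To prove $I(\ga,e,\pn)\le I(\ga,e)$ I would bound each $B_i$ separately: absolute continuity gives $w(r_i)-w(r_{i-1}) = \int_{I_i}w'$ and $\ga(r_i)-\ga(r_{i-1}) = \int_{I_i}\ga'$, so Jensen's inequality for $x\mapsto x^2$ gives $B_i \le \tfrac1{\Delta_i}\int_{I_i}\big(w'+(\ga')^2\big) = \tfrac1{\Delta_i}\int_{I_i}\rho$, which is nonnegative; hence $(B_i)_+^{3/2} \le \big(\tfrac1{\Delta_i}\int_{I_i}\rho\big)^{3/2} \le \tfrac1{\Delta_i}\int_{I_i}\rho^{3/2}$ by a second use of Jensen (for the convex map $x\mapsto x^{3/2}$ on $[0,\infty)$), so $(B_i)_+^{3/2}\Delta_i\le\int_{I_i}\rho^{3/2}$; summing over $i$ and multiplying by $\tfrac43$ yields $I(\ga,e,\pn)\le I(\ga,e)$, with no finiteness assumption on $I(\ga,e)$ needed.

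For the convergence statement, the inequality just established gives $\limsup_n I(\ga,e,\pn_n)\le I(\ga,e)$ for free, so the point is the matching $\liminf$ bound. Let $\beta_n:[a,b]\to[0,\infty)$ be the step function equal to $(B_{n,i})_+$ on $[r_{n,i-1},r_{n,i})$, so that $I(\ga,e,\pn_n)=\tfrac43\int_a^b\beta_n^{3/2}$; for $r$ not a partition point, with $I_n(r)$ denoting the interval of $\pn_n$ containing $r$, we have
$$
\beta_n(r) = \left(\frac1{|I_n(r)|}\int_{I_n(r)} w' + \left(\frac1{|I_n(r)|}\int_{I_n(r)}\ga'\right)^{2}\right)_+ .
$$
Since the interval lengths of $\pn_n$ tend to $0$, $|I_n(r)|\to 0$ for every $r$, and because $w',\ga'\in L^1([a,b])$ the Lebesgue differentiation theorem --- applied along the intervals $I_n(r)\ni r$ of vanishing length --- gives for a.e.\ $r$ that $\tfrac1{|I_n(r)|}\int_{I_n(r)}w'\to w'(r)$ and $\tfrac1{|I_n(r)|}\int_{I_n(r)}\ga'\to\ga'(r)$, so that $\beta_n(r)\to \big(w'(r)+\ga'(r)^2\big)_+=\rho(r)$ for a.e.\ $r$ (using $\rho\ge0$ a.e.). Fatou's lemma applied to the nonnegative functions $\beta_n^{3/2}$ then gives $\liminf_n\int_a^b\beta_n^{3/2}\ge\int_a^b\rho^{3/2}$, that is $\liminf_n I(\ga,e,\pn_n)\ge I(\ga,e)$, which finishes the proof.

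I do not expect a genuine obstacle: the argument is two applications of Jensen's inequality and one of Fatou's lemma. The one step that should be stated with care is the Lebesgue differentiation theorem along the (possibly non-nested) family $\{I_n(r)\}_n$; this is legitimate because at every Lebesgue point of an $L^1$ function the averages over an arbitrary sequence of intervals that contain the point and have lengths tending to $0$ converge to the value of the function there. If one prefers to avoid differentiation theorems entirely, one can instead write $B_{n,i} = \tfrac1{\Delta_{n,i}}\int_{I_{n,i}}\rho - E_{n,i}$ with $E_{n,i}\ge 0$ the variance of $\ga'$ over $I_{n,i}$, observe that the averaged $\rho$'s converge to $\rho$ in $L^1$ and that $\int_a^b E_n = \int_a^b(\ga')^2 - \sum_i\Delta_{n,i}\big(\tfrac1{\Delta_{n,i}}\int_{I_{n,i}}\ga'\big)^2\to 0$ by the density of step functions in $L^2([a,b])$, conclude that $\beta_n\to\rho$ in $L^1([a,b])$, and finish by Fatou along an a.e.-convergent subsequence together with the usual subsequence argument.
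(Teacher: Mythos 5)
Your proof is correct and takes essentially the same route as the paper: two applications of Jensen's inequality (one to get $([\ga']_{\pn})^2\le[(\ga')^2]_\pn$, one for $x\mapsto x^{3/2}$) give the partition inequality $I(\ga,e,\pn)\le I(\ga,e)$, and the Lebesgue differentiation theorem plus Fatou give the matching $\liminf$. The paper phrases the same computation via an averaging operator $[f]_\pn$, which is purely notational. Two small remarks: (1) you are right to flag the use of the differentiation theorem along the possibly non-nested intervals $I_n(r)$, a point the paper leaves implicit; (2) both you and the paper implicitly require the \emph{maximum} interval length of $\pn_n$ to vanish, whereas the lemma's stated quantity $m(\pn_n)$ is written with a $\min$ --- this is evidently a typo in the statement, and you silently used the correct reading. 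Your alternative route avoiding differentiation theorems via $L^1$ convergence of the step approximants is a valid minor variant but not materially different.
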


\begin{proof}
	For a function $f:[a, b] \to \R$ and a partition $\pn = \{r_0 < \dots < r_k\}$ of $[a, b]$, write $[f]_\pn$ for the function which for $s \in [r_i, r_{i+1})$ is given by the average of $f$ on that interval:
	$$
	[f]_\pn(s) = \frac{1}{r_{i+1} - r_i}\int_{r_i}^{r_{i+1}} f(s) ds.
	$$
	Then
	\begin{align*}
		I(\ga, e, \pn) &= \frac{4}{3} \int_a^b \left([w_{\ga, e}']_{\pn} + ([\ga']_{\pn})^2 \right)_+^{3/2} 
		\le \frac{4}{3} \int_a^b \left([w_{\ga, e}']_{\pn} + [(\ga')^2]_{\pn} \right)_+^{3/2} \\&= \frac{4}{3} \int_a^b \left([w_{\ga, e}' + (\ga')^2]_{\pn} \right)^{3/2} \le \frac{4}{3} \int_a^b \left(w_{\ga, e}' + (\ga')^2 \right)^{3/2} = I(\ga, e).
	\end{align*}
	Here the two inequalities both use Jensen's inequality, and the equality on the middle line uses that $[f + g]_\pn = [f]_\pn + [g]_\pn$ and that $w_{\ga, e}' + (\ga')^2 \ge 0$. This yields the first part of the lemma.
	
	For the `Moreover', observe that $[w_{\ga, e}']_{\pn_n} + ([\ga']_{\pn_n})^2 \to w_{\ga, e}' + (\ga')^2$ Lebesgue a.e. by the Lebesgue differentiation theorem. Therefore by Fatou's lemma,
	$$
	\liminf_{n \to \infty} I(\ga, e, \pn_n) \ge \frac{4}{3} \int_a^b (w_{\ga, e}' + (\ga')^2)^{3/2}_+ = I(\ga, e),
	$$
	which combined with the first part of the lemma yields the result.
\end{proof}

\begin{proof}[Proof of Proposition \ref{P:rate-function-properties}]
	Property (i) holds because $I(\ga, d) = 0$ for any $\ga \in H^1$. For part (ii), consider any $e\in \mathcal D$.
	For any path $\gamma$ with endpoints $u=(x,s;y,t)$ we have 
	$$
	\tfrac34 I(\gamma,e)=\int \rho_{\gamma,e}(r)^{3/2}dr \ge \frac{(\int_s^t \rho_{\gamma,e})^{3/2}}{\sqrt{t-s}}= \frac{(|\gamma|_e-|\gamma|_d)^{3/2}}{\sqrt{t-s}}\ge\frac{(|\gamma|_e-d(u))^{3/2}}{\sqrt{t-s}}
	$$
	Now let $U$ be any set of points defined on time intervals with disjoint interiors, and for $u \in U$ let $\ga_{u}$ be an $e$-geodesic between the endpoints of $u$. Then 
	$$
	I(e) \ge \sum_{u \in U} I(\ga_{u}, e)  
	\ge\tfrac43\sum_{u=(x,s;y,t) \in U} \frac{(|\gamma_u|_e-d(u))^{3/2}}{\sqrt{t-s}}
	=\tfrac43\sum_{u \in U}\Theta (e,u).
	$$
	So $\Theta(e) \le 3I(e)/4$, which yields (ii).
	
	For part (iii), let $e_{n(i)}$ be a subsequence where
	$\lim_{i \to \infty} I(e_{n(i)}) = \liminf_{n \to \infty} I(e_n).$ If this limit is infinite, then the claim is trivially true, so we may assume this limit is finite, in which case by part (ii) there exists $m > 0$ such that $e_{n(i)} \in \mathcal D_m$ for all large enough $i$. By compactness of $\mathcal D_m$, Proposition \ref{P:compactness}, $e \in {\mathcal D}_m$ as well, and so its rate function is given by maximizing over networks. 
	
	Therefore for $\alpha < I(e)$, by Lemma \ref{L:partition-approximation} we can find a disjoint network $\ga_j:[a_j, b_j] \to \R, j = 1 \dots, k$ and partitions $\pn_1, \dots, \pn_k$ of the domains of $\ga_1, \dots, \ga_k$ such that
	$
	\sum_{j=1}^k I(\ga_j, e, \pn_j) \ge \alpha.
	$
	Moreover, we can make the mesh sizes of $\pn_j$ arbitrarily small.
	
	For every $n$, let $\ga_j^{n}$ be the path from $\bar \ga_j(a_j)$ to $\bar \ga_j(b_j)$ which equals $\ga_j$ on the points of $\pn_j$ and is given by the rightmost $e_n$-geodesic in between points of the partition $\pn_j$. As long as the mesh sizes of $\pn_j$ are sufficiently small, since $e_{n(i)} \in \mathcal D_m$ for large enough $i$, by Lemma \ref{L:geo-mod-cont} the paths  $\ga^{n(i)}_1, \dots, \ga^{n(i)}_k$ are disjoint for large enough $i$. Therefore
	$$
	I(e_{n(i)}) \ge \sum_{j=1}^k I(\ga^{n(i)}_j, e_{n(i)}, \pn_j) \to_{n \to \infty} \sum_{i=1}^k I(\ga_j, e, \pn_j),
	$$
	and so $\liminf_{n \to \infty} I(e_n) \ge I(e)$.
	
	For part (iv), $\mathcal D_{3 \al/4}$ is compact by Proposition \ref{P:compactness} and $I^{-1}[0,\al]$ is closed by part (iii) and a subset of $\mathcal D_{3 \al/4}$ by part (ii), so $I^{-1}[0,\al]$ is compact.
\end{proof}

\section{Finite-rate metrics,  measures and Kruzhkov entropy}
\label{sec:plant}

In this section we give a full description of the structure of finite-rate metrics. As we will see,  metrics are in one-to-one correspondence with measures $\mu$ on $\mathbb R^2$ whose support is contained in a set of the form $\gr \Gamma := \bigcup_{\ga \in \Ga} \gr \gamma$ for some network $\Gamma$ and have finite {\bf Kruzhkov entropy.} We call such measures {\bf planted network measures}.

We first define the {\bf temporal density} $\rho_\mu(x,t)$ of $\mu$. For $\gamma\in \Gamma$, $(x,t)\in \gr\gamma$ define $\rho_\mu(x,t)$ as the Lebesgue density of the $t$-marginal of 
$\mu|_{\gr \gamma}$.  Set $\rho_\mu=0$ on $(\gr \Gamma)^c$. The temporal density $\rho_\mu$ is well-defined $\mu$-almost everywhere, i.e. it is independent of the network $\Ga$. Define the {\bf Kruzhkov entropy} by
$$
\mathcal K(\mu)=\int \sqrt{\rho_\mu} d\mu.
$$
The name is motivated by Kruzhkov's analysis of Burgers' equation and its generalizations, but as far as we know, this is the first time it is used to define an entropy of a measure. 

\begin{definition}[Measure to metric]\label{d:mutoe} Given a planted network measure $\mu$, define the metric
	$$
	e_\mu(p;q)=\sup_\pi \mu(\gr \pi)+|\pi|_d
	$$
	where the $\sup$ is over all $H^1$ paths $\pi$ from $p$ to $q$. We call such a metric a planted network metric.
\end{definition}

A priori, it is not clear that $e_\mu$ even lies in $\mathcal E$, let alone that $e_\mu$ produces a finite rate metric. We prove this on route to our main structure theorem.
Next, for a finite rate metric $e$, we say a network $\Gamma$ is $e$-complete if $I(e) = I(\Gamma, e)$.

\begin{definition}[Metric to measure] \label{d:etomu}
	For a finite-rate metric $e$ and a network $\Gamma$ define 
	$$\mu_{\Gamma,e}=\sum_{\gamma \in \Gamma} (\rho_{\gamma,e} dt)\circ \gamma^{-1}, 
	$$
	where the notation refers to the  pushforward measure, and $\rho_{\ga, e}$ is the excess density defined in \eqref{e:rhogamma}. Set $\mu_e=\mu_{\Gamma,e}$ for any $e$-complete network $\Gamma$.
\end{definition}

At this point, it is not clear that $\mu_e$ is well-defined. This is shown in Proposition \ref{p:etomu}.
However, $\mu_{\Gamma,e}$ is well defined for all $\Gamma$. Its temporal density is given by 
$$
\rho_{\Gamma,e}(x)=\begin{cases}\rho_
	{\gamma,e}(t) & \text{if }(\gamma(t),t)=x \quad  \text{ for some } \gamma\in \Gamma
	\\ 0 & \mbox{else}
\end{cases}.
$$ 
The change-of-variables formula applied to  \eqref{E:ga-rate} implies 
\begin{equation}\label{e:IandK}
	I(\Gamma,e)=\tfrac{4}{3}\int \sqrt{\rho_{\Gamma,e}}d\mu_{\Gamma,e}=\tfrac{4}{3}\,\mathcal K(\mu_{\Gamma,e}).
\end{equation}
This motivates the main structure theorem. 

\begin{theorem}\label{t:structure}
	Definitions \ref{d:mutoe} and \ref{d:etomu} give a one-to-one correspondence between finite rate metrics and planted network measures. 
	The maps $\mu\to e_\mu$ and $e\to \mu_e$ are inverses of each other. Moreover $I(e_\mu)=\frac43 \mathcal K(\mu)$.
\end{theorem}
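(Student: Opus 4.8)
Everything reduces to one \emph{length identity}: if a planted network measure $\mu$ and a metric $e$ are paired either by $e=e_\mu$ or by $\mu=\mu_e$ for a finite rate $e$, then
$$
|\gamma|_{e}=\mu(\gr\gamma)+|\gamma|_d\qquad\text{for every }\gamma\in H^1.
$$
Granting this, I would first treat the map $\mu\mapsto e_\mu$: verify $e_\mu\in\mathcal E$, prove the identity for $e_\mu$, and read off from it that $\mu_{e_\mu}=\mu$, that $e_\mu\in\mathcal D$, and that $I(e_\mu)=\tfrac43\mathcal K(\mu)$; the reverse direction then follows by applying this to $\mu_e$ (a planted network measure by Proposition \ref{p:etomu}) together with the identity for $e$.

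\noindent\textbf{$e_\mu$ is a metric, and maximizers exist.} The reverse triangle inequality is immediate by concatenating competitor paths, using that $\mu$ is atomless so the mass at the joining point is neither lost nor double counted; Dirichlet dominance follows by testing the supremum in Definition \ref{d:mutoe} with the linear path, on whose graph $\mu\ge0$. Finiteness and continuity of $e_\mu$, and existence of a maximizing $H^1$ path for each point pair, come from a confinement estimate: the Dirichlet term penalizes excursions, so any path with $\mu(\gr\pi)+|\pi|_d$ near the supremum obeys an $H^1$ modulus of continuity around the linear geodesic in the style of Lemma \ref{L:geo-mod-cont}, whence only a compact piece of $\mu$ matters and the mass it can collect is controlled by H\"older against $\mathcal K(\mu)$; together with upper semicontinuity of $\pi\mapsto\mu(\gr\pi)$ under uniform convergence (from $\limsup_n\mathbf 1[\pi_n=\gamma_0]\le\mathbf 1[\pi=\gamma_0]$ for each $\gamma_0$ in the supporting network) this yields a maximizer. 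Splitting a maximizer at an intermediate time gives the metric composition law.

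\noindent\textbf{The length identity for $e_\mu$ (main obstacle).} The bound $|\gamma|_{e_\mu}\ge\mu(\gr\gamma)+|\gamma|_d$ is easy: on any partition, test each $e_\mu(\bar\gamma(r_{i-1});\bar\gamma(r_i))$ with the restriction of $\gamma$ and sum. The reverse bound is the crux and I expect it to be the hardest step. The plan is to show that on a partition of small mesh a near-optimal detour $\pi_i$ between $\bar\gamma(r_{i-1})$ and $\bar\gamma(r_i)$ must spend all but a vanishing fraction of its time on a single network path $\gamma_0$: hopping between two internally disjoint $H^1$ curves across a time interval of length $\delta$ costs Dirichlet energy of larger order than the $O(\delta^{1/3})$ extra $\mu$-mass a hop can gain (H\"older against the finite $\mathcal K$), ruling out multiple hops in the fine-mesh limit. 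On the portion where $\pi_i=\gamma_0$ one has $\pi_i'=\gamma_0'$ a.e., so the gain over $\gamma$ is controlled by the excess of $\rho_\mu$ along $\gamma_0$; summing over $i$ and refining drives the estimate to $\int(\rho_\mu(\gamma(r),r)-|\gamma'(r)|^2)\,dr=\mu(\gr\gamma)+|\gamma|_d$, where the last equality uses internal disjointness of the network and the definition of the temporal density to evaluate $\mu(\gr\gamma)=\int\rho_\mu(\gamma(r),r)\,dr$. The phrase ``excess of $\rho_\mu$ along $\gamma_0$'' tacitly invokes part of the conclusion, so the argument must be bootstrapped --- prove the identity first for paths confined to a single network graph, then propagate --- which is the delicate point.

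\noindent\textbf{Consequences and the reverse map.} Applying the identity to initial segments $\gamma|_{[a,r]}$ and differentiating in $r$ (Lemma \ref{L:finite-weight}) gives $\rho_{\gamma,e_\mu}(r)=\rho_\mu(\gamma(r),r)$ a.e., for every $\gamma\in H^1$; in particular $\mu_{\Gamma,e_\mu}=\mu$ for a network $\Gamma$ with $\operatorname{supp}(\mu)\subset\gr\Gamma$, so by \eqref{e:IandK} $I(\Gamma,e_\mu)=\tfrac43\mathcal K(\mu)$. For an arbitrary network $\Gamma'$, \eqref{e:IandK} plus internal disjointness of $\Gamma$ and $\Gamma'$ (so a.e.\ point of $\gr\Gamma$ lies on at most one path of $\Gamma'$) gives $I(\Gamma',e_\mu)=\tfrac43\mathcal K(\mu_{\Gamma',e_\mu})\le\tfrac43\mathcal K(\mu)$; hence $\sup_{\Gamma'}I(\Gamma',e_\mu)=\tfrac43\mathcal K(\mu)$. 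A Jensen bound applied to $e_\mu$-geodesics, as in the proof of Proposition \ref{P:rate-function-properties}(ii), then yields $\Theta(e_\mu)\le\tfrac34\sup_{\Gamma'}I(\Gamma',e_\mu)<\infty$, so with Dirichlet dominance and metric composition $e_\mu\in\mathcal D$; therefore $I(e_\mu)=\sup_{\Gamma'}I(\Gamma',e_\mu)=\tfrac43\mathcal K(\mu)$, $\Gamma$ is $e_\mu$-complete, and $\mu_{e_\mu}=\mu$. Finally, for a finite rate metric $e$, Proposition \ref{p:etomu} gives that $\mu_e$ is a well-defined planted network measure; applying the above to $\mu_e$ produces the finite rate metric $e_{\mu_e}$ with $|\gamma|_{e_{\mu_e}}=\mu_e(\gr\gamma)+|\gamma|_d$ for all $\gamma\in H^1$, while the length identity for $e$ itself (proved by the same argument; this is the length description of finite rate metrics promised in Section \ref{SS:rate-function-intro}) gives $|\gamma|_e=\mu_e(\gr\gamma)+|\gamma|_d$. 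Since $e(p;q)=\sup_{\pi\in H^1}|\pi|_e$ and $e_{\mu_e}(p;q)=\sup_{\pi\in H^1}(\mu_e(\gr\pi)+|\pi|_d)=\sup_{\pi\in H^1}|\pi|_{e_{\mu_e}}$, with both suprema attained at the corresponding geodesics (which lie in $H^1$ by Lemma \ref{L:dirichlet-close-paths}) and $|\pi|_e=|\pi|_{e_{\mu_e}}$ for every such $\pi$, we conclude $e_{\mu_e}=e$. Thus $\mu\mapsto e_\mu$ and $e\mapsto\mu_e$ are mutually inverse bijections and $I(e_\mu)=\tfrac43\mathcal K(\mu)$.
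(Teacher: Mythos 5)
Your skeleton is the same as the paper's: reduce everything to the length identity $|\gamma|_{e}=\mu(\gr\gamma)+|\gamma|_d$ (Propositions \ref{p:mutoe} and \ref{p:etomu}.3), read off $\rho_{\gamma,e_\mu}=\rho_\mu$ and hence $\mu_{e_\mu}=\mu$ and $I(e_\mu)=\tfrac43\mathcal K(\mu)$, and close the loop by noting that $e$ and $e_{\mu_e}$ assign identical lengths to all $H^1$ paths and both lie in $\mathcal D$, so both are geodesic spaces with $H^1$ geodesics (Lemmas \ref{L:geodesic-space}, \ref{L:dirichlet-close-paths}) and therefore coincide. That part is right.

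The gap is in your sketch of the hard inequality $|\gamma|_{e_\mu}\le\mu(\gr\gamma)+|\gamma|_d$. Arguing that a fine-mesh near-optimal detour $\pi_i$ ``must spend all but a vanishing fraction of its time on a single network path $\gamma_0$'', with hops killed by Dirichlet cost, is not a viable plan: the supporting network may be countable with paths accumulating arbitrarily close to $\gr\gamma$, and two internally disjoint $H^1$ curves can be uniformly close on a short time window, so a ``hop'' need not cost $\gg\delta^{1/3}$ in Dirichlet energy. You recognize there is a bootstrapping issue but do not resolve it. The paper's Proposition \ref{p:mutoe} avoids analyzing detour structure entirely. It sets $\mu':=\mu|_{(\gr\gamma)^c}$ and introduces an open $O\supset\gr\gamma$: detours confined to $O$ can grab at most $c_\gamma\,\mathcal K(\mu'|_O)^{2/3}$ of $\mu'$-mass by the Jensen bound \eqref{E:jensen-measure}, and $\mathcal K(\mu'|_O)\to\mathcal K(\mu'|_{\gr\gamma})=0$ as $O\downarrow\gr\gamma$ by dominated convergence; detours exiting $O$ are killed in the fine-mesh limit via the $L^2$ triangle inequality $\sqrt{-|\pi|_d}\ge\sqrt{-|\pi-\gamma|_d}-\sqrt{-|\gamma|_d}$, not via a per-hop accounting. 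This sidesteps the bootstrap entirely.

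Two smaller points. First, the parenthetical claim that the length identity for a general finite-rate $e$ (Proposition \ref{p:etomu}.3) is ``proved by the same argument'' is wrong: for abstract $e$ there is no supremum formula to exploit; the paper instead constructs an $e$-complete network $\Gamma\ni\gamma$ via Lemma \ref{l:network-extension} (Proposition \ref{p:etomu}.1), after which $|\gamma|_e-|\gamma|_d=\mu_{\Gamma,e}(\gr\gamma)$ is immediate from the definition of $\mu_{\Gamma,e}$. Second, invoking the argument of Proposition \ref{P:rate-function-properties}\ref{sublevel} to get $\Theta(e_\mu)<\infty$ quietly uses the excess density $\rho_{\gamma,e_\mu}$, whose existence (Lemma \ref{L:finite-weight}) presupposes $e_\mu\in\mathcal D$; the paper's Lemma \ref{L:inD} instead bounds $\Theta(e_\mu)\le\mathcal K(\mu)$ directly from the defining supremum and \eqref{E:jensen-measure}, which you should use to avoid the circle (or else derive $\rho_{\gamma,e_\mu}$ explicitly from the already-proved length identity rather than from Lemma \ref{L:finite-weight}).
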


An immediate corollary of Theorem \ref{t:structure} is the following. 
\begin{corollary}
	\label{c:monotone}
	$I$ is strictly monotone:  for distinct $e, e' \in \mathcal D$ with $e \le e'$, we have $I(e) <I(e')$.
\end{corollary}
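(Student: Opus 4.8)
The plan is to derive this from Theorem~\ref{t:structure} together with the identity $|\gamma|_{e_\mu}=\mu(\gr\gamma)+|\gamma|_d$, valid for every $\gamma\in H^1$ and every finite-rate metric $e_\mu$. The elementary input is that path length is monotone in the metric: if $e\le e'$ then $|\gamma|_e\le|\gamma|_{e'}$ for every path $\gamma$, directly from the definition \eqref{E:ga-length} of $|\cdot|_e$ as an infimum of partition sums. Since length is additive over concatenation (a consequence of the triangle inequality) and the weight functions $w_{\gamma,e},w_{\gamma,e'}$ are absolutely continuous for $e,e'\in\mathcal D$ by Lemma~\ref{L:finite-weight}, this upgrades to $\rho_{\gamma,e}\le\rho_{\gamma,e'}$ Lebesgue-a.e.\ on the domain of $\gamma$, hence to $I(\Gamma,e)\le I(\Gamma,e')$ for every network $\Gamma$ and so to $I(e)\le I(e')$. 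Thus $I$ is at least weakly monotone on $\mathcal D$.

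Now assume $I(e')<\infty$ (the remaining case being either trivial or outside the range $I^{-1}[0,\infty)$ needed for Theorem~\ref{t:main}). By Theorem~\ref{t:structure}, $e'=e_\nu$ for a planted network measure $\nu$; by the weak monotonicity above $I(e)\le I(e')<\infty$, so likewise $e=e_\mu$, and $\mu\ne\nu$ since $e\ne e'$ and $\mu\mapsto e_\mu$ is injective. Feeding $|\gamma|_{e_\mu}\le|\gamma|_{e_\nu}$ and the length identity into every subpath of each path of a network carrying $\mu$ gives $\mu(\gr\gamma)\le\nu(\gr\gamma)$ for all $\gamma\in H^1$; since a planted network measure is recovered along each graph from its (Lebesgue) temporal density, this yields both $\rho_\mu\le\rho_\nu$ $\mu$-a.e.\ and $\mu\le\nu$ as measures. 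Therefore, using $\mathcal K(\mu),\mathcal K(\nu)<\infty$ and \eqref{e:IandK},
\[
\tfrac34\big(I(e')-I(e)\big)=\mathcal K(\nu)-\mathcal K(\mu)=\int\sqrt{\rho_\nu}\,d(\nu-\mu)+\int\big(\sqrt{\rho_\nu}-\sqrt{\rho_\mu}\big)\,d\mu\ \ge\ 0 .
\]
For strictness: if the left-hand side vanished, both (nonnegative) integrals would be $0$; the first would force $\rho_\nu=0$ $(\nu-\mu)$-a.e., but $\nu$, and hence $\nu-\mu$, gives zero mass to $\{\rho_\nu=0\}$ directly from the representation $\nu=\sum_{\gamma\in\Gamma}(\rho_\nu\!\circ\gamma\; dt)\circ\gamma^{-1}$ over an $e_\nu$-complete network $\Gamma$. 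So $\nu=\mu$, a contradiction; hence $I(e)<I(e')$.

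The step I expect to be the main obstacle is the measure-theoretic bookkeeping in the second paragraph: passing from $\mu(\gr\gamma)\le\nu(\gr\gamma)$ for all $H^1$ paths $\gamma$ to the pointwise statements $\rho_\mu\le\rho_\nu$ $\mu$-a.e.\ and $\mu\le\nu$, when the networks carrying $\mu$ and $\nu$ differ and their graphs may cross or partially overlap. The relevant facts are that a planted network measure is nonatomic and has an absolutely continuous time-marginal on each graph, so that the mass it assigns to $\gr(\gamma|_{[c,d]})$ equals $\int_c^d\rho\circ\gamma\,dt$ no matter which network is used to evaluate it, with crossing and triple points contributing only a Lebesgue-null set of times. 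Everything else — monotonicity of length, the reduction via Theorem~\ref{t:structure}, and the final entropy inequality — is short, the last being essentially \eqref{e:IandK}.
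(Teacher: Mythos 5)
Your argument is correct, but it runs through more machinery than the ``immediate'' argument the paper has in mind. You pass to the measure level, establishing $\mu_e\le\mu_{e'}$ and $\rho_{\mu_e}\le\rho_{\mu_{e'}}$ $\mu_e$-a.e., and then use the decomposition $\mathcal K(\nu)-\mathcal K(\mu)=\int\sqrt{\rho_\nu}\,d(\nu-\mu)+\int\bigl(\sqrt{\rho_\nu}-\sqrt{\rho_\mu}\bigr)\,d\mu$ to extract strictness. The measure-level comparison is exactly where the work hides, as you flag: one needs that $\nu|_{\gr\gamma}$ has absolutely continuous time marginal for an \emph{arbitrary} path $\gamma$ carrying $\mu$ (not just for paths in a network carrying $\nu$), that this density agrees Lebesgue-a.e.\ with $\rho_\nu$ on the coincidence sets $\{\gamma=\gamma'\}$, $\gamma'$ in a network carrying $\nu$ (a point in the spirit of Lemma~\ref{L:overlap-lemma}), and that summing $\mu|_{\gr\gamma}\le\nu|_{\gr\gamma}$ over $\gamma\in\Gamma$ causes no overcounting because the measures are nonatomic; all of this works but takes several steps. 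The argument the paper regards as immediate stays at the network level and never compares the measures: since $e\ne e'$ and $e\le e'$, pick $u$ with $e(u)<e'(u)$, let $\pi$ be an $e'$-geodesic for $u$ (so $\pi\in H^1$ by Lemma~\ref{L:dirichlet-close-paths}), and use Proposition~\ref{p:etomu}.1 to choose an $e$-complete network $\Gamma\ni\pi$. Then $\rho_{\gamma,e}\le\rho_{\gamma,e'}$ a.e.\ for every $\gamma$ (your first paragraph), so $I(\gamma,e)\le I(\gamma,e')$ termwise, and for $\gamma=\pi$ the inequality is strict because $\int\rho_{\pi,e}=|\pi|_e-|\pi|_d<|\pi|_{e'}-|\pi|_d=\int\rho_{\pi,e'}$ forces $\rho_{\pi,e}<\rho_{\pi,e'}$ on a set of positive measure and $x\mapsto x^{3/2}$ is strictly increasing. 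Hence $I(e)=I(\Gamma,e)<I(\Gamma,e')\le I(e')$. Your route proves the stronger measure-level monotonicity $\mu_e\le\mu_{e'}$ as a byproduct, which may be of independent interest; the network-level route buys brevity and sidesteps the bookkeeping you identified as the obstacle.
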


To prove Theorem \ref{t:structure}  we need the following lemma. It will be useful for eliminating conflicts when defining $\mu_{\Gamma,e}$ for two different $\Gamma$.

\begin{lemma}
	\label{L:overlap-lemma}
	Let $e \in \mathcal D$, and let $\ga, \pi \in H^1$ be paths with $e$-weight functions $w_\ga, w_\pi$ whose domains overlap on an interval $[a, b]$. Let $K_0 = \{r \in [a, b] : \ga(r) = \pi(r)\}$. Then $w_\ga'(r) = w_\pi'(r)$ and $\rho_{\pi, e} = \rho_{\ga, e}$ for Lebesgue-a.e.\ $r \in K_0$.
\end{lemma}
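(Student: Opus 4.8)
The plan is to show that a suitable ``excursion-corrected length'' along a subinterval with endpoints in $K_0$ depends only on the common restriction $\gamma|_{K_0}=\pi|_{K_0}$, and then to differentiate it.

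I would fix $m>0$ with $e\in\mathcal D_m$, take $u<v$ in $K_0$, and write $[u,v]\setminus K_0=\bigsqcup_i(a_i,b_i)$ for the (countable) decomposition into maximal open intervals, noting $a_i,b_i\in K_0$ for all $i$. For $\sigma\in\{\gamma,\pi\}$ and a subinterval $[c,d]\subseteq[a,b]$, absolute continuity of $w_{\sigma,e}$ (Lemma~\ref{L:finite-weight}) together with the definition $\rho_{\sigma,e}=w_{\sigma,e}'+(\sigma')^2$ from \eqref{e:rhogamma} gives $|\sigma|_{[c,d]}|_e=\int_c^d w_{\sigma,e}'=\int_c^d\rho_{\sigma,e}-\int_c^d(\sigma')^2$; since $w_{\sigma,e}'\in L^1([a,b])$ and $(\sigma')^2\in L^1([a,b])$, we have $\rho_{\sigma,e}\in L^1([a,b])$, and as $\rho_{\gamma,e},\rho_{\pi,e}\ge0$ the series $\sum_i|\gamma|_{[a_i,b_i]}|_e$ and $\sum_i|\pi|_{[a_i,b_i]}|_e$ converge absolutely. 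Hence I may set $L(\sigma):=|\sigma|_{[u,v]}|_e-\sum_i|\sigma|_{[a_i,b_i]}|_e$ for $\sigma\in\{\gamma,\pi\}$.

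The heart of the argument is to prove $L(\gamma)=L(\pi)$. First, if one modifies $\gamma$ on a single interval $[a_i,b_i]$ while keeping its endpoints $\bar\gamma(a_i),\bar\gamma(b_i)$ fixed, then by additivity of $e$-length under concatenation both $|\gamma|_{[u,v]}|_e$ and the excursion sum change only through the term $|\gamma|_{[a_i,b_i]}|_e$, so $L(\gamma)$ is unchanged; iterating, $L$ is invariant under modification on finitely many of the $[a_i,b_i]$. Next I would enumerate the excursions as $(a_1,b_1),(a_2,b_2),\dots$ and define $\gamma_n(r)=\pi(r)$ when $r\in(a_i,b_i)$ for some $i\le n$ and $\gamma_n(r)=\gamma(r)$ otherwise; since $\gamma=\pi$ at each $a_i,b_i$ this is a well-defined (absolutely continuous) path, and $L(\gamma_n)=L(\gamma)$ for every $n$. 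Because $\gamma_n\to\pi$ pointwise on $[u,v]$, Lemma~\ref{L:length-lemma} gives $\limsup_n|\gamma_n|_{[u,v]}|_e\le|\pi|_{[u,v]}|_e$, while $\sum_i|\gamma_n|_{[a_i,b_i]}|_e=\sum_{i\le n}|\pi|_{[a_i,b_i]}|_e+\sum_{i>n}|\gamma|_{[a_i,b_i]}|_e\to\sum_i|\pi|_{[a_i,b_i]}|_e$ by absolute convergence; hence $L(\gamma)=\limsup_n L(\gamma_n)\le L(\pi)$, and the reverse inequality follows by interchanging $\gamma$ and $\pi$. This step is the main obstacle: there is no inequality between the $e$-lengths of $\gamma$ and $\pi$ directly, so one must combine \emph{exact} additivity of length on the excursion intervals with the \emph{one-sided} lower semicontinuity of length (Lemma~\ref{L:length-lemma}) and the $\gamma\leftrightarrow\pi$ symmetry.

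To conclude, I would rewrite $L(\gamma)=L(\pi)$ using $|\sigma|_{[c,d]}|_e=\int_c^d w_{\sigma,e}'$ and $[u,v]\setminus\bigsqcup_i(a_i,b_i)=[u,v]\cap K_0$: it becomes $\int_{[u,v]\cap K_0}w_{\gamma,e}'=\int_{[u,v]\cap K_0}w_{\pi,e}'$ for all $u<v$ in $K_0$. Equivalently, the absolutely continuous function $r\mapsto\int_a^r\mathbf 1_{K_0}\,(w_{\gamma,e}'-w_{\pi,e}')$ is constant on $K_0$, so at every Lebesgue density point of $K_0$ at which it is differentiable --- that is, a.e.\ on $K_0$ --- its derivative $\mathbf 1_{K_0}(w_{\gamma,e}'-w_{\pi,e}')$ vanishes, giving $w_{\gamma,e}'=w_{\pi,e}'$ a.e.\ on $K_0$. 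Finally $\gamma'=\pi'$ a.e.\ on $K_0$ (two functions agreeing on a set have equal derivative at its density points where both are differentiable), so for a.e.\ $r\in K_0$, $\rho_{\gamma,e}(r)=w_{\gamma,e}'(r)+\gamma'(r)^2=w_{\pi,e}'(r)+\pi'(r)^2=\rho_{\pi,e}(r)$, which together with $w_{\gamma,e}'=w_{\pi,e}'$ a.e.\ on $K_0$ is the claim of the lemma.
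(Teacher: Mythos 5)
Your proof is correct, but it follows a genuinely different route from the paper's. The paper works with an arbitrary closed $K\subset K_0$, chooses an $\ep$-good partition $\pn$ of $[a,b]$, refines it to $\pn'$ so that all partition points landing in a finite-excursion approximation $K'$ of $K$ actually lie in $K$, and then exploits the exact equality of partition sums $|\ga|_{V_i}|_{\pn'}=|\pi|_{V_i}|_{\pn'}$ (valid since $\ga=\pi$ on $K$) together with $|\sigma|\le|\sigma|_\pn$ to get $\int_K w_\ga'>\int_K w_\pi'-3\ep$, concluding by symmetry and inner regularity of Lebesgue measure. You instead exploit directly that $K_0$ itself is closed: you introduce the excursion-corrected length $L(\sigma)=|\sigma|_{[u,v]}|_e-\sum_i|\sigma|_{[a_i,b_i]}|_e$ over the maximal gaps of $K_0$ in $[u,v]$, observe (via exact additivity of length at interior concatenation points) that $L$ is invariant under replacing the path on finitely many excursions, then combine this invariance with pointwise convergence $\ga_n\to\pi$, Lemma~\ref{L:length-lemma}, and the $\ga\leftrightarrow\pi$ symmetry to get $L(\ga)=L(\pi)$, i.e.\ $\int_{[u,v]\cap K_0}w_{\ga,e}'=\int_{[u,v]\cap K_0}w_{\pi,e}'$, and finish via the Lebesgue density theorem rather than inner regularity. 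Both arguments hinge on the same underlying facts --- $K_0$ is closed, $\ga=\pi$ on $K_0$, weight functions are absolutely continuous, and the length functional is upper semicontinuous (this is what both the paper's inequality $|\sigma|\le|\sigma|_\pn$ and your Lemma~\ref{L:length-lemma} encode) --- but your identification of the conserved quantity $L$ and use of the convergence $\ga_n\to\pi$ makes the comparison step cleaner, at the modest cost of appealing to Lemma~\ref{L:length-lemma} rather than reproving the relevant semicontinuity by hand. For the final claim you directly show $\ga'=\pi'$ a.e.\ on $K_0$ at density points, whereas the paper applies the first claim to $e$ and to $d$ to get $(\ga')^2=(\pi')^2$ a.e.; both routes give $\rho_{\ga,e}=\rho_{\pi,e}$ a.e.\ on $K_0$.
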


\begin{proof}Let $K$ be a closed subset of $K_0$.
	Let $\ep>0$ and let $\pn$ be a partition of $[a,b]$ so that $$|\gamma_{|[a,b]}|> |\gamma_{|[a,b]}|_\pn-\ep,$$
	where we drop $e$ from the notation $|\ga|_e$ and $|\ga|_{e,\pn}$ for convenience. The open set $(a,b)\setminus K$ is a finite or countable union of disjoint open intervals $U_1,U_2\ldots$. Let $k$ be so that the finite set $\pn \cap ((a,b)\setminus K)$ is contained in $\bigcup_{i=1}^k U_i$, and that $K'=[a,b]\setminus \bigcup_{i=1}^k U_i$ satisfies 
	\begin{equation}\label{E:wp-eps}
		\ep+\int_K w_\gamma'> \int_{K'} w_\gamma', \qquad \int_K w_\pi'< \ep +\int_{K'} w_\pi'.
	\end{equation}
	The latter can be achieved by the dominated convergence theorem.  $K'$ is a finite union of disjoint closed intervals $V_0,\ldots, V_{k}$. Let $\pn'$ be the union of $\pn$ and the endpoints of $U_1,\ldots, U_k$.  By construction, $\pn'\setminus \pn\subset K$, and also $\pn\cap K'\subset K$, hence $\pn'\cap K'\subset K$, which implies $|\gamma_{|V_i}|_{\pn'}=|\pi_{|V_i}|_{\pn'}$ for all $i$. We  bound
	$$
	\int_{K'} w_\gamma'  = |\gamma_{|[a,b]}|-\sum_{i=1}^k |\gamma_{|\bar U_i}|> |\gamma_{|[a,b]}|_{\pn'}-\ep - \sum_{i=1}^k |\gamma_{|\bar U_i}|_{\pn'}
	$$
	where $\bar U_i$ is the closure of $U_i$. We have 
	$$
	|\gamma_{|[a,b]}|_{\pn'}- \sum_{i=1}^k |\gamma_{|\bar U_i|}|_{\pn'}=\sum_{i=0}^{k} |\gamma_{|V_i}|_{\pn'}=\sum_{i=0}^{k} |\pi_{|V_i}|_{\pn'}\ge \sum_{i=0}^{k} |\pi_{|V_i}|=\int_{K'} w'_{\pi}.
	$$
	With \eqref{E:wp-eps}, this gives $\int_K w_\gamma'>\int_K w_\pi'-3\ep$. Since $\ep>0$ was arbitrary, and the roles of $\gamma$ and $\pi$ are symmetric, $\int_K w_\gamma'=\int_K w'_\pi$. Since this holds for all closed  $K\subset K_0$, we have that $w_\ga'(r) = w_\pi'(r)$ for Lebesgue-a.e.\ $r \in K_0$. The claim for $\rho_{\ga, e}, \rho_{\pi, e}$ follows by applying this equality to both $e$ and $d$.
\end{proof}

To show that $e$-complete networks exist, we will use the following extension lemma. 

\begin{lemma}\label{l:network-extension}
	Let $\ep>0$, $e\in \mathcal D$, and  $\Gamma, \Pi$ be finite networks. There exists a finite network $\Gamma^+\supset \Gamma$ so that for every Borel $A\subset \mathbb R^2$
	$$
	\ep+\int_A \sqrt{\rho_{\Gamma^+,e}}d\mu_{\Gamma^+,e}>\int_A \sqrt{\rho_{\Pi,e}}d\mu_{\Pi,e}.
	$$
	In particular, $\frac43\ep+I(\Gamma^+,e)>I(\Pi,e)$. 
\end{lemma}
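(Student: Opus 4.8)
The plan is to build $\Gamma^{+}$ by adjoining to $\Gamma$ finitely many restrictions of the paths of $\Pi$ --- namely those pieces of $\Pi$ that run off the graph $\gr\Gamma$ --- and to absorb the remaining pieces into the $\ep$-slack. It is convenient to work with the pushforward measures $\nu_{\Lambda}:=\sqrt{\rho_{\Lambda,e}}\,\mu_{\Lambda,e}$ on $\R^{2}$ attached to a network $\Lambda$; by Definition \ref{d:etomu} and \eqref{e:IandK} we have $\nu_{\Lambda}(A)=\sum_{\lambda\in\Lambda}\int_{\{r\,:\,(\lambda(r),r)\in A\}}\rho_{\lambda,e}(r)^{3/2}\,dr$ and $\nu_{\Lambda}(\R^{2})=\tfrac34 I(\Lambda,e)$, so the asserted statements read $\ep+\nu_{\Gamma^{+}}(A)>\nu_{\Pi}(A)$ for all Borel $A$ and, as the consequence with $A=\R^{2}$, $\tfrac43\ep+I(\Gamma^{+},e)>I(\Pi,e)$. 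We may assume $I(\Pi,e)<\infty$ (which is the case in every application of this lemma); this finiteness is what makes the truncation below possible.

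First I would decompose each $\pi\in\Pi$, with domain $[a_{\pi},b_{\pi}]$, relative to $\Gamma$. Let $G_{\pi}=\{r\in[a_{\pi},b_{\pi}]:(\pi(r),r)\in\gr\Gamma\}$, a closed set since $\Gamma$ is finite and each coincidence set $\{r:\pi(r)=\ga(r)\}$ is closed; write $[a_{\pi},b_{\pi}]\setminus G_{\pi}$ as a countable disjoint union of intervals $J_{\pi,i}$ and put $P_{\pi,i}=\pi|_{\overline{J_{\pi,i}}}$. Two facts drive the argument. (a) Additivity of $e$-length over concatenation shows that on $\overline{J_{\pi,i}}$ the weight function, and hence the excess density, of $P_{\pi,i}$ coincides with that of $\pi$, so $\nu_{P_{\pi,i}}(A)=\nu_{\Pi}(A\cap\gr P_{\pi,i})$. (b) Wherever $\pi$ coincides with some $\ga\in\Gamma$, Lemma \ref{L:overlap-lemma} gives $\rho_{\pi,e}=\rho_{\ga,e}$ Lebesgue-a.e.\ on the coincidence set; since $\Pi$ is internally disjoint, the graphs $\gr(\pi|_{G_{\pi}})$, $\pi\in\Pi$, are pairwise essentially disjoint and meet $\gr\Gamma$ only over a Lebesgue-null set of times, so summing (b) over $\pi$ (using that absolutely continuous paths send null time-sets to $\nu_{\Gamma}$-null sets) yields the domination
\[
\nu_{\Pi}(A\cap\gr\Gamma)\ \le\ \nu_{\Gamma}(A),\qquad A\subset\R^{2}\text{ Borel.}
\]

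I would then check that $\Gamma\cup\{P_{\pi,i}:\pi\in\Pi,\ i\ge1\}$ is a network: for $r$ interior to $J_{\pi,i}$ we have $(\pi(r),r)\notin\gr\Gamma$, so $P_{\pi,i}$ is internally disjoint from every $\ga\in\Gamma$; distinct pieces of the same $\pi$ have disjoint open domains; and an interior coincidence of pieces of distinct $\pi,\pi'\in\Pi$ would be interior to both of their domains, contradicting internal disjointness of $\Pi$. Since $\sum_{\pi,i}\nu_{\Pi}(\gr P_{\pi,i})\le\nu_{\Pi}(\R^{2})=\tfrac34 I(\Pi,e)<\infty$, choose a finite subfamily $P_{1},\dots,P_{N}$ of these pieces whose complement within all the pieces carries total $\nu_{\Pi}$-mass strictly less than $\ep$, and set $\Gamma^{+}=\Gamma\cup\{P_{1},\dots,P_{N}\}$, a finite network containing $\Gamma$. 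The graphs of the $P_{j}$ are pairwise essentially disjoint and essentially disjoint from $\gr\Gamma$, so $\nu_{\Gamma^{+}}=\nu_{\Gamma}+\sum_{j}\nu_{P_{j}}$; combining this with the domination, with (a), and with $\gr\Pi\subset\gr\Gamma\cup\bigcup_{\pi,i}\gr P_{\pi,i}$ up to a null set, we obtain for every Borel $A$
\[
\nu_{\Gamma^{+}}(A)=\nu_{\Gamma}(A)+\sum_{j}\nu_{\Pi}(A\cap\gr P_{j})\ \ge\ \nu_{\Pi}(A\cap\gr\Gamma)+\sum_{j}\nu_{\Pi}(A\cap\gr P_{j})\ >\ \nu_{\Pi}(A)-\ep,
\]
which is the claimed inequality; taking $A=\R^{2}$ and using \eqref{e:IandK} gives $\tfrac43\ep+I(\Gamma^{+},e)>I(\Pi,e)$.

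The main obstacle will be the domination inequality $\nu_{\Pi}(\cdot\cap\gr\Gamma)\le\nu_{\Gamma}$. Here one must combine Lemma \ref{L:overlap-lemma} (equality of excess densities on coincidence sets) with the network property of $\Pi$ to exclude that two distinct paths of $\Pi$ deposit mass over the same part of $\gr\Gamma$, and one has to be careful that the exceptional time-sets --- where two paths of $\Gamma$, or two paths of $\Pi$, meet --- are Lebesgue-null and hence negligible for $\nu_{\Gamma}$ and $\nu_{\Pi}$, and that passing to the restrictions $P_{\pi,i}$ does not alter excess densities. The internal-disjointness bookkeeping for the enlarged collection is routine, modulo the remark that a component of $[a_{\pi},b_{\pi}]\setminus G_{\pi}$ abutting an endpoint of $[a_{\pi},b_{\pi}]$ imposes no interior constraint there.
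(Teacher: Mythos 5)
Your proof is correct and follows essentially the same strategy as the paper's: decompose each $\pi\in\Pi$ into its coincidence set with $\gr\Gamma$ (handled via Lemma \ref{L:overlap-lemma}) and the complementary pieces, adjoin finitely many of those pieces to $\Gamma$, and absorb the discarded low-mass pieces into the $\ep$-slack. The only cosmetic difference is that the paper allocates an $\ep/n$ budget per path $\pi$ when choosing the finite sub-collection, whereas you pool all pieces across $\Pi$ and select a single finite subfamily whose complement has total $\nu_\Pi$-mass below $\ep$; these are interchangeable.
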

\begin{proof}
	For $\gamma\in \Gamma, \pi\in \Pi$, let $K_{\gamma,\pi}$ be the set of times $t$ where $\gamma(t)=\pi(t)$. The set $K_\pi=\bigcup_{\gamma\in \Gamma}K_{\gamma,\pi}$ is closed, so $(a_\pi,b_\pi)\setminus K_\pi$ is a countable union of open intervals. Let $D_\pi$ denote the set of paths given by $\pi$ restricted to the closure of one of these intervals. Let $D^{\ep}_\pi$ be a finite subset of $D_\pi$ so that $I(D^{\ep}_\pi,e)>I(D_{\pi},e)-\ep/n$, where $n= |\Pi|$.
	Let
	$$\Gamma^+=\Gamma \cup \bigcup_{\pi  \in \Pi} D^\ep_\pi.
	$$
	We have  
	\begin{equation}\label{E:pathterms}
		\int_A \sqrt{\rho_{\Gamma^+,e}}d\mu_{\Gamma^+,e}=
		\sum_{\gamma\in \Gamma} \int_{\bar \gamma^{-1}(A)}
		\rho_{\gamma,e}^{3/2}(t)dt
		+\sum_{\pi\in \Pi}
		\sum_{\eta\in D^{\ep}_{\pi}}
		\int_{\bar \eta^{-1}(A)}\rho_{\eta,e}^{3/2}(t)dt.
	\end{equation}
	The first sum in \eqref{E:pathterms} is bounded below by 
	$$\sum_{\gamma\in \Gamma, \pi\in \Pi} \int_{\bar \gamma^{-1}(A)\cap K_{\gamma,\pi}}
	\rho_{\gamma,e}^{3/2}(t)dt=\sum_{\gamma\in\Gamma,\pi\in \Pi} \int_{\bar \pi^{-1}(A)\cap K_{\gamma,\pi}}
	\rho_{\pi,e}^{3/2}(t)dt,
	$$
	where the equality uses Lemma \ref{L:overlap-lemma}. To bound the second sum in \eqref{E:pathterms} term we write
	$$
	\ep/n+\sum_{\eta\in D_{\pi}^{\ep}}\int_{\bar \eta^{-1}(A)}\rho_{\eta,e}^{3/2}(t)dt
	\ge \sum_{\eta\in D_{\pi}}
	\int_{\bar \eta^{-1}(A)}\rho_{\eta,e}^{3/2}(t)dt=\int_{\bar \pi^{-1}(A)\cap K^c_{\pi}}\rho_{\pi,e}^{3/2}(t)dt.
	$$
	Summing the above  bounds we get
	\[\ep+
	\int_A \sqrt{\rho_{\Gamma^+,e}}d\mu_{\Gamma^+,e}\ge \sum_{\pi\in \Pi} \left(\int_{\pi^{-1}(A)\cap K_{\pi}}\!\!
	\rho_{\pi,e}^{3/2}(t)dt+\int_{\pi^{-1}(A)\cap K^c_{\pi}}\!\!
	\rho_{\pi,e}^{3/2}(t)dt
	\right)=\int_A \sqrt{\rho_{\Pi,e}}d\mu_{\Pi,e}.\qedhere
	\]\end{proof}

The next proposition implies that $\mu_e$ is well-defined for finite rate metrics. It also sheds light to how lengths are measured in $e$. 
\begin{proposition}\label{p:etomu}
	Let $I(e)<\infty.$ Then 
	\begin{enumerate}
		\item Any finite  network is contained in an $e$-complete  network.
		\item For any two  $e$-complete networks $\Gamma, \Pi$, we have $\mu_{\Gamma,e}=\mu_{\Pi,e}=:\mu_e$. Also, $\rho_{\Gamma,e}=\rho_{\Pi,e}=:\rho_e$ almost everywhere with respect to $\mu_e$.
		% \item $e$ is the  metric of $\mu_e$. 
		\item $|\gamma|_e=\mu_e(\gr \gamma)+|\gamma|_d$ for $\gamma\in H^1$.
	\end{enumerate}
\end{proposition}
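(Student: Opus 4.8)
The plan is to prove the three parts in order, using the network extension lemma (Lemma~\ref{l:network-extension}) as the main engine for parts (1) and (2), and the absolute continuity of weight functions (Lemma~\ref{L:finite-weight}) together with part (1) for part (3). For part (1): since $I(e)<\infty$ and the supremum in Definition~\ref{D:path-rate} may be taken over finite networks, pick finite networks $\Pi_n$ with $I(\Pi_n,e)\to I(e)$. Starting from the given finite network $\Gamma_0$, iterate Lemma~\ref{l:network-extension}: having built a finite $\Gamma_n$, apply it with $\ep=2^{-n}$ to the pair $(\Gamma_n,\Pi_n)$ to obtain a finite $\Gamma_{n+1}\supset\Gamma_n$ with $\tfrac43 2^{-n}+I(\Gamma_{n+1},e)>I(\Pi_n,e)$. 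Since $I(\Gamma_{n+1},e)\le I(e)$ always, this forces $I(\Gamma_n,e)\to I(e)$. Then $\Gamma^+:=\bigcup_n\Gamma_n$ is a countable network (internal disjointness passes to increasing unions), and since the $\Gamma_n$ increase and $I(\cdot,e)$ is a sum of nonnegative terms, $I(\Gamma^+,e)=\lim_n I(\Gamma_n,e)=I(e)$; thus $\Gamma^+$ is $e$-complete and contains $\Gamma_0$.

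For part (2) I would work with the auxiliary finite measure $\kappa_\Gamma(A):=\sum_{\ga\in\Gamma}\int_{\bar\ga^{-1}(A)}\rho_{\ga,e}^{3/2}\,dt$, which by change of variables (as in \eqref{e:IandK}) equals $\int_A\sqrt{\rho_{\Gamma,e}}\,d\mu_{\Gamma,e}$, has total mass $\tfrac34 I(\Gamma,e)$, and is monotone in $\Gamma$. Two facts are needed. First, $\rho_{\Gamma,e}>0$ holds $\mu_{\Gamma,e}$-a.e., since each pushforward $(\rho_{\ga,e}\,dt)\circ\ga^{-1}$ assigns no mass to $\{\rho_{\ga,e}=0\}$; hence $\kappa_\Gamma=\sqrt{\rho_{\Gamma,e}}\,\mu_{\Gamma,e}$ inverts to $\mu_{\Gamma,e}=\rho_{\Gamma,e}^{-1/2}\ind(\rho_{\Gamma,e}>0)\,\kappa_\Gamma$. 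Second, the key claim: if $\Gamma$ is $e$-complete then $\kappa_{\Gamma'}\le\kappa_\Gamma$ as measures for every network $\Gamma'$. Granting the claim, applying it both ways to $e$-complete $\Gamma,\Pi$ gives $\kappa_\Gamma=\kappa_\Pi$; this common measure is supported, up to null sets, on $\gr\Gamma\cap\gr\Pi$, where $\rho_{\Gamma,e}=\rho_{\Pi,e}$ off a $\mu$-null set by Lemma~\ref{L:overlap-lemma}, so $\rho_{\Gamma,e}=\rho_{\Pi,e}=:\rho_e$ holds $\kappa_\Gamma$-a.e., and inverting as above yields $\mu_{\Gamma,e}(A)=\int_A\rho_e^{-1/2}\ind(\rho_e>0)\,d\kappa_\Gamma=\int_A\rho_e^{-1/2}\ind(\rho_e>0)\,d\kappa_\Pi=\mu_{\Pi,e}(A)$ for every Borel $A$, i.e. $\mu_{\Gamma,e}=\mu_{\Pi,e}=:\mu_e$ and $\rho_{\Gamma,e}=\rho_{\Pi,e}$ $\mu_e$-a.e. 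To prove the claim, approximate $\Gamma,\Gamma'$ by finite sub-networks $\Gamma_0\subset\Gamma$ with $I(\Gamma_0,e)>I(e)-\de$ and $\Gamma_0'\subset\Gamma'$ with $\kappa_{\Gamma_0'}(A)>\kappa_{\Gamma'}(A)-\de$; Lemma~\ref{l:network-extension} gives a finite $\Gamma_0^+\supset\Gamma_0$ with $\ep+\kappa_{\Gamma_0^+}(A)>\kappa_{\Gamma_0'}(A)$, but $I(\Gamma_0^+,e)\le I(e)$ forces the added paths $\Gamma_0^+\setminus\Gamma_0$ to have total rate, hence total $\kappa$-mass, less than $\tfrac34\de$, so $\kappa_{\Gamma_0^+}(A)\le\kappa_{\Gamma_0}(A)+\tfrac34\de\le\kappa_\Gamma(A)+\tfrac34\de$, and letting $\ep,\de\to0$ gives $\kappa_\Gamma(A)\ge\kappa_{\Gamma'}(A)$.

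For part (3), fix $\ga\in H^1$ with domain $[a_\ga,b_\ga]$. By Lemma~\ref{L:finite-weight}, $w_{\ga,e}$ is absolutely continuous with $w_{\ga,e}'=\rho_{\ga,e}-(\ga')^2$ a.e.\ (by \eqref{e:rhogamma}) and $w_{\ga,e}(a_\ga)=0$, so, using Lemma~\ref{L:dirichlet-chara} and $\ga\in H^1$, $|\ga|_e=w_{\ga,e}(b_\ga)=\int_{a_\ga}^{b_\ga}\rho_{\ga,e}(r)\,dr-\int_{a_\ga}^{b_\ga}\ga'(r)^2\,dr=\int_{a_\ga}^{b_\ga}\rho_{\ga,e}(r)\,dr+|\ga|_d$. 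It remains to identify $\int_{a_\ga}^{b_\ga}\rho_{\ga,e}$ with $\mu_e(\gr\ga)$: by part (1) pick an $e$-complete network $\Gamma\ni\ga$ (which is possible since $\{\ga\}$ is a finite network), so $\mu_e=\mu_{\Gamma,e}=\sum_{\eta\in\Gamma}(\rho_{\eta,e}\,dt)\circ\eta^{-1}$; the $\eta=\ga$ term contributes $\int_{a_\ga}^{b_\ga}\rho_{\ga,e}$ to $\mu_e(\gr\ga)$, while every other term contributes $(\rho_{\eta,e}\,dt)(\{r:\eta(r)=\ga(r)\})$, a time-set confined by internal disjointness to the finitely many shared domain endpoints and hence Lebesgue-null. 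Thus $\mu_e(\gr\ga)=\int_{a_\ga}^{b_\ga}\rho_{\ga,e}$, which gives the identity.

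The main obstacle is the measure-theoretic bookkeeping in part (2): upgrading Lemma~\ref{l:network-extension} (stated for finite networks) to the comparison $\kappa_{\Gamma'}\le\kappa_\Gamma$ for $e$-complete $\Gamma$ and arbitrary $\Gamma'$, and then passing from equality of the ``Kruzhkov measures'' $\kappa_\Gamma=\kappa_\Pi$ back to equality of the underlying measures $\mu_\Gamma=\mu_\Pi$ — which is exactly where strict positivity of $\rho_e$ on the support of $\mu_e$ and the cross-network consistency of temporal densities from Lemma~\ref{L:overlap-lemma} enter. Everything else is routine manipulation of the definitions.
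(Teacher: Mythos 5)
Your proposal is correct and follows essentially the same route as the paper: part (1) iterates Lemma~\ref{l:network-extension} along a maximizing sequence of finite networks; part (2) works with the Kruzhkov measure $\sqrt{\rho_{\Gamma,e}}\,\mu_{\Gamma,e}$ (your $\kappa_\Gamma$, the paper's $\nu_\Gamma$), establishes its uniqueness via the finite-subnetwork approximation combined with Lemma~\ref{l:network-extension}, and recovers $\mu_e$ by combining the positivity of $\rho_{\Gamma,e}$ $\mu_{\Gamma,e}$-a.e.\ with Lemma~\ref{L:overlap-lemma}; part (3) puts $\ga$ inside an $e$-complete network and unwinds Definition~\ref{d:etomu} using Lemma~\ref{L:finite-weight}. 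The only organizational difference is that you isolate the intermediate claim $\kappa_{\Gamma'}\le\kappa_\Gamma$ for an $e$-complete $\Gamma$ and arbitrary $\Gamma'$, whereas the paper runs the same $\ep$--$\de$ chain of inequalities directly between two $e$-complete networks; these are equivalent.
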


\begin{proof}
	Let $\Pi_n$ be a sequence of finite networks so that $I(\Pi_n,e)\to I(e)$. Let $\Gamma_1$ be the given finite network, and for $n\ge 2$ construct $\Gamma_n$ consecutively as follows. Use Lemma \ref{l:network-extension} to get $\Gamma_{n}\supset \Gamma_{n-1}$, a finite network with $I(\Gamma_n,e)\ge I(\Pi_n,e)-1/n$. Then $\Gamma=\bigcup_{n\ge 1} \Gamma_n$ is an $e$-complete network, showing 1. 
	
	For 2, let $\Gamma_0\subset \Gamma$ be a finite network with $I(\Gamma_0,e)>I(e)-\frac43\ep$, and define $\Pi_0$ similarly. Use the notation $\nu_\Gamma=\sqrt{\rho_{\Gamma,e}}\mu_{\Gamma,e}$. Let $\Gamma_1\supset \Gamma_0$ be a finite network satisfying $\ep+\nu_{\Gamma_1}(A)\ge \nu_{\Pi_0}(A)$ for all Borel $A$ as in Lemma \ref{l:network-extension}. Then for any Borel $A$ we have
	\begin{align*}
		\nu_\Gamma(A)&\le \nu_{\Gamma_0}(A)+\ep\le 
		\nu_{\Gamma_1}(A)+\ep=\nu_{\Gamma_1}(\mathbb R^2)-\nu_{\Gamma_1}(A^c)+\ep 
		\\&\le \tfrac34 I(e)-\nu_{\Pi_0}(A^c)+2\ep\le
		\tfrac34 I(e)-\nu_{\Pi}(A^c)+3\ep=\nu_{\Pi}(A)+3\ep.
	\end{align*}
	Since $A,\ep$ were arbitrary and the roles of $\nu_\Gamma$, $\nu_\Pi$ are symmetric, we get $\nu_\Gamma=\nu_\Pi=:\nu$.
	
	Use the notation $\gr \Gamma=\bigcup_{\gamma\in \Gamma}\gr \gamma$. By the definition, $\nu\left((\gr \Gamma)^c\right)=\nu\left((\gr \Pi)^c\right)=0$. Moreover, on $\gr \Gamma \cap \gr\Pi$, $\rho_{\Gamma,e}=\rho_{\Pi,e}$ by Lemma \ref{L:overlap-lemma}, and 
	so $\rho_{\Gamma,e}=\rho_{\Pi,e}$ $\nu$-almost everywhere. Since by definition $\mu_\Gamma(\{x:\rho_\Gamma(x)=0\})=0$, and the same holds for $\Pi$, we see that $\mu_{\Gamma,e}=\mu_{\Pi,e}:=\mu_e$, and $\rho_{\Gamma,e}=\rho_{\Pi,e}$ $\mu_e$-almost everywhere, proving 2. 
	
	For part $3$, there is an $e$-complete network $\Gamma$ containing $\gamma$, and then  $\mu_{\Gamma,e}(\gr \gamma)=|\gamma|_e-|\gamma|_d$ by the definition of $\mu_{\Gamma,e}$.
\end{proof}

Next, we analyze the map $\mu\to e_\mu$. We first check that $e_\mu \in \mathcal D$ for any measure $\mu$ with finite entropy.

\begin{lemma}
	\label{L:inD}
	Let $\mu$ be a measure supported on $\gr \Gamma$ for some network $\Gamma$ and assume that $\mathcal K(\mu)$ is finite. Then for any path $\ga$, we have 
	\begin{equation}
		\label{E:jensen-measure}
		\frac{\mu(\gr \ga)^{3/2}}{(b_\ga - a_\ga)^{1/2}}  \le \int_{\gr \ga} \sqrt{\rho_\mu} d\mu = \mathcal K(\mu|_{\gr \ga}) \le \mathcal K(\mu).
	\end{equation}
	Moreover, defining $e=e_\mu$ by
	$
	e(p;q)=\sup_\ga \mu(\gr \gamma)+|\gamma|_d  
	$
	over all paths $\gamma \in H^1$ from $p$ to $q$, we have that $e \in \cD_{\cK(\mu)}$.
\end{lemma}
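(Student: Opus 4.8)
The plan is to first prove the inequalities \eqref{E:jensen-measure} --- which amount to a change of variables followed by Jensen --- and then bootstrap from them to check the defining conditions of $\mathcal D_{\mathcal K(\mu)}$ for $e_\mu$ one at a time. For \eqref{E:jensen-measure}, fix a path $\ga$ and recall $\bar\ga(\tau)=(\ga(\tau),\tau)$. Since $\mu$ is supported on $\gr\Gamma$ for an internally disjoint network $\Gamma$, the pieces $\mu|_{\gr\ga\cap\gr\gamma'}$, $\gamma'\in\Gamma$, are pairwise disjoint up to a $\mu$-null set (two distinct paths of $\Gamma$ can meet only at the countably many endpoints of their domains, which are $\mu$-null because $\mu$ has a temporal density and hence no atoms), and each is a restriction of $\mu|_{\gr\gamma'}$, whose $t$-marginal is $\rho_\mu(\gamma'(\cdot),\cdot)\,d\tau$. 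Summing, the $t$-marginal of $\mu|_{\gr\ga}$ equals $\rho_\mu(\ga(\tau),\tau)\,d\tau$; in particular $\mu(\gr\ga)=\int_{a_\ga}^{b_\ga}\rho_\mu(\ga(\tau),\tau)\,d\tau$ and $\rho_{\mu|_{\gr\ga}}$ agrees with $\rho_\mu$ $\mu|_{\gr\ga}$-a.e. The change of variables $\mu|_{\gr\ga}=(\rho_\mu(\ga(\tau),\tau)\,d\tau)\circ\bar\ga^{-1}$ then yields the middle equality $\int_{\gr\ga}\sqrt{\rho_\mu}\,d\mu=\int_{a_\ga}^{b_\ga}\rho_\mu(\ga(\tau),\tau)^{3/2}\,d\tau=\mathcal K(\mu|_{\gr\ga})$; the last inequality holds as $\mu|_{\gr\ga}\le\mu$ and the integrand is nonnegative, and the first is Jensen's inequality for the convex map $x\mapsto x^{3/2}$.

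Now set $m=\mathcal K(\mu)$ and turn to $e_\mu$. From \eqref{E:jensen-measure}, $\mu(\gr\ga)\le m^{2/3}(b_\ga-a_\ga)^{1/3}$ for every path $\ga$, while $|\ga|_d\le d(p;q)$ by Jensen for $\ga$ from $p$ to $q$; hence $d(p;q)\le e_\mu(p;q)\le d(p;q)+m^{2/3}(t-s)^{1/3}<\infty$, the lower bound coming from testing with the linear path. Thus $e_\mu$ is real-valued and satisfies Dirichlet dominance together with the shape bound $e_\mu-d\le m^{2/3}(t-s)^{1/3}$. The reverse triangle inequality follows by concatenating near-optimal paths for $(p;q)$ and $(q;r)$: their graphs meet only at $q$, which is $\mu$-null, so $\mu$-masses and $d$-lengths add. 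Reverse triangle, together with the fact that any $H^1$ path from $p$ to $q$ splits at time $r$ into two admissible pieces, gives the metric composition law in $\sup$ form, $e_\mu(x,s;y,t)=\sup_{z}e_\mu(x,s;z,r)+e_\mu(z,r;y,t)$ for $s<r<t$; the shape bound makes $z\mapsto e_\mu(x,s;z,r)+e_\mu(z,r;y,t)$ coercive, so on a box $B_\ell$ both this $\sup$ and the analogous two-intermediate-time $\sup$ may be restricted to $z\in[-n,n]$ with $n=(m+1)(\ell+1)$, exactly as in the proof of Proposition \ref{P:compactness}.

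Continuity of $e_\mu$ then follows from Lemma \ref{L:equicontinuity} applied on each $\tilde B_\ell$: the hypothesis \eqref{E:met-compact} holds in the $\sup$ form just established, with $\ep=0$ in \eqref{E:d-e}, and the proof of Lemma \ref{L:equicontinuity} uses the maximum only to select an optimizer, so it goes through verbatim with a near-optimizer at the cost of an error sent to $0$. Hence $e_\mu\in\mathcal E$; and since $z\mapsto e_\mu(x,s;z,r)+e_\mu(z,r;y,t)$ is now continuous and coercive, the $\sup$ in the composition law is attained, giving Definition \ref{D:Dm-def}\ref{emet}. It remains to show $\Theta(e_\mu)\le m$. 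Fix a finite collection $u_i=(x_i,s_i;y_i,t_i)$ with the $(s_i,t_i)$ disjoint and $\ep>0$, and for each $i$ pick $\ga_i\in H^1$ from $p_i$ to $q_i$ with $\mu(\gr\ga_i)+|\ga_i|_d>e_\mu(u_i)-\delta_i$; since $|\ga_i|_d\le d(u_i)$ this forces $[e_\mu(u_i)-d(u_i)]_+\le\mu(\gr\ga_i)+\delta_i$. The windows $[s_i,t_i]$ overlap pairwise in at most one point, so the graphs $\gr\ga_i$ are pairwise disjoint up to $\mu$-null sets, and \eqref{E:jensen-measure} gives $\sum_i\mu(\gr\ga_i)^{3/2}(t_i-s_i)^{-1/2}\le\sum_i\int_{\gr\ga_i}\sqrt{\rho_\mu}\,d\mu\le m$. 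Expanding $(\mu(\gr\ga_i)+\delta_i)^{3/2}$ and using $\mu(\gr\ga_i)\le m^{2/3}(t_i-s_i)^{1/3}$, one checks that for $\delta_i$ small enough (depending on $u_i$, $k$, $m$, $\ep$) the sum $\sum_i\Theta(e_\mu,u_i)$ is at most $m+\ep$; letting $\ep\downarrow0$ and taking the supremum over collections yields $\Theta(e_\mu)\le m=\mathcal K(\mu)$, so $e_\mu\in\mathcal D_{\mathcal K(\mu)}$ (and in fact $e_\mu\in\mathcal D^*_{\mathcal K(\mu)}$ as well).

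I expect the continuity step to be the main obstacle: every metric-type property of $e_\mu$ reduces to elementary manipulations of paths, but $e_\mu$ is a supremum over a poorly parametrized family, so continuity must be routed through the equicontinuity estimate of Lemma \ref{L:equicontinuity}, which in turn first requires pinning down the metric composition law in compact form. A secondary nuisance is the $\delta_i$-bookkeeping in the $\Theta$-bound, which is needed precisely because at that stage the suprema defining the $e_\mu(u_i)$ are not yet known to be attained.
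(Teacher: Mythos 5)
Your proof is correct and follows essentially the same route as the paper: Jensen for \eqref{E:jensen-measure}, then Dirichlet dominance and the shape bound, continuity via the argument of Lemma \ref{L:equicontinuity}/Proposition \ref{P:compactness}, and finally the metric composition law from the $\sup$ form plus continuity. Your additional care --- deriving the shape bound directly from Jensen rather than via $\Theta$, spelling out the reverse triangle inequality for membership in $\mathcal E$, and noting that the equicontinuity estimate works with near-optimizers since only the $\sup$ form of composition is available at that stage --- fills in details the paper leaves implicit, but the underlying argument is the same.
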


\begin{proof}
	The bound \eqref{E:jensen-measure} is immediate from Jensen's inequality. We will use it to prove that $e \in \mathcal D_{\mathcal K(\mu)}$.
	
	A priori, it is not clear that $e$ is continuous. However, $e$ is Dirichlet-dominant by construction (Definition \ref{D:Dm-def}(i)) and by \eqref{E:jensen-measure}, we have that $\Theta(e) \le \cK(\mu)$ giving Definition \ref{D:Dm-def}(ii). From there continuity of $e$ follows exactly as in the proof of the equicontinuity for $\mathscr D_m^*$ in Lemma \ref{L:equicontinuity}/Proposition \ref{P:compactness}.
	Finally, from the path definition, for any $x, y \in \R, s < r < t$, 
	$$
	e(x, s; y, t) = \sup_{z \in \R} e(x, s; z, r) + e(z, r; y, t).
	$$  
	This supremum is unchanged if we replace $\R$ by a large finite interval since $\Theta(e) < \infty$. Continuity of $e$ then implies the metric composition law, Definition \ref{D:Dm-def}\ref{emet}.
\end{proof}

In general, when we try to define a metric through lengths of paths, it may not in the end give the desired lengths. The next proposition shows that $e_\mu$ is well-behaved in this sense. 

\begin{proposition}\label{p:mutoe}
	For any measure $\mu$ supported on $\gr \Gamma$ for some network $\Gamma$ with $\mathcal K(\mu) < \infty$, setting $e = e_\mu$, we have $|\gamma|_e=\mu(\gr \gamma)+|\gamma|_d$ for every Dirichlet path $\gamma$.
\end{proposition}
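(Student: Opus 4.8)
Write $e:=e_\mu$; by Lemma \ref{L:inD} we have $e\in\mathcal D_{\mathcal K(\mu)}$, so $e$ is a genuine continuous directed metric and all path lengths are well defined, and by \eqref{E:jensen-measure} the number $\mu(\gr\gamma)$ is finite for every Dirichlet path $\gamma$. Two elementary facts will be used throughout: since the time‑marginal of $\mu|_{\gr\gamma'}$ has a Lebesgue density for each $\gamma'$ in the network, $\mu$ has no atoms and in fact every horizontal fibre $\mathbb R\times\{t\}$ is $\mu$‑null (it meets $\gr\Gamma$ in a countable set of non‑atoms); consequently, for any partition $a=r_0<\dots<r_k=b$ of the domain of $\gamma$, writing $\gamma_i=\gamma|_{[r_{i-1},r_i]}$, the graphs $\gr\gamma_i$ overlap only on $\mu$‑null fibres, whence $\sum_i\mu(\gr\gamma_i)=\mu(\gr\gamma)$, and also $\sum_i|\gamma_i|_d=|\gamma|_d$. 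The lower bound $|\gamma|_e\ge\mu(\gr\gamma)+|\gamma|_d$ is then immediate: for any partition, taking $\pi=\gamma_i$ in the supremum defining $e(\bar\gamma(r_{i-1});\bar\gamma(r_i))$ gives $e(\bar\gamma(r_{i-1});\bar\gamma(r_i))\ge\mu(\gr\gamma_i)+|\gamma_i|_d$, and summing and taking the infimum over partitions yields the claim.

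For the upper bound I would use the uniform partitions $\mathcal P_n$ of $[a,b]$ into $n$ equal pieces of length $h_n=(b-a)/n$ and estimate each $e(u_i)$, $u_i=(\bar\gamma(r_{i-1});\bar\gamma(r_i))$, from above. Let $L_i$ be the linear interpolant of $\gamma$ between the endpoints of $u_i$. By Lemma \ref{L:dirichlet-close-paths} every admissible $\pi\in H^1$ satisfies $|\pi|_e\ge|\pi|_d$, so by Lemma \ref{L:geo-mod-cont} the graph of \emph{every} such $\pi$ is contained in the tube $T_i:=\{(x,t):t\in[r_{i-1},r_i],\ |x-L_i(t)|\le 2^{1/3}\sqrt{\mathcal K(\mu)}\,h_n^{2/3}\}$, and moreover $|\pi|_d\le|L_i|_d=d(u_i)$. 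To bound $\mu(\gr\pi)$ I would split the domain of $\pi$ into $\{t:\pi(t)=\gamma(t)\}$ and the open set $\{t:\pi(t)\neq\gamma(t)\}=\bigsqcup_j(a_j,b_j)$: the graph over the first set lies in $\gr\gamma_i$ and contributes at most $\mu(\gr\gamma_i)$, while the graphs of the pieces $\pi|_{[a_j,b_j]}$ are pairwise disjoint (up to $\mu$‑null endpoints), disjoint from $\gr\gamma$, and contained in $T_i\setminus\gr\gamma$, so \eqref{E:jensen-measure} and Hölder's inequality (exponents $3$ and $3/2$) give
\[
\sum_j\mu\big(\gr\pi|_{(a_j,b_j)}\big)\ \le\ \sum_j (b_j-a_j)^{1/3}\,\mathcal K\big(\mu|_{\gr\pi|_{[a_j,b_j]}}\big)^{2/3}\ \le\ h_n^{1/3}\,\mathcal K\big(\mu|_{T_i\setminus\gr\gamma}\big)^{2/3}.
\]
Hence $\mu(\gr\pi)+|\pi|_d\le\mu(\gr\gamma_i)+h_n^{1/3}\mathcal K(\mu|_{T_i\setminus\gr\gamma})^{2/3}+d(u_i)$ uniformly in $\pi$, so $e(u_i)$ is bounded by the same quantity.

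Summing over $i$, using $\sum_i\mu(\gr\gamma_i)=\mu(\gr\gamma)$, Hölder once more, and the fact that the sets $T_i\setminus\gr\gamma$ are pairwise $\mu$‑null‑disjoint, I get $|\gamma|_e\le\sum_i e(u_i)\le\mu(\gr\gamma)+(b-a)^{1/3}\mathcal K(\mu|_{T\setminus\gr\gamma})^{2/3}+\sum_i d(u_i)$ with $T:=\bigcup_i T_i$. Now $T\subseteq N_{\eta_n}(\gamma):=\{(x,t):t\in[a,b],\ |x-\gamma(t)|\le\eta_n\}$ where $\eta_n=2^{1/3}\sqrt{\mathcal K(\mu)}\,h_n^{2/3}+2\|\gamma'\|_{L^2[a,b]}h_n^{1/2}\to 0$ (the second term bounds $\|L_i-\gamma\|_\infty$ on $[r_{i-1},r_i]$ via Cauchy–Schwarz); since $N_\eta(\gamma)\setminus\gr\gamma\downarrow\varnothing$ as $\eta\downarrow 0$ and $\int\sqrt{\rho_\mu}\,d\mu=\mathcal K(\mu)<\infty$, dominated convergence gives $\mathcal K(\mu|_{N_{\eta_n}(\gamma)\setminus\gr\gamma})\to 0$. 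Finally $\sum_i d(u_i)=-\|\gamma_n'\|_{L^2}^2\to-\|\gamma'\|_{L^2}^2=|\gamma|_d$, where $\gamma_n$ is the piecewise‑linear interpolant of $\gamma$ on $\mathcal P_n$, because $\gamma_n\to\gamma$ in $H^1$. Letting $n\to\infty$ yields $|\gamma|_e\le\mu(\gr\gamma)+|\gamma|_d$, and combined with the lower bound this proves the proposition.

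The only genuine difficulty is that $\mu$ need not be locally finite near $\gr\gamma$: a thin tube around $\gamma$ can have infinite $\mu$‑mass (many network paths may accumulate on $\gamma$), so one cannot simply argue that $\mu(T_i)$ is small. The point that makes the argument work is that the Jensen-type bound \eqref{E:jensen-measure} limits the $\mu$‑mass a \emph{single} path can collect while staying off $\gamma$ to the small quantity $h_n^{1/3}\mathcal K(\mu|_{T_i\setminus\gr\gamma})^{2/3}$, and the sum of these over the partition is controlled, after one application of Hölder, by $(b-a)^{1/3}\mathcal K(\mu|_{N_{\eta_n}(\gamma)\setminus\gr\gamma})^{2/3}\to 0$ — a quantity that is finite and vanishing precisely because $\mathcal K(\mu)<\infty$, even when $\mu$ of the corresponding region is infinite.
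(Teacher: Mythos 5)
Your proof is correct in its essential mechanism and reaches the same conclusion as the paper, but the implementation differs in a few interesting ways. Both proofs hinge on the same key observation you state at the end: the Jensen bound \eqref{E:jensen-measure} limits the $\mu$-mass a single competitor path can collect off $\gr\ga$, and this quantity shrinks as the localizing region shrinks, via dominated convergence on $\int\sqrt{\rho_\mu}\,d\mu$. Where you differ from the paper: you fix uniform partitions and build a tube $T_i$ around the linear interpolant whose radius shrinks automatically with the mesh, so you take a single limit $n\to\infty$; you also decompose a competitor $\pi$ into on-$\gamma$ and off-$\gamma$ excursions and apply H\"older twice. The paper instead keeps the open set $O\supset\gr\ga$ fixed and independent of the partition, applies \eqref{E:jensen-measure} once to the whole competitor $\pi$ (with $\mu'=\mu|_{(\gr\gamma)^c}$ restricted to $O$) rather than excursion-by-excursion, disposes of competitors leaving $O$ by showing $|\pi|_d\to -\infty$ via the $L^2$-triangle inequality $\sqrt{-|\pi|_d}\ge\sqrt{-|\pi-\gamma|_d}-\sqrt{-|\gamma|_d}$, and only then lets $O\downarrow\gr\gamma$. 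The paper's version is slightly leaner (no H\"older, no excursion decomposition); yours unifies the two limits and is more explicit about the tube geometry. Both are valid.

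One point you should tighten: you justify the tube containment by noting ``every $\pi\in H^1$ satisfies $|\pi|_e\ge|\pi|_d$'' (Lemma \ref{L:dirichlet-close-paths}) and then invoking Lemma \ref{L:geo-mod-cont}. Read literally, that would say every $H^1$ path with the given endpoints lies in $T_i$, which is absurd; the hypothesis ``$|\pi|_e\ge|\pi|_d$'' in Lemma \ref{L:geo-mod-cont} as printed is vacuous (it always holds), and what the lemma's proof actually uses is $|\pi|_e\ge d(p;q)$, i.e.\ that $\pi$ is at least as $e$-long as the $d$-geodesic. In your setting you cannot assume competitors $\pi$ satisfy that a priori, and you should not need to: what you really want is that in $\sup_\pi\bigl(\mu(\gr\pi)+|\pi|_d\bigr)$ one may discard any $\pi$ exiting $T_i$, because by \eqref{E:jensen-measure} it gathers at most $h_n^{1/3}\mathcal K(\mu)^{2/3}$ of $\mu$-mass while paying a Dirichlet detour penalty of order $R_n^2/h_n$ for leaving a tube of radius $R_n$; for $R_n\gtrsim h_n^{2/3}\mathcal K(\mu)^{1/3}$ the penalty dominates, so such $\pi$ are beaten by the straight line $L_i$ already in the supremum. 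This is a direct two-line computation and does not rely on Lemma \ref{L:geo-mod-cont} (nor on knowing anything about $e$-length of $\pi$). With that replacement, the proof is sound.
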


%We start with a simple observation.
%
%\begin{lemma}
%Consider a measure $\mu$ supported on $\gr \Gamma$ for some network $\Gamma$, and let $\pi$ be any path. Then letting $\mu_1 = \mu|_{\gr \pi}$ and $\mu_2 = \mu|_{(\gr \pi)^c}$, we can write 
%$$
%e_\mu(p; q) = \sup_{\ga:p \to q} \mu_2(\ga) + |\ga|_{\mu_1}.
%$$
%\end{lemma}

\begin{proof}
	For a fixed partition $\mathcal P$ on $[a_\ga, b_\ga]$, let $S(\ga, \pn)$ denote the set of paths $\pi \in H^1$ from $\bar \ga(a_\ga)$ to $\bar \ga(b_\ga)$ which equal $\ga$ on $\pn$.	
	The definition of path length and the definition of the distance $e$ can be combined to give 
	\begin{equation}
		\label{e:gainfsup}   |\gamma|_e=\inf_{\pn}|\gamma|_{e,\pn}=\inf_{\pn}\sup_{\pi\in S(\ga, \pn)} \mu(\gr \pi)+|\pi|_d,
	\end{equation}
	which shows $|\gamma|_e\ge \mu(\gr \gamma)+|\gamma|_d$. Now let $\mu' = \mu|_{(\gr \ga)^c}$, and let $O$ be any open set containing $\gr \ga$. Then for any partition $\pn$,
	\begin{align}
		\nonumber
		&\sup_{\pi\in S(\ga, \pn)} \mu(\gr \pi)+|\pi|_d \\
		\nonumber
		&\le \mu(\gr \ga) + \sup_{\pi\in S(\ga, \pn), \gr \pi \subset O} (\mu'(\gr \pi)+|\pi|_d) \vee \sup_{\pi\in S(\ga, \pn), \gr \pi \not \subset O} (\mu'(\gr \pi)+|\pi|_d) \\
		\label{E:final-calc}
		&\le \mu(\gr \ga) + \sup_{\pi\in S(\ga, \pn), \gr \pi \subset O} (c_\ga \mathcal K(\mu'|_O)^{2/3} +|\pi|_d) \vee \sup_{\pi\in S(\ga, \pn), \gr \pi \not \subset O} (c_\ga \mathcal K(\mu')^{2/3}+|\pi|_d),
	\end{align}
	where the second inequality uses \eqref{E:jensen-measure} applied to $\mu'$, where $c_\ga := (b_\ga - a_\ga)^{1/3}$. As the mesh size of $\mathcal P$ tends to $0$, we claim that the second supremum term in \eqref{E:final-calc} tends to $-\infty$. Indeed, the triangle inequality for the $L^2$-norm implies that for any $\pi \in H^1$ from $\bar \ga(a_\ga)$ to $\bar \ga(b_\ga)$ we have 
	$$\sqrt{-|\pi|_d}\ge \sqrt{-|\pi-\gamma|_d}-\sqrt{-|\gamma|_d},
	$$
	and for $\pi\in S(\ga, \pn), \gr \pi \not \subset O$, the first term on the right-hand side above blows up to $\infty$ as the mesh of $\mathcal P$ tends to $0$ whereas the second term is finite. On the other hand,
	$$
	\inf_\pn \sup_{\pi\in S(\ga, \pn), \gr \pi \subset O} (c_\ga \mathcal K(\mu'|_O)^{2/3} +|\pi|_d) = c_\ga \mathcal K(\mu'|_O)^{2/3} + |\ga|_d.
	$$
	As we let the open set $O$ decrease down to $\gr \ga$, $\mathcal K(\mu'|_O) \to \mathcal K(\mu'|_{\gr \ga}) = 0$ by the dominated convergence theorem.  This gives that $|\gamma|_e\le \mu(\gr \gamma)+|\gamma|_d$.
\end{proof}

We now have all the ingredients to prove our structure theorem. 

\begin{proof}[Proof of Theorem \ref{t:structure}]
	Start with a finite rate metric $e$, construct $\mu_e$, and let $\Gamma$ be an $e$-complete network. This exists by Proposition \ref{p:etomu}.3.
	Then $\mathcal K(\mu_e) = \mathcal K(\mu_{\Gamma,e})=\frac34I(\Gamma,e)=\tfrac34 I(e)$ by \eqref{e:IandK}. Now, for all $\ga \in H^1$, \begin{equation}
		\label{E:equal-lengths}
		|\gamma|_e=\mu_e(\gr\gamma)+|\gamma|_d = |\gamma|_{e_{\mu_e}}, 
	\end{equation}
	where the first equality uses Proposition \ref{p:etomu}.3 and the second uses Proposition \ref{p:mutoe}. Finally, $e \in \mathcal D$ since $e$ has finite rate, and $e_{\mu_e} \in \mathcal D$ by Lemma \ref{L:inD}, and so both $e, e_{\mu_e}$ are geodesic spaces by Lemma \ref{L:geodesic-space} with all geodesics in $H^1$ by Lemma \ref{L:dirichlet-close-paths}. Therefore \eqref{E:equal-lengths} implies $e = e_{\mu_e}$.
	
	Now start with $\mu$ supported on $\gr\Gamma$ with $\mathcal K(\mu)<\infty$. Then 
	\begin{equation}
		\label{e:gamu}
		|\gamma|_{e_\mu}=\mu(\gr\gamma)+|\gamma|_d
	\end{equation}
	for all $\gamma\in H^1$ by Proposition \ref{p:mutoe}. Applying \eqref{e:gamu} to the paths $\ga|_{[a_\ga, r]}$ for all $r \in [a_\ga, b_\ga]$ we see that $\rho_{\ga, e_\mu} = \rho_\mu$ for any path $\ga$ and hence $\mu_{\Pi, e_\mu} = \mu|_{\gr\Pi}$ for any network $\Pi$. Therefore
	$$
	I(e) = \sup_{\Pi} I(e, \Pi) = \sup_\Pi \int_{\gr \Pi} \sqrt{\rho_{\mu}} d_{\mu|_{\gr \Pi}} = \tfrac{4}3 \mathcal K(\mu_{\Ga, e_\mu}) = \tfrac{4}3 \mathcal K(\mu)
	$$ 
	by \eqref{e:IandK}, and so $e_\mu$ is a finite rate metric with $\mu = \mu_{e_\mu}$.
\end{proof} 

We finish this section with a few estimates that will be useful in proving the large deviation principle, Theorem \ref{t:main}. The first lemma will be useful in the large deviation upper bound on cones. It will be used to show that a metric is large enough if distances are large on a certain finite set.

\begin{lemma}\label{l:pieces-length}
	Assume $e=e_\mu$ is a finite rate metric and  $\mu$ is supported on $\gr \Gamma$ for a network $\Gamma$. By possibly replacing each of the paths $\ga \in \Ga$ with $\ga|_{[a_\ga, (a_\ga + b_\ga)/2]}$ and $\ga|_{[(a_\ga + b_\ga)/2, b_\ga]}$ we may assume that for any point pair $(p; q) = (x, s; y, t)$ there is at most one path $\ga \in \Ga$ with $\gamma(s)=x$, $\gamma(t)=y$.
	
	We can then unambiguously define $e^0(x,s;y,t)=|\gamma_{[s,t]}|_e$ if $\gamma(s)=x$, $\gamma(t)=y$ for some $\gamma \in \Gamma$,  and  $e^0(x,s;y,t)=-\infty$ otherwise. 
	Let 
	$$
	e'(u)=\sup_{\pn} \sup_\pi \sum_{i=1}^\ell(e^0\vee d)(\pi(t_i),\pi(t_{i+1}))
	$$
	Then $e'=e$. Moreover, the inner sup is non-decreasing as the partition is refined. 
\end{lemma}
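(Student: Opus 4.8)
The plan is to show both inequalities $e' \le e$ and $e' \ge e$, plus the monotonicity of the inner supremum under refinement.

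\medskip

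\noindent\textbf{Setup and the easy inequality.} First I would record that $e^0 \le e$ wherever $e^0$ is finite, by Proposition~\ref{p:etomu}.3 (since $e^0(x,s;y,t) = |\ga_{[s,t]}|_e \le e(x,s;y,t)$ by the triangle inequality/the definition of length), and trivially $d \le e$ since $e \in \mathcal D$. Hence $e^0 \vee d \le e$. For any partition $\pn = \{s = t_1 < \dots < t_{\ell+1} = t\}$ of $[s,t]$ and any path $\pi$ from $x$ at time $s$ to $y$ at time $t$ passing through the prescribed intermediate points, the triangle inequality for $e$ gives $\sum_{i=1}^\ell (e^0 \vee d)(\pi(t_i), t_i; \pi(t_{i+1}), t_{i+1}) \le \sum_{i=1}^\ell e(\pi(t_i), t_i; \pi(t_{i+1}), t_{i+1}) \le e(u)$. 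Taking the supremum over $\pn$ and $\pi$ yields $e'(u) \le e(u)$.

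\medskip

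\noindent\textbf{Monotonicity under refinement.} Refining $\pn$ by inserting one point $t^*$ between $t_i$ and $t_{i+1}$: the corresponding term $(e^0 \vee d)(\pi(t_i), t_i; \pi(t_{i+1}), t_{i+1})$ is replaced by $(e^0 \vee d)(\pi(t_i), t_i; \pi(t^*), t^*) + (e^0\vee d)(\pi(t^*), t^*; \pi(t_{i+1}), t_{i+1})$. I would argue this can only increase the sum: if $\pi|_{[t_i, t_{i+1}]}$ lies along a path $\ga \in \Ga$ (the case where the $e^0$ term was active), then by additivity of length along $\ga$, $e^0(\pi(t_i); \pi(t_{i+1})) = e^0(\pi(t_i); \pi(t^*)) + e^0(\pi(t^*); \pi(t_{i+1}))$ (both intermediate pieces are subarcs of the same $\ga$ after the halving normalization, so both $e^0$ values are finite), and each is $\le$ the corresponding $e^0 \vee d$; if instead the $d$ term was active, then $d(\pi(t_i); \pi(t_{i+1})) \le d(\pi(t_i); \pi(t^*)) + d(\pi(t^*); \pi(t_{i+1})) \le (e^0\vee d)(\dots) + (e^0 \vee d)(\dots)$ by the reverse triangle inequality for $d$. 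Actually one must be a little careful: when choosing the optimal $\pi$ for the refined partition one has more freedom, so monotonicity really follows because any $(\pn, \pi)$ admissible for the coarser partition extends (taking $\pi$ unchanged) to an admissible pair for the finer one with sum at least as large, by the above termwise bound. This is the argument I would write out.

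\medskip

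\noindent\textbf{The main inequality $e' \ge e$.} This is where the work lies. Fix $u = (x,s;y,t)$ and let $\ga^* $ be an $e$-geodesic from $(x,s)$ to $(y,t)$, which exists and lies in $H^1$ by Lemma~\ref{L:geodesic-space} and Lemma~\ref{L:dirichlet-close-paths}; so $e(u) = |\ga^*|_e = \mu(\gr \ga^*) + |\ga^*|_d$ by Proposition~\ref{p:mutoe} (or \ref{p:etomu}.3). The strategy is to approximate $\gr \ga^*$ from above by finitely many arcs of the network paths $\ga \in \Ga$ together with straight Dirichlet segments, and recognize the result as an admissible $(\pn, \pi)$ in the definition of $e'$. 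Concretely, fix $\de > 0$. Since $\mathcal K(\mu) < \infty$, only finitely many $\ga \in \Ga$ carry $\mu$-mass exceeding any given threshold, so I can pick a finite subnetwork $\Ga_0 \subset \Ga$ with $\mu\big(\gr\ga^* \setminus \bigcup_{\ga \in \Ga_0} \gr \ga\big) < \de$. Now the graph $\gr \ga^*$ meets the (finitely many) graphs $\gr \ga$, $\ga\in\Ga_0$, in a way that decomposes $[s,t]$ into finitely many closed intervals on which $\ga^*$ either coincides with a single $\ga \in \Ga_0$ (up to the overlap structure of rightmost geodesics / internally disjoint networks, as used in Proposition~\ref{P:stline}) or carries $\mu$-mass at most... — here I need the finer point that on the ``coincidence'' intervals $\rho_{\ga^*, e} = \rho_{\ga, e}$ a.e.\ by Lemma~\ref{L:overlap-lemma}, so $e^0$ along those subarcs records exactly the right length. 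I would then build $\pn$ from the endpoints of these intervals (refined enough to control the Dirichlet-only stretches by absolute continuity of $w_{\ga^*, e}$, Lemma~\ref{L:finite-weight}) and take $\pi = \ga^*$ itself. On each subinterval where $\ga^*$ runs along some $\ga \in \Ga_0$, the term $(e^0 \vee d)(\ga^*(t_i);\ga^*(t_{i+1})) = e^0(\ga^*(t_i);\ga^*(t_{i+1}))$ equals the $e$-length of that subarc; on each remaining subinterval the term is at least $d(\ga^*(t_i);\ga^*(t_{i+1})) \ge |\ga^*|_{d,[t_i,t_{i+1}]}$, and the total $\mu$-mass lost on these is $< \de$. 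Summing and letting the mesh of $\pn$ go to $0$ and then $\de \to 0$, the sum converges to $\mu(\gr\ga^*) + |\ga^*|_d = e(u)$, giving $e'(u) \ge e(u)$.

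\medskip

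\noindent\textbf{Expected main obstacle.} The delicate part is the combinatorial/geometric decomposition of $\gr \ga^*$ into finitely many pieces, each of which is either an honest subarc of a single network path (so that $e^0$ applies and records the exact $e$-length, via Lemma~\ref{L:overlap-lemma}) or a piece carrying negligible $\mu$-mass (so that the crude bound by $d$ loses only $\de$). One must handle the fact that $\ga^*$ can touch and leave the network paths on a complicated closed set of times, and that the ``at most one path through a point pair'' normalization is exactly what makes $e^0$ well-defined on each subarc. I expect to invoke the same rightmost-geodesic overlap fact used in the proof of Proposition~\ref{P:stline} (``any two distinct rightmost geodesics overlap on a closed interval'') to keep the decomposition finite, together with absolute continuity of the weight function to ensure the Dirichlet-only remainder is genuinely small when $\mu$-mass there is small. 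Everything else — the two easy inequalities and the refinement monotonicity — is routine triangle-inequality bookkeeping.
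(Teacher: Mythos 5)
Your easy inequality $e' \le e$ is correct and matches the paper. Your refinement-monotonicity argument is essentially the right idea, but the $d$ case is muddled: you invoke ``the reverse triangle inequality for $d$,'' which in fact gives $d(p;q) \ge d(p;r) + d(r;q)$ for arbitrary $r$ --- the opposite direction of what you need. The correct move (which the paper makes explicit) is to \emph{choose} the inserted intermediate point on the straight line from $p$ to $q$, where equality holds because the line is the $d$-geodesic; similarly in the $e^0$ case one chooses the intermediate point on $\gamma$ itself. So the monotonicity claim is really ``for each coarse pair $(p;q)$ there \emph{exists} an intermediate point making the two refined terms sum to at least $(e^0\vee d)(p;q)$,'' not a termwise bound for a fixed $\pi$.

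For the hard direction $e' \ge e$ you take a genuinely different route from the paper, and it is where the real gap lies. The paper does not touch the geodesic $\ga^*$ at all: it first checks $d \le e' \le e$, concludes $e' \in \mathcal D$ with finite rate, then observes that the trivial one-term partition already forces $|\gamma|_{[s,t]}|_{e'} \ge e^0(\gamma(s),s;\gamma(t),t) = |\gamma|_{[s,t]}|_e$ for every $\gamma\in\Gamma$ and every subinterval, so $|\gamma|_{e'} = |\gamma|_e$, hence $\rho_{\gamma,e'} = \rho_{\gamma,e}$, hence $\mu_{e'} \ge \mu_e$, hence $e' = e_{\mu_{e'}} \ge e_{\mu_e} = e$. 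Your plan instead decomposes $\gr\ga^*$ into finitely many subarcs that either lie wholly on one network path or carry small $\mu$-mass. That decomposition does not exist in general: the coincidence set $\{r : \ga^*(r) = \gamma(r)\}$ is a closed set that can be Cantor-like with no interior, so there need not be a single non-degenerate subinterval on which $\ga^*$ coincides with $\gamma$, even though positive $\mu$-mass is captured there. The overlap fact you propose to borrow from Proposition \ref{P:stline} (two rightmost geodesics overlap on a closed interval) does not apply here, since $\gamma\in\Gamma$ is not assumed to be a geodesic between any relevant endpoints, let alone the rightmost one. You acknowledge this as the delicate step, but the proposal offers no way around it, and the subsequent bookkeeping (comparing $|\gamma|_{[t_i,t_{i+1}]}|_e$ with $|\ga^*|_{[t_i,t_{i+1}]}|_e$ on ``mostly-coincidence'' intervals) needs further control on the densities off the coincidence set that is not set up. The paper's abstract route via $\mu_{e'} \ge \mu_e$ avoids all of this, which is precisely why the structure theory of Section \ref{sec:plant} was developed; I would adopt that argument here rather than trying to repair the geodesic decomposition.
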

\begin{proof}
	The refinement claim can be proven by induction as we add an extra time in the partition. The inequality to check is  $$
	\sup_x (e^0\vee d) (p;x,s)+(e^0\vee d)(x,s;q)\ge (e^0\vee d)(p;q).
	$$
	When $d(p;q)\ge e^0(p;q)$ then we can take $(x,s)$ to be the point on the line segment $pq$ and $d$ in both terms on the left. Otherwise, let $\gamma\in \Gamma$ so that $p,q\in \gr\gamma$ and take $x=\gamma(s)$ and $e^0$ in both terms on the left. 
	
	It follows from the definition that $d\le e'\le e$. This implies that $e'\in \mathcal D$ and is of finite rate. 
	By the definition of length, for any $\gamma\in \Gamma$ and $[s, t] \subset [a_\ga, b_\ga]$ we have $|\gamma|_{[s, t]}|_{e'}=|\gamma|_{[s, t]}|_e$. Therefore $\mu_{e'}\ge \mu_{e}$, and so $e'=e$.
\end{proof}

\begin{proposition} Suppose $\mu_n, \mu$ are planted network measures such that $\mu_n(A)\uparrow \mu(A)$ for every Borel set $A$. Then $e_{\mu_n}\to e_\mu$ in $\mathcal E$.
\end{proposition}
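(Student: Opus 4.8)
The plan is to use that $\mu_n \uparrow \mu$ forces the metrics $e_{\mu_n}$ to form a non-decreasing sequence bounded above by $e_\mu$, to confine this sequence to a single compact subset of $\mathcal E$, and then to identify its limit as $e_\mu$ by a short $\varepsilon$-optimal-path argument. The only real subtlety will be upgrading the natural pointwise/on-compacts convergence to convergence in $\mathcal E$ near the boundary $\{s=t\}$.

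First I would record the monotonicity. Since $\mu_n(A)$ is non-decreasing in $n$ and bounded by $\mu(A)$ for every Borel $A$, we have $\mu_n \le \mu_{n+1} \le \mu$ as measures; as the graph $\gr\gamma$ of any path is compact, hence Borel, Definition \ref{d:mutoe} gives $e_{\mu_n} \le e_{\mu_{n+1}} \le e_\mu$. By Lemma \ref{L:inD} all of these metrics lie in $\mathcal D$, so Theorem \ref{t:structure} together with the monotonicity of $I$ (Corollary \ref{c:monotone}) yields $\tfrac43\mathcal K(\mu_n) = I(e_{\mu_n}) \le I(e_\mu) = \tfrac43\mathcal K(\mu) =: \tfrac43 m$. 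Feeding this uniform bound back into Lemma \ref{L:inD} shows $e_{\mu_n} \in \mathcal D_{\mathcal K(\mu_n)} \subseteq \mathcal D_m$ for all $n$, and $e_\mu \in \mathcal D_m$. Since $\mathcal D_m$ is compact in $\mathcal E$ (Proposition \ref{P:compactness}), the non-decreasing sequence $(e_{\mu_n})$ is precompact, and every subsequential $\mathcal E$-limit must agree pointwise with the pointwise supremum $e^\star := \sup_n e_{\mu_n}$; by the usual subsequence characterization of convergence in a metric space, $e_{\mu_n} \to e^\star$ in $\mathcal E$, and clearly $e^\star \le e_\mu$.

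It then remains to prove $e^\star \ge e_\mu$ pointwise, and this is where the full hypothesis $\mu_n \uparrow \mu$ (not merely $\mu_n \le \mu$) is used. Given $(p;q)\in\Rd$ and $\varepsilon>0$, I would pick, by Definition \ref{d:mutoe}, a path $\gamma\in H^1$ from $p$ to $q$ that is $\varepsilon$-optimal for $e_\mu(p;q)$, i.e.\ $\mu(\gr\gamma)+|\gamma|_d > e_\mu(p;q)-\varepsilon$. Applying the hypothesis to the Borel set $\gr\gamma$ gives $\mu_n(\gr\gamma)\uparrow\mu(\gr\gamma)$, whence
$$
e^\star(p;q) = \lim_{n\to\infty} e_{\mu_n}(p;q) \ge \lim_{n\to\infty}\big(\mu_n(\gr\gamma)+|\gamma|_d\big) = \mu(\gr\gamma)+|\gamma|_d > e_\mu(p;q)-\varepsilon,
$$
and letting $\varepsilon\downarrow 0$ gives $e^\star = e_\mu$, completing the proof.

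The step I expect to require the most care is the upgrade to convergence in $\mathcal E$, i.e.\ uniform convergence on bounded sets including arbitrarily close to $\{s=t\}$. I would dispatch it exactly as in the proof of Proposition \ref{P:compactness}: the estimate of Lemma \ref{L:basic-facts}\ref{L:basic-mbound}, valid uniformly over $\mathcal D_m$, pins every $e_{\mu_n}$ and $e_\mu$ to within $m^{2/3}(t-s)^{1/3}$ of the Dirichlet metric $d$, so on any bounded set the differences $e_{\mu_n}-e_\mu$ are automatically small where $t-s$ is small, while on each compact slice $\{t-s\ge\delta\}$ a non-decreasing sequence of continuous functions with continuous limit converges uniformly by Dini's theorem — or, more economically, one simply invokes the compactness of $\mathcal D_m$ in $\mathcal E$, which already encodes precisely this. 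The remaining ingredients (the measure comparisons, Borel-measurability of $\gr\gamma$, existence of $\varepsilon$-optimal paths) are routine and I would not dwell on them.
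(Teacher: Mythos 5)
Your proof is correct and follows the same backbone as the paper's: compactness of sub-level sets of $I$ (equivalently of $\mathcal D_m$) confines the monotone sequence $e_{\mu_n}$ to a compact set and gives a limit $e^\star$ in $\mathcal E$. The only substantive difference is in how $e^\star$ is identified with $e_\mu$. You argue directly from Definition \ref{d:mutoe}: pick an $\varepsilon$-optimal path $\gamma$ for $e_\mu(p;q)$, use $\mu_n(\gr\gamma)\uparrow\mu(\gr\gamma)$, and conclude $e^\star\ge e_\mu$. The paper instead goes through path lengths: for every $\gamma\in H^1$, Proposition \ref{p:mutoe} gives $|\gamma|_{e_{\mu_n}}=\mu_n(\gr\gamma)+|\gamma|_d\to\mu(\gr\gamma)+|\gamma|_d=|\gamma|_{e_\mu}$, and then Lemma \ref{L:length-lemma} (upper semicontinuity of length under metric convergence) yields $|\gamma|_{e^\star}\ge|\gamma|_{e_\mu}$, hence $e^\star\ge e_\mu$; the reverse inequality comes from $\mu_n\le\mu$. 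Your route is slightly more self-contained since it avoids Lemma \ref{L:length-lemma} and the structural identity of Proposition \ref{p:mutoe}, needing only the defining supremum for $e_\mu$; the paper's route gives, as a by-product, convergence of $|\gamma|_{e_{\mu_n}}$ for every $H^1$ path rather than just near-geodesics. Both are valid, and your handling of the upgrade from subsequential to full convergence via monotonicity is fine.
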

\begin{proof}
	Since $I(e_{\mu_n}) \le I(e_\mu) < \infty$ and sub-level sets of $I$ are compact, $e_{\mu_n}$ has a subsequential limit $e$. For any path $\gamma$ we have $|\gamma|_{e_{\mu_n}}=\mu_n(\gr \gamma)+|\gamma|_d\to \mu(\gr \gamma)+|\gamma|_d=|\gamma|_{e_\mu}$. Thus by Lemma \ref{L:length-lemma}, $|\gamma|_{e} \ge |\gamma|_{e_{\mu}}$. This shows $e\ge e_\mu$. On the other hand $e_{\mu_n} \le e$ for all $n$ since $\mu_n \le \mu$, so $e \le e_\mu$.
\end{proof}

Approximating a finite-rate planted network measure by its restrictions to finitely many slightly truncated paths we get the following.

\begin{corollary}\label{c:approx} Any finite-rate metric can be approximated from below by finite disjoint planted network  metrics.
\end{corollary}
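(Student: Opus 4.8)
The plan is to invoke Theorem~\ref{t:structure} to write the given finite-rate metric as $e=e_\mu$, then exhibit $\mu$ as the increasing limit of its restrictions to finitely many slightly truncated paths, and finally quote the Proposition immediately preceding this corollary.

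\textbf{Setup.} By Theorem~\ref{t:structure}, a finite-rate metric $e$ equals $e_\mu$ for a planted network measure $\mu$ supported on $\gr\Gamma$, where $\Gamma=\{\gamma_i:i\ge 1\}$ is an internally disjoint collection of $H^1$ paths $\gamma_i:[a_i,b_i]\to\R$. Choose $\de_n\downarrow 0$ with $a_i+\de_n<b_i-\de_n$ for all $i\le n$, set $\gamma_i^{n}:=\gamma_i|_{[a_i+\de_n,\,b_i-\de_n]}$, and let $\Gamma_n:=\{\gamma_1^{n},\dots,\gamma_n^{n}\}$ and $\mu_n:=\mu|_{\gr\Gamma_n}$.

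\textbf{$\Gamma_n$ is a finite disjoint network and $\mu_n$ is a planted network measure.} For $i\ne j$, any common time $r$ of $\gamma_i^{n}$ and $\gamma_j^{n}$ lies in $[a_i+\de_n,b_i-\de_n]\cap[a_j+\de_n,b_j-\de_n]\subset(a_i,b_i)\cap(a_j,b_j)$, so $\gamma_i(r)\ne\gamma_j(r)$ by internal disjointness of $\Gamma$; hence $\Gamma_n$ is disjoint. Restricting $\mu$ leaves the temporal-density structure intact (the density is a local quantity on each graph, unaffected by truncation) and $\mathcal K(\mu_n)=\int_{\gr\Gamma_n}\sqrt{\rho_\mu}\,d\mu\le\mathcal K(\mu)<\infty$. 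Thus $\mu_n$ is a planted network measure carried by the finite disjoint network $\Gamma_n$, so $e_{\mu_n}$ is a finite disjoint planted network metric.

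\textbf{Monotone convergence.} Since $\de_n$ decreases, $\gr\Gamma_n\subseteq\gr\Gamma_{n+1}$, so $\mu_n\le\mu_{n+1}$ and $\mu_n(A)\uparrow\mu\bigl(A\cap\bigcup_n\gr\Gamma_n\bigr)$ for every Borel $A$. Now $\bigcup_n\gr\Gamma_n$ is $\gr\Gamma$ with the countable set of path endpoints removed; since the $t$-marginal of $\mu|_{\gr\gamma}$ has a Lebesgue density for each $\gamma\in\Gamma$, $\mu$ assigns no mass to this endpoint set, whence $\mu_n(A)\uparrow\mu(A)$. The Proposition immediately above then gives $e_{\mu_n}\to e_\mu=e$ in $\mathcal E$, while $e_{\mu_n}\le e$ follows directly from the definition $e_\mu(p;q)=\sup_\pi\mu(\gr\pi)+|\pi|_d$ because $\mu_n\le\mu$; in fact $e_{\mu_n}$ increases to $e$, giving the claimed approximation from below. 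The only points requiring care are that slight truncation promotes ``internally disjoint'' to ``disjoint'' while simultaneously costing no $\mu$-mass in the limit — both resting on the single observation that $\mu$ has no time-atoms along any $\gamma\in\Gamma$ — and that restriction preserves the three defining conditions of a planted network measure; everything quantitative is supplied by Theorem~\ref{t:structure} and the preceding Proposition.
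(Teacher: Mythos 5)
Your proof is correct and is exactly the argument the paper intends: the paper's entire ``proof'' is the one-sentence remark preceding the corollary (``Approximating a finite-rate planted network measure by its restrictions to finitely many slightly truncated paths we get the following''), together with the unnumbered proposition just above. You have simply filled in the details — in particular the two small but necessary observations that truncating interiors upgrades internal disjointness to disjointness, and that $\mu$ charges no endpoints because the $t$-marginals have Lebesgue densities.
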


\section{The large deviation upper bound}\label{sec:upbd}
In this section, we prove the ingredients needed for the large deviation upper bound. The large deviation upper bound will follow from a version of exponential tightness (Proposition \ref{P:landscape-in-E}) and an upper bound on small balls (Proposition \ref{p:upbd}). Both of these bounds use only the limited inputs of Theorem \ref{T:tracy-widom-tails}, Proposition \ref{l:4.4} and the topological framework of the previous sections. We start with exponential tightness.

\subsection{Exponential tightness}

Here it will be convenient to expand the space $\mathcal E$ to the space $\bar \cE$ comprising all continuous functions from $\Rd \to \R$ with the same metric $\fd$ introduced in Section \ref{sec:top}. 

\begin{proposition}
	\label{P:landscape-in-E*} For every $m > 0$, there exists a compact set $K_m \subset \bar \cE$ such that for all $\de > 0$,
	\begin{align}
		\label{e:exptight*}
		\limsup_{\ep \to 0} \ep^{3/2} \log \Pr(\fd (\mathcal L_\ep, K_m) \ge \delta) \le - \tfrac{4}3 m.
	\end{align}
\end{proposition}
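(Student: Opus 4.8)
### Proof Plan for Proposition \ref{P:landscape-in-E*}

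\textbf{Proof plan.} The plan is to take $K_m$ to be a compact set calibrated to the shape of $\mathcal D_m^*$: concretely,
\[
K_m \ :=\ \Bigl\{\, e \in \bar\cE \ :\ e \text{ satisfies } \eqref{e:metmax},\ \ d(u) \le e(u) \le d(u) + m^{2/3}(t-s)^{1/3}\ \text{ for all } u=(x,s;y,t)\in\Rd \,\Bigr\},
\]
i.e.\ essentially $\mathcal D_m^*$ viewed inside $\bar\cE$ (one may also allow a two–sided band, or a vanishing-at-the-diagonal additive slack $\phi(t-s)$ with $\phi(r)\to 0$, without affecting the argument). The first task is to show $K_m$ is compact in $\bar\cE$; this is exactly the argument of Proposition \ref{P:compactness}. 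The shape bound together with \eqref{e:metmax} places every $e\in K_m$ in the scope of Lemma \ref{L:equicontinuity} on each $\tilde B_\ell$ (with $\ep=0$), giving equicontinuity and boundedness on compacts, hence precompactness in the topology of uniform convergence on compacts; the shape bound and (a compact-interval reformulation of) \eqref{e:metmax} are closed conditions there; and the estimate $\|e|_{B_n}-e'|_{B_n}\|_\infty \le \|e|_{\tilde B_{n'}}-e'|_{\tilde B_{n'}}\|_\infty + 2m^{2/3}(1/n')^{1/3}$, valid by the shape bound, upgrades convergence on compacts to convergence in the stronger topology of $\bar\cE$ as $n'\to\infty$. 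It is here that the $(t-s)^{1/3}$ calibration is essential: the shape bound forces $e\to d$ uniformly as $s\uparrow t$, which is what makes $K_m$ compact in $\bar\cE$ rather than just in $\mathcal E_*$.

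For the tightness estimate \eqref{e:exptight*}, the key structural input is that $\mathcal L_\e$ \emph{always} satisfies \eqref{e:metmax}: this follows from Property III of Definition \ref{D:L-unique} and the scaling in \eqref{E:L-epsilon}, since the change of variables $z\mapsto \e^{1/2}z$ turns the landscape's composition law into \eqref{e:metmax} for $\mathcal L_\e$. Next one exploits the form of the metric $\fd$: if $\fd(\mathcal L_\e,K_m)\ge\de$ then there are $N=N(\de)\in\N$ and $\de'=\de'(\de)>0$, both independent of $\e$, with $\|\mathcal L_\e|_{B_N}-e|_{B_N}\|_\infty\ge\de'$ for every $e\in K_m$. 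Thus it suffices to show that, off an event of probability $\exp\bigl((o(1)-\tfrac43 m)\e^{-3/2}\bigr)$, there is some $e\in K_m$ agreeing with $\mathcal L_\e$ on $B_N$ up to error $<\de'$. Define the good event
\[
G_\e \ =\ \bigl\{\, \mathcal L_\e(u) \le d(u) + m^{2/3}(t-s)^{1/3} + \tfrac{\de'}{3}\ \ \forall u\in B_{N'}\,\bigr\}\ \cap\ \bigl\{\, \mathcal L_\e(u) \ge d(u) - \tfrac{\de'}{3}\ \ \forall u\in B_{N'}\,\bigr\},
\]
where $N'=N'(N,m)$ is chosen (via Lemma \ref{L:geo-mod-cont}) large enough that, on $G_\e$, every $\mathcal L_\e$-geodesic between points of $B_N$ has graph inside $B_{N'}$. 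The complement of the first event is handled by Proposition \ref{p:d-close} with $a=m^{2/3}$: since $\tfrac43 a^{3/2}=\tfrac43 m$, we get $\Pr(\text{first event fails})\le \exp\bigl((o(1)-\tfrac43 m)\e^{-3/2}\bigr)$ (near the diagonal the excess is automatically $\le\tfrac{\de'}{3}$ once $t-s$ is small, and the region bounded away from the diagonal is covered by the statement of Proposition \ref{p:d-close} directly). The complement of the second event is negligible at speed $\e^{-3/2}$: by Proposition \ref{p:d-dominant} on the compact part $\{t-s\ge\eta\}$ together with the two-sided bound of Proposition \ref{p:d-close} near the diagonal, it has probability at most $\exp(-c_{\de'}\e^{-3})$. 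Adding, $\Pr(G_\e^c)\le \exp\bigl((o(1)-\tfrac43 m)\e^{-3/2}\bigr)$.

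It remains to produce, on $G_\e$, an element $e\in K_m$ within $\de'$ of $\mathcal L_\e$ on $B_N$. The natural candidate is the geodesic metric induced by the truncation $f := (\mathcal L_\e\vee d)\wedge\bigl(d+m^{2/3}(t-s)^{1/3}\bigr)$, namely $e(p;q)=\sup_{\pi}|\pi|_f$ over $H^1$ paths $\pi$ from $p$ to $q$; arguing as in Lemma \ref{L:inD} one gets $e\ge d$, $e\le d+m^{2/3}(t-s)^{1/3}$, continuity of $e$, and \eqref{e:metmax}, so $e\in K_m$. On $G_\e$ one has $|f-\mathcal L_\e|\le\tfrac{\de'}{3}$ on $B_{N'}$, and since $\mathcal L_\e$ already satisfies \eqref{e:metmax} exactly (so that $\mathcal L_\e$ is a geodesic space and its geodesics stay in $B_{N'}$), one compares $e$ and $\mathcal L_\e$ on $B_N$ through these geodesics to conclude $\|e|_{B_N}-\mathcal L_\e|_{B_N}\|_\infty<\de'$. \emph{This comparison at the threshold rate is the main obstacle}: because rate $\tfrac43 m$ is precisely the cost of a single point-pair with $\mathcal L_\e-d\sim m^{2/3}(t-s)^{1/3}$, the good event genuinely permits $\mathcal L_\e$ to have excess of that size, and the regularized metric $e$ must track this excess — not merely be close to $d$ — which is why the argument has to use the metric-composition structure of $\mathcal L_\e$ rather than the crude bound $\mathcal L_\e\approx d$. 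Once this is in place, $G_\e\subset\{\fd(\mathcal L_\e,K_m)<\de\}$, and \eqref{e:exptight*} follows by taking logarithms, multiplying by $\e^{3/2}$, and letting $\e\to 0$.
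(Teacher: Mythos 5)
The overall scaffolding of your argument—choosing a compact target set, bounding away the bad event via Propositions \ref{p:d-close} and \ref{p:d-dominant}, and on the good event producing an element of the target set uniformly close to $\mathcal L_\ep$ on the relevant $B_N$—is sound and matches the paper's strategy. The reduction from $\fd$-closeness to sup-norm closeness on some $B_N$, the compactness of $\mathcal D_m^*$ via Proposition \ref{P:compactness}, and the probability estimates for your good event are all fine.

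The gap is in the final step, which you yourself flag as ``the main obstacle'' but then only gesture at. Your candidate is $e(p;q)=\sup_\pi|\pi|_f$ with $f:=(\cL_\ep\vee d)\wedge\bigl(d+m^{2/3}(t-s)^{1/3}\bigr)$. The shape bounds $d\le e\le d+m^{2/3}(t-s)^{1/3}$ do follow (the trivial partition gives $|\pi|_f\le f(p;q)$). The problem is the closeness $e\ge\cL_\ep-\de'$ on $B_N$. You would lower-bound $e(p;q)\ge|\pi^*|_f$ for the $\cL_\ep$-geodesic $\pi^*$, but $|\pi^*|_f=\inf_{\pn}\sum_i f\bigl(\bar\pi^*(r_{i-1});\bar\pi^*(r_i)\bigr)$, and the upper truncation can cut each term by up to $\de'/3$ on the good event. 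Along the geodesic, $\sum_i\cL_\ep(\cdots)=\cL_\ep(p;q)$ exactly, so $\sum_i f\ge\cL_\ep(p;q)-\sum_i\bigl[\cL_\ep-d-m^{2/3}\Delta_i^{1/3}\bigr]_+$, and the subtracted sum is only bounded by $k\,\de'/3$; nothing in the pointwise good event prevents this from blowing up as the partition is refined. Controlling it would require something like $\sum_i\Theta(\cL_\ep,u_i)\le m$ over all partitions — i.e.\ the $\Theta(\cL_\ep)\le m$ condition defining $\mathcal D_m$ — which is a much stronger event whose probability is not a consequence of the one-point bounds you invoke. In effect, insisting that $K_m\subset\mathcal D_m^*$ forces you to simultaneously prove the content of the next proposition (Proposition \ref{P:landscape-in-E}) and is too restrictive for this step.

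The paper sidesteps this entirely: its $K_m$ is \emph{not} $\mathcal D_m^*$ (nor even a set of directed metrics). It is an ad-hoc Arzel\`a--Ascoli compact set of continuous functions on $B_\ell$ defined by an explicit modulus of continuity (coming from Lemma \ref{L:equicontinuity} applied on the good event) together with the shape bound, with the functions patched to equal $d$ near the diagonal. The approximating element $F_\ep$ is a direct piecewise-linear interpolation of $\cL_\ep$ on a fine grid, so $\|F_\ep-\cL_\ep\|_\infty\le\de/2$ follows immediately from the modulus of continuity — there is no inf-over-partitions to lose mass through. No metric-composition or triangle-inequality structure is imposed on $K_m$ at this stage; upgrading from this abstract compact set to the genuinely metric set $\mathcal D_m$ is deferred to Proposition \ref{P:landscape-in-E}, where the $\Theta$-sums are handled by a separate covering argument. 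To repair your proof you would either need to replace $\mathcal D_m^*$ with a compact set that does not force the exact shape bound (essentially reinventing the paper's $K_m$), or prove directly that $\Theta(\cL_\ep)\le m+o(1)$ on an event of the right exponential size, which is substantially harder than the inputs you allow yourself.

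Two smaller points: Lemma \ref{L:geo-mod-cont} is stated for deterministic $e\in\mathcal D_m$ and does not apply as-is to the random $\cL_\ep$; the paper instead deduces $A_\ep\subset A_\ep'$ by a direct computation from the shape bound. And a vanishing-at-diagonal slack $\phi(t-s)$ will not help, since Proposition \ref{p:d-close} only gives a constant additive slack $\de$ near the diagonal, not one that vanishes with $t-s$.
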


\begin{proof}
	Throughout we assume $m \ge 1, \de \le 1$.	By definition \eqref{e:metric} of $\fd$, letting $\ell = 2 + \lceil \log_2(\de^{-1}) \rceil$, for $f \in \bar \cE$ we have
	$$
	\fd (\mathcal L_\ep, f) \ge \delta \qquad \implies \qquad \fd_\ell(\mathcal L_\ep,f) > \delta/2.
	$$
	Therefore it is enough to construct a compact set $K_m'$ in the continuous function space $C(B_\ell)$ of continuous functions from $B_\ell \to \R$ with the uniform norm, such that \eqref{e:exptight*} holds with $\fd_\ell, K_m', \de/2$ in place of $\fd, K_m, \de$. For this, define the events
	\begin{description}
		\item[$A_{\ep}$:] For all $u \in B_{2 \ell m}$, we have
		$
		d(u) - \de/10 \le \cL_\ep(u)\le d(u) + m^{2/3} (t - s)^{1/3} + \de/10.
		$ Here and throughout the proof, we let $s, t$ be the time coordinates of $u$ when the notation is unambiguous.
		\item[$A_{ \ep}'$:] For all $\cL_\ep$-geodesics $\pi$ with endpoints in $B_\ell$, we have $\gr \pi \in B_{2 \ell m}$.
	\end{description}
	We have $\Pr(A_\ep) \ge 1 - \exp(-\tfrac{4}{3} \ep^{3/2} (m + o(1)))$ as $\e \to 0$ by Propositions \ref{p:d-close} and  \ref{p:d-dominant}. Moreover, $A_\ep \subset A_\ep'$. Indeed, suppose there is a geodesic $\pi$ between points $p =(x, s), q = (y, t) \in [-\ell, \ell]^2$ that exits $B_{2 \ell m}$. Then there exists $r \in (s, t)$ and $z = \pm 2 m \ell$ such that 
	$
	\cL_\e(p; q) = \cL_\e(p; z, r) + \cL_\e(z, r; q),
	$
	and so by the condition on $A_\ep$ we have
	$$
	d(p; q) - d(p; z, r) - d(z, r; q) \le 2m^{2/3} \ell^{1/3} + 3\de/10.
	$$
	A quick computation shows this is not possible. Therefore to complete the proof, we just need to construct a compact set $K_m'$ so that on $A_\ep = A_\ep \cap A_\ep'$ we have $\|\mathcal L_\ep|_{B_\ell}-f \|_\infty \le \delta/2$ for some $f \in K_m'$.
	
	%We first construct a compact set $K_m'' \subset C(B_\ell', \R)$, where $B_\ell' = \{u = (x, s; y, t) \in B_\ell : t - s \ge \de^3/(1000 m^2)\}$. 
	The event $A_\ep'$ implies that for any $(p; q) \in B_\ell$, we have the metric composition law 
	\begin{equation}
 \label{E:Lepq}
 \begin{aligned}
		\cL_\e(p; q) & = \max_{z \in [-2 m\ell, 2 m \ell]} \cL_\e(p; z,r)+ \cL_\e(z, r; q) \\ & =\max_{z_1,z_2 \in [-2 m\ell, 2 m \ell]} \cL_\e(p; z_1, r_1) +\cL_\e(z_1,r_1;z_2,r_2)+ \cL_\e(z_2, r_2; q).
 \end{aligned}	
	\end{equation}
	Therefore on $A_\e$, by Lemma \ref{L:equicontinuity} we have the estimate
	\begin{equation}
		\label{E:L-e}
		|\cL_\e(u_1) - \cL_\e(u_2)| \le 2\de/5 + 48 m^2 \ell^2 \|u_1 - u_2\|_\infty^{1/9}
	\end{equation}
	for all $u_1, u_2 \in  B_\ell$ with $\|u_1 - u_2\|_\infty < \de^9/(64 \cdot 10^{9} m^6)$ and $(t_1 - s_1) \vee (t_2 - s_2) \ge \de^3/(1000 m^2)$.
	We use this to construct an approximation to $\cL_\e$ with an explicit modulus of continuity. Fix $\beta = m^{-18} \ell^{-18} {48}^{-9} (\de/10)^9, \ga = \de^3/(2000 m^2)$, define $ B_\ell' = \{u \in B_\ell: t - s \ge 2 \ga\}$ and let $M = \tilde B_\ell \cap (\beta \Z)^4$. 
	Now for $u \in  B_\ell'$, let $F_\e:B_\ell' \to \R$ interpolate $\cL_\e|_M$ by setting
	\begin{equation}
		\label{E:Feu}
		F_\e(u) = \sum_{v \in M: \|v - u\|_\infty < \beta} \cL_\e(v)\frac{[\beta - \|v - u\|_\infty]}{\sum_{v \in M: \|v - u\|_\infty < \beta} [\beta - \|v - u\|_\infty]}.
	\end{equation}
 We extend $F_\e$ to all of $B_\ell$ as follows. For $u = (x, s; y, t)$ with $t -s < 2\ga$, define
 $$
 F_\e(u) = d(u) + [F_\e(x, s; y, s + 2\ga) - d(x, s; y, s + 2\ga)](t - s - \ga)^+.
 $$
By construction, $F_\e = d$ off of the compact subset $\hat B_\ell = \{u \in B_\ell: t - s \ge \ga\}$. Moreover, by the definition of $A_\ep$, $F_\e$ satisfies a uniform bound on $\hat B_\ell$ depending only on $\de, m, \ell$. Finally, on $\hat B_\ell$, $F_\e$ satisfies an explicit modulus of continuity depending only on $\de, m, \ell$. This uses \eqref{E:L-e}, along with the estimate in $A_\e$ comparing $\cL_\e$ and $d$, which handles $F_\e$ on $\hat B_\ell \setminus B_\ell'$. Therefore by the Arzel\`a-Ascoli theorem on $C(\hat B_\ell)$, there exists a compact set $K_m' \subset C(B_\ell)$ such that on $A_\ep$, we have $F_\e \in K_m'$. Moreover, by \eqref{E:L-e}, \eqref{E:Feu} we have that 
	$$
	|F_\ep(u) - \cL_\ep(u)| \le 2 \de/5 + \de/10 = \de/2
	$$
	when $t - s \ge 2 \ga$. When $t - s \le 2 \ga$, we can obtain the same estimate via the bounds in the definition of $A_\e$. Therefore $\|\mathcal L_\ep|_{B_\ell}- F_\e \|_\infty \le \delta/2$, as desired.
\end{proof}

Next, we upgrade Proposition \ref{P:landscape-in-E*} to move from the abstract compact set $K_m$ to the explicit compact set $\cD_m$. We will use the notation $\Theta$ introduced in Section \ref{sec:top}, and in the proof we write $B(e, \ep)$ for the open $\fd $-ball of radius $\ep$ about a function $e \in \bar \cE$, and $B(K, \ep) = \bigcup \{B(e, \ep) : e \in K\}$. We will use the following folklore lemma, whose proof we leave to the reader.

\begin{lemma}\label{l:CK} Let $C$ be a closed set and $K$ be a compact set in a metric space $(M,d)$ with $C \cap K = \emptyset$.  Then $d(C,K):=\inf_{C\times K} d > 0$.
\end{lemma}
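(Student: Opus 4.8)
The plan is to reduce the statement to the elementary fact that a strictly positive continuous function on a compact set attains a strictly positive minimum. Define $f:M \to [0,\infty)$ by $f(x) = d(x,C) = \inf_{c \in C} d(x,c)$. First I would check that $f$ is $1$-Lipschitz, hence continuous: for $x,y \in M$ and any $c \in C$ we have $d(x,c) \le d(x,y) + d(y,c)$, so taking the infimum over $c$ gives $f(x) \le d(x,y) + f(y)$, and by symmetry $|f(x) - f(y)| \le d(x,y)$.

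Next I would observe that $f(x) = 0$ if and only if $x$ lies in the closure of $C$, which equals $C$ because $C$ is closed. Since $K \cap C = \emptyset$, this forces $f(x) > 0$ for every $x \in K$. As $K$ is compact and $f$ is continuous, $f$ attains its infimum over $K$ at some $x_0 \in K$, and therefore
$$
d(C,K) = \inf_{(x,c) \in K \times C} d(x,c) = \inf_{x \in K} f(x) = f(x_0) > 0,
$$
which is the desired conclusion.

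There is no genuine obstacle here; the only point worth flagging is that compactness of $K$ is essential — two disjoint closed sets in a metric space can be at distance zero — and it enters precisely through the attainment of the minimum of $f$. Alternatively, one could argue by contradiction: if $d(C,K) = 0$, choose $x_n \in K$ and $c_n \in C$ with $d(x_n,c_n) \to 0$, extract a subsequence with $x_n \to x \in K$ by compactness, note $d(c_n,x) \le d(c_n,x_n) + d(x_n,x) \to 0$ so $c_n \to x$, and conclude $x \in C$ since $C$ is closed, contradicting $K \cap C = \emptyset$.
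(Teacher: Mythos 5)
Your proof is correct. The paper explicitly leaves the proof of this folklore lemma to the reader, so there is no paper argument to compare against; both routes you sketch (attaining the positive minimum of the continuous function $x \mapsto d(x,C)$ on the compact set $K$, or the sequential compactness argument by contradiction) are standard and valid.
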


\begin{proposition}
	\label{P:landscape-in-E} For every $\delta, m > 0$ we have
	\begin{align}
		\label{e:exptight}
		\limsup_{\ep \to 0} \ep^{3/2} \log \Pr(\fd (\mathcal L_\ep, \mathcal D_m) > \delta) \le - \tfrac{4}{3}m.
	\end{align}
\end{proposition}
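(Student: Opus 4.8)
The plan is to refine Proposition \ref{P:landscape-in-E*} from the abstract compact set $K_m$ to the explicit set $\mathcal D_m$, using the structure theory of Section \ref{sec:plant}. Fix $\delta, m>0$, a large box $B_N$ governing $\fd$-distances below $\delta$, and a small macroscopic mesh $\tau_0>0$. I work on the good event $A_\ep$ from the proof of Proposition \ref{P:landscape-in-E*}: there $\Pr(A_\ep^c)\le \exp((o(1)-\tfrac43 m)\ep^{-3/2})$ by Propositions \ref{p:d-close} and \ref{p:d-dominant}, on $A_\ep$ one has the two-sided shape bound $d-\delta'\le \cL_\ep\le d+m^{2/3}(t-s)^{1/3}+\delta'$ on an enlarged box together with $\cL_\ep$-geodesic confinement, and by Lemma \ref{L:equicontinuity} a uniform modulus of continuity for $\cL_\ep$ on $B_N$; moreover $\cL_\ep$ satisfies the metric composition law exactly. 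The only $\mathcal D_m$-axiom $\cL_\ep$ can grossly fail is Dirichlet-closeness: in fact $\Theta(\cL_\ep)=\infty$ a.s., since the diffuse sub-macroscopic fluctuations of $\cL_\ep$ already carry infinite $\Theta$-mass. So the proposition is genuinely about approximating $\cL_\ep$ by a nearby finite-rate metric rather than placing $\cL_\ep$ itself in $\mathcal D_m$.

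The second ingredient controls a macroscopically coarse-grained rate of $\cL_\ep$. For a finite disjoint network $\Gamma$ of paths in $B_N$ and partitions $\pn_\gamma$ of each $\gamma$ into pieces of length at least $\tau_0$, set $I^{\tau_0,N}(\cL_\ep)=\sup\sum_{\gamma\in\Gamma}\tfrac43\sum_i\Theta(\cL_\ep,u_i^\gamma)$, where $u_i^\gamma$ runs over the consecutive point pairs of $\gamma$ along $\pn_\gamma$ and the supremum is over all such $(\Gamma,(\pn_\gamma))$. For a single coarse point pair, \eqref{E:one-point-law} and the one-point tails of Theorem \ref{T:tracy-widom-tails} give $\Pr(\Theta(\cL_\ep,u)\ge\theta)=\exp((o(1)-\tfrac43\theta)\ep^{-3/2})$; along a single path the coarse pieces lie over disjoint time intervals, hence are independent by the independent-increments property of Definition \ref{D:L-unique}, and across distinct paths of a disjoint network the contributions are asymptotically independent by Proposition \ref{l:4.4}. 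A Cram\'er-type estimate then bounds $\Pr(I^{\tau_0,N}(\cL_\ep)\ge\tfrac43 m)$ --- for a fixed $(\Gamma,(\pn_\gamma))$ --- by $\exp((o(1)-\tfrac43 m)\ep^{-3/2})$. Since at macroscopic scale only boundedly many disjoint configurations matter and $\cL_\ep$ is equicontinuous on $A_\ep$, one passes to a finite net of networks, partitions and thresholds of cardinality $\exp(o(\ep^{-3/2}))$, and a union bound --- letting the net mesh tend to $0$ after $\ep\to0$ --- yields $\Pr(\{I^{\tau_0,N}(\cL_\ep)>m\}\cap A_\ep)\le\exp((o(1)-\tfrac43 m)\ep^{-3/2})$.

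On $G_\ep:=A_\ep\cap\{I^{\tau_0,N}(\cL_\ep)\le m\}$, with $\Pr(G_\ep^c)\le\exp((o(1)-\tfrac43 m)\ep^{-3/2})$, it remains to exhibit a planted network metric $e^{*}\in\mathcal D_m$ with $\fd(\cL_\ep,e^{*})\le\delta$. I would take a fine net of endpoint pairs in $B_N$, replace each corresponding rightmost $\cL_\ep$-geodesic by its polygonal approximation on the $\tau_0$-time grid (a piecewise linear, hence $H^1$, path), assign it the piecewise-constant excess density inherited from the $\cL_\ep$-values at the grid times, truncate overlaps to get a finite disjoint network $\Gamma_0$, and let $e^{*}=e_\mu$ for the resulting planted measure $\mu$ via Definition \ref{d:mutoe}. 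By \eqref{e:plantrate} and the definition of $I^{\tau_0,N}$ one has $I(e^{*})=\tfrac43\mathcal K(\mu)=\sum_{\gamma\in\Gamma_0}\tfrac43\sum_i\Theta(\cL_\ep,u_i^\gamma)\le\tfrac43 m$, so $e^{*}\in I^{-1}[0,\tfrac43 m]\subseteq\mathcal D_m$ by Proposition \ref{P:rate-function-properties}\ref{sublevel}. For the $\fd$-bound one uses Proposition \ref{p:mutoe} to compute lengths in $e^{*}$ from $\mu$, the confinement and continuity of $\cL_\ep$-geodesics on $A_\ep$ (Lemma \ref{L:geo-mod-cont}) to see that the geodesic between any two points of $B_N$ is well approximated by one of the planted polygonal paths, and the shape bound to see that the excess not captured at scales below $\tau_0$ has amplitude at most $m^{2/3}\tau_0^{1/3}+\delta'$; choosing $\tau_0,\delta'$ small and then the net of endpoints fine makes both errors at most $\delta/2$ on $B_N$. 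The main obstacle is precisely this last step: organizing the macroscopic excess of $\cL_\ep$ along a controlled finite disjoint network of polygonal paths whose planted metric has Kruzhkov entropy at most $\tfrac34 m$ and reproduces $\cL_\ep$ up to $\fd$-error $\delta$; a secondary difficulty is making the net/approximate-independence bookkeeping in the second paragraph lose nothing in the exponential rate.
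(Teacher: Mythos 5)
Your proposal takes a genuinely different, constructive route than the paper, and it has a real gap at the step you yourself flag as "the main obstacle."

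The paper's proof works contrapositively: it uses the compact set $K_m$ from Proposition~\ref{P:landscape-in-E*}, observes that $\mathcal C_{m,\delta}:=K_m\setminus B(\mathcal D_m,\delta)$ is compact, and notes that every $e\in\mathcal C_{m,\delta}$ must fail one of the four conditions defining $\mathcal D_m$ (reverse triangle inequality, Dirichlet dominance, metric composition, or $\Theta(e)\le m$). For each failure mode one finds a small ball around $e$ on which $\cL_\ep$ has exponentially small probability at rate $\ge\tfrac43 m$: failure of the triangle inequality gives probability zero, failure of Dirichlet dominance is handled by the lower tail, failure of the composition law is handled by geodesic confinement, and $\Theta(e)>m$ is handled via continuity of $\Theta(\cdot,u)$, temporal independence and the one-point upper tail. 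A finite subcover of $\mathcal C_{m,\delta}$ then finishes. Crucially, at no point does the paper need to \emph{exhibit} a metric in $\mathcal D_m$ close to $\cL_\ep$.

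Your approach instead tries to construct an explicit planted network metric $e^*\in\mathcal D_m$ with $\fd(\cL_\ep,e^*)\le\delta$ on a good event. Two things are not established. First, the Cram\'er-type bound for the coarse-grained rate $I^{\tau_0,N}(\cL_\ep)$ requires a union bound over a net of networks and partitions whose cardinality you assert is $\exp(o(\ep^{-3/2}))$, but the supremum defining $I^{\tau_0,N}$ ranges over an \emph{a priori} unbounded family of disjoint networks, and reducing this to a controlled net without losing in the exponent is not worked out. Second, and more seriously, even on $G_\ep$ the construction of $e^*$ does not obviously produce a metric that is $\delta$-close to $\cL_\ep$ on $B_N$: the $\cL_\ep$-geodesics between the continuum (or a fine net) of endpoint pairs in $B_N$ need not organize into a single finite disjoint network $\Gamma_0$ of bounded cardinality, yet $e_\mu(u)$ for $u$ off $\gr\Gamma_0$ reverts to roughly $d(u)$, which can be order-$1$ far from $\cL_\ep(u)$. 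The bound $I(e^*)\le\tfrac43 m$ holds for whichever $\Gamma_0$ you choose (since $I^{\tau_0,N}\le m$), but the two-sided bound $|e^*(u)-\cL_\ep(u)|<\delta$ for all $u\in B_N$ is precisely the hard geometric claim that the paper's argument is designed to avoid. As written, your proposal identifies the structure correctly but leaves this essential step open.
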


% \begin{lemma}
	% 	\label{L:topos}
	\begin{proof}
		Let $m,\delta>0$ and consider the sets 
		$$
		\mathcal C_{m, \de} = K_m \setminus B(\mathcal D_m,\delta).
		$$
		Every $e \in \mathcal C_{m, \de}$ satisfies one of the following four conditions:
		\begin{enumerate}[nosep]
			\item There are points $(p; q), (q; r) \in \Rd$ with $e(p; q) + e(q; r) > e(p; r)$.
			\item There exists $u \in \Rd$ with $e(u) < d(u)$.
			\item There exists $(p, q) = (x, s; y, t) \in \Rd$ and $r \in (s, t)$ with
			$$
			e(p; q) > \sup_{z \in \R} e(p; z, r) + e(z, r; q).
			$$
			\item There exists finite set of points $U_e \subset \Rd$ defined on disjoint time intervals such that $\sum_{u \in U_e} \Theta(e, u) > m$.
		\end{enumerate}
		We show that in all of these cases we can find $\ga_e > 0$ such that 
		\begin{equation}
			\label{E:de-e}
			\Pr(\fd (\mathcal L_\ep, e) < \ga_e) \le \exp(-\tfrac{4}{3}\ep^{3/2} (m + o(1)), \qquad \text{ as} \quad \e \to 0.
		\end{equation}
		If $e$ satisfies property $1$, then since $\cL_\e$ satisfies the reverse triangle inequality, for small enough $\ga$ we have $\Pr(\fd (\mathcal L_\ep, e) < \ga) = 0$ for all $\e > 0$. If $e$ satisfies property $2$, then by Theorem \ref{T:tracy-widom-tails}, for small enough $\ga$ we have $\Pr(\fd (\mathcal L_\ep, e) < \ga) \le 2\exp(- \ga \ep^3)$ for all $\e \in (0, 1)$. Next, arguing as in the proof of Proposition \ref{P:landscape-in-E*}, there exists $m, \ell \ge 0$ such that with probability at least $1 - \exp(-\tfrac{4}{3} \ep^{3/2}(m + o(1))$, \eqref{E:Lepq} holds. This implies that if $e$ satisfies property $3$, then \eqref{E:de-e} holds for small enough $\ga_e$. Finally, if $e$ satisfies property 4, then since $\Theta(\cdot, u)$ is continuous on $\bar {\mathcal E}$ for all $u$, for small enough $\ga$, if $\fd (\mathcal L_\ep, e) < \ga$ then
		$$
		\sum_{u \in U_e} \Theta(\cL_\e, u) \ge m.
		$$
		By Theorem \ref{T:tracy-widom-tails} and the temporal independence of $\cL_\e$, the probability of this event is at most $\exp(-\tfrac{4}{3}\ep^{3/2} (m + o(1))$.
		
		Now, since $K_m$ is compact, its closed subset 
		$\mathcal C_{m, \de}$ is also compact. Therefore the open cover $ \{O_e := B(e, \ga_e) : e \in \mathcal C_{m, \de}\}$  contains a finite subcover  $\{O_{e_1}, \dots, O_{e_n}\}$. By Lemma \ref{l:CK}, we have  $r:=(\fd((O_{e_1}\cup\ldots\cup O_{e_n})^c,\mathcal C_{m, \de})\wedge \delta)/2>0$. Therefore if $\fd (\mathcal L_\ep,  \mathcal D_m) > \delta$ then $\mathcal L_\ep \notin B(K_m, r )$ or $\mathcal L_\ep \in O_{e_i}$ for some $i = 1, \dots, n$. The first event is covered by Proposition \ref{P:landscape-in-E*}, and the second is covered by \eqref{E:de-e}.
	\end{proof}
	
	Next, we prove the large deviation upper bound on small balls in $\mathcal D_m$.
	\subsection{A bound on small balls}
	
	\begin{proposition}\label{p:upbd} For any $e \in \mathcal D$, we have
		\begin{align}\label{e:DLub}
			\lim_{\delta\downarrow 0}\limsup_{\e\downarrow 0}\e^{3/2}\log\Pr(\L_\e \in B(e, \delta)) \le -I(e).
		\end{align}
	\end{proposition}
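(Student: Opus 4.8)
The plan is to turn the heuristic of Section \ref{sec:basic} into a rigorous upper bound. On the event $\{\cL_\e\in B(e,\delta)\}$ the rescaled landscape is uniformly close to $e$ on a fixed large box, hence simultaneously close to $e$ on a finite family of point pairs running along the paths of a near-optimal network for $I(e)$; the probability of the latter is then controlled through the Tracy--Widom upper tail (Theorem \ref{T:tracy-widom-tails}) together with the two independence inputs: the temporal independence of $\cL$ (Property II of Definition \ref{D:L-unique}), applied across disjoint time intervals, and the approximate Airy-sheet independence of Proposition \ref{p:Airy-tails}, applied across spatially separated point pairs living over a common time interval.

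First I would fix $e\in\mathcal D$, say $e\in\mathcal D_m$, and fix $R,\eta>0$. Since $I(e)$ may be computed as a supremum over finite \emph{disjoint} networks, choose such a network $\Gamma=\{\ga_1,\dots,\ga_k\}$ with $\sum_{j}I(\ga_j,e)\ge \min(I(e),R)-\eta$; disjointness and compactness give a separation constant $\Delta_0>0$ with $|\ga_j(r)-\ga_{j'}(r)|\ge\Delta_0$ for $j\ne j'$ and all $r\in[a_j,b_j]\cap[a_{j'},b_{j'}]$. Next, choose a finite set of times $\sigma_0<\dots<\sigma_L$ containing every endpoint $a_j,b_j$ and otherwise as fine as needed; then each slab $[\sigma_{l-1},\sigma_l]$ is contained in, or disjoint from the interior of, each $[a_j,b_j]$. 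Write $J_l=\{j:[\sigma_{l-1},\sigma_l]\subseteq[a_j,b_j]\}$, $\pn_j=\{\sigma_0,\dots,\sigma_L\}\cap[a_j,b_j]$, $u^{(l)}_j=(\ga_j(\sigma_{l-1}),\sigma_{l-1};\ga_j(\sigma_l),\sigma_l)$, and $\rho^{(l)}_j=w_{\ga_j,e}(\sigma_l)-w_{\ga_j,e}(\sigma_{l-1})$, which is the $e$-length of $\ga_j$ on $[\sigma_{l-1},\sigma_l]$ and so satisfies $d(u^{(l)}_j)\le\rho^{(l)}_j\le e(u^{(l)}_j)$; all the (finitely many) $u^{(l)}_j$ lie in a fixed box $B_N$. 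By Lemma \ref{L:partition-approximation}, refining the set of times we may assume $\sum_j I(\ga_j,e,\pn_j)\ge\sum_j I(\ga_j,e)-\eta$; and since $\Theta(e,u)\le m$ for every $u$ (a consequence of Lemma \ref{L:basic-facts}\ref{L:basic-mbound}), we may also assume every slab width is small enough that $km<c_0\Delta_0^3/(\sigma_l-\sigma_{l-1})^2$, so the hypothesis of Proposition \ref{p:Airy-tails} holds on each slab.

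Now the main estimate. Given $\epsilon'>0$, for $\delta$ small (depending only on $N$) we have $B(e,\delta)\subset\{e':\fd_N(e',e)<\epsilon'\}$, so on $\{\cL_\e\in B(e,\delta)\}$, $\cL_\e(u^{(l)}_j)>e(u^{(l)}_j)-\epsilon'\ge\rho^{(l)}_j-\epsilon'$ for all $l,j$. Events for distinct $l$ depend on $\cL$ over disjoint time intervals, hence are independent; within a slab the $u^{(l)}_j$, $j\in J_l$, share the time interval $[\sigma_{l-1},\sigma_l]$ and, after ordering $J_l$ by the values of the paths on that interval (a total order since the paths do not cross there), are spatially $\Delta_0$-separated, so Proposition \ref{p:Airy-tails} bounds the slab probability after discarding the $j$ with $\rho^{(l)}_j-\epsilon'<d(u^{(l)}_j)$, which contribute nothing. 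Multiplying over the finitely many slabs and taking $\limsup_{\e\to0}$ of $\e^{3/2}\log(\cdot)$,
$$\limsup_{\e\to0}\e^{3/2}\log\Pr(\cL_\e\in B(e,\delta))\ \le\ -\tfrac{4}{3}\sum_{l}\sum_{j\in J_l}\frac{[\rho^{(l)}_j-\epsilon'-d(u^{(l)}_j)]_+^{3/2}}{(\sigma_l-\sigma_{l-1})^{1/2}}.$$
Holding $\Gamma$ and the time grid fixed and letting $\delta\downarrow0$ (hence $\epsilon'\downarrow0$), the right-hand side increases to $-\tfrac{4}{3}\sum_{l}\sum_{j\in J_l}[\rho^{(l)}_j-d(u^{(l)}_j)]_+^{3/2}(\sigma_l-\sigma_{l-1})^{-1/2}$; the same algebra relating \eqref{E:ga-rate} to \eqref{E:6.3} shows that the sum over the slabs of a fixed $\ga_j$ equals $\tfrac{3}{4}I(\ga_j,e,\pn_j)$, so this limit is $-\sum_j I(\ga_j,e,\pn_j)\le-(\min(I(e),R)-2\eta)$. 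Letting $\eta\downarrow0$ and $R\uparrow\infty$ then gives $\lim_{\delta\downarrow0}\limsup_{\e\downarrow0}\e^{3/2}\log\Pr(\cL_\e\in B(e,\delta))\le-I(e)$, covering both $I(e)<\infty$ and $I(e)=\infty$.

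The step I expect to be the real obstacle is reconciling the two independence mechanisms. Temporal independence only applies over disjoint time intervals, but the paths of a network overlap in time, so one is forced to slice time into common slabs and, within a slab, to use the merely \emph{approximate} independence of Proposition \ref{p:Airy-tails}; this forces both the bookkeeping of a time grid compatible with every path endpoint (so that slabs are cleanly inside or outside each path) and the quantitative check that slab widths can be made small relative to $\Delta_0$ --- exactly the place where the $\mathcal D_m$ bound $\Theta(e,u)\le m$ is used to control the quantity $\theta$ in the hypothesis of Proposition \ref{p:Airy-tails}. A secondary but genuine point is pinning down the order of limits: $\Gamma$ and the grid (hence $N$ and the number of slabs) are frozen first, then $\delta$ (hence $\epsilon'$) is sent to $0$, and only at the very end are $\eta$ and $R$ optimized.
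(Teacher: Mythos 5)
Your proof is correct and follows essentially the same route as the paper: reduce to a finite disjoint network, slice time by a grid containing all path endpoints so that Definition~\ref{D:L-unique}(II) gives independence across slabs, apply Proposition~\ref{p:Airy-tails} within each slab (using the separation $\Delta_0$ of the disjoint network), then pass to the limits via Lemma~\ref{L:partition-approximation}. One small imprecision: the sentence ``we may assume $\sum_j I(\ga_j,e,\pn_j)\ge\sum_j I(\ga_j,e)-\eta$'' is vacuous when $\sum_j I(\ga_j,e)=\infty$; you should instead ask for $\sum_j I(\ga_j,e,\pn_j)\ge\min(I(e),R)-2\eta$, which Lemma~\ref{L:partition-approximation} does deliver (the paper instead splits off the case $|\Ga|=1$). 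A nice feature of your write-up is the observation that $\Theta(e,u)\le m$ bounds the per-slab $\theta$ by $km$ uniformly, so the hypothesis of Proposition~\ref{p:Airy-tails} is met by simply taking the slab widths small, with no need to track whether the total $\theta(\Gamma,n)$ diverges as the partition refines.
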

	
	For the proof, we need a definition. For a network $\Gamma$, define the {\bf separation} of $\Gamma$ as
	\begin{equation}\label{e:def-delta-gamma}
		\Delta(\Ga) = \inf \{|\ga(r) - \ga'(r)| : \ga \ne \ga' \in \Ga, r\in [a_\ga, b_\ga] \cap [a_\ga', b_\ga'] \},
	\end{equation}
	the minimal gap between paths in $\Ga$. For a finite disjoint network, we have $\Delta(\Ga)>0$.
	
	\begin{proof}
		Let $\Ga$ be a finite disjoint network, and  let $S = \{a_\ga, b_\ga : \ga \in \Ga\}$ be the set of all endpoints of paths in $\Ga$. Define the partition $\pn_{n, \ga}= (S \cup n^{-1} \Z)\cap[a_\ga,b_\ga].$
		
		Write $\bigcup_{\gamma\in \Gamma}\mathcal P_{n,\gamma}=\{t_0 < t_1 < \dots < t_k\}$. For all $i = 1, \dots, k$, let $U_i$ be the (possibly empty) set of all points of the form $(\bar \ga(t_{i-1}); \bar \ga(t_{i}))$ where $\ga \in \Ga$, and let $U=\bigcup_{i=1}^k U_i$. 
		Then
		\begin{align}
			\label{E:Theta-bd-2}
			\Pr(\L_\e\in B(e, \de))\le \Pr\bigg(\bigcap_{u\in U}\{\mathcal \L(u)\ge e(u)-\delta\}\bigg)
			= \prod_{i=1}^{k} \Pr \bigg(
			\bigcap_{u\in U_i}\{\mathcal \L(u)\ge e(u)-\delta\}\bigg).
		\end{align}
		The last equality follows since $\cL$ has independent time increments. Proposition \ref{p:Airy-tails} together with \eqref{E:Theta-bd-2} gives that if $|\Ga| = 1$ or if
		\begin{equation}\label{e:delta-cond}\theta(
			\Gamma,n):=\sum_{u\in U}\Theta(e,u)< c_0\Delta(\Gamma)^3n^2,
		\end{equation}
		we have 
		$$
		\Pr(\L_\e\in B(e, \de))\le  \exp\bigg((o(1)-\tfrac43\sum_{u\in U_i} \Theta(e-\delta,u))\e^{-3/2}\bigg).
		$$
		In this case, letting $\delta\to 0$ and using that $\Theta(\cdot, u)$ is continuous shows that 
		$$
		\eta:=
		\lim_{\delta\downarrow 0}\limsup_{\e\downarrow 0}\e^{3/2}\log\Pr(\L_\e \in B(e, \de)) 
		\le-\tfrac43 \sum_{u\in U}\Theta(e,u)
		\le -\sum_{\ga \in \Ga} I(\ga, e, \pn_{n,\ga}) 
		$$
		where the last inequality follows from the definition \eqref{E:6.3}.
		By Lemma \ref{L:partition-approximation}, as $n\to\infty$  we see that $\frac43\theta(\Gamma,n)\to I(\Gamma,e)$ so when $|\Ga|= 1$ or \eqref{e:delta-cond} holds, the above calculation is valid for large enough $n$. Therefore the left-hand side of \eqref{e:DLub} is bounded above by 
		$$
		- \max \big(\sup\{I( \Ga,e) : |\Ga| = 1\} , \sup \{I(\Ga,e) : I(\Ga,\e) < \infty, \Ga \text{ finite, disjoint}\} \big).
		$$
		At this point, either the first term in the maximum is $\infty$, in which case $I(e) = \infty$ and we have the result, or else in the second term we can eliminate the constraint that $I(\Ga,e) < \infty$, and this term equals $I(e)$ by the discussion following Definition \ref{D:path-rate}. 
	\end{proof}
	Next, we assemble the parts to complete the proof of the large deviation upper bound.
	\subsection{Proof of the upper bound in Theorem \ref{t:main}} \label{sec:9.1}
	
	Fix any closed set $C \subset \mathcal E$ and let $\de,m > 0$. 
	By Proposition \ref{p:upbd}, for every $e \in C\cap \mathcal D_m$ we can find an open ball $B$ centered at $e$ such that
	\begin{equation}
		\label{E:limsup-est}
		\limsup_{\ep \downarrow 0} \ep^{3/2} \log \Pr(\cL_\ep \in B) \le - I(e) + \de.
	\end{equation}
	By Proposition \ref{P:compactness}, $\mathcal D_m \cap C$ is compact, so it is covered by a finite collection $\mathcal O$ of such balls. 
	Let $r$ be the distance of the compact set $C\cap \mathcal D_m$ and the closed set $(\bigcup \mathcal O)^c$. By Lemma \ref{l:CK}, $r>0$. Then 
	$$
	C \subset \big(\bigcup \mathcal O\big) \cup \{e \in \mathcal E: \fd (e, \mathcal D_m) > r/2\}.
	$$ 
	Therefore applying \eqref{E:limsup-est} together with Proposition \ref{P:landscape-in-E*} gives that
	$$
	\limsup_{\ep \downarrow 0} \ep^{3/2} \log \Pr(\cL_\ep \in C) \le - \min \left(\inf_{e \in C} I(e) + \de, \frac{4m}{3}\right).
	$$
	Taking $m \to \infty$ and then $\de \to 0$ completes the proof.

	\section{The large deviation lower bound}\label{sec:lwbd}
	
	\subsection{A bound on cones}
	
	Given $e \in \mathcal E$, define the cone with apex $e$ by
	\begin{align}\label{def:cone}
		\m{Cone}_e:=\big\{e' \in \mathcal E: e'\ge e \big\}.\end{align}
	
	In this section, we prove a large deviation lower bound on neighborhoods of  cones. This will be combined with strict monotonicity of $I$ and a topological argument to give the full large deviation lower bound. 
	\begin{theorem}\label{t:lbd_apart0}
		Let $e$ be a finite disjoint planted network metric so that $\mu_e$ is supported on the graph $\gr \Gamma$ of  a  finite disjoint network $\Gamma$. For all $\delta>0$ we have
		\begin{align}
			\label{e:lbDL00}
			\liminf_{\e\downarrow 0}\e^{3/2}\log\Pr\big( \L_\e \in B(\m{Cone}_e,\delta)\big) \ge -I(e).
		\end{align}
	\end{theorem}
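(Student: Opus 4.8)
The plan is to reduce the cone lower bound to a product of one-point Tracy--Widom lower bounds along the network $\Gamma$, using the approximate independence of the Airy sheet (Proposition~\ref{l:4.4}) to make the product legitimate. Concretely, enumerate the paths $\ga_1,\dots,\ga_r\in\Gamma$ and fix a fine partition $\pn_{n,\ga}$ of each domain as in the proof of Proposition~\ref{p:upbd}; this produces a finite collection of point pairs $U=\bigcup_i U_i$, where $U_i$ is indexed by the $i$-th sub-interval of the common refinement. I would then argue that on the event
$$
\bigcap_{u=(x,s;y,t)\in U}\Big\{\big|\L_\e(u)-e^0(u)\big|\le \eta\Big\},
$$
where $e^0$ is the path-restricted metric from Lemma~\ref{l:pieces-length}, one automatically has $\L_\e\in B(\m{Cone}_e,\delta)$ once $\eta$ (and the mesh $1/n$) is small enough. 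This is the heart of the matter: Lemma~\ref{l:pieces-length} says that $e$ is recovered by taking $e^0\vee d$ and then maximizing over concatenations along a refining partition; since $\L_\e$ already satisfies $\L_\e\ge d$ with high probability on bounded sets (Proposition~\ref{p:d-dominant}) and, being close to $e^0$ on the $U_i$, satisfies $\L_\e\ge e^0-\eta$ there, concatenation gives $\L_\e\gtrsim e - \delta'$ on a large bounded set; combined with the a priori Dirichlet bounds near the temporal boundary (Propositions~\ref{p:d-close}, \ref{p:d-dominant}) this forces $\fd(\L_\e,\m{Cone}_e)<\delta$, or rather that some $e'\ge e$ lies within $\delta$ of $\L_\e$.

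For the probability estimate, since $\Gamma$ is a \emph{disjoint} finite network, its separation $\Delta(\Gamma)>0$, so for $n$ large the point pairs in different $U_i$ live over disjoint time intervals and the spatial coordinates within each time slab are $\Delta(\Gamma)$-separated up to error controlled by $n$. Using the independence of increments of $\L$ (Property~II) to split across time slabs, and Proposition~\ref{l:4.4} (in its scaled form, Proposition~\ref{p:Airy-tails}, but here for a lower rather than upper bound --- so I would instead subtract the additive error $4k e^{-c'\Delta_0^3}=\exp((o(1)-c\Delta^3/(t-s)^2)\e^{-3/2})$ directly) to decouple the finitely many pairs within each slab, I would bound
$$
\Pr\Big(\bigcap_{u\in U}\{|\L_\e(u)-e^0(u)|\le\eta\}\Big)\ \ge\ \prod_{u\in U}\Pr\big(|\L_\e(u)-e^0(u)|\le\eta\big)\ -\ (\text{small error}),
$$
valid provided $\theta(\Gamma,n)=\sum_{u\in U}\Theta(e,u)<c_0\Delta(\Gamma)^3 n^2$, exactly the regime where the error term is negligible at speed $\e^{-3/2}$. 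Each factor is estimated from below using \eqref{E:one-point-law} and the two-sided Tracy--Widom asymptotics of Theorem~\ref{T:tracy-widom-tails}: $\Pr(|\L_\e(u)-e^0(u)|\le\eta)\ge \exp((o(1)-\tfrac43\Theta(e^0+\eta,u))\e^{-3/2})$, the lower-tail contribution being of higher-order smallness in the speed. Taking the product, letting $\eta\downarrow0$ (continuity of $\Theta(\cdot,u)$), then $n\to\infty$, gives $\liminf \e^{3/2}\log\Pr \ge -\tfrac43\sum_{u\in U}\Theta(e,u)\to -\tfrac43\,\theta(\Gamma,n)\to -I(\Gamma,e) = -I(e)$, where the last equalities use Lemma~\ref{L:partition-approximation} and $e$-completeness of $\Gamma$ (Proposition~\ref{p:etomu}), i.e. $I(\Gamma,e)=I(e)$ since $\mu_e$ is supported on $\gr\Gamma$.

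The main obstacle I anticipate is the deterministic step: carefully showing that $\L_\e$ being within $\eta$ of $e^0$ on the finite set $U$, plus $\L_\e\ge d$ off a large box, actually implies $\L_\e$ is within $\delta$ of the cone in the metric $\fd$ --- uniformly near the boundary $s=t$ of $\Rd$, which is where the bounded-convergence topology bites. This requires combining Lemma~\ref{l:pieces-length} (concatenation recovers $e$) with quantitative control of how much $\L_\e$ can dip below the concatenated lower bound between partition points, handled by Lemma~\ref{L:geo-mod-cont}-type modulus bounds for $\L_\e$-geodesics and the neighborhood bounds of Section~\ref{sec:basic}; one also needs that the resulting function $\L_\e\vee e$ (or a suitable metric majorant) genuinely lies in $\mathcal E$, i.e. still satisfies the triangle inequality, so that it is a legitimate point of $\m{Cone}_e$. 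I expect this to be the longest part of the argument, with the probabilistic decoupling being a routine adaptation of the proof of Proposition~\ref{p:upbd} run in the opposite direction.
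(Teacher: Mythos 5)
Your probabilistic machinery (temporal independence, Proposition \ref{p:Airy-tails}, the TW one-point bounds, and Lemma \ref{l:pieces-length} to produce the finite family of point pairs) matches the paper's, and the rate computation via Jensen at the end is the same. The gap is in the step you yourself flag as the ``main obstacle,'' and it is not a technicality: you have no route from ``$\L_\e$ is within $\eta$ of $e^0$ on the finite set $U$, plus global Dirichlet-type bounds'' to ``$\fd(\L_\e,\m{Cone}_e)<\delta$.'' Your suggestion of a majorant $\L_\e\vee e$ is a dead end, since the pointwise max of two elements of $\mathcal E$ does not satisfy the reverse triangle inequality and so need not lie in $\mathcal E$, let alone in $\m{Cone}_e$; the neighborhood and modulus bounds you invoke do not repair this.

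The paper avoids the need to exhibit \emph{any} metric in the cone by a compactness argument you are missing. Write $\m{Cone}_e=\bigcap_{r>0,\,u\in\Rd}G(u,e-r)$ with $G(u,e)=\{e':e'(u)\ge e(u)\}$; then $\mathcal D_m\cap B(\m{Cone}_e,\delta/2)^c$ is compact (by Proposition \ref{P:compactness}) and covered by $\{G(u,e-r)^c\}$, so a finite subcover $Q$ and a uniform $r>0$ give
$$
\mathcal D_m\cap\bigcap_{u\in Q}G(u,e-2r)\subset B(\m{Cone}_e,\delta/2),
\quad\text{hence}\quad
B(\mathcal D_m,r)\cap\bigcap_{u\in Q}G(u,e-r)\subset B(\m{Cone}_e,\delta).
$$
This reduces the cone inclusion to finitely many \emph{one-sided} constraints at point pairs $Q$ (which are then fed into Lemma \ref{l:pieces-length}), plus the event $\L_\e\in B(\mathcal D_m,r)$, whose complement is killed by the exponential tightness bound of Proposition \ref{P:landscape-in-E*}. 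A related simplification you should adopt: you only need the one-sided event $\L_\e(u)\ge e^0\vee d(u)$, not $|\L_\e(u)-e^0(u)|\le\eta$. The cone is defined by lower bounds; insisting on a matching upper bound forces you into the (subdominant but distracting) lower-tail regime and, more importantly, does not actually buy you the cone inclusion without the compactness step above. Drop the $\eta$, use the one-sided event, insert the finite-subcover reduction, and the proof goes through.
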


	\begin{proof}
		We may assume that $I(e)<\tfrac43m<\infty$, or else there is nothing to prove. Our first goal is to reduce the problem to having to control only finitely many values of $\L_\ep$. Towards this end, write
		$$\m{Cone}_e= \bigcap_{r\in(0,\infty),u\in\Rd}
		G(u,e-r), \qquad G(u,e)=\{e'\in \mathcal E:e'(u)\ge e(u)\}.
		$$
		so that  
		$$
		\mathcal D_m\cap  B(\m{Cone}_e,\delta/2)^c\subset \m{Cone}_e^c =\bigcup _{r\in(0,\infty),u\in \Rd} G(u,e-r)^c,
		$$
		an open cover. 
		Since $\mathcal D_m$ is compact (Proposition \ref{P:compactness}) so is $\mathcal D_m\cap B(\m{Cone}_e,\delta/2)^c$, and therefore we can find a finite subcover 
		$\{G(u,e-r_u)^c:u\in Q\}$. Let $r=(\delta \wedge \min_{u\in Q} r_u)/2$.
		Then $\{G(u,e-2r)^c:u\in Q\}$ is also a subcover, giving
		\begin{equation}
			\label{e:bd2}
			\mathcal B(\m{Cone}_e,\delta/2) \supset  \mathcal D_m
			\cap\bigcap _{u\in Q} G(u,e-2r).
		\end{equation} 
		The set $\mathcal D_m$ is too small for our bounds. To fix this, we claim that 
		\begin{equation}\label{e:lb-twoset}
			\mathcal B(\m{Cone}_e,\delta) \supset B(\mathcal D_m,r)\cap \bigcap _{u\in Q} G(u,e-r).
		\end{equation}
		Indeed, let $e_1$ be in the set on the right. Then there exists $e_2\in \mathcal D_m$ with $\fd(e_1,e_2)<r$ so that 
		$e_2\in \bigcap_{u\in Q} G(u,e-2r)$. By \eqref{e:bd2} then $e_2\in B(\m{Cone}_e,\delta/2)$. Then $e_1\in B(\m{Cone}_e,\delta)$ since $\delta\ge r+\delta/2$, giving \eqref{e:lb-twoset}.
		
		Let $S=\bigcup_{(x,s;y,t)\in Q} \{s,t\}$, the set containing all time coordinates in $Q$.
		By Lemma \ref{l:pieces-length}, for $\tau>0$ we can find a partition $\pn=\{t_0<t_1<\ldots< t_k\}$  containing $S$ so that the following holds. First, $t_{i+1}-t_i\le  \tau$ for all $i$. Second, for every $u=(x,t_i;y,t_j)\in Q$ there exists points $(x,t_i)=z_i, z_{i+1},\ldots, z_j=(y,t_j)$ with increasing and consecutive time coordinates $t_i,t_{i+1},\ldots ,t_j$ so that 
		\begin{align}\label{e.lwsumbd}
			\sum_{v\in V_u} e^0\vee d (v)>e(u)-r, \qquad \text{ where }V_u=\{(z_{\ell-1},z_{\ell}):\ell=i+1,\ldots ,j\}.
		\end{align}
		Here the function $e^0$ is defined in Lemma \ref{l:pieces-length}.
		Let $U =\bigcup_{u\in Q}V_u$. By \eqref{e.lwsumbd} and the triangle inequality for $\L_\e$, 
		\begin{align}\label{e:lb-p1}
			P&\Big(\cL_e\in \bigcap _{u\in Q} G(u,e-r)\Big)
			\ge \Pr\Big(\bigcap_{u \in U} \{\cL_\ep(u) \ge e^0 \vee d(u)\}\Big).
		\end{align}
		Write $U=\bigcup_{i=1}^k U_i$ with $U_i=\{(x,t_{i-1};y,t_i)\in U\}$.   By the independent increment property of the directed landscape and the upper tail bound of Proposition \ref{p:Airy-tails}, the right hand side of \eqref{e:lb-p1} equals
		\begin{align}\label{e:lb-p2}
			\prod_{i=1}^{k}
			P\Big(\bigcap_{u \in U_i} \{\cL_\ep(u) \ge e^0 \vee d(u)\}\Big)
			= \exp \Big(-\sum_{u \in U} \tfrac{4}{3} \ep^{-3/2}(\Theta(e^0\vee d, u) + o(1)) \Big)
		\end{align}
		as long as  the minimal gap $\Delta(\Gamma)$ defined  in \eqref{e:def-delta-gamma} satisfies
		\begin{equation}\label{e:lb-gap-cond}
			\sum_{u \in U}\Theta(e^0\vee d, u)<c_0\Delta(\Gamma)^3/\tau^2. \end{equation}
		Finally, by Jensen's inequality, 
		\begin{align}\label{e:lb-p3}
			\sum_{u \in U}  \Theta(e^0\vee d, u) = \sum_
			{
				\genfrac{}{}{0pt}{}
				{
					\gamma\in\Gamma,(x,s;y,t)\in U:}{\gamma(s)=x,\gamma(t)=y} 
			}
			\frac{(\int_s^t\rho_{\gamma,e}(r)dr)^{3/2}}{\sqrt{t-s}}\le\sum_{\gamma\in\Gamma} \int \rho_{\gamma,e}(r)^{3/2}dr=\tfrac34I(e),
		\end{align}
		so in particular $\eqref{e:lb-gap-cond}$ holds for $\tau$ small enough. 
		By Proposition \ref{P:landscape-in-E*} $$P(\L_\e \in B(\mathcal D_m,r)^c)\le e^{- (\frac43m+o(1))\e^{-3/2}}.$$
		This together with \eqref{e:lb-twoset} and \eqref{e:lb-p1}-\eqref{e:lb-p3} imply the claim. 
	\end{proof}

	Next, we conclude the proof of the large deviation lower bound. 
	
	\subsection{Proof of the lower bound in Theorem \ref{t:main}}
	Let  $O=A^\circ$, an open set. If $O$ does not contain finite rate metrics, there is nothing to prove. So let $e_0\in O$ with $I(e_0)<\infty$. Then $e_0\in \mathcal D$, and by Corollary \ref{c:approx}, we can find a finite disjoint planted network metric $e\le e_0$ so that $e\in O$. It suffices to show that 
	\begin{align}\label{rtg}
		\liminf_{\e\downarrow 0}\e^{3/2}\log\Pr(\L_\e \in O) \ge -I(e).
	\end{align}
	Recall the set $\m{Cone}_e$ from Theorem \ref{t:lbd_apart0}. We bound
	$
	\Pr(\cL_\ep \in O)$ below by 
	$$
	\Pr(\cL_\ep \in O \cap B(\m{Cone}_e,\delta)) = \Pr(\cL_\ep \in B(\m{Cone}_e,\delta))- \Pr(\cL_\ep \in B(\m{Cone}_e,\delta) \cap O^c),
	$$
	and so in view of Theorem \ref{t:lbd_apart0}, to prove \eqref{rtg} it is enough to show that for some $\delta> 0$,
	\begin{equation}
		\label{E:cone-intersection}
		\limsup_{\ep \downarrow 0} \ep^{3/2} \log\Pr(\cL_\ep \in B(\m{Cone}_e,\delta) \cap O^c) < - I(e).
	\end{equation}
	The closure of $B( \m{Cone}_e,\delta)\cap O^c$ is contained in $B(\m{Cone}_e,2\delta)\cap O^c$. By the large deviation upper bound, it suffices to show that  
	$$
	s=\sup_{\delta>0}\; \inf_{e' \in B(\m{Cone}_e,2\delta) \cap O^c} I(e') >I(e).
	$$
	If $s=\infty$, there is nothing to prove. Otherwise we can find $e_n \in B(\m{Cone}_e,1/n)\cap O^c$ with $I(e_n)\to s$. By Proposition \ref{P:rate-function-properties}, $I$ is lower semi-continuous with compact sub-level sets, so $e_n$ has a subsequential limit $e^*\in \m{Cone}_e\cap O^c$ and $s=\lim I(e_n) \ge I(e^*)$. Thus  $e^* \ge e$ and $e^*\not=e$.  Strict monotonicity of $I$, Corollary \ref{c:monotone}, gives $I(e^*) > I(e)$,  as required.

	\section{Large deviations of the directed geodesic}\label{s:gamma-LDP}
	
	In this section, we use the results of Section \ref{sec:app} to prove Theorem \ref{T:dg-ldp}.  By symmetries of $\cL$, it suffices to prove the theorem when $(p; q) = (0,0; 0, 1)$.
	
	\begin{proof}[Proof of Theorem \ref{T:dg-ldp}]\ 
		
		{\bf $J$ is lower semicontinuous.}  Consider $f_n \to f$ uniformly, and suppose that $J(f_n) $ converges to a finite limit, or else there is nothing to prove.  By Lemma \ref{l:j-facts}, we can find $e_n$ achieving the infimum \eqref{e:J-def} for each $f_n$, and since $I$ is a good rate function, there is a subsequential limit $e$ of the $e_n$, and $I(e) \le \liminf_{n \to \infty} I(e_n)$. Let $u=(0,0;0,1)$, then $|f_n|_{e_n}=e_n(u)\to e(u)$. By Lemma \ref{L:length-lemma}, $|f|_e\ge \liminf |f_n|_{e_n}=\lim e_n(u)=e(u)$, so $f$ is a geodesic in $e$. Thus $J(f)\le I(e)$, as required. 
		
		{\bf Sub-level sets are compact.}
		By Lemma \ref{l:J-norms}, for $\al > 0$, we have that $J^{-1}[0, \al]$ is contained in the set
		$$
		\Big\{f :[0, 1] \to \R, f(0) = f(1) = 0, \int_0^1 |f'(t)|^{3/2} dt \le \sqrt{3 \al/4} \Big\},
		$$
		which is compact in the uniform norm. Since $J$ is lower semicontinuous, $J^{-1}[0,a]$ is also closed. Therefore sub-level sets are compact, and $J$ is a good rate function.
		
		{\bf The large deviation upper bound.} Let $\mathcal G\subset \mathcal E$ denote the set of geodesic spaces, see the definition prior to Lemma \ref{L:geodesic-space0}. Since $\mathcal D\subset \mathcal G$ by Lemma \ref{L:geodesic-space}, $I=\infty$  outside $\mathcal G$. Since also $\L_\e\in\mathcal G$ almost surely, it follows that $\L_\ep$ satisfies the large deviation principle, Theorem \ref{t:main}, on $\mathcal G$ replacing  $\mathcal E$. 
		
		Let $A$ be a Borel  subset of $C([0, 1])$, and let $S_A \subset \mathcal G$ be the set of  metrics that have a geodesic from $(0, 0)$ to $(0, 1)$ which lies in $A$. Then
		\begin{equation}
			\label{E:Pr-translation}
			\Pr(\ep \ga \in A) = \Pr(\cL_{\ep^2} \in A).
		\end{equation}
		Moreover, if $A$ is closed then $S_A$ is also closed by Lemma \ref{L:length-lemma}, and so by the large deviation upper bound for $\cL_\ep$ we have
		$$
		\limsup_{\ep \to 0} \ep^{-3} \log\Pr(\ep \ga \in A) \le -\inf_{S_A} I = -\inf_{A} J.
		$$
		%		Let $\mathcal G\subset \mathcal E$ denote the set of geodesic spaces, see the definition prior to Lemma \ref{L:geodesic-space0}. Since $\mathcal D\subset \mathcal G$ by Lemma \ref{L:geodesic-space}, $I=\infty$  outside $\mathcal G$. Since also $\L_\e\in\mathcal G$ almost surely, it follows that $\L_\ep$
		%		satisfies the large deviation principle, Theorem \ref{t:main}, on $\mathcal G$ replacing  $\mathcal E$.
		%		
		%		
		
		{\bf The large deviation lower bound.}
		Suppose that $A$ is open in $C([0, 1])$, and assume that ${J}(f) < \infty$ for some $f \in A$. By \eqref{E:Pr-translation} and the large deviation lower bound for $\cL_\ep$ on $\mathcal G$ we have
		$$
		\liminf_{\ep \to 0} \ep^{-3} \log\Pr(\ep \ga \in A) =-\inf_{ S_A^{\circ}} I,
		$$
		where here the interior is defined with respect to $\mathcal G$.
		Thus it is enough to show that for any $f \in A$ with $J(f) < \alpha<\infty$, we can find   $e \in S_A^\circ$ with $I(e) < \alpha$. 
		
		Let $\mu$ be a planted network measure supported on $\gr f$ so that $e_{\mu} 
		\in \mathcal D(f)$ and $I(e_\mu) = J(f)$, and let $\lambda$ be the measure supported on $\gr f$ with time marginal given by Lebesgue  measure on $[0,1]$. Let $\kappa>0$ and $e=e_{\mu+\kappa\lambda}$. 
		By the triangle inequality for $L^{3/2}$-norms, 
		$I(e)^{2/3}\le I(e_\mu)^{2/3}+\kappa$, and  we can set $\kappa$ small enough so that $I(e)< \alpha$.
		
		Since $f$ is an $e_\mu$-geodesic for $u$, $f$ must be the unique $e$-geodesic for $u$. This implies
		\begin{equation}
			\label{E:dfwn}
			\max_{a\in \{\delta,-\delta\}}\sup_{r \in [0, 1]} \Big(e(p; f(r) +a, r) + e(f(r) +a, r; q) \Big) < e(u).
		\end{equation}
		Let $\delta>0$ so that $\{g:\|f-g\|_\infty<\delta\}\subset A$ and so $S_{\{g:\|f-g\|_\infty<\delta\}}\subset S_A$.
		
		If $e_n \in \mathcal G$ and $e_n \to e$ uniformly on bounded sets, then for all large $n$ \eqref{E:dfwn}  holds for $e_n$ as well. Any $e_n$-geodesic $g$ is continuous and by \eqref{E:dfwn} cannot  intersect $f+\delta$ or $f-\delta$, hence $\|f-g\|_\infty<\delta$ and $e_n\in S^\circ_A$. Since for any $e_n\to e$ we have $e_n\in S_A$ for large enough $n$, we have $e\in S_A^{\circ}$, as required. 
	\end{proof}
	
	\section{The rate function as an integrated Dirichlet energy}
	\label{sec:dirichlet}
	
	The goal of this section is to give an alternate characterization of the rate function that is based on differentiating the metric directly. To motivate this form of the rate function, we first focus on understanding the one-dimensional spatial marginals $x \mapsto \cL(x, s; y, t)$ and $y \mapsto \cL(x, s; y, t)$. All of these marginals are rescaled versions of the \textbf{parabolic Airy process} $\mathfrak A_1(x) = \cL(0,0; x, 1)$.
	
	\subsection{Conjectured large deviations for the Airy process}
	
	The parabolic Airy process is the top line in a random sequence of functions $(\mathfrak A_i : \R\to \R, i \in \N)$ known as the parabolic Airy line ensemble. The parabolic Airy line ensemble can be loosely thought of as a system of infinitely many Brownian motions (of diffusion coefficient $2$) conditioned on the non-intersection event $\mathfrak A_1 > \mathfrak A_2 > \dots $ and with the boundary condition $\mathfrak A_i(x) \sim -x^2$ as $|x| \to \infty$. This idea can be made precise through the Brownian Gibbs resampling property \cite{corwin2014brownian}. When restricted to the top line, this property says that given the function $\mathfrak A_2$ and the values of $\mathfrak A_1$ outside of an interval $[a, b]$, the function $\mathfrak A_1|_{[a, b]}$ is simply a Brownian bridge between the points $(a, \mathfrak A_1(a))$ and $(b, \mathfrak A_1(b))$ conditioned on the event $\mathfrak A_1 > \mathfrak A_2$.
	
	The shifted process $\mathfrak A_2(x) + x^2$ is stationary, and it is difficult to push $\mathfrak A_2$ far below the parabola $-x^2$, so the Brownian Gibbs resampling property suggests that a good proxy for $\mathfrak A_1$ is simply a Brownian motion $\mathfrak B$ (of diffusion coefficient $2$) conditioned on the event $\mathfrak B(x) > -x^2$ for all $x$ and given the boundary condition $\mathfrak B(x) \sim - x^2$ as $|x| \to \infty$ \footnote{To construct $\mathfrak B$ precisely, take the limit in law as $a \to \infty$ of a Brownian bridge $W_a:[-a, a] \to \R$ with $W_a(a) = W_a(-a) = -a^2 + 1$ and conditioned on the event $W_a(x) > -x^2$ for all $x$.}. We can hope to understand the large deviations for $\mathfrak A_1$ by first developing a large deviation principle  for $\mathfrak B$. This follows from Schilder's large deviation principle for Brownian motion.
	
	Let $\mathfrak B_\ep(x) = \ep \mathfrak B(\ep^{-1/2} x)$, so that $\mathfrak B_\ep$ converges in law to the parabola $g(x) = -x^2$ as $\ep \to 0$. For any absolutely continuous function $f:\R \to \R$ with $f(x) \ge g(x)$ for all $x$ and
	$$
	\lim_{x \to \pm \infty} f(x) - x^2 = 0,
	$$
	define
	\begin{equation}
		\label{E:Dirichlet-energy}
		\pj (f) = \frac{1}{4} \int_\R (f'(x)^2 - g'(x)^2) dx= \frac{1}{4} \int_\R (f'(x)^2 - 4 x^2) dx,
	\end{equation} 
	and set $\mathcal I(f) = \infty$ for any other function $f$. Then for $f:\R^2 \to \R$, as $\ep \to 0$,  we should have a large deviation principle which makes precise the statement that
	\begin{equation}
		\label{E:P-B-ep}
		\Pr  (\mathfrak B_\ep \approx f) = \exp(-\ep^2[\pj(f) + o(1)]).
	\end{equation}
	The large deviation principle \eqref{E:P-B-ep} should also hold for the parabolic Airy process. While this result is not straightforward due to the more delicate nature of $\mathfrak A_1$, it seems within reach of current methods, e.g. see \cite{ganguly2022sharp, dauvergne2023wiener} for recent work on related problems. Such a large deviation principle for the parabolic Airy process can be rephrased in terms of our rate function $I$. We state this as an open problem.
	\begin{conjecture}
		\label{P:minimizer-characterization}
		For any function $f:\R\to \R$ we have
		\begin{align}
			\label{E:DE-st}
			\min \{I(e) : e \in \mathcal D, e(0, 0; \cdot, 1) = f \} &= \pj(f).
		\end{align}
	\end{conjecture}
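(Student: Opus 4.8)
The statement to be proved is the deterministic identity \eqref{E:DE-st}; once it is in hand, its agreement with the (still conjectural) large deviation rate of the parabolic Airy process follows by the contraction principle, since $e\mapsto e(0,0;\cdot,1)$ is continuous on $\mathcal E$. The plan is to prove \eqref{E:DE-st} directly by matching upper and lower bounds. First note that $\{e\in\cD:e(0,0;\cdot,1)=f\}$ is closed in $\mathcal E$: uniform convergence on bounded sets forces convergence of the slices on every bounded interval, hence everywhere. By Proposition \ref{P:rate-function-properties} the intersection of this set with any sub-level set of $I$ is compact, so once we exhibit one finite-rate metric with slice $f$, the infimum in \eqref{E:DE-st} is a minimum. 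Thus it suffices to prove $\inf\le\pj(f)$ and $\inf\ge\pj(f)$. For the upper bound we may assume $f-g$ is smooth and compactly supported with $f\ge g$, the general case following by approximating $f$ from below by such functions and using lower semicontinuity of $I$ and continuity of $\pj$ along the approximation; if $\pj(f)=\infty$ the lower bound will show the minimum is $\infty$ as well.

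\textbf{Upper bound.} Write $v(y):=-f'(y)/2$, so that $v(y)=y$ off the support of $f-g$ and $\pj(f)=\int_{\R}(v(y)^2-y^2)\,dy$. The natural minimizer is the ``fan'' metric whose $(0,0)$-geodesics reach $(y,1)$ with incoming slope $v(y)$; this is not a planted network metric, so it must be approximated. For each $n$ I would discretize $f$ on a fine grid $y_0<\dots<y_k$ and build a finite planted network $\Gamma_n$ from broken-line paths sharing a common trunk near $(0,0)$ carrying a density that blows up near time $0$ (as in the single-curve examples that already produce slices lying strictly above $g$ everywhere), and then splitting off toward $(y_j,1)$ along segments of slope $\approx v(y_j)$. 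The densities are chosen by inverting the Hopf--Lax representation so that the $e_{\mu_n}$-length of the path to $(y_j,1)$, computed via $|\gamma|_{e_\mu}=\mu(\gr\gamma)+|\gamma|_d$ (Proposition \ref{p:mutoe}), equals $f(y_j)$, with intermediate $y$ controlled by continuity. One checks that the slices $e_{\mu_n}(0,0;\cdot,1)$ converge to $f$ uniformly on bounded sets and that $I(e_{\mu_n})$ is a Riemann sum converging to $\pj(f)$. Extracting a subsequential limit $e$ (goodness of $I$, Proposition \ref{P:rate-function-properties}) and using Lemma \ref{L:length-lemma} gives $e(0,0;\cdot,1)=f$ and $I(e)\le\liminf_n I(e_{\mu_n})\le\pj(f)$.

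\textbf{Lower bound.} Let $e\in\cD$ with $e(0,0;\cdot,1)=f$ and $I(e)<\infty$, so $e=e_{\mu_e}$ is a planted network metric. Fix a fine grid $y_0<\dots<y_k$ and let $\pi_j$ be the rightmost $e$-geodesic from $(0,0)$ to $(y_j,1)$ (Lemma \ref{L:geodesic-space}); these are ordered, and by Lemma \ref{L:geo-mod-cont} stay within $O(\sqrt r)$ of their Dirichlet geodesics near time $r=0$, so $\bigcup_j\gr\pi_j$ is a tree through $(0,0)$ which decomposes into an internally disjoint network $\{\gamma_i\}$, yielding $I(e)\ge\tfrac43\sum_i\int\rho_{\gamma_i,e}^{3/2}$. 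Two facts drive the estimate: along $\pi_j$ the total $\mu_e$-mass is $f(y_j)+\int_0^1(\pi_j')^2\ge f(y_j)+y_j^2=R(y_j)\ge0$ by Jensen, where $R=f-g$; and the slope along the final straight portion of $\pi_j$ is $v(y_j)+o(1)$ as the mesh shrinks, since $\partial_y e(0,0;y,1)=f'(y)$ equals $-2$ times this incoming slope (the last segment carries no mass). Using convexity of $t\mapsto t^{3/2}$ along the branches and summing, one shows that the excess Dirichlet cost of coasting in with slope $v(y_j)$ rather than $y_j$ --- which must be paid for by extra $\mu_e$-mass, hence by Kruzhkov entropy --- contributes at least $\int(v(y_j)^2-y_j^2)\,\Delta y_j+o(1)$, so $I(e)\ge\pj(f)$ in the limit (and $I(e)=\infty$ is forced whenever $\int(f')^2=\infty$).

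\textbf{Main obstacle.} The crux is the bookkeeping in the lower bound: making precise how the density on the shared trunk and internal branches of the geodesic tree is apportioned among the $\gamma_i$, and showing the Jensen/convexity estimates lose nothing in the fine-grid limit --- equivalently, that the fan metric is genuinely optimal. This is at bottom a Legendre-transform identity between $\tfrac43\int\rho^{3/2}$ along the tree and $\int(v^2-y^2)$, and it must be verified uniformly over all competitor metrics $e$, not merely the fan. A secondary, more routine, difficulty is controlling the transition regions and the $|y|\to\infty$ behaviour in the upper-bound construction, and discharging the reduction to smooth compactly supported $f-g$.
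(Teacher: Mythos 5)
This statement is a \emph{conjecture} in the paper, not a proven result. The authors explicitly write, immediately after Conjecture \ref{P:minimizer-characterization}: ``It would also be quite interesting to find a proof of Conjecture \ref{P:minimizer-characterization} using only our definition of $I$. While we attempted this, we were only able to verify \eqref{E:DE-st} for a few straightforward choices of the function $f$, see Examples \ref{e:one-point} and \ref{e:two-point}.'' So there is no proof in the paper to compare against, and your proposal should be judged as an attempt at an open problem rather than as a reconstruction of a known argument.

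Your upper bound sketch is plausible and roughly in line with how one would expect the ``fan'' construction to go, and the preliminary reductions (closedness of the constraint set, goodness of $I$, reduction to compactly supported smooth $f-g$) are fine. The genuine gap is in the lower bound, and you have correctly identified it yourself under ``Main obstacle,'' but it is larger than a bookkeeping issue. The step where you invoke $\partial_y e(0,0;y,1)=f'(y)=-2\cdot(\text{incoming slope of }\pi_y)$ with the parenthetical ``the last segment carries no mass'' is only valid for a fan-type competitor where the density on each $\pi_j$ is concentrated near the trunk and the geodesics are straight near $t=1$. For an arbitrary $e\in\mathcal D$ with $e(0,0;\cdot,1)=f$, nothing forces the planted measure $\mu_e$ to vanish near the line $t=1$; the geodesic $\pi_j$ to $(y_j,1)$ may carry mass all the way up, in which case the envelope relation between $f'(y_j)$ and the terminal slope has an extra term and the ``coasting in at slope $v(y_j)$'' picture breaks down. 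Consequently the Jensen/convexity estimate does not obviously force each competitor to pay at least the fan's cost, and the claimed ``Legendre-transform identity'' between $\tfrac43\int\rho^{3/2}$ along the tree and $\int(v^2-y^2)$ is exactly the thing that needs proof, not a tool you can appeal to. This is precisely the obstruction the authors report having been unable to overcome, so as written your proposal does not close the conjecture.

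A secondary issue: even in the upper bound, you need to verify that the density singularity you plant near $(0,0)$ on the common trunk both (i) has finite Kruzhkov entropy and (ii) does not inflate $e_{\mu_n}(0,0;y,1)$ above $f(y)$ for $y$ between grid points. The first point requires the trunk density to be integrable in $\rho^{3/2}$ near $t=0$; the second requires a careful Hopf--Lax inversion that you gesture at but do not carry out. Neither is fatal, but both must be done explicitly before the construction is convincing.
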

	It would also be quite interesting to find a proof of Conjecture \ref{P:minimizer-characterization} using only our definition of $I$. While we attempted this, we were only able to verify \eqref{E:DE-st} for a few straightforward choices of the function $f$, see Examples \ref{e:one-point} and \ref{e:two-point}.
	
	\subsection{The rate function in terms of gradients of directed metrics}
	
	The above discussion suggests that we may be able to \textit{build} up the rate function $I$ through a kind of integrated Dirichlet energy. This is the goal of the present section. Along the way, we will prove the differentiation formula \eqref{E:sup-theta-R} for the planted network measure in terms of the metric. First, for a metric $e \in \mathcal D$, define its \textbf{gradient field} $D e:\R^2 \times \R \to \R, (q, \theta) \mapsto D_q e (\theta)$ by letting
	$$
	D_q e (\theta) = \liminf_{t \to 0^+} \frac{e(q; q + (t \theta, t))}{t}.
	$$
	As we will see, the gradient field $D e$ typically behaves much more nicely than the metric $e$ itself. In particular, the part of the rate function of $e$ coming from the infinitesimal part of $e$ around $p$ can be expressed through \eqref{E:Dirichlet-energy}. More precisely, we have the following theorem.
	\begin{theorem}
		\label{T:tangent-space-gives-the-metric}
		Let $e \in \mathcal D$ be a finite rate metric and let $\mu = \mu_e$. Then there is a set $S \subset \R$ whose complement has Lebesgue measure $0$ such that for all $q \in \R \times S$, the limit $D_qe(\theta)$ exists for all $\theta \in \R$ and 
		$$
		\pj(D_q e) = \frac{4}{3} \rho_\mu(q)^{3/2}, \qquad \rho_\mu(q) = \sup_{\theta \in \R} D_qe (\theta) - D_q d(\theta).
		$$
	\end{theorem}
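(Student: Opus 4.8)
The plan is to pin down the function $\theta\mapsto D_qe(\theta)$ explicitly for a full‑measure set of base times, and then reduce both displayed identities to a one–variable computation. Throughout write $\mu=\mu_e$, fix a network $\Gamma$ with $\mu$ supported on $\gr\Gamma$, and recall that $e\in\mathcal D_m$ with $m=\tfrac34 I(e)$ (Proposition~\ref{P:rate-function-properties}) and that $|\ga|_e=\mu(\gr\ga)+|\ga|_d$ for all $\ga\in H^1$ (Theorem~\ref{t:structure}, Proposition~\ref{p:etomu}); also $d(q;q+(t\theta,t))=-t\theta^2$, so $D_qd(\theta)=-\theta^2$. First I would define the good set $S$ to be the set of times $s$ that are not endpoints of any $\ga\in\Gamma$'s domain, are not ``meeting times'' (no $\ga\ne\ga'\in\Gamma$ share a point at time $s$), are such that every $\ga\in\Gamma$ containing $s$ in the interior of its domain is differentiable at $s$ and has $s$ as a Lebesgue point of $w_{\ga,e}'$ and of $r\mapsto\rho_\mu(\ga(r),r)$, and finally satisfy a differentiation (density‑regularity) property for the measure $\mu$ used only in the upper bound below. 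Since $\Gamma$ is countable and each listed condition excludes a null (or countable) set of times, $S$ has full measure. Fix $q=(x,s)$ with $s\in S$: if $q\in\gr\ga_0$ for the (unique) such $\ga_0$, set $v=\ga_0'(s)$ and $\rho=\rho_\mu(q)=w_{\ga_0,e}'(s)+v^2$; if $q\notin\gr\Gamma$ set $\rho=0$.

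For the lower bound I would, for each $\alpha\in[0,1)$ and small $t>0$, use the competitor path from $q$ to $q+(t\theta,t)$ that follows $\ga_0$ on $[s,s+\alpha t]$ and then goes straight. Using $|\cdot|_e=\mu(\gr\cdot)+|\cdot|_d$ together with the Lebesgue‑point property of $w_{\ga_0,e}'$ at $s$ and differentiability of $\ga_0$ at $s$, this competitor has $e$-length $t\big(\alpha w_{\ga_0,e}'(s)-\tfrac{(\theta-\alpha v)^2}{1-\alpha}\big)+o(t)$; combined with the straight competitor (always available, giving $-t\theta^2$),
\begin{equation}\label{E:Dqe-lower}
\liminf_{t\to0^+}\frac{e(q;q+(t\theta,t))}{t}\ \ge\ \sup_{\alpha\in[0,1)}\Big(\alpha\, w_{\ga_0,e}'(s)-\frac{(\theta-\alpha v)^2}{1-\alpha}\Big)\vee(-\theta^2),
\end{equation}
with the right side interpreted as $-\theta^2$ when $q\notin\gr\Gamma$. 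A short optimization in $\alpha$ (substitute $\beta=1-\alpha$, use $w_{\ga_0,e}'(s)=\rho-v^2$) shows the right side of \eqref{E:Dqe-lower} equals $h(\theta)-\theta^2$ where $h(\theta):=\big(\sqrt{\rho}-|\theta-v|\big)_+^2$.

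For the matching upper bound I would analyze the rightmost $e$-geodesic $\pi_t$ from $q$ to $q+(t\theta,t)$ (it exists by Lemma~\ref{L:geodesic-space} and lies in $H^1$ by Lemma~\ref{L:dirichlet-close-paths}), so that $e(q;q+(t\theta,t))=\int_s^{s+t}\big(\rho_\mu(\pi_t(r),r)-|\pi_t'(r)|^2\big)\,dr$. Lemma~\ref{L:geo-mod-cont} confines $\pi_t$ to $[x-Ct^{2/3},x+Ct^{2/3}]\times[s,s+t]$ for small $t$; combining this with the Jensen bound \eqref{E:jensen-measure} (a detour of spatial size $\ell$ costs Dirichlet energy $\gtrsim \ell^2/t$ yet collects at most $\mathcal K(\mu|_{\mathrm{box}})^{2/3}t^{1/3}$ of mass, and $\mathcal K(\mu|_{\mathrm{box}})\to0$ as $t\to0$) one controls the detour size and shows that $\pi_t$ collects at most $t\rho+o(t)$ of $\mu$-mass while incurring Dirichlet energy at least that of the optimal ``follow $\ga_0$, then straight'' strategy, up to $o(t)$. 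Making this last comparison rigorous is the \textbf{main obstacle}: it requires a differentiation statement for $\mu$ at $q$ that bounds the mass picked up from paths other than $\ga_0$ meeting the shrinking box, which is precisely the density‑regularity condition built into $S$ and should be established by a Vitali–Lebesgue type argument using finiteness of $\mathcal K(\mu)$ (and the dominated‑convergence fact $\mathcal K(\mu|_O)\to\mathcal K(\mu|_{\gr\ga})=0$ used already in Proposition~\ref{p:mutoe}). This yields $\limsup_{t\to0^+}t^{-1}e(q;q+(t\theta,t))\le h(\theta)-\theta^2$, so with \eqref{E:Dqe-lower} the limit $D_qe(\theta)$ exists and equals $h(\theta)-\theta^2$ for every $\theta$.

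Finally I would read off the conclusions. Since $D_qe(\theta)-D_qd(\theta)=h(\theta)$, we get $\sup_\theta\big(D_qe(\theta)-D_qd(\theta)\big)=\sup_\theta h(\theta)=\rho=\rho_\mu(q)$, the second identity. For the first, $(D_qe)'(\theta)=h'(\theta)-2\theta$ gives $(D_qe)'(\theta)^2-4\theta^2=h'(\theta)^2-4\theta h'(\theta)$, and since $h$ is Lipschitz with support in $[v-\sqrt\rho,v+\sqrt\rho]$ an integration by parts yields $\pj(D_qe)=\tfrac14\int_\R h'(\theta)^2\,d\theta+\int_\R h(\theta)\,d\theta$. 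Substituting $u=\theta-v$, $h=(\sqrt\rho-|u|)_+^2$ gives $\int_\R h=\tfrac23\rho^{3/2}$ and $\int_\R h'^2=4\int_\R h=\tfrac83\rho^{3/2}$, hence $\pj(D_qe)=\tfrac23\rho^{3/2}+\tfrac23\rho^{3/2}=\tfrac43\rho_\mu(q)^{3/2}$. When $q\notin\gr\Gamma$ one has $h\equiv0$, $D_qe=D_qd$, and both sides vanish, completing the proof.
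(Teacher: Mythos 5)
Your overall plan matches the paper's: build a full-measure set $S$ of Lebesgue points, establish the explicit formula $D_qe(\theta)=h(\theta)-\theta^2$ with $h(\theta)=(\sqrt{\rho}-|\theta-v|)_+^2$ (the concave majorant of $-\theta^2$ and a spike of height $w'(s)=\rho-v^2$ at $v=\ga_0'(s)$), and then compute $\pj(D_qe)$. Your lower-bound competitor (follow $\ga_0$ on $[s,s+\alpha t]$, then go straight) and the final integration-by-parts computation are both correct; in fact your closed-form expression for $D_qe$ is cleaner than the one printed in the paper. However, you explicitly flag the matching upper bound for $q\in\gr\Gamma$ as the ``main obstacle'' and do not close it, and this is a genuine gap: directly bounding the mass that the rightmost geodesic $\pi_t$ picks up from the countably many other paths in $\Gamma$, and simultaneously showing its Dirichlet cost is at least that of the follow-then-straight family, is exactly where the hard work lies.

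The paper closes this gap with a decomposition you do not use. Write $\mu=\mu_1+\mu_2$ where $\mu_1=\mu|_{\gr\ga_0}$ and $\mu_2=\mu|_{(\gr\ga_0)^c}$. For $\mu_2$, the point $q$ is \emph{outside} the support, so the argument you already ran for the easy case $q\notin\gr\Gamma$ (geodesic confinement from Lemma~\ref{L:geo-mod-cont} plus the Jensen bound \eqref{E:jensen-measure} and the Lebesgue-point property of $g(z)=\max_\ga\rho_\ga^{3/2}(z)\ind(z\in[a_\ga,b_\ga])$, together with the condition that $s$ is not a meeting time) shows $e_{\mu_2}(q;q_r)-d(q;q_r)=o(r)$. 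For $\mu_1$, which is supported on the single graph $\gr\ga_0$, the Lebesgue-point conditions on $\ga_0'$, $|\ga_0'|^2$, and $\rho_{\ga_0}$ let one replace $\ga_0$ by its linearization at $s$, so the optimization in $e_{\mu_1}(q;q_r)$ reduces, up to $o(r)$, to the one-parameter family ``follow $\ga_0$ on $[s,s+hr]$, then straight'' --- precisely your lower-bound competitors --- and the supremum over $h$ gives the desired formula. Sandwiching $e_{\mu_1}\le e\le e_{\mu_1}+(e_{\mu_2}-d)$ (via the optimizing path for $e=e_\mu$) then finishes the case $q\in\gr\Gamma$. So you are missing not a new estimate but a structural reduction: split off the path through $q$, handle the rest by the case you already proved, and only then do the 1D optimization you sketched. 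With that insertion your argument would be complete, and your ``Vitali--Lebesgue type argument'' for the residual mass would no longer be needed in the form you describe, since it is subsumed by the $q\notin\gr\Gamma$ case applied to $\mu_2$.
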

	Theorem \ref{T:tangent-space-gives-the-metric} gives the following corollary.
	\begin{corollary}
		\label{C:I-is-defined}
		For any $e \in \mathcal D$ of finite rate, we can write
		$$
		I(e) = \int_\R \Big(\sum_{x \in \R} \pj(D_{(x, s)} e) \Big) ds.
		$$
	\end{corollary}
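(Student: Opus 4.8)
The plan is to deduce Corollary~\ref{C:I-is-defined} directly from Theorem~\ref{T:tangent-space-gives-the-metric}, together with the change-of-variables identity \eqref{e:IandK} and the fact (Proposition~\ref{p:etomu}) that every finite-rate metric admits an $e$-complete network on which $\mu_e$ is determined. So fix a finite-rate $e\in\mathcal D$, write $\mu=\mu_e$, and fix an $e$-complete network $\Gamma$ with $\mathrm{supp}\,\mu\subseteq\gr\Gamma$. Theorem~\ref{T:tangent-space-gives-the-metric} supplies a set $S\subseteq\R$ of full Lebesgue measure such that for every $q=(x,s)$ with $s\in S$ the limit $D_qe(\theta)$ exists for all $\theta\in\R$ and $\pj(D_qe)=\tfrac43\rho_\mu(q)^{3/2}$; in particular this already covers the case $\rho_\mu(q)=0$, where it forces $\pj(D_qe)=0$. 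Hence for every $s\in S$,
$$
\sum_{x\in\R}\pj(D_{(x,s)}e)=\tfrac43\sum_{x\in\R}\rho_\mu(x,s)^{3/2}.
$$

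Next I would rewrite the inner sum path by path. Since $\rho_\mu$ vanishes off $\gr\Gamma$, for each fixed $s$ the set $\{x:\rho_\mu(x,s)\neq0\}$ is contained in $\{\gamma(s):\gamma\in\Gamma,\ a_\gamma\le s\le b_\gamma\}$, a countable set, so the sum over $x$ is genuinely a countable sum. Internal disjointness of $\Gamma$ makes the map $\gamma\mapsto\gamma(s)$ injective on $\{\gamma\in\Gamma: a_\gamma<s<b_\gamma\}$, while the set of times $s$ that are an endpoint of some path in $\Gamma$ is countable, hence Lebesgue-null. Moreover, by the definition of the temporal density, $\rho_\mu(\gamma(s),s)=\rho_{\gamma,e}(s)$ for Lebesgue-a.e.\ $s\in[a_\gamma,b_\gamma]$ (on overlaps the two candidate values agree a.e.\ by Lemma~\ref{L:overlap-lemma}). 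Combining these facts, for a.e.\ $s$,
$$
\sum_{x\in\R}\pj(D_{(x,s)}e)=\tfrac43\sum_{\gamma\in\Gamma:\,a_\gamma\le s\le b_\gamma}\rho_{\gamma,e}(s)^{3/2}.
$$

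Finally I would integrate in $s$ and apply Tonelli's theorem (all integrands are nonnegative), then invoke \eqref{E:ga-rate}, \eqref{e:IandK} and $e$-completeness of $\Gamma$:
$$
\int_\R\Big(\sum_{x\in\R}\pj(D_{(x,s)}e)\Big)\,ds=\tfrac43\sum_{\gamma\in\Gamma}\int_{a_\gamma}^{b_\gamma}\rho_{\gamma,e}(s)^{3/2}\,ds=\sum_{\gamma\in\Gamma}I(\gamma,e)=I(\Gamma,e)=I(e).
$$
Equivalently, the middle quantity equals $\tfrac43\mathcal K(\mu)$ via the change-of-variables underlying \eqref{e:IandK}, which is $I(e)$ by Theorem~\ref{t:structure}. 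This would complete the proof.

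I do not expect a deep obstacle here: the substantive content is entirely carried by Theorem~\ref{T:tangent-space-gives-the-metric}, and what remains is measure-theoretic bookkeeping. The points requiring (routine) care are: justifying the interchange of the countable sums over $x$ and over $\gamma$ with the $s$-integral, which is immediate from nonnegativity via Tonelli; discarding both the Lebesgue-null set $S^c$ and the countable set of path-endpoint times on which $\gamma\mapsto\gamma(s)$ may fail to be injective; and citing Proposition~\ref{p:etomu} so that $\rho_\mu$, and hence every quantity above, is independent of the choice of $e$-complete network $\Gamma$.
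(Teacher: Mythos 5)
Your proposal is correct and is precisely the measure-theoretic bookkeeping that the paper leaves implicit (the corollary is stated as an immediate consequence of Theorem~\ref{T:tangent-space-gives-the-metric} with no separate proof given): apply the theorem on the full-measure time set $S$, note that for $s\in S$ the $x$-sum has countable support indexed bijectively by paths through time $s$ (internal disjointness plus the fact that $S$ already excludes endpoint times), identify $\rho_\mu(\gamma(s),s)=\rho_{\gamma,e}(s)$, and finish with Tonelli, \eqref{E:ga-rate}, \eqref{e:IandK}, and $e$-completeness. One small redundancy: your extra step of discarding a countable set of endpoint times is unnecessary, since the set $S$ from the theorem's proof is already constructed to exclude all $a_\gamma,b_\gamma$.
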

	
	The inner sum in Corollary \ref{C:I-is-defined} is over uncountably many elements, but by Theorem \ref{T:tangent-space-gives-the-metric}, for Lebesgue a.e.~$s \in \R$ there are only countably many non-zero terms. One way to think of Corollary \ref{C:I-is-defined} is by viewing the gradient field $D_q e$ as the `Riemannian metric tensor' for the directed metric $e$. With this language, Corollary \ref{C:I-is-defined} states that we can build the rate function for the metric by integrating the rate function for the metric tensor over the whole space. 
	
	\begin{proof}[Proof of Theorem \ref{T:tangent-space-gives-the-metric}]
		Let $\Ga$ be a network such that $\gr \Ga$ contains the support of $\mu$, and for every $\ga \in \Ga$ define the excess density $\rho_\ga = \rho_{\ga, e}$ as in \eqref{e:rhogamma}, which recall is related to $\rho_\mu$ by Definition \ref{d:etomu}.
		Let $S \subset \R$ to be the set of points where
		\begin{itemize}[nosep]
			\item $a_\ga, b_\ga \notin S$ for all $\ga \in \Ga$.
			\item $s$ is a Lebesgue point for $\ga', |\ga'|^2, \rho_\ga, \rho_\ga^{3/2}$ for all $\ga \in \Gamma$ with $s \in (a_\ga, b_\ga)$, and also a Lebesgue point for
			$$
			g(z) := \max_{\ga \in \Ga} \rho_\ga^{3/2} \mathbf{1}(z \in [a_\ga, b_\ga]).
			$$
		\end{itemize}	
		Then $S^c$ has Lebesgue measure $0$, by the Lebesgue differentiation theorem, using the integrability of $|\ga'|^2, \rho_\ga^{3/2}$ and $g$ (whose integral over $\R$ is bounded above by $\mathcal K(\mu))$.
		
		Now fix $q = (x, s) \in S \times \R, \theta \in \R$ and let $\pi_r$ denote the rightmost geodesic from $q$ to $q_r := q + (\theta r, r)$. First suppose $q \notin \gr \Ga$ and hence $\rho_\mu(q) = 0$.
		By Lemma \ref{L:geo-mod-cont} there exists a constant $c > 0$ such that $|\pi_r(h) - x| \le c r^{2/3}$ for all $0 < h < r \le 1$. Therefore by Proposition \ref{p:etomu} we can write
		\begin{equation}
			\label{E:pi-r-e}
			\frac{1}{r} (|\pi_r|_e - |\pi_r|_d) \le \frac{1}{r} \int_s^{s + r} \max_{\ga \in \Ga} \rho_\ga(z)\mathbf{1}(z \in [a_\ga, b_\ga]), |\ga(z) - x| \le  c r^{2/3}) dz
		\end{equation}
		By Jensen's inequality, the right-hand side of \eqref{E:pi-r-e} is bounded above by
		$$
		\left(\frac{1}{r} \int_s^{s+r} \max_{\ga \in \Ga} \rho_\ga^{3/2}(z) \mathbf{1}(z \in [a_\ga, b_\ga], |\ga(z) - x| \le  c r^{2/3}) dz \right)^{2/3},
		$$
		whose limit as $r \to 0$ is $0$ since $s$ is a Lebesgue point for $g$ and $\ga', \rho_\ga^{3/2}$ for all $\ga$. Therefore $e(q; q_r) \le d(q; q_r) + o(r)$ as $r \to 0$. The opposite inequality also holds by Dirichlet dominance of $e$, yielding the theorem in this case.
		
		Now suppose $q \in \gr \Ga$. By our assumptions on $S$, there is a unique $\ga \in \Ga$ such that $q \in \gr \ga$. Decompose $\mu = \mu_1 + \mu_2$, where $\mu_1 = \mu|_{\gr \ga}, \mu_2 = \mu|_{(\gr \ga)^c}$. We have that
		$$
		e_{\mu_1}(q; q_r) \le e(q; q_r) \le e_{\mu_1}(q; q_r) + e_{\mu_2}(q; q_r) - d(q; q_r),
		$$
		and from the previous case $e_{\mu_2}(q; q_r) - d(q; q_r) = o(r)$ as $r \to 0$. Therefore it suffices to prove this case for $\mu_1$. Letting $w' = \rho_\ga - |\ga'|^2$, our goal will be to show that
		\begin{equation}
			\label{E:Dp-desire}
			D_q e (\theta) = \begin{cases}
				-\theta^2,  &|\theta - \ga_1'(s)| \ge \sqrt{\rho_\ga(s)}, \\
				w'(s) - 2 \sqrt{\rho_\ga(s)}|\theta - \ga'(s)| + 2\ga'(s)(\theta - w'(s)),  &|\theta - \ga'(s)| \le \sqrt{\rho_\ga(s)}.
			\end{cases}
		\end{equation}
		This function is the concave majorant of the parabola $-\theta^2$ and a single spike of height $w'(s)$ at location $\ga'(s)$.
		From here a quick calculation proves the two claims in the theorem. To prove \eqref{E:Dp-desire}, observe that
		\begin{equation}
			\label{E:emu1}
			e_{\mu_1}(q; q_r) = \sup_{\pi:q \to q_r} \mu_1(\pi \cap \ga) + |\pi|_d = o(r) + \sup_{\pi:q \to q_r} \rho_\ga(s)\lambda\{t: \pi(t) = \ga(t)\}+ |\pi|_d,
		\end{equation}
		where here $\lambda$ denotes Lebesgue measure, and in the equality we have used that $s$ is a Lebesgue point of $\rho_\ga$. Now, since $s$ is a Lebesgue point of $\ga', |\ga'|^2$ we have $ \ga(s + h) = \ga(s) + h \ga'(s) + o(h)$ and $|\ga|_{[s, s + h]}|_d + o(r) = h |\ga'(s)|^2 + o(h)$ as $h \to 0$. Therefore in \eqref{E:emu1}, up to a $o(r)$-term it suffices to optimize over paths $\pi$ that equal $\ga$ on an initial interval $[s, s + hr]$ and are straight lines afterwards. Hence \eqref{E:emu1} equals
		$$
		o(r) + \sup_{h \in [0, 1]} \rho_\ga(s) h r + h r|\ga'(s)|^2  +  \frac{r(\ga'(s) h - \theta)^2}{1 - h},
		$$
		from which the formula \eqref{E:Dp-desire} easily follows after taking $r \to 0$.
	\end{proof}

	\bibliographystyle{dcu}
	\bibliography{ldp}
	\bigskip
	
	\noindent
	Sayan Das. 	Department of Mathematics, University of Chicago,
	5734 S. University Avenue, Room 108
	Chicago, IL, 60637.  sayan.das@columbia.edu
	
	\medskip
	\noindent Duncan Dauvergne. 	Departments of Mathematics, University of Toronto,
	40 St. George St., Toronto, Ontario, M5S2E4. duncan.dauvergne@utoronto.ca
	
	\medskip
	\noindent B\'alint Vir\'ag.
	Departments of Mathematics, University of Toronto, 40 St. George St., Toronto, Ontario, M5S2E4. balint@math.toronto.edu
\end{document}